\newtheorem{theorem}{Theorem}[section]
\newtheorem{corollary}[theorem]{Corollary}
\newtheorem{lemma}[theorem]{Lemma}
\newtheorem{proposition}[theorem]{Proposition}
\newtheorem{claim}[theorem]{Claim}
\newtheorem*{question}{Question}
\newtheorem{thmintro}{Theorem}
\theoremstyle{definition}
\newtheorem{definition}[theorem]{Definition}
\newtheorem{remark}[theorem]{Remark}
\newtheorem*{convention}{Convention}
\newcommand{\setword}[2]{%
  \phantomsection
  #1\def\@currentlabel{\unexpanded{#1}}\label{#2}%
}
\newcommand{\eps}{\epsilon}
\newcommand{\wt}[1]{\widetilde{#1}}
\newcommand{\R}{\mathbb R}
\newcommand{\ZZ}{\mathbb Z}
\newcommand{\T}{\mathbb T}
\newcommand{\cT}{\mathcal{T}}
\newcommand{\cW}{\mathcal{W}}
\newcommand{\cF}{\mathcal{F}}
\newcommand{\cL}{\mathcal{L}}
\newcommand{\cC}{\mathcal{C}}
\newcommand{\cS}{\mathcal{S}}
\newcommand{\cB}{\mathcal{B}}
\newcommand{\mt}{\widetilde M}
\newcommand{\ft}{\widetilde f}
\newcommand{\fbs}{\cW^{cs}}
\newcommand{\fbu}{\cW^{cu}} 
\newcommand{\ftk}{\widetilde {f^k}}
\newcommand{\wfbs}{\widetilde \cW^{cs}}
\newcommand{\wfbu}{\widetilde \cW^{cu}}
\newcommand{\fes}{\cW^{cs}_{\epsilon}}
\newcommand{\feu}{\cW^{cu}_{\epsilon}} 
\newcommand{\wfes}{\widetilde \cW^{cs}_{\epsilon}}
\newcommand{\wfeu}{\widetilde \cW^{cu}_{\epsilon}}
\newcommand{\hs}{h^{cs}_{\epsilon}}
\newcommand{\hu}{h^{cu}_{\epsilon}}
\newcommand{\fol}{\cF}
\newcommand{\fole}{\cF_{\epsilon}}
\newcommand{\fn}{\widetilde \cF}
\newcommand{\lcse}{{\mathcal L}^{cs}_{\epsilon}}
\newcommand{\lcue}{{\mathcal L}^{cu}_{\epsilon}}
\newcommand{\lcsb}{{\mathcal L}^{cs}}
\newcommand{\lcub}{{\mathcal L}^{cu}}
\newcommand{\cG}{\mathcal G}
\newcommand{\wcs}{\widetilde \cW^{cs}}
\newcommand{\wcu}{\widetilde \cW^{cu}}
\newcommand{\pe}{\Phi_{\epsilon}}
\newcommand{\rquotient}[2]{{\left.\raisebox{.1em}{$#1$}\middle/\raisebox{-.1em}{$#2$}\right.}}
\DeclareMathOperator{\id}{Id}
\title[Partial hyperbolicity in 3 manifolds II]{{\small Partially hyperbolic diffeomorphisms homotopic to 
the identity in dimension 3} \\ {\small  Part II: Branching foliations}}
\author[T. Barthelm\'e]{Thomas Barthelm\'e}
\address{Queen's University, Kingston, ON}
\email{thomas.barthelme@queensu.edu}
\urladdr{sites.google.com/site/thomasbarthelme}
\author[S.R. Fenley]{Sergio R.\ Fenley} 
\address{Florida State University, Tallahassee, FL 32306}
\email{fenley@math.fsu.edu}
\author[S. Frankel]{Steven Frankel} 
\address{Washington University in St.~Louis, St.~Louis, Mo}
\email{steven.frankel@wustl.edu}
\author[R. Potrie]{Rafael Potrie} 
\address{Centro de Matem\'atica, Universidad de la Rep\'ublica, Uruguay}
\email{rpotrie@cmat.edu.uy}
\urladdr{http://www.cmat.edu.uy/~rpotrie/}
\begin{document}
 
 \begin{abstract}
 We study $3$-dimensional partially hyperbolic diffeomorphisms that are homotopic to the identity, focusing on the geometry and dynamics of Burago and Ivanov's center stable and center unstable \emph{branching} foliations. This extends our study of the true foliations that appear in the dynamically coherent case \cite{BFFP-prequel}. We complete the classification of such diffeomorphisms in Seifert fibered manifolds. In hyperbolic manifolds, we show that any such diffeomorphism is either dynamically coherent and has a power that is a discretized Anosov flow, or is of a new potential class called a \emph{double translation}.
 \bigskip
 
 \noindent {\bf Keywords: } Partial hyperbolicity, 3-manifolds, foliations. 
 
 \medskip
 
 \noindent {\bf MSC 2010:} 37D30,57R30,37C15,57M50,37D20.
\end{abstract}

 \maketitle

\tableofcontents

\section{Introduction}\label{s.introduction}

%

A diffeomorphism $f$ of a $3$-manifold $M$ is \emph{partially hyperbolic} if it preserves a splitting of the tangent bundle $TM$ into three $1$-dimensional sub-bundles
\[ TM = E^s \oplus E^c \oplus E^u, \]
where the \emph{stable bundle} $E^s$ is eventually contracted, the \emph{unstable bundle} $E^u$ is eventually expanded, and the \emph{center bundle} $E^c$ is distorted less than the stable and unstable bundles at each point. That is, for some $n>0$ one has
\begin{align*}
\|Df^n|_{E^s(x)}\| &< 1, \\
\|Df^n|_{E^u(x)}\| &> 1, \text{ and}\\
\|Df^n|_{E^s(x)}\| < \|Df^n|_{E^c(x)}\| &< \|Df^n|_{E^u(x)}\|,
\end{align*}
at each $ x\in M$.

From a geometric perspective, one can think of partial hyperbolicity as a generalization of the discrete behavior of an Anosov flow. On a $3$-manifold $M$, such a flow $\Phi$ preserves a splitting of the unit tangent bundle $TM$ into three $1$-dimensional sub-bundles
\[ TM = E^s \oplus T\Phi \oplus E^u, \]
where $E^s$ is eventually exponentially contracted, $E^u$ is eventually exponentially expanded, and $T\Phi$ is the tangent direction to the flow. After flowing for a fixed time, an Anosov flow generates a partially hyperbolic diffeomorphism of a particularly simple type, where the stable and unstable bundles are contracted uniformly, and the center direction, which corresponds to $T\Phi$, is left undistorted. More generally, there are examples of partially hyperbolic diffeomorphisms of the form $f(x) = \Phi_{\tau(x)}(x)$ where $\Phi$ is a (topological) Anosov flow and $\tau\colon M \to \mathbb{R}_{>0}$ is a positive continuous function; the partially hyperbolic diffeomorphisms obtained in this way are called \emph{discretized Anosov flows}.

A partially hyperbolic diffeomorphism is said to be \emph{dynamically coherent} if there are invariant foliations tangent to the center stable and center unstable bundles $E^c \oplus E^s$ and $E^c \oplus E^u$. Discretized Anosov flows are dynamically coherent, since their center stable and center unstable bundles are uniquely integrable. On the other hand, we show in \cite{BFFP-prequel} that large classes of dynamically coherent partially hyperbolic diffeomorphisms must in fact be discretized Anosov flows:

\begin{theorem}[ {\cite[Theorem A]{BFFP-prequel}} ]\label{theorem:prequelA}
	Let $f\colon M \to M$ be a dynamically coherent partially hyperbolic diffeomorphism on a closed Seifert fibered $3$-manifold. If $f$ is homotopic to the identity, then some iterate is a discretized Anosov flow. 
\end{theorem}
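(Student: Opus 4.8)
The plan is to prove that, after replacing $f$ by a suitable iterate, the lift $\tf$ of $f$ to the universal cover $\mt$ lying a bounded distance from the identity — which exists because $f$ is homotopic to the identity, and which commutes with every deck transformation — preserves every leaf of $\wcs\cap\wcu$ and acts on each such leaf as a fixed-point-free, orientation-preserving homeomorphism. By the description of discretized Anosov flows recalled in the introduction, this is exactly the desired conclusion: the center leaves are then the orbits of a topological Anosov flow $\Phi$, and $f(x)=\Phi_{\tau(x)}(x)$ for a positive continuous function $\tau$.

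First I would fix the ambient geometry. The foliations $\fbs,\fbu$ are Reebless, hence taut, which is a standard feature of dynamically coherent partially hyperbolic diffeomorphisms. Since $f$ acts trivially on $\pi_1(M)$ it lifts to every finite cover, so after passing to such a cover and to an iterate I may assume that $f$ preserves all orientations and co-orientations and that $M$ is an orientable circle bundle over a closed hyperbolic surface $S$; indeed, the Seifert manifolds with Euclidean base orbifold are handled by Hammerlindl and Hammerlindl--Potrie, while those with spherical base admit no Reebless foliation at all, their finite fundamental group forcing a Reeb component by Novikov's theorem. By the classification of taut foliations on Seifert fibered spaces (Brittenham; Eisenbud--Hirsch--Neumann; Naimi), up to isotopy each of $\fbs,\fbu$ is either horizontal — transverse to the fibers — or contains an essential vertical piece; the vertical alternative is impossible here, since a vertical torus or annulus leaf would put the fiber direction into both $E^{cs}$ and $E^{cu}$ along a compact invariant set, or would produce an $f$-periodic incompressible torus. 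Thus both foliations are horizontal, and in particular $\R$-covered; in the universal cover $\mt\cong\HH^2\times\R$, the $\R$-factor being the fiber direction, $\wcs$ and $\wcu$ are transverse to the fibers, their leaf spaces $\lcsb$ and $\lcub$ are homeomorphic to $\R$, and every leaf of $\wcs\cap\wcu$ is a properly embedded line.

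The crux is the action of $\tf$ on $\lcsb\cong\R$ and on $\lcub\cong\R$. Both commute with the $\pi_1(M)$-action, in particular with the central element $\mathfrak f$ carried by a regular fiber, which — the foliations being horizontal — acts on each leaf space as a fixed-point-free translation; dividing by $\langle\mathfrak f\rangle$ exhibits $\pi_1(S)$ as a group acting on a circle, the universal circle underlying the Eisenbud--Hirsch--Neumann picture of horizontal foliations, and one verifies this action is nontrivial with trivial centralizer in $\mathrm{Homeo}^{+}(S^1)$. Since $\tf$ descends to a homeomorphism of this circle commuting with $\pi_1(S)$, it must act on $\lcsb$ as a power $\mathfrak f^{k}$ and on $\lcub$ as a power $\mathfrak f^{k'}$. \textbf{The main obstacle}, and the point that genuinely fails on hyperbolic $3$-manifolds, where ``double translations'' appear, is to show that $k=k'$: then $\mathfrak f^{-k}\tf$ is a lift of (an iterate of) $f$ fixing every leaf of $\wcs$ and of $\wcu$, hence every center leaf. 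I expect this to come down to comparing the transverse contraction along $\fbs$ with the expansion along $\fbu$ and showing that a nonzero shear $k'-k$ is incompatible with $\tf$ remaining a bounded distance from the identity once iterates are tracked inside $\HH^2\times\R$; the extra $S^1$-symmetry of the Seifert fibration is what makes this work and is absent in the hyperbolic case.

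Finally, granting that some iterate $f^N$ has a lift fixing every center leaf, I would observe that on each center leaf — a properly embedded line in $\mt$ — that lift is fixed-point-free and orientation-preserving, for a fixed point would be an $f^N$-periodic point with trivial center behaviour, which is impossible since the stable and unstable directions are strictly contracted and expanded, unless the leaf were a periodic circle, which the $\R$-covered structure excludes. Reparametrizing these line actions to unit speed produces a continuous flow $\Phi$ on $M$ whose orbit foliation is the center foliation, with $f^N(x)=\Phi_{\tau(x)}(x)$ for a positive continuous $\tau$; the uniform transverse contraction of $\fbs$ and expansion of $\fbu$ then make $\Phi$ a topological Anosov flow, so $f^N$ is a discretized Anosov flow, as claimed.
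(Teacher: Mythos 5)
Your overall route does match the skeleton of the actual proof: reduce to horizontal (hence $\R$-covered) foliations over a hyperbolic base, use that the central fiber element $\mathfrak{f}$ acts on each leaf space as a translation and has bounded displacement in $\mt$, correct the good lift $\ft$ by a fiber power so that it fixes every leaf of $\wcs$ (this is \cite[Proposition~7.1]{BFFP-prequel}, quoted here as Proposition~\ref{p.liftfixleaf}), and then pass from double invariance to fixed center leaves and a discretized Anosov flow. The genuine gap is exactly the step you leave as ``I expect this to come down to\dots'': showing that the \emph{same} fiber power works for $\wcs$ and $\wcu$, i.e.\ ruling out mixed behavior. Your proposed mechanism for this cannot get off the ground, because composing with any power of $\mathfrak{f}$ again yields a good lift ($\mathfrak{f}$ is central and has bounded displacement), so the constraint ``$\ft$ stays a bounded distance from the identity'' is completely insensitive to the shear $k'-k$; a lift fixing every leaf of $\wcs$ while translating $\lcub$ is perfectly consistent with all the coarse data you track in $\HH^2\times\R$. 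The exclusion of mixed behavior is where partial hyperbolicity must enter dynamically: one shows that a good lift fixing every center stable leaf but no center leaf forces every $f$-periodic center leaf to be coarsely contracting, while the symmetric argument on the center unstable side forces such leaves to be coarsely expanding, and one produces a periodic center leaf from an annular center stable leaf via the graph transform — the contradiction comes from this, not from bounded displacement (compare \ref{starDI_contractingcenters}, Propositions~\ref{p.alternnonDC} and~\ref{periodiccenter}, and Lemma~\ref{lem_no_mixed_nonDC} here; in the dynamically coherent setting of the statement this is \cite[Proposition~4.4 and Theorem~5.1]{BFFP-prequel}). None of this machinery appears in your outline, so its central step is missing.

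Two further points need repair even granting that step. First, the centralizer of the induced circle action need not be trivial — it can be finite cyclic — so $\ft$ need only act on $\lcsb$ as a fiber power after a \emph{further} iterate; indeed there are Seifert examples in which every good lift of $f$ itself is a double translation, which is exactly why the theorem only claims that some iterate is a discretized Anosov flow. Second, the endgame is not routine: fixing every leaf of $\wcs$ and $\wcu$ does not immediately fix every center leaf (a lift could a priori permute the components of $L\cap U$); the absence of fixed points on center leaves does not follow from ``a periodic point with neutral center is impossible'' (the correct argument uses that the unstable leaf through a fixed point crosses nearby fixed center stable leaves and is expanded, cf.\ Lemma~\ref{l.nofixedpoints_branching_case}); and upgrading the fixed center foliation to the orbit foliation of a topological Anosov flow with $f^N(x)=\Phi_{\tau(x)}(x)$ is the content of \cite[Theorem~6.1]{BFFP-prequel}, not a reparametrization remark. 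You should also justify why the conclusion, obtained after a finite cover and an iterate, descends to $f$ on $M$. These latter items are fixable along the lines of the prequel, but as written the mixed-behavior step is a real gap.
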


\begin{theorem}[{\cite[Theorem B]{BFFP-prequel}} ]\label{theorem:prequelB}
	Let $f\colon M \to M$ be a dynamically coherent partially hyperbolic diffeomorphism on a closed hyperbolic $3$-manifold. Then some iterate is a discretized Anosov flow.
\end{theorem}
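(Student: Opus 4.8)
The plan is to show that, after passing to a finite cover and an iterate so that $E^s,E^c,E^u$ are orientable and $f$ preserves each orientation, the \emph{good lift} $\wt f$ of $f$ to $\wt M=\HH^3$ --- the lift commuting with every deck transformation, which exists precisely because $f$ is homotopic to the identity --- fixes every leaf of the lifted center-stable foliation $\widetilde{\cF}^{cs}$ and every leaf of $\widetilde{\cF}^{cu}$. Granting this, the center foliation $\cF^c=\cF^{cs}\cap\cF^{cu}$ has all of its lifted leaves preserved by $\wt f$, and integrating along $\cF^c$ one builds a topological flow $\Phi$ and a positive continuous $\tau$ with $f=\Phi_\tau$; the flow $\Phi$ is a topological Anosov flow because $\cF^{cs}$ and $\cF^{cu}$ are its weak stable and unstable foliations. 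So the entire problem reduces to controlling the actions of $\wt f$ on the two leaf spaces $\cL^{cs}$ and $\cL^{cu}$.

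First I would collect the geometric consequences of hyperbolicity. The foliations $\cF^{cs}$ and $\cF^{cu}$ are taut, hence Reebless: a vanishing cycle is incompatible with the uniform contraction of $E^s$ inside a center-stable leaf. Since $M$ is hyperbolic, hence atoroidal, Candel's theorem puts a metric of curvature $-1$ on every leaf, the lifted leaves are properly embedded planes in $\HH^3$ with well-defined limit sets in $\pin\HH^3$, and the leaf spaces $\cL^{cs},\cL^{cu}$ are simply connected (possibly non-Hausdorff) $1$-manifolds carrying commuting actions of $\pi_1(M)$ and of $\wt f$. The decisive point is that, because $f\simeq\id$, lifting a homotopy to the identity shows that $\wt f$ moves points a uniformly bounded hyperbolic distance; being also a quasi-isometry of $\HH^3$, it extends to the \emph{identity} on $\pin\HH^3$. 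Consequently $\wt f(L)$ has the same limit set as $L$ for every leaf $L$ of $\widetilde{\cF}^{cs}$ or $\widetilde{\cF}^{cu}$.

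Next comes the dichotomy. One shows $\cF^{cs}$ is $f$-minimal --- the only nonempty closed, $f$-invariant, $\cF^{cs}$-saturated subset of $M$ is $M$ --- so the set of center-stable leaves whose canonical lift is fixed by $\wt f$ is either empty or all of $\cL^{cs}$, and likewise for $\cF^{cu}$. If both are ``all'', we are in the discretized-Anosov-flow situation of the first paragraph (a center leaf on which $\wt f$ has a fixed point is harmless, playing the role of a periodic orbit of $\Phi$). It therefore remains to exclude the \emph{translation} alternative: $\wt f$ fixing no center-stable leaf (or no center-unstable leaf).

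Excluding translations in a hyperbolic manifold is the main obstacle. If $\wt f$ translates $\cL^{cs}$, then for a leaf $L$ the monotone family $\{\wt f^{\,n}(L)\}_{n\in\ZZ}$ has no limit leaf --- such a leaf would be fixed by $\wt f$ --- so these leaves escape every compact set while all sharing the limit set of $L$; I would then play this against Fenley's structure theory for taut foliations of hyperbolic $3$-manifolds, distinguishing whether $\cF^{cs}$ is $\R$-covered. In the $\R$-covered case $\cF^{cs}$ is uniform and slithers over a circle on which $\pi_1(M)$ acts minimally, so a homeomorphism of $\cL^{cs}=\R$ commuting with the $\pi_1(M)$-action must, up to an iterate, act as a covering translation, which pushed back to $\HH^3$ would make an iterate of $\wt f$ a nontrivial deck transformation within bounded distance of the identity --- impossible. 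In the branching case the non-separated leaves fall into finitely many $\pi_1(M)$-orbits by atoroidality, so an iterate of $\wt f$ fixes all of them, and on each complementary $\R$-covered piece the same reasoning (there the leaves are quasi-isometrically embedded, so the shared-limit-set argument applies directly) forces all leaves to be fixed. In every case an iterate of $\wt f$ fixes all center-stable and all center-unstable leaves, and we conclude as in the first paragraph. I expect the branching case --- and, more broadly, making ``trivial at infinity together with commuting with $\pi_1(M)$'' genuinely constrain $\wt f$ against the full combinatorics of $\cL^{cs}$ and $\cL^{cu}$ --- to be the delicate heart of the argument.
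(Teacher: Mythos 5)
Your overall reduction (pass to a cover/iterate, show the good lift $\ft$ fixes every leaf of $\wcs$ and $\wcu$, then build the flow from the center foliation) is the same skeleton as the actual proof, and the dichotomy ``fixes every leaf or acts as a translation on an $\R$-covered uniform foliation'' is indeed available. The gap is in the step you yourself flag as the heart of the matter: excluding the translation case on a hyperbolic manifold. Two of your ingredients there do not work. First, the ``shared limit set'' observation carries no information: for a taut, $\R$-covered (more generally minimal) foliation of a closed hyperbolic $3$-manifold every lifted leaf has limit set equal to all of $\pin \HH^3$, so the fact that $\ft$ is the identity at infinity puts no constraint on how it permutes leaves. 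Second, and decisively, the claim that a homeomorphism of $\lcsb\cong\R$ commuting with the $\pi_1(M)$-action must, up to an iterate, ``act as a covering translation'' and hence yield a nontrivial deck transformation a bounded distance from the identity is false (and the inference from an action on the leaf space back to a deck transformation of $\mt$ is itself unjustified). A uniform $\R$-covered foliation carries, via Thurston's slithering, a canonical infinite cyclic group of homeomorphisms of its leaf space commuting with $\pi_1(M)$ which are not induced by deck transformations; a good lift translating the leaf space is exactly this kind of object, and its existence is not formally contradictory. Indeed, in the non-dynamically-coherent setting of the present paper this ``double translation'' behavior cannot be ruled out at all (case (ii) of Theorem B here), which already shows that no soft argument of the type you propose can close this case: dynamical coherence must enter in an essential way.

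What the actual proof does instead is dynamical. In the translation case one invokes the regulating pseudo-Anosov flow $\Phi$ transverse to the $\R$-covered uniform foliation (Thurston, Calegari, Fenley); for $\gamma\in\pi_1(M)$ corresponding to a periodic orbit of $\Phi$ one finds leaves $L$ fixed by $h=\gamma\circ\ft^m$, transfers the ideal dynamics of $\gamma$ on $L\cup S^1(L)$ to $h$, and computes the Lefschetz index of $h$ on $L$ (it must be $1-p<0$). Dynamical coherence is then used to show that the actual center/stable dynamics inside $L$ (fixed center leaves are coarsely contracting or expanding, depending on which foliation is translated) is incompatible with that index count, giving the contradiction; the mixed case is excluded similarly. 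Your proposal contains none of this machinery, and the branching subcase you sketch does not arise in the translation alternative anyway (translation forces $\R$-covered). Finally, the last step ``double invariance implies discretized Anosov flow'' is also not automatic -- one must first show $\ft$ fixes every \emph{center} leaf before integrating -- but that is comparatively routine in the coherent case; the unrepaired failure is the exclusion of translations.
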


The assumption of dynamical coherence is natural from a geometric perspective: the way that an Anosov flow distorts its weak stable and weak unstable foliations is often seen as the defining property of such a flow. In this light, the preceding results say that on certain classes of manifolds, any diffeomorphism with a geometric structure reminiscent to that of an Anosov flow must in fact come from one. 

This assumption is much less satisfying from a dynamical perspective, however. Here the interest in partial hyperbolicity stems from its appearance as a generic consequence of dynamical conditions, such as stable ergodicity and robust transitivity (see \cite{BDV}), and one is not provided with any invariant foliations. Although dynamical coherence was once generally expected, a number of recent results (see, e.g., \cite{HHU-noncoherent,BGHP, BFFP_companion}) have shattered that belief. For instance, in the unit tangent bundle of a hyperbolic surface, we proved in \cite{BFFP_companion} that many partially hyperbolic diffeomorphisms are not dynamically coherent.

In our study of the dynamically coherent case \cite{BFFP-prequel}, the key to relating the inherently local property of partial hyperbolicity with the global structure of the ambient manifold lay in understanding the geometry and topology of the center stable and center unstable foliations, as well as their leafwise and transverse dynamics. The present article does away with the assumption of dynamical coherence. Instead of foliations we work with the center stable and center unstable ``branching foliations'' constructed by Burago and Ivanov \cite{BI} under certain orientability conditions. These are generalizations of foliations in which distinct leaves are allowed to merge together.

A large part of the present paper is concerned with carrying over our understanding of the geometry of foliations to branching foliations. We find that much of the familiar structure still holds in this more general context -- sometimes by direct analogy, and sometimes with considerably more work. At the same time, there are important points at which branching foliations allow for more varied behavior than true foliations. A particularly important example of this appears in Figure~\ref{figureCancelation}, where the possibility of merging leaves thwarts one's ability to use the qualitative transverse and tangent behavior of a dynamical system to draw conclusions about its Lefschetz index. We hope that our work will entice those interested in the theory of foliations to consider the possible uses for branching foliations.

The following two theorems, which generalize the preceding theorems from \cite{BFFP-prequel}, summarize the major consequences of the present article.

\begin{thmintro}\label{thmintro:Seifert}
	Let $f\colon M \to M$ be a partially hyperbolic diffeomorphism on a closed Seifert fibered $3$-manifold. If $f$ is homotopic to the identity, then it is dynamically coherent, and some iterate is a discretized Anosov flow. 
\end{thmintro}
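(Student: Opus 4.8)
The plan is to reduce Theorem~\ref{thmintro:Seifert} to the dynamically coherent case treated in Theorem~\ref{theorem:prequelA}, by proving that on a closed Seifert fibered $3$-manifold a partially hyperbolic diffeomorphism homotopic to the identity must in fact be dynamically coherent. The natural tool is the pair of Burago--Ivanov branching foliations $\fbs$ and $\fbu$ tangent to $E^c\oplus E^s$ and $E^c\oplus E^u$, which exist after passing to a finite iterate and lift to obtain the required orientability; these should be among the objects whose geometry the body of the paper develops. I would first analyze the lifted branching foliations $\wfbs,\wfbu$ in the universal cover $\mt$, using the Seifert structure: the circle-bundle (or, in the atoroidal base-orbifold case, the $\widetilde{\mathrm{SL}}_2$) geometry severely constrains how the leaves of a branching foliation can sit, much as in \cite{BFFP-prequel}. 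The goal of this first stage is a structure theorem saying that the leaves of $\wfbs$ and $\wfbu$ are uniformly close to leaves of the Seifert fibration pulled back appropriately, i.e.\ that the branching foliations are ``horizontal'' or carry the fiber direction in their center.

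The second stage is to upgrade from branching foliations to genuine foliations. Burago--Ivanov provide, for each branching foliation, a genuine foliation approximating it and a continuous leaf-onto-leaf map; the crucial point is to show that the center bundle $E^c$ is actually uniquely integrable, which gives honest invariant center stable and center unstable foliations and hence dynamical coherence. Here I would argue by contradiction: non-unique integrability forces genuine branching (two distinct leaves of $\fbs$, say, that share a center leaf and then separate), and I would show that the Seifert geometry established in the first stage is incompatible with this — roughly, merging leaves would produce a closed transversal or a contradiction with the constraints on how the fiber direction interacts with the branching locus. Once $E^c$ is uniquely integrable, $f$ is dynamically coherent, Theorem~\ref{theorem:prequelA} applies to a further iterate, and we conclude that some iterate is a discretized Anosov flow; the dynamical coherence assertion of the theorem is exactly what we proved along the way.

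I expect the main obstacle to be precisely the passage from branching foliations to true foliations, i.e.\ ruling out genuine branching of the center stable and center unstable branching foliations. This is the step where, as the introduction warns (cf.\ Figure~\ref{figureCancelation}), merging leaves invalidate the Lefschetz-index and transverse-dynamics arguments that work so smoothly in \cite{BFFP-prequel}. The plan to get around this is to exploit the rigidity of the Seifert fibration rather than index theory: the fibered structure should force the branching locus to be invariant and ``vertical'' in a way that contradicts the partially hyperbolic splitting (the fiber direction cannot simultaneously be tangent to both $E^c\oplus E^s$ and $E^c\oplus E^u$ along a nontrivial set unless the leaves coincide). A secondary difficulty is handling the various types of Seifert fibered manifolds uniformly — small Seifert fibered spaces, those with hyperbolic base orbifold, and the exceptional flat/$\mathrm{Nil}$/$\mathrm{Sol}$-like cases — which may require splitting into cases according to the geometry of the base orbifold, each reducing to the core argument above.
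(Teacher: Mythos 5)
Your high-level plan — reduce to the dynamically coherent case and invoke Theorem~\ref{theorem:prequelA} — matches the paper's strategy, but the mechanism you propose for proving dynamical coherence would not work, and it misses the actual dynamical core of the argument.

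The central gap is in your second stage. You claim that the Seifert geometry alone (the "horizontality" of the branching foliations and the rigidity of the fiber direction) should be incompatible with genuine branching of $\fbs$ or $\fbu$. This cannot be right: the paper explicitly cites \cite{BGHP}, which constructs partially hyperbolic diffeomorphisms on Seifert fibered manifolds (unit tangent bundles of hyperbolic surfaces) that are \emph{not} dynamically coherent and whose branching foliations genuinely branch. Those examples carry exactly the kind of structure you are trying to rule out geometrically; the only thing distinguishing them from the situation in Theorem~\ref{thmintro:Seifert} is that they are not homotopic to the identity. So any correct proof must make essential use of the homotopy to the identity in a dynamical way, not just as a background hypothesis. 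Your proposed contradiction ("merging leaves would produce a closed transversal or contradict how the fiber direction interacts with the branching locus") therefore cannot go through as stated.

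What the paper actually does is quite different. It works with the action of a \emph{good lift} $\ft$ on the leaf spaces $\lcsb,\lcub$ of the lifted branching foliations. Using a result from the prequel (horizontality of $\fbs$ forces the Seifert fibration to be orientable, so Proposition~\ref{p.liftfixleaf} applies), one finds a good lift of an iterate that fixes every leaf of $\wfbs$. One then must rule out the "mixed" case where $\ft$ translates $\wfbu$; this is done via Proposition~\ref{p.alternnonDC}, which shows that in the mixed case every $f$-periodic center leaf is simultaneously coarsely contracting and coarsely expanding, a contradiction (and Proposition~\ref{periodiccenter} guarantees such a periodic center leaf exists). Once $\ft$ fixes every leaf of both branching foliations ("double invariance"), Proposition~\ref{p.nondceverycfixed} shows every center leaf is fixed, and Corollary~\ref{coro.fixcentercoh} — a compactness argument controlling center-segment lengths, Lemmas~\ref{l.boundedmovement} and \ref{l.nomergefix} — shows the branching foliations cannot merge, giving dynamical coherence. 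Finally, all of this was done after passing to a finite cover and iterate to ensure orientability, so a separate uniqueness argument (Proposition~\ref{coro-DCwithoutlift}, via Lemma~\ref{lema-uniquefixfol}) is needed to descend dynamical coherence back to the original $f$ on $M$; your proposal does not address this step at all.
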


This is a stronger version of Theorem \ref{theorem:prequelA}, without the \emph{a priori} assumption of dynamical coherence. The following corresponds to  Theorem \ref{theorem:prequelB}.

\begin{thmintro}\label{thmintro:Hyperbolic}
	Let $f\colon M \to M$ be a partially hyperbolic diffeomorphism on a closed hyperbolic $3$-manifold. Then either
	\begin{enumerate}[label=\rm{(\roman*)}]
		\item $f$ is dynamically coherent, some iterate is a discretized Anosov flow; or
		\item\label{it.DT_in_hyperbolic} $f$ is not dynamically coherent, and after taking a finite cover\footnote{This is only needed to get the existence of $f$-invariant branching foliations.} and iterate, it has center stable and center unstable branching foliations which are $\R$-covered and uniform, and a lift of $f$ acts as a nontrivial translation on both of the corresponding leaf spaces.
	\end{enumerate}
\end{thmintro}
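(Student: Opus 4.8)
The plan is to start from the structure theory for partially hyperbolic diffeomorphisms homotopic to the identity that we have built up in the body of the paper, and organize the argument around the leaf spaces of the Burago--Ivanov branching foliations. First I would pass to a finite cover and iterate so that the branching foliations $\fbs$ and $\fbu$ exist and are orientable, and $f$ preserves them along with their orientations; this is the standard reduction from \cite{BI} and accounts for the footnote in the statement. Next, I would invoke the dichotomy established earlier in the paper for the action of a lift $\tf$ on each leaf space: either $\tf$ has a fixed leaf (a \emph{fixed point} in the branching leaf space of $\fbs$ or of $\fbu$), or it acts freely, i.e.\ as a translation. The heart of the argument is to show that on a closed hyperbolic $3$-manifold the ``fixed leaf'' alternative forces dynamical coherence and, via Theorem~\ref{theorem:prequelB}, that some iterate is a discretized Anosov flow; and that if instead $\tf$ acts as a translation on \emph{one} of the two leaf spaces, then it does so on both, and both branching foliations are $\R$-covered and uniform.

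For the first alternative I would argue as follows. Suppose $\tf$ fixes a leaf $L$ of $\wfbs$ (the lifted center stable branching foliation). Using the contraction along $E^s$ and the transverse dynamics inside $L$, together with the Lefschetz-index machinery developed in the paper (and the crucial caveat illustrated in Figure~\ref{figureCancelation} about what merging leaves can and cannot do), one shows that this produces, after iterating, a periodic center leaf and ultimately enough invariant structure to conclude that the branching foliations are actually genuine foliations -- i.e.\ $f$ is dynamically coherent. Once dynamical coherence is in hand, Theorem~\ref{theorem:prequelB} applies directly and gives that an iterate is a discretized Anosov flow, which is case (i). The subtle point here -- and what I expect to be the main obstacle -- is exactly that branching foliations do not support the clean index arguments available for honest foliations: one must rule out cancellation phenomena where merging leaves create spurious index contributions, and this is where the bulk of the technical work (and the reason the hyperbolic hypothesis enters, through the absence of $\ZZ^2$ subgroups and the structure of $\pi_1(M)$) lies.

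For the second alternative, assume $\tf$ acts freely on the leaf space $\cL^{cs}$ of $\wfbs$. I would first show this leaf space is $\R$-covered: a non-Hausdorff, tree-like leaf space would, under a freely acting homeomorphism coming from a diffeomorphism homotopic to the identity on a hyperbolic manifold, contradict the structure of branching directions and the coarse geometry of leaves (again using that leaves are Gromov-hyperbolic and the group action on the circle at infinity is well understood from \cite{BFFP-prequel} and the earlier sections). Given that $\cL^{cs}\cong\R$ and $\tf$ translates it, uniformity of $\fbs$ should follow from the standard dichotomy for $\R$-covered foliations (uniform versus skewed) combined with the translation behavior, as in the prequel. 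Finally, to get that $\tf$ also translates $\cL^{cu}$: if $\tf$ had a fixed leaf in $\wfbu$, the first alternative (applied to the center unstable side, i.e.\ to $f^{-1}$) would force dynamical coherence and land us in case (i), contradicting the assumption we are in case (ii); hence $\tf$ acts freely on $\cL^{cu}$ as well, and symmetry gives that $\fbu$ is $\R$-covered and uniform too. Combining the two sides yields case (ii) exactly as stated, completing the dichotomy.
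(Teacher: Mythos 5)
Your outline reproduces the architecture of the paper's argument (pass to an iterate and cover with invariant, orientable branching foliations; run the fixed-leaf versus translation dichotomy on each leaf space; double translation gives case (ii), otherwise case (i)), but the two implications that carry all of the content are asserted rather than proved, and the sketch you give for the crucial one points at the wrong mechanism. The claim ``$\ft$ fixes a leaf of $\wfbs$ $\Rightarrow$ $f$ is dynamically coherent'' is precisely the conjunction of the elimination of mixed behaviour (Theorem \ref{mixednonDC}) with ``double invariance implies coherence'' (Proposition \ref{p.nondceverycfixed} and Corollary \ref{coro.fixcentercoh}), and neither is obtained by an index computation inside the fixed leaf $L$. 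In the paper the Lefschetz argument is run in leaves of the \emph{translated} foliation fixed by $h=\gamma\circ\ft^{k}$, where $\gamma$ corresponds to a $p$-prong periodic orbit ($p\ge 3$) of a regulating pseudo-Anosov flow transverse to that translated branching foliation; this flow must first be constructed in the branching setting (Proposition \ref{prop-transversepA}), the fixed leaf of $h$ comes from Proposition \ref{fixedleaf_nonDC}, and the contradiction needs coarse expansion of every $f$-periodic center leaf (Proposition \ref{p.alternnonDC}, which itself requires the minimality statement Proposition \ref{p.hypSeifminimal_nonDC} that you never invoke), the landing of fixed center and stable rays at single, distinct ideal points of $S^1(L)$ (Lemmas \ref{claim8}--\ref{conclusion4}), and the final count $2p-2\le p$ against $p\ge3$. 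The cancellation phenomenon of Figure \ref{figureCancelation} that you cite is exactly the reason a naive index argument ``inside $L$, using contraction of $E^s$'' cannot conclude; acknowledging that ``the bulk of the technical work lies here'' does not supply it, and that work is the theorem.

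There are three further gaps. First, the statement concerns an arbitrary partially hyperbolic diffeomorphism of a closed hyperbolic manifold, so before any good lift exists you must record that some iterate of $f$ is homotopic to the identity (finiteness of the mapping class group via Mostow rigidity); you implicitly assume this. Second, case (i) asserts coherence of the \emph{original} $f$, whereas your argument (at best) gives coherence of the lifted iterate on the finite cover; descending requires the uniqueness-of-foliations argument of \S\ref{dcwithout} (Lemma \ref{lema-uniquefixfol} and Proposition \ref{coro-DCwithoutlift}), which you do not address. Third, your route to ``$\R$-covered and uniform'' in the translation case, via Gromov hyperbolicity of leaves and circles at infinity, is not the mechanism that works: it follows from Proposition \ref{p.dichotomy_branching} together with Proposition \ref{p.hypSeifminimal_nonDC} (fixing one leaf forces fixing all, so ``no fixed leaf'' makes the whole leaf space a single component translated by $\ft$ with leaves at bounded Hausdorff distance). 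In short, the proposal is a correct reading of the proof's skeleton, but every step it defers is where the proof actually lives.
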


The existence or non-existence of examples of type \ref{it.DT_in_hyperbolic} is one of the major questions coming out of this article. See \S\ref{ss.DT}. 

Let us also mention a dynamical consequence of our analysis (Corollary \ref{c.contractible}).

\begin{theorem}\label{thm-nocontractible}
	Let $f \colon M \to M$ be a partially hyperbolic diffeomorphism of a closed 3-manifold $M$ that is homotopic to the identity. If either $M$ is hyperbolic or Seifert fibered, or the center stable or center unstable branching foliation is $f$-minimal, then $f$ has no contractible periodic points (see Definition \ref{def.contractible_periodic_point}).
\end{theorem}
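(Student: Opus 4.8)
The plan is to reduce the statement to the non-existence of fixed points for the good lift of an iterate, and then to obstruct such fixed points: via the classification in the hyperbolic and Seifert cases, and by a direct analysis in the $f$-minimal case. In detail: if $\wt f$ denotes the good lift of $f$ to the universal cover $\wt M$ (the lift determined by a homotopy from $\mathrm{id}$ to $f$; it commutes with the deck group and, since $f\simeq\mathrm{id}$, moves every point a uniformly bounded distance), then a periodic point $p$ of period $k$ is contractible exactly when $\wt f^{\,k}$ has a fixed point projecting to $p$. So it suffices to show $\wt f^{\,k}$ is fixed-point free for every $k$. Passing to a finite cover and an iterate — harmless, since a contractible periodic point lifts to one in any finite cover and stays contractible under further iteration — we may assume the $f$-invariant branching foliations $\cW^{cs},\cW^{cu}$ exist and are co-orientable. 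Suppose, for contradiction, $\wt f^{\,k}(\wt p)=\wt p$.

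The first real step is to promote $\wt p$ to an invariant leaf. Since $\wt f^{\,k}$ fixes $\wt p$ and preserves the co-orientation of $E^{cs}$, it permutes the $\cW^{cs}$-leaves through $\wt p$ order-preservingly for the transverse order; by the standard completeness properties of branching foliations this set has a topmost element $L$, which is therefore $\wt f^{\,k}$-invariant, and likewise there is an $\wt f^{\,k}$-invariant $\cW^{cu}$-leaf $E\ni\wt p$. By the structure theory developed earlier, $L$ and $E$ are properly embedded planes; moreover the strong stable leaf $\wt\cW^{s}(\wt p)$ lies in $L$ and the strong unstable leaf $\wt\cW^{u}(\wt p)$ lies in $E$, each $\wt f^{\,k}$-invariant, with $\wt f^{\,k}$ contracting the former towards $\wt p$ and expanding the latter away from $\wt p$. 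This picture is the common point of departure.

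In the hyperbolic and Seifert fibered cases I would invoke the classification: by Theorem~\ref{thmintro:Seifert}, resp.\ Theorem~\ref{thmintro:Hyperbolic}, after a further iterate either (a) $f$ is a discretized Anosov flow $\Phi_{\tau}$, or (b) $M$ is hyperbolic and the good lift of $f$ acts as a nontrivial translation on the leaf space of $\cW^{cs}$. In case (b), $\wt f^{\,k}$ then acts with nonzero translation number on that leaf space, hence has no invariant leaf, contradicting $\wt f^{\,k}(L)=L$. In case (a), a periodic point of $\Phi_{\tau}$ lies on a periodic orbit $\alpha$ of the topological Anosov flow $\Phi$, and the corresponding trace loop develops to $g^{m}$ for some $m\geq 1$, where $g\neq 1$ is the deck transformation obtained by developing $\alpha$ once; since $g$ translates the corresponding lifted orbit — a properly embedded line — along itself, $g$, and hence $g^{m}$, has infinite order, so the trace loop is non-trivial and $p$ is not contractible.

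When no classification is available — the $f$-minimal case — one argues directly on $L$: choose a center curve $c\subset L$ through $\wt p$ (an integral curve of $E^{c}|_{L}$) and, by the same extremal argument inside $L$, arrange $\wt f^{\,k}(c)=c$, so that $c$ is a properly embedded line in $\wt M$, invariant under $\wt f^{\,k}$, through the fixed point $\wt p$; then one plays the local dynamics at $\wt p$ (contraction along $\wt\cW^{s}$, expansion along $\wt\cW^{u}$) against $f$-minimality — every leaf of $\cW^{cs}$ is dense, so $\pi(L)$ is dense and deck transformations carry $L$ arbitrarily close to itself from either side — and against the uniformly bounded displacement of $\wt f^{\,k}$, to reach a contradiction, the fixed leaf $L$ being too dynamically rigid to coexist with everywhere-dense leaves. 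The crux throughout is excluding the fixed point $\wt p$ itself, and the main obstacle is that the natural route — a Lefschetz-index computation on an $\wt f^{\,k}$-invariant surface, where the interplay of the transverse co-orientation with the tangent dynamics of a partially hyperbolic fixed point ``ought'' to force a nonzero local index — fails for branching foliations, since as in Figure~\ref{figureCancelation} merging leaves can cancel contributions that would be forced for a genuine foliation. Hence the argument must route through the global classification (hyperbolic and Seifert cases) or through the one-dimensional dynamics of $\wt f^{\,k}$ on an invariant center curve inside an invariant leaf (the $f$-minimal case), and carrying out the latter rigorously without a Lefschetz shortcut is the principal technical hurdle.
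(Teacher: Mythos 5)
Your reduction (contractible periodic point $\Rightarrow$ some good lift $\ft$ has $\ft^{k}(\wt p)=\wt p$, then pass to a finite cover and iterate to get the branching foliations) matches the paper, but the heart of the theorem — the $f$-minimal case, which is the only case that works on an arbitrary $M$ — is not proved in your proposal: you end with a sketch (``the fixed leaf $L$ being too dynamically rigid to coexist with everywhere-dense leaves'') and yourself flag it as the unresolved hurdle. The paper's argument here is short and uses a mechanism you missed. Under $f$-minimality, Corollary~\ref{c.minimalcase_branching_case} gives a dichotomy: a good lift either acts as a translation on the leaf space $\lcsb$ (hence has no periodic points at all), or it fixes \emph{every} leaf of $\wfbs$. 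In the latter case, if $\ft^{n}(\wt p)=\wt p$ with $n>0$, the unstable leaf $u$ through $\wt p$ is $\ft^{n}$-invariant and expanded; any center stable leaf through a point $y\in u$, $y\neq \wt p$, is also fixed by $\ft^{n}$ and meets $u$ only at $y$, so $y$ would be a second fixed point on the expanded leaf $u$ — impossible (this is Lemma~\ref{l.nofixedpoints_branching_case}, giving Theorem~\ref{t.good_lifts_have_no_fixed_points} and Corollary~\ref{c.contractible}). No invariant center curve, no density argument, and no Lefschetz index is needed; your worry about index cancellations for branching foliations is a red herring for this statement.

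For the hyperbolic and Seifert cases your appeal to Theorems~\ref{thmintro:Seifert} and~\ref{thmintro:Hyperbolic} is not formally circular (the paper proves them without using Theorem~\ref{thm-nocontractible}), but it inverts the paper's logic: the paper instead proves Proposition~\ref{p.hypSeifminimal_nonDC} (a single fixed leaf of $\wfbs$ forces all leaves fixed and $f$-minimality — this is where the genuinely hard work of \S\ref{sss.minimal} sits) and then runs the same expansion argument as above. Moreover your execution of the classification route has its own gaps: on a Seifert fibered manifold the good lift is not unique (good lifts differ by central deck transformations, i.e.\ powers of the regular fiber, cf.\ Lemma~\ref{boundcov}), so ``$p$ contractible iff \emph{the} good lift fixes a lift of $p$'' must be ``\emph{some} good lift'', and in the discretized Anosov flow case you then need that the developed class of the periodic orbit does not become trivial after multiplication by a central element — i.e.\ that a periodic orbit is not freely homotopic to a power of the fiber — which you assert implicitly but do not justify. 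Those points are repairable; the absent argument in the $f$-minimal case is the genuine gap.
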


\subsection{Acknowledgments}
We thank C. Bonatti, A. Gogolev and A. Hammerlindl for interesting discussions. We also thank the referee for their careful reading and thoughtful suggestions which led to significant improvements, especially in \S\ref{sss.minimal}.

T.~Barthelm\'e was partially supported by the NSERC (Funding reference number RGPIN-2017-04592).

S.~Fenley was partially supported by Simons Foundation grant number 280429.

S.~Frankel was partially supported by National Science Foundation grant number~DMS-1611768.

R.~Potrie was partially supported by CSIC~618 and ANII--FCE--135352.

\section{Outline and discussion} \label{s.plan_of_proof}


After recalling some definitions, we outline the more detailed results that lie behind our main theorems.

Let $f\colon M \to M$ be a partially hyperbolic diffeomorphism that is homotopic to the identity on a closed $3$-manifold $M$.

\vspace{.1cm}
\fbox{\begin{minipage}{.95\textwidth}
		\textbf{Convention:}  Throughout this paper we will assume that $\pi_1(M)$ is not virtually solvable.
\end{minipage}}
\vspace{.1cm}

Although this assumption is not always necessary, it will simplify certain parts of the exposition. It does not result in loss of generality, since partially hyperbolic diffeomorphisms have been completely classified in manifolds with solvable or virtually solvable fundamental group \cite{HP-Nil,HP-Sol}.

A foundational result of Burago and Ivanov (Theorem \ref{teo-BI}) implies that, after passing to an appropriate finite power and lift, we can assume that there is a pair of ``branching foliations'' $\fbs$ and $\fbu$ that are preserved by $f$ and tangent to the center stable and center unstable bundles $E^c \oplus E^s$ and $E^c \oplus E^u$.

We outline the theory of these branching foliations in \S\ref{sec-branching}, and construct corresponding \emph{leaf spaces} $\lcsb$ and $\lcub$. Like the leaf spaces of true foliations, these are simply-connected, possibly non-Hausdorff $1$-manifolds that capture the transverse structure of $\wfbs$ and $\wfbu$, the lifts of $\fbs$ and $\fbu$ to the universal cover. This is where a large part of our work takes place, studying the dynamics of the following important class of lifts of $f$.

\begin{definition}\label{d.goodlift}
	A lift of $f$ to the universal cover is called \emph{good} if it moves each point a uniformly bounded distance and commutes with every deck transformation.
\end{definition}

Since $f$ is homotopic to the identity, it has at least one good lift, obtained by lifting such a homotopy.

\begin{remark}
The diffeomorphisms we consider are in fact \emph{isotopic} to identity: Indeed, all the manifolds that appear in this article are irreducible and covered by $\R^3$. Then, the works of many authors (Waldhausen \cite{Wald} for Haken manifolds, Boileau--Otal \cite{BoileauOtal} for Seifert manifolds and Gabai--Meyerhoff--Thurston \cite{GMT} for hyperbolic manifolds) give that homotopy implies isotopy. We will however not use this fact in the sequel, as the existence of a good lift is all that we use.
\end{remark}

\subsubsection{Dynamics on leaf spaces}\label{sss-introLeafSpaces}
In \S\ref{sec-branching_goodlift}, we study the way that good lifts of $f$ permute the leaves of the lifted center stable and center unstable branching foliations, and the implications for the structure of their leaf spaces. This extends \cite[\S~3]{BFFP-prequel}.

The picture is particularly simple when $\fbs$ is $f$-minimal, which means that the only closed, non empty, $f$-invariant set which is a union of leaves is $M$ itself.
\begin{tabular}{cl}
	\setword{$(\star)$}{star_dichotomy}
	&%
	\begin{minipage}{.85\textwidth}
		\vspace{.1cm}
		If $\fbs$ is $f$-minimal, then:
		\begin{itemize}
			\item Each good lift $\ft$ fixes either every leaf or no leaf of $\wfbs$.
			
			\item If some good lift $\ft$ fixes no leaf, then $\fbs$ is $\R$-covered and uniform, and $\ft$ acts as a translation its leaf space.
		\end{itemize}
	\end{minipage}
\end{tabular}
\vspace{.1cm}

The same holds for $\wfbu$. In particular, if both $\fbs$ and $\fbu$ are $f$-minimal, then one of the following holds for each good lift $\ft$ of $f$:

\begin{enumerate}[label=(\arabic*)]
	\item\label{it.DI_dinv} {\bf double invariance:} $\ft$ fixes every leaf of both $\wfbs$ and $\wfbu$;
	\item\label{it.DI_nomix} {\bf mixed behavior:} $\ft$ fixes every leaf of either $\wfbs$ or $\wfbu$, and acts as a translation on the leaf space of the other, or
	\item\label{it.DI_dtran} {\bf double translation:} $\ft$ acts as a translation on the leaf spaces of both $\wfbs$ and $\wfbu$. 
\end{enumerate}

This trichotomy applies whenever $f$ is transitive or volume-preserving, where the associated branching foliations are always $f$-minimal \cite{BW}.

When $f$ is a discretized Anosov flow, there is a natural homotopy from the identity to $f$ that moves points along the orbits of the underlying flow. The good lift $\ft$ that comes from lifting this homotopy fixes every center leaf. In order to show that a given partially hyperbolic diffeomorphism is a discretized Anosov flow, we will need to find a good lift with this property. Here, one takes the center leaves to be the components of intersections between center stable and center unstable leaves. In particular, we will need find a good lift with doubly invariant behavior.

\subsubsection{Center dynamics in fixed leaves}
In \S\ref{sec-centerdynamics}, we study the dynamics of the center foliation within center stable  and center unstable leaves. We obtain the following crucial tool (See Definition~\ref{d.coarsely_contracting} and Proposition \ref{p.alternnonDC}):\\
\begin{tabular}{cl}
	\setword{$(\star\star)$}{starDI_contractingcenters}
	&%
	\begin{minipage}{.85\textwidth}
		\vspace{.1cm}
		Suppose that $\fbs$ is $f$-minimal, and that some good lift $\ft$ fixes every center stable leaf but no center leaf in $\mt$. Then every $f$-periodic center leaf in $M$ is coarsely contracted.
	\end{minipage}
\end{tabular}
\vspace{.1cm}

If one replaces $\fbs$ with $\fbu$ then one concludes that any $f$-periodic center leaf in $M$ is coarsely expanded. This is widely applicable since one can find a periodic center leaf on any center stable or center unstable leaf with non-trivial fundamental group (Proposition \ref{periodiccenter}).

\begin{remark}
	In the dynamically coherent case, \ref{starDI_contractingcenters} leads to a contradiction that yields a \emph{fixed} center leaf \cite[Proposition~4.4]{BFFP-prequel}. In \S\ref{ss.AbsolutePH} we show that this holds as well under the assumption of \emph{absolute} partial hyperbolicity.
\end{remark}

\subsubsection{Minimality in hyperbolic and Seifert fibered manifolds}
In \S\ref{sss.minimal}, we show the following, which means that the preceding trichotomy holds whenever the ambient manifold is hyperbolic or Seifert fibered.

\begin{tabular}{cl}
	\setword{$(\star')$}{star_dichotomyHypSeif}
	&%
	\begin{minipage}{.85\textwidth}
		\vspace{.1cm}
		If $M$ is hyperbolic or Seifert fibered, then: 
		\begin{itemize}
			\item Each good lift $\ft$ fixes either every leaf or no leaf of $\wfbs$.
			
			\item If some good lift $\ft$ fixes every leaf, then $\fbs$ is $f$-minimal.
			
			\item If some good lift $\ft$ fixes no leaf, then $\fbs$ is $\R$-covered and uniform, and $\ft$ acts as a translation on its leaf space.
		\end{itemize}
	\end{minipage}
\end{tabular}
\vspace{.1cm}

\subsubsection{Double invariance implies dynamical coherence}
In \S\ref{sec-doubleinvndc} we prove the following criterion for when a partially hyperbolic diffeomorphism is a discretized Anosov flow:
\begin{theorem}\label{thm.DIimpliesDC}
	Let $f\colon M \to M$ be a partially hyperbolic diffeomorphism that is homotopic to the identity. If $f$ admits $f$-minimal center stable and center unstable branching foliations, and some good lift $\ft$ has doubly invariant behavior, then $f$ is a discretized Anosov flow.
\end{theorem}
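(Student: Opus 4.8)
The plan is to produce a topological Anosov flow $\Phi$ on $M$ together with a positive continuous function $\tau\colon M\to\R_{>0}$ such that $f=\Phi_{\tau}$. The natural candidate orbits are the connected components of the intersections $L^{cs}\cap L^{cu}$, where $L^{cs}$ is a leaf of $\fbs$ and $L^{cu}$ a leaf of $\fbu$; I will call these \emph{center curves}. Since $E^c\oplus E^s$ and $E^c\oplus E^u$ are transverse continuous plane fields meeting exactly along $E^c$, every nonempty component of $L^{cs}\cap L^{cu}$ is a complete curve tangent to $E^c$ that is properly embedded in each of the two leaves. The proof then has two parts: \textbf{(A)} showing that $\fbs$ and $\fbu$ are genuine, branching-free foliations whose leaves meet in \emph{single} center curves, which assemble into an $f$-invariant center foliation $\cW^c$ each of whose leaves is fixed by $\ft$ --- equivalently, that $f$ is dynamically coherent; and \textbf{(B)} reparametrizing $f$ along $\cW^c$ and recognizing the result as a discretized Anosov flow.

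For \textbf{(A)}, I would first show that $\ft$ fixes some center leaf. If not, then since $\ft$ fixes every leaf of $\wfbs$ and $\fbs$ is $f$-minimal, \ref{starDI_contractingcenters} shows that every $f$-periodic center leaf is coarsely contracted; applying the same statement with $\fbu$ in place of $\fbs$ --- legitimate because $\ft$ also fixes every leaf of $\wfbu$ --- shows that every $f$-periodic center leaf is coarsely expanded. Since $\pi_1(M)$ is not virtually solvable, $\fbs$ cannot have all of its leaves planes, so it has a leaf with non-trivial fundamental group, and Proposition~\ref{periodiccenter} then supplies an $f$-periodic center leaf; being simultaneously coarsely contracted and coarsely expanded is absurd. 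To promote this to the full statement of \textbf{(A)}, I argue by contradiction again: if some $L^{cs}\cap L^{cu}$ were disconnected, or if two distinct leaves of $\fbs$ or of $\fbu$ shared a point, one extracts a ``strip'' (or ``half-strip'') region bounded by center curves lying inside a single center-stable leaf (resp.\ center-unstable leaf). As $\ft$ fixes that leaf it permutes these boundary center curves, so one of them is $f$-periodic, and the center dynamics inside fixed leaves developed in \S\ref{sec-centerdynamics} (the refined dichotomy of Proposition~\ref{p.alternnonDC}) forces an $f$-periodic center curve bounding such a strip inside a fixed center-stable leaf to be coarsely contracted, while the mirror configuration on a center-unstable leaf forces the analogous curve to be coarsely expanded --- the same contradiction. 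Throughout, $f$-minimality is what turns the local center behavior into global statements, and the Burago--Ivanov approximating true foliations (Theorem~\ref{teo-BI}) are used to control the ambient local product structure around these regions. This yields \textbf{(A)}: in particular $f$ is dynamically coherent, and $\ft$ fixes every leaf of $\cW^c$.

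For \textbf{(B)}, note that on each center-stable leaf --- a plane or an annulus --- $\ft$ preserves the (now genuine) $1$-dimensional foliation $\cW^c|_{L^{cs}}$, fixes every leaf, and moves points a bounded distance; similarly on center-unstable leaves. Combining the contraction of the $E^s$-holonomy inside $\fbs$ with the expansion of the $E^u$-holonomy inside $\fbu$, and propagating globally via $f$-minimality, one shows that $\ft$ acts on each center leaf either with a fixed point or freely, and that its displacement has a consistent coorientation along $\cW^c$. Since $\ft$ is a good lift (Definition~\ref{d.goodlift}), a compactness argument then produces a positive continuous $\tau\colon M\to\R_{>0}$ with $\ft(x)$ at center-arclength $\tau(x)$ from $x$, equivariantly under the deck group. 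Letting $\Phi_t$ be the flow along $\cW^c$ normalized so that $\Phi_{\tau(x)}(x)=f(x)$, the $f$-invariance of $\cW^c$, $\fbs$ and $\fbu$ together with the uniform contraction of $E^s$ and expansion of $E^u$ shows that $\Phi$ is a topological Anosov flow whose weak stable and weak unstable foliations are $\fbs$ and $\fbu$; hence $f=\Phi_\tau$ is a discretized Anosov flow.

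The main obstacle is \textbf{(A)}: ruling out branching and establishing connectedness of the center-stable/center-unstable intersections. Branching is precisely the added flexibility these objects were built to allow, so the argument must genuinely exploit both the doubly-invariant good lift and the fixed-leaf center dynamics of \S\ref{sec-centerdynamics}; the strip and bigon configurations have to be arranged with care, and one must handle separately the degenerate cases --- for instance where a relevant center-stable or center-unstable leaf is a plane, carrying no $f$-periodic center leaf --- using $f$-minimality and the global structure of the leaf spaces $\lcsb$ and $\lcub$.
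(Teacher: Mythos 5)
Your first step (producing at least one $\ft$-fixed center leaf by playing \ref{starDI_contractingcenters} for $\fbs$ against its mirror for $\fbu$ at an $f$-periodic center leaf supplied by Proposition~\ref{periodiccenter}) is exactly the paper's argument. The genuine gap is in your promotion step. Once you know that $\ft$ fixes \emph{some} center leaf, Proposition~\ref{p.alternnonDC} (i.e.\ \ref{starDI_contractingcenters}) is no longer available: its hypothesis is precisely that $\ft$ fixes \emph{no} center leaf, so the ``refined dichotomy'' you invoke cannot be applied to a boundary curve of a merging strip. Moreover, your claim that $\ft$ fixing the ambient center stable leaf forces one of the boundary center curves of the strip to be $f$-periodic is unjustified: a merging region inside a fixed leaf need not be bounded by any center curve invariant under some $\gamma^n\circ\ft^m$ (and on plane leaves there is no nontrivial stabilizer at all). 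What is actually needed, and what you never establish, is that $\ft$ fixes \emph{every} center leaf. The paper proves this (Proposition~\ref{p.nondceverycfixed}) by analyzing the boundary of the open set of fixed center leaves inside an annular leaf: a boundary leaf $c_1$ is non-separated from $\ft(c_1)$, perfect fits and the graph transform produce an invariant curve and stable leaves trapping $c_1$, one shows $c_1$ is fixed by some $h=\gamma^n\ft^m$, and then Lemma~\ref{l.condition_for_coarse_contraction} gives coarse contraction, contradicted by the same argument on the center unstable side. Only after all center leaves are fixed does the paper rule out branching, and it does so by a different mechanism than yours: bounded center displacement (Lemma~\ref{l.boundedmovement}) versus unbounded growth of stable segments under $\ft^{-n}$ across a bigon (Lemma~\ref{l.nomergefix}), followed by the coherence criterion of Proposition~\ref{p.BWcoh}. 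As written, your strip argument neither yields ``all center leaves fixed'' nor rules out merging.

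Your part (B) is also not how the paper concludes, and as sketched it amounts to re-proving the prequel's structure theorem: constructing $\tau$, the flow $\Phi$, and verifying it is a topological Anosov flow is a substantial argument that a ``compactness argument'' does not cover. This part is not fatal, because once dynamical coherence and the doubly invariant good lift (fixing all center leaves) are in hand one can simply quote \cite[Theorem~6.1]{BFFP-prequel}, which is what the paper does; but the theorem cannot be reached along your route until the gap in (A) --- the missing proof that every center leaf is fixed, and a valid non-merging argument --- is filled.
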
 
The key is to show that such an $f$ is dynamically coherent. Then \cite[Theorem~6.1]{BFFP-prequel} implies that it is a discretized Anosov flow.

Until this point we have always assumed that the bundles $E^s$, $E^c$, and $E^u$ have orientations that are preserved by $f$ so that we can use the result of Burago-Ivanov to find center stable and center unstable branching foliations. In \S\ref{dcwithout}, we show that if a lift of an iterate of $f$ is dynamically coherent and has a good lift $\widetilde{g}$ with doubly invariant behavior, then $f$ is dynamically coherent. This is why Theorems \ref{thmintro:Seifert} and \ref{thmintro:Hyperbolic}(i) do not need the orientability conditions.

\subsubsection{Seifert fibered and hyperbolic manifolds}

We rule out mixed behavior in Seifert fibered manifolds in \S\ref{sec-ThmA}, and in hyperbolic manifolds in \S\ref{sec-transl}--\ref{sec-mix.hyp}. Together with Theorem~\ref{thm.DIimpliesDC}, this yields the following:
\begin{theorem}\label{teo-hypseif}
	Let $f \colon M \to M$ be a partially hyperbolic diffeomorphism homotopic to the identity on a closed hyperbolic or Seifert fibered $3$-manifold. Assume that there are center stable and center unstable branching foliations. Then each good lift of $f$ either
	\begin{enumerate}[label=\rm{(\roman*)}]
		\item\label{item.hypsef_DI} fixes every leaf of both $\wfbs$ and $\wfbu$, or
		\item\label{item.hypsef_DT} acts as a translation on both leaf spaces.
	\end{enumerate}
	
	If there is a good lift of type \ref{item.hypsef_DI}, then $f$ is a discretized Anosov flow.
\end{theorem}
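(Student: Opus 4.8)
The plan is to assemble the statement from three ingredients prepared in the outline: the dichotomy~\ref{star_dichotomyHypSeif}, the elimination of \emph{mixed behavior}, and Theorem~\ref{thm.DIimpliesDC}. Fix a good lift $\ft$ of $f$ (one exists since $f$ is homotopic to the identity). Applying~\ref{star_dichotomyHypSeif} to $\wfbs$ gives the alternative that either $\ft$ fixes every leaf of $\wfbs$, in which case $\fbs$ is $f$-minimal, or $\ft$ fixes no leaf of $\wfbs$, in which case $\fbs$ is $\R$-covered and uniform and $\ft$ acts as a translation on $\lcsb$; the same holds verbatim for $\wfbu$. Crossing the two alternatives produces the trichotomy of \S\ref{sss-introLeafSpaces}: \emph{double invariance} (type~\ref{item.hypsef_DI}), \emph{mixed behavior} ($\ft$ fixes every leaf of exactly one of $\wfbs$, $\wfbu$ and acts as a translation on the leaf space of the other), or \emph{double translation} (type~\ref{item.hypsef_DT}). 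So the first assertion amounts to showing that mixed behavior does not occur.

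Ruling out mixed behavior is the heart of the proof, and the step I expect to be the main obstacle. After possibly interchanging the roles of $cs$ and $cu$, we may assume $\ft$ fixes every leaf of $\wfbs$ while acting as a translation on $\lcub$. The translation on $\lcub$ means $\ft$ fixes no center unstable leaf, hence no center leaf, so~\ref{starDI_contractingcenters} applies and every $f$-periodic center leaf in $M$ is coarsely contracted. On the other hand, Proposition~\ref{periodiccenter} produces an $f$-periodic center leaf inside any center unstable leaf with nontrivial fundamental group, and the translation action on the $\R$-covered, uniform foliation $\fbu$ should instead make such a leaf coarsely expanded --- the desired contradiction. Turning this heuristic into an actual argument requires the global topology of $M$: when $M$ is Seifert fibered one argues from the Seifert structure together with the fact that $f$ is homotopic to the identity (\S\ref{sec-ThmA}), and when $M$ is hyperbolic one analyzes the translation on the $\R$-covered uniform branching foliation $\fbu$ and the induced action at infinity (\S\ref{sec-transl}--\ref{sec-mix.hyp}). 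These two manifold-specific analyses carry the real content.

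With mixed behavior excluded, every good lift of $f$ is either of type~\ref{item.hypsef_DI} or of type~\ref{item.hypsef_DT}, which is the first claim. For the second, suppose some good lift $\ft$ is of type~\ref{item.hypsef_DI}, i.e.\ fixes every leaf of both $\wfbs$ and $\wfbu$. Then~\ref{star_dichotomyHypSeif}, applied to each of the two foliations, shows that $\fbs$ and $\fbu$ are both $f$-minimal, so the hypotheses of Theorem~\ref{thm.DIimpliesDC} are met: $f$ is homotopic to the identity, it admits $f$-minimal center stable and center unstable branching foliations, and $\ft$ has doubly invariant behavior. Theorem~\ref{thm.DIimpliesDC} then yields that $f$ is a discretized Anosov flow, completing the proof.
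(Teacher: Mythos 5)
Your proposal assembles the theorem from exactly the ingredients the paper uses — the \ref{star_dichotomyHypSeif} dichotomy (Proposition~\ref{p.hypSeifminimal_nonDC}), the elimination of mixed behavior in \S\ref{sec-ThmA} (Seifert) and \S\ref{sec-transl}--\S\ref{sec-mix.hyp} (hyperbolic), and Theorem~\ref{thm.DIimpliesDC} — so this is essentially the paper's own argument. One small slip in the (explicitly heuristic) middle paragraph: Proposition~\ref{periodiccenter} produces a periodic center leaf inside a center \emph{stable} leaf with nontrivial isotropy (not a center unstable one), and in the hyperbolic case the contradiction is not the naive ``coarsely contracting vs.\ coarsely expanding'' clash you describe but a counting argument comparing the number of prongs of the transverse regulating pseudo-Anosov flow with the Lefschetz index in a fixed leaf (Theorem~\ref{mixednonDC}); since you defer to those sections for the real content, this does not affect the correctness of the assembly.
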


As was already pointed out in \cite[Remark 7.3]{BFFP-prequel}, there are examples in Seifert fibered manifolds where every good lift acts as a double translation. However, we show in \S\ref{sec-ThmA} that one can always find a finite power of such diffeomorphisms with a good lift that has doubly invariant behavior. Together with the results of \S\ref{sec-doubleinvndc} this implies Theorem~\ref{thmintro:Seifert}. 

Since every diffeomorphism of a hyperbolic 3-manifold has an iterate homotopic to the identity one also deduces Theorem~\ref{thmintro:Hyperbolic}. 

\begin{remark}
	An analogue of Theorem~\ref{teo-hypseif} holds under the assumption of $f$-minimality together with \emph{absolute} partial hyperbolicity. See ~\S\ref{ss.AbsolutePH}.
	
	We believe that Theorem \ref{teo-hypseif} should hold, using the same strategy as here, under the assumption of $f$-minimality together with the existence of an atoroidal piece in the JSJ decomposition of $M$. We have not pursued this here as it would require proving results similar to \cite{Thurston2,Calegari00,Fen2002} in this setting.
\end{remark}

\subsubsection{Double translations}\label{ss.DT}
This leaves open one major question:
\begin{question}
	Is there a partially hyperbolic diffeomorphism on a closed hyperbolic $3$-manifold whose good lifts act as double translations?
\end{question}

As noted above, there are such examples on Seifert fibered manifolds, but by Theorem~\ref{thmintro:Seifert} these are all dynamically coherent and have iterates that are discretized Anosov flows.

The dynamics of a double translation on a hyperbolic manifold would have to be coarsely comparable to that of a pseudo-Anosov flow (see \S\ref{sec-transl}). The closest analogues from this perspective are the non dynamically coherent examples on Seifert manifolds, constructed in \cite{BGHP}, which act as pseudo-Anosov maps on the base.

\subsection{Remarks and references}
There are three major areas in which the general case differs significantly from the dynamically coherent case:
\begin{enumerate}
	\item Unlike the dynamically coherent case (see condition ($\star\star$) in \cite[\S 2]{BFFP-prequel}), there may be annular center stable leaves which do not contain a closed center leaf.  
	
	\item In hyperbolic manifolds, we cannot rule out the possibility of double translations from the general version of the existence of cores that ``shadow'' the periodic orbits of the transverse pseudo-Anosov flow (see condition ($\star \star \star$) in \cite[\S 2]{BFFP-prequel}). 
	
	\item In hyperbolic and Seifert manifolds, it is more difficult to eliminate the hypothesis of $f$-minimality. See Section \ref{sss.minimal}. 
\end{enumerate}

We refer to \cite{CHHU,HP-survey,PotrieICM} for surveys on the problem of classification of partially hyperbolic diffeomorphisms in dimension 3. There is earlier work towards classification that does not assume dynamical coherence, but these articles tend to have two simplifying characteristics: They work with manifolds on which taut foliations are well understood and amenable to classification, and on which known partially hyperbolic models are available for comparison. Typically, dynamical coherence is established under the assumption of non-existence of invariant tori by using the fact that coarse dynamics separates leaves of the branching foliations. Neither of these features hold for the classes of manifolds considered in this article, and dynamical incoherence may appear in several different ways.

For instance, we obtain dynamical coherence in Section \ref{sec-doubleinvndc} when the lift of the partially hyperbolic diffeomorphism fixes each leaf of the lifted branching foliations. We also learn more about the structure of the branching foliations in the non dynamically coherent case, leading, in particular, to case \ref{it.DT_in_hyperbolic} of Theorem \ref{thmintro:Hyperbolic}. This structure also allows us to better understand the dynamical properties of the system, even when the manifold is not hyperbolic or Seifert fibered, as can be seen in Theorem \ref{thm-nocontractible}.

More generally, the framework that we develop for the study of non dynamically coherent partially hyperbolic diffeomorphism is useful outside of the homotopy class of the identity.

Below are several tools developed in this article that we wish to emphasize:

\begin{enumerate}
	
	\item In \S\ref{sec-branching} and \ref{sec-branching_goodlift}, we develop some of the basic theory necessary for the topological study of branching foliations and the diffeomorphisms that preserve them, including the structure of their leaf spaces.
	
	\item\label{itemintro1} In \S \ref{ss.fixed_center_or_coarse_contraction} we introduce the notion of coarsely contracting and coarsely repelling periodic rays. This should be useful for the study of all partially hyperbolic diffeomorphisms in 3-manifolds, i.e., including those not homotopic to the identity,  
	
	\item In \S\ref{sss.minimal} we study the way that certain special lifts of a partially hyperbolic diffeomorphism act within a fixed center stable leaf, and find conditions that guarantee the non-existence of fixed points. This involves understanding the behavior of strong stable manifolds through fixed points under iteration, which may find applications in other contexts.
	
	\item In \S\ref{sec-doubleinvndc} we prove uniqueness of (branching) foliations under certain conditions. This is a key to finding results that do not require taking finite lifts and finite powers. As such, it may also be relevant for the study of topological obstructions for partially hyperbolic diffeomorphisms --  note that the topological obstructions for the existence of Anosov flows can depend on taking finite lifts (see, e.g., \cite{Calegari:book}).
	
	There is other work that shows the uniqueness of branching foliations, but always in a setting where there is an understood model partially hyperbolic diffeomorphism for comparison.
	
	\item\label{itemintro4} In \S\ref{sec-transl}, \ref{sec-mix.hyp} we develop some tools to analyze the transverse geometry of branching foliations. This combines ideas from the theory of Lefschetz index, hyperbolic geometry, and the notion of coarsely expanding and contracting rays in item (\ref{itemintro1}). 
\end{enumerate}

The tools in (\ref{itemintro4}) are used in \cite{BFFP_companion} to prove that a large class of partially hyperbolic diffeomorphisms in Seifert manifolds
are dynamically incoherent. In addition (\ref{itemintro1}) and (\ref{itemintro4}) are used in \cite{FenleyPotrie} to obtain fine dynamical consequences of partial hyperbolicity in 3-manifolds.

\section{Branching foliations and leaf spaces}\label{sec-branching}


In this section we review the existence of center stable and center unstable branching foliations, and construct corresponding leaf spaces that capture their transverse topology. We will also construct a ``center foliation'' and leaf space.

\begin{definition} \label{def.branching}
 A \emph{branching foliation} of a $3$-manifold $M$ is a collection $\cF$ of $C^1$-immersed surfaces, called \emph{leaves}, each complete in its induced metric, such that:
 \begin{enumerate}[label=(\roman*)]
  \item Each $x \in M$ is contained in at least one leaf;
  \item \label{item.no_self_crossing} No leaf crosses itself;
  \item \label{item.no_crossing_of_leaves} Different leaves do not cross each other;
  \item \label{item.convergence_of_leaves} If $L_n$ are leaves, and $x_n \in L_n$ converges to a point $x \in M$, then some subsequence of the $L_n$ converges to a leaf $L$ with $x \in L$. \footnote{Here, convergence should be understood in the pointed compact-open topology, i.e., given a compact set $K$ in $L$ containing $x$, there is a sequence of compact subsets $K_n$ of $L_n$ containing $x_n$ such that $K_n$ converges to $K$ in the Hausdorff topology.}
 \end{enumerate}
\end{definition}

Here, ``crossing'' is meant in a topological sense -- see \cite{BI} or \cite{HP-survey}.

\begin{remark}
	In this context, ``branching'' refers to the fact that leaves may merge. This should not be confused with the typical use of ``branching'' in the theory of codimension-$1$ foliations, where it refers to non-Hausdorff behavior in the leaf space.
\end{remark}

Since a branching foliation has $C^1$ leaves that do not cross, it has a well-defined tangent distribution.

As with foliations, there is a sense in which branching foliations are ``locally product (branched) foliated'': around each point one can find a neighborhood $U$ with a smooth product structure $U \simeq \mathbb{D}^2 \times [0,1]$ such that each leaf of $\cF$ that intersects $U$ does so in a collection of discs that are transverse to the $[0,1]$-fibration and meet every $[0,1]$-fiber.
This follows readily from the fact that branching foliations are tangent to $C^1$ distributions.

On a compact manifold there is a uniform scale $\epsilon_0$, called  the \emph{local product structure size}, such that every open set of diameter less than $\epsilon_0$ is contained in a product chart as above.

\begin{definition}\label{def.wellapproximated}
	A branching foliation $\cF$ is \emph{well-approximated by foliations} if there is, for a set of $\epsilon > 0$ accumulating on $0$, a family of foliations $\{\cF_{\eps}\}$ with $C^1$ leaves, and a family of continuous maps $\{ h_{\eps}\colon M \to M \}$, that have the following properties (with respect to some fixed Riemannian metric):
	\begin{enumerate}[label=(\roman*)]
		\setcounter{enumi}{4}
		\item The angles between leaves of $\cF$ and $\cF_{\eps}$ are less than $\eps$;
		\item \label{item.epsilon_close_identity} The $C^0$-distance between $h_{\eps}$ and the identity is less than $\eps$;
		\item \label{item.leaves_to_leaves} On each leaf of $\cF_{\eps}$, the map $h_{\eps}$ restrict to a local diffeomorphism to a leaf of $\cF$;
		\item \label{item.map_surjective} For each leaf $L$ of $\cF$ there is a leaf $L_{\eps}$ of $\cF_{\eps}$ with $h_{\eps}(L_{\eps}) = L$.
	\end{enumerate}
\end{definition}

\begin{remark}
	Note that while the maps $h_{\eps}$ restrict to local diffeomorphisms on leaves, they will fail to be global diffeomorphisms on leaves of $\cF_{\eps}$ that map to self-merging leaves of $\cF$. In addition, the $h_{\eps}$ will not be local diffeomorphisms on $M$ unless $\cF$ is actually a true foliation.
\end{remark}

\begin{definition}\label{def:oriented}
	A partially hyperbolic diffeomorphism $f: M \to M$ is said to be \emph{orientable} if the bundles $E^s$, $E^u$ and $E^c$ admit orientations that are preserved by $f$.
\end{definition}

The following is the foundational existence result of Burago-Ivanov:
\begin{theorem}[Burago-Ivanov \cite{BI}]\label{teo-BI} 
Let $f$ be an orientable partially hyperbolic diffeomorphism of a $3$-manifold $M$. Then there are $f$-invariant branching foliations $\fbs$ and $\fbu$ tangent to $E^c \oplus E^s$ and $E^c \oplus E^u$ that are well-approximated by foliations. 
\end{theorem}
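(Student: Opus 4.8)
The plan is to construct the center stable branching foliation $\fbs$; the center unstable one, $\fbu$, then follows by applying the same construction to $f^{-1}$, whose center stable bundle $E^c\oplus E^u$ is exactly the center unstable bundle of $f$ and which is again orientable with $f$-invariant orientations (an $f^{-1}$-invariant collection of surfaces is an $f$-invariant one, and being well-approximated by foliations does not refer to the dynamics). Throughout I would use the classical facts that the strong bundles $E^s$ and $E^u$ are uniquely integrable, giving genuine one-dimensional foliations $\cW^s$ and $\cW^u$ whose leaves inherit the $f$-invariant orientations of $E^s$ and $E^u$, and that $E^{cs}:=E^c\oplus E^s$ is transverse to $\cW^u$. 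The structural observation driving everything is that \emph{any} $C^1$ surface tangent to $E^{cs}$ is necessarily saturated by strong stable leaves --- it contains the uniquely integrable line field $E^s$ --- and is therefore the strong stable saturation $\mathrm{Sat}^s(\gamma):=\bigcup_{y\in\gamma}\cW^s(y)$ of a curve $\gamma$ tangent to $E^c$ inside it. So the problem splits into: (a) producing, through every point, a ``good'' center curve whose saturation is everywhere tangent to $E^{cs}$; (b) organizing these surfaces into a non-crossing, $f$-invariant family; and (c) exhibiting the approximating foliations and collapsing maps.

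Step (a) is where partial hyperbolicity is essential, since a rough $2$-plane field need not have integral surfaces even locally (Frobenius). I would try a graph-transform/limiting argument in the universal cover $\wt M$: after passing to a power, the cone field around $E^{cs}$ is backward $Df$-invariant, $E^s$ is uniformly contracted, and $E^c$ is dominated by $E^u$, so one can hope to iterate $\cW^s$-saturated candidate discs and rectify them until their ``center directions'' converge to $E^c$, the orientations of the three bundles being used to keep the limiting choices continuous and globally consistent. An alternative that I expect to be cleaner to organize is to first construct \emph{smooth, integrable} $2$-plane fields $\eta_\epsilon\to E^{cs}$ in $C^0$, let $\cF_\epsilon$ be the foliation tangent to $\eta_\epsilon$, and take the branching leaves to be pointed limits (in the sense of axiom (iv) of Definition \ref{def.branching}) of leaves of $\cF_\epsilon$ as $\epsilon\to0$. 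The delicate point is then building the $\eta_\epsilon$, since integrability of a smooth $2$-plane field in a $3$-manifold is highly non-generic; I would attempt this relative to $\cW^s$, smoothing $E^c$ to a smooth line field transverse to a smoothing $\cW^s_\epsilon$ of $\cW^s$ and arranging that $\cW^s_\epsilon$-holonomy approximately preserves that line field, the approximate holonomy-invariance being what forces near-integrability and being available because genuine $\cW^s$-holonomy carries $E^{cs}$ to $E^{cs}$ well enough over bounded distances.

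Step (b) is the part I expect to be the main obstacle. With surfaces tangent to $E^{cs}$ available through every point, I would work in $\wt M$ and assign to each $x$ a canonical leaf $L_x$ by an averaging (center-of-mass) procedure over the space of candidate integral surfaces through $x$, in a suitable metric on pieces of surfaces. Naturality makes $x\mapsto L_x$ equivariant under deck transformations and under $\wt f$, so $\wt f(L_x)=L_{\wt f(x)}$ and the family descends to an $f$-invariant one on $M$; axiom (iv) of Definition \ref{def.branching} holds by a leaf-limit argument. What needs real work is \emph{consistency} ($L_y=L_x$ whenever $y\in L_x$, so that the $L_x$ genuinely form a collection of leaves) and the \emph{non-crossing} conditions (ii) and (iii). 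I would prove non-crossing by contradiction: two averaged leaves that cross must do so tangentially, switching sides along their intersection; pushing such a configuration forward by $\wt f^n$ and using the contraction of $E^s$ and the compactness of $M$ should force an impossible accumulation of crossings --- a maximum-principle-type argument in which the orientation hypothesis is exactly what lets one speak coherently of the ``sides'' of a leaf.

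For well-approximation by foliations (Definition \ref{def.wellapproximated}): if the $\cF_\epsilon$ were produced as in step (a), the maps $h_\epsilon$ come from the same data --- locally $h_\epsilon$ collapses the $\epsilon$-smoothing, carrying each $\cF_\epsilon$-plaque onto the nearby $\fbs$-plaque (available because $\fbs$ is locally a product of plaques, as noted after Definition \ref{def.branching}); these local collapses agree on overlaps and glue to a global continuous map that is $C^0$-close to the identity, restricts to a local diffeomorphism on each leaf of $\cF_\epsilon$, and is onto every leaf of $\fbs$, which is properties (v)--(viii). If instead $\fbs$ is built first, one smooths its leaves chart by chart, resolving the merging by tilting coincident plaques slightly apart consistently, and defines $h_\epsilon$ by the reverse collapse. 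This last step is bookkeeping; the only subtlety, the branching (merging) locus, is precisely what the local product structure is designed to keep under control.
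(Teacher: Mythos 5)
There is a genuine gap here, and a preliminary remark: the paper does not prove this statement at all --- Theorem~\ref{teo-BI} is quoted from Burago--Ivanov \cite{BI}, so the only meaningful comparison is with their proof, which is a long and delicate construction. Your reduction of $\fbu$ to $\fbs$ for $f^{-1}$ and your observation that any surface tangent to $E^c\oplus E^s$ is saturated by strong stable leaves are both correct, but the two steps that carry all the weight are exactly the ones your sketch does not actually supply. First, the ``cleaner alternative'' in step (a) --- producing smooth \emph{integrable} plane fields $\eta_\epsilon\to E^{cs}$ and taking $\fbs$ as a limit of the foliations $\cF_\epsilon$ --- is essentially circular: there is no known way to integrably approximate $E^{cs}$ in advance (approximate holonomy-invariance of a smoothed center line field gives only approximate integrability, which by Frobenius yields nothing), and in \cite{BI} the logical order is the opposite: the branching foliation is constructed first, directly from the partially hyperbolic structure, and the approximating true foliations $\fes$ and the collapsing maps $h_\epsilon$ are built \emph{afterwards from it} (their Theorem~7.2), by spreading the leaves apart using the local product structure; so you cannot use the $\cF_\epsilon$ as an input.

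Second, the selection mechanism in step (b) would not work as stated. An averaging/center-of-mass over the family of integral surfaces through $x$ does not preserve tangency to $E^{cs}$ (the family is not convex in any sense compatible with the distribution), and even granting tangency, an averaged choice has no reason to satisfy the consistency $L_y=L_x$ for $y\in L_x$, which is what makes the $L_x$ a collection of leaves rather than a mere assignment of surfaces to points. Burago--Ivanov's actual device is monotone rather than barycentric: using the coorientation (this is where orientability enters) they order surfaces tangent to a cone field around $E^{cs}$ and take \emph{extremal} (topmost) stable-saturated surfaces; extremality is what makes the choice consistent, equivariant, and automatically non-crossing, and it is compatible with the limit axiom (iv) of Definition~\ref{def.branching}. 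Your proposed substitute for non-crossing --- iterating a crossing forward and invoking stable contraction --- does not engage the real difficulty: distinct leaves tangent to the same bundle may merge and separate without ever crossing topologically (this is the branching phenomenon itself), and nothing in the contraction of $E^s$ rules out a topological crossing of two \emph{averaged} surfaces. So the proposal, while sensible in outline, is missing the key ideas that make the theorem true, and its two concrete mechanisms (integrable smooth approximation, averaging) would fail or beg the question.
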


Here, a branching foliation is said to be \emph{$f$-invariant} if the image of any leaf under $f$ is again a leaf.

Note that there is no \emph{a priori} uniqueness for the center stable and center unstable branching foliations $\fbs$ and $\fbu$ related to a partially hyperbolic diffeomorphism. Nevertheless, we will typically fix \emph{some} pair of such branching foliations and call them ``the'' branching foliations for our diffeomorphism. In addition, we will fix families of approximating foliations $\fes$ and $\feu$, with associated maps denoted by $h^{cs}_{\eps}$ and $h^{cu}_{\eps}$. 

On the other hand, since the stable bundle $E^s$ \emph{is} uniquely integrable, a stable leaf $s$ that intersects a center stable leaf $L$ must be contained entirely in $L$. Consequently, the intersection of any two center stable leaves is saturated by stable leaves.

Once we have fixed ``the'' center stable and center unstable branching foliations $\fbs$ and $\fbu$, the corresponding lifted foliations on $\mt$ will be denoted by $\wfbs$ and $\wfbu$. We may then define center leaves as follows:

\begin{definition}\label{d.center_leaf}
A \emph{center leaf} of a partially hyperbolic diffeomorphism is the projection to $M$ of a connected component of the intersection between a leaf of $\wfbs$ and a leaf of $\wfbu$.
\end{definition}

Although the collection of center leaves is not a foliation, it is a kind of codimension-$2$ branching foliation. We will abuse terminology and call the collection of center leaves the \emph{center foliation}.

\begin{remark}\label{r.center_leaf}
Each center leaf is tangent to the central direction $E^c$, but a complete curve that is tangent to the central direction may not be a center leaf. Indeed, even when the diffeomorphism is dynamically coherent, the central direction may not be uniquely integrable. See \cite{HHU-noncoherent} for an example.
\end{remark}

\begin{figure}[h]
\centering
\begin{subfigure}[t]{0.45\textwidth}
\centering
\includegraphics{./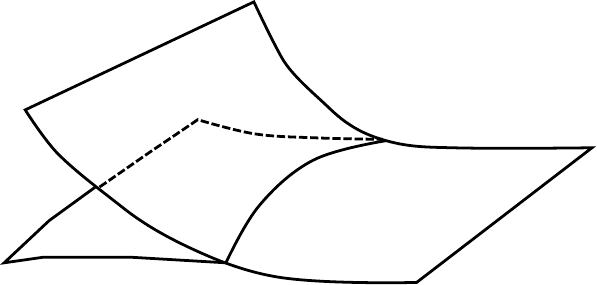} 
\subcaption{Two center stable leaves sharing a region}
\end{subfigure}%
\hspace{.05\textwidth}
\begin{subfigure}[t]{0.45\textwidth}
 \centering
 \includegraphics{./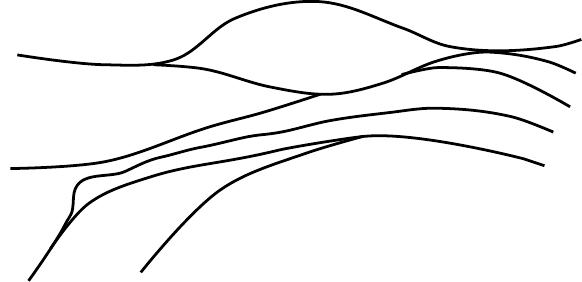} 
\subcaption{Distinct center leaves inside a center stable leaf}
\end{subfigure}
\caption{The branching of center and center-stable leaves.}\label{f.1M}
\end{figure}

\subsection{Tautness} 
In this article, the approximating foliations $\fes$ and $\feu$ have no compact leaves. 

Indeed, suppose that one has a compact leaf $L \in \fes$. Then $K := h^{cs}_{\eps}(L)$ is a compact leaf of $\fbs$. Since the stable bundle $E^s$ is uniquely integrable, this compact surface has a foliation without compact leaves, so it is a torus. According to \cite[Theorem~1.4]{HHU-nocompactleaves}, there are only a few classes of manifolds that admit partially hyperbolic diffeomorphisms with tori tangent to $E^s \oplus E^c$, all mapping tori of $\mathbb{T}^2$.

Since we assume that $\pi_1(M)$ is not virtually solvable, it follows that the approximating foliations have no compact leaves, which implies that they are taut.

\subsection{Center stable and center unstable leaf spaces}\label{ss.leafspace}

Given a foliation $\fol$ on a manifold $M$, the set of leaves of the lifted foliation $\fn$ on $\mt$ has a natural topology -- the quotient obtained from $\mt$ by collapsing each leaf to a point -- and the resulting space is called the \emph{leaf space} of $M$. 

In this section we will define a notion of leaf space for our branching foliations, where it would not make sense to take the quotient topology. We will see, in fact, that the leaf spaces of our branching foliations are homeomorphic to those of the approximating foliations for small enough $\eps$.

Much of this section would apply to any codimension-$1$ branching foliation, of any dimension, as long as the leaves in the universal cover are properly embedded $\mathbb{R}^{n-1}$'s in $\mathbb{R}^n$. For convenience, however, we will mostly restrict attention to the branching foliations that we are interested in. This allows for some shortcuts. For example, in Proposition \ref{prop.leafspace} we use the approximating foliations and maps to see that the leaf space is a $1$-manifold as desired, though this could also be done directly.

\subsubsection{Complementary regions and sides}\label{ss.sides}
Since $M$ is not finitely covered by $S^2 \times S^1$ (as $\pi_1(M)$ is not virtually solvable), and our branching foliations are well-approximated by taut foliations, it follows that the universal cover is homeomorphic to $\R^3$, and the lifted leaves are properly embedded planes \cite{CandelConlonI}.

The \emph{complementary regions} of a leaf $L$ are the two connected components of $\mt \smallsetminus L$. For each complementary region $U$ of a leaf $L$, the closure $\overline{U} = U \cup L$ is called a \emph{side} of $L$.

A coorientation of the branching foliation (which may be thought of as a coorientation of its tangent distribution) determines, for each leaf $L$, a \emph{positive} and a \emph{negative} complementary region which we denote by $L^{\oplus}$ and $L^{\ominus}$. The corresponding sides are denoted by $L^+ = L^{\oplus} \cup L$ and $L^-  = L^{\ominus} \cup L$. We will fix such a coorientation throughout.

\subsubsection{Leaf spaces.}\label{sss.ls-centerstable}
Let us now construct the \emph{center stable leaf space} $\lcsb$. This is the set of leaves of $\wfbs$ with the topology defined below. The \emph{center unstable leaf space} $\lcub$ is constructed similarly.

In the case of a true codimension-$1$ foliation, each transverse arc in the universal cover maps homeomorphically to an arc in the leaf space. We will use a similar idea for branching foliations, and use transverse arcs to construct the topology. In a true foliation each point in a transverse arc intersects a single leaf; for our branching foliations we need to ``blow up'' at some points using the following definition:

\begin{definition}
	Given $x \in \mt$, let $\lcsb(x) \subset \lcsb$ denote the set of leaves that contain $x$.
	
	Given distinct leaves $L \neq E$ in $\lcsb(x)$, we will write $L <_x E$ whenever $L^+ \supset E$.
\end{definition}

\begin{claim}\label{claim.closed_intervals_merge}
	For each $x \in \mt$, $<_x$ defines a linear order, with which $\lcsb(x)$ is order-isomorphic to a closed interval (possibly a single point). 
\end{claim}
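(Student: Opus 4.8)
The plan is to show three things: first, that $<_x$ is a linear order on $\lcsb(x)$; second, that the order topology on $\lcsb(x)$ agrees with a natural notion of convergence coming from $\mt$; and third, that $(\lcsb(x), <_x)$ is order-complete and order-separable, so that it is order-isomorphic to a closed sub-interval of $\R$ (which, together with connectedness of the order, forces it to be a genuine closed interval or a point).

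For the first step, transitivity and trichotomy are the only non-formal points. Transitivity: if $L <_x E$ and $E <_x F$, then $E \subset L^+$ and $F \subset E^+$; since leaves of $\wfbs$ do not cross (Definition~\ref{def.branching}\ref{item.no_crossing_of_leaves}) and all three contain the common point $x$, the leaf $F$ cannot meet the complementary region $L^\ominus$ — if it did, being a properly embedded plane containing $x \in L$, it would have to cross $L$ — so $F \subset L^+$, i.e.\ $L <_x F$. Trichotomy: given distinct $L, E \in \lcsb(x)$, since they share $x$ but are distinct properly embedded planes that do not cross, $E$ lies entirely in one closed complementary side of $L$; it cannot be that $E \subset L^+$ and $E \subset L^-$ simultaneously unless $E = L$, so exactly one of $L <_x E$, $E <_x L$ holds. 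Irreflexivity is immediate. This gives the linear order.

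For the interval structure, I would use the local product structure (the charts $U \simeq \mathbb D^2 \times [0,1]$ near $x$ in which every leaf through $U$ is a graph over $\mathbb D^2$ transverse to the $[0,1]$-fibration, which exists by the paragraph following Definition~\ref{def.branching}): restricting to the $[0,1]$-fiber $\{0\}\times[0,1]$ through $x$, a leaf $L \in \lcsb(x)$ meets this fiber in a point $t(L) \in [0,1]$, and the coorientation makes $L \mapsto t(L)$ order-preserving. This realizes $\lcsb(x)$ order-isomorphically with a subset $S \subset [0,1]$. It remains to see $S$ is a closed interval. Closedness is exactly the convergence axiom \ref{item.convergence_of_leaves}: if $t_n = t(L_n) \to t \in [0,1]$, then the points $x_n$ of $L_n$ on the fiber at height $t_n$ converge to the point $y$ at height $t$, so a subsequence of $L_n$ converges to a leaf $L'$ through $y$; since $y$ lies on the fiber through $x$ and $L'$ is a plane transverse to the fibration meeting it at $y$, and $x$ is on that fiber... here I need to be slightly careful: I want $x \in L'$. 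This is where the argument requires the most thought — the issue is that $L'$ is a leaf through $y$, not obviously through $x$. I would resolve it by noting that each $L_n$ contains $x$, so $x \in L_n$ for all $n$, and pointed convergence of a subsequence $L_{n_k} \to L'$ based at $x_{n_k} \to y$ forces, via the local product structure (all $L_{n_k}$ and $L'$ are graphs over $\mathbb D^2$ in the chart, and graphs converging pointedly converge uniformly on the disc), that $L'$ is also the graph passing through $x$. Hence $L' \in \lcsb(x)$ and $t(L') = t$, so $t \in S$; thus $S$ is closed. Convexity of $S$: if $t(L) < s < t(E)$ with $L, E \in \lcsb(x)$, I would produce a leaf at height $s$ through $x$ by the same compactness input applied to any point at height $s$ on the fiber (a leaf through it exists by axiom (i)) and then checking it passes through $x$ using that it is trapped between $L$ and $E$ — being a non-crossing plane lying in $L^+ \cap E^-$ and containing a point of the fiber between them, it cannot separate $x$ from that point, forcing $x$ on it. A closed convex subset of $[0,1]$ is a closed interval (or a point), which is the claim.

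The main obstacle, as flagged, is the step showing that limits of leaves through $x$ again pass through $x$ (and the parallel convexity step); the convergence axiom \ref{item.convergence_of_leaves} only guarantees a limit leaf through the limit \emph{point} $x_n \to y$, and one must leverage the local product/graph description in the chart — where pointed convergence upgrades to uniform convergence of graphs over the disc — to pin the limit leaf down to the one through $x$ itself. Everything else is bookkeeping with the non-crossing condition and the coorientation.
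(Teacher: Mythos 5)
Your first step (that $<_x$ is a linear order, via the non-crossing condition) is fine and matches the paper. The second part, however, has a genuine gap: your key construction does not work. You propose to order-embed $\lcsb(x)$ into $[0,1]$ by sending each leaf $L\ni x$ to its intersection $t(L)$ with the $[0,1]$-fiber through $x$ in a product chart at $x$. But every leaf in $\lcsb(x)$ contains $x$, and its plaque in the chart meets each fiber exactly once; hence the plaque through $x$ meets the fiber through $x$ precisely at $x$, so $t(L)$ is the \emph{same} point for all $L\in\lcsb(x)$ and the map is constant, not an order isomorphism. The deeper problem, which no repair inside a chart at $x$ can fix, is that two distinct leaves $L<_x E$ may coincide on an arbitrarily large neighborhood of $x$: the branch locus of the pair can lie far from $x$, so no local transversal or fiber near $x$ separates them, and the order on $\lcsb(x)$ is simply not detectable by local data at $x$. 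For the same reason your convexity step ("produce a leaf at height $s$ on the fiber between $L$ and $E$") has nothing to work with near $x$, and your worry about the limit leaf passing through $x$ is attached to a framework that already fails earlier.

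The paper's proof avoids this by never trying to coordinatize $\lcsb(x)$ near $x$. Completeness of the order is obtained directly from the convergence axiom \ref{item.convergence_of_leaves} of Definition~\ref{def.branching}. For density (no gaps), given $L<_x E$ one takes a boundary point $y$ of the connected component of $L\cap E$ containing $x$ --- i.e.\ the actual branch point, possibly far from $x$ --- and uses the local product structure in a ball of size $\eps_0$ around $y$: any leaf through a point of $B\cap(L^+\cap E^-)$ must pass through $y$, and then, by non-crossing, through the whole component of $L\cap E$ containing $x$, hence through $x$; such a leaf $L'$ satisfies $L<_x L'<_x E$. If you want to salvage your outline, you must replace the chart at $x$ by this chart at the branch point $y$; the rest (completeness from axiom \ref{item.convergence_of_leaves}, plus a complete gapless linear order being a closed interval) then goes through.
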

\begin{proof}
	Assume that $\lcsb(x)$ is not a singleton.
	
	That $<_x$ defines a linear order on $\lcsb(x)$ follows from the fact that leaves do not cross (property \ref{item.no_crossing_of_leaves} of Definition \ref{def.branching}). From property \ref{item.convergence_of_leaves}, it follows that this order is complete.
	
	To see that $\lcsb(x)$ is order-isomorphic to a closed interval, it suffices to check that there are no gaps in the order. That is, given $L, E \in \lcsb(x)$ such that $L <_x E$, we must find some $L' \in \lcsb(x)$ with $L <_x L' <_x E$.
	
	Given such $L, E$, let $y$ be a boundary point of the connected component of $L \cap E$ that contains $x$. Consider a neighborhood $B$ of $y$ with diameter less than $\eps_0$, the local product structure size of $\fbs$. Since $\wfbs$ is product branched foliated in $B$, each leaf that intersects $B \cap (L^+ \cap E^-)$ must intersect $y$, and since leaves do not cross, any such leaf must intersect $x$. Any such leaf $L'$ will have $L <_x L' <_x E$.
\end{proof}

Combined with the linear ordering of points in a transversal, this gives a linear ordering on the set of leaves that intersect a transversal:

\begin{definition}
	Given a transverse arc $\tau$, let $\lcsb(\tau) \subset \lcsb$ denote the set of leaves that intersect $\tau$.
	
	Orient $\tau$ so that it agrees with the coorientation on $\wfbs$. Given distinct leaves $K \neq L$ in $\lcsb(\tau)$, we will write $K <_\tau L$ whenever either
	\begin{itemize}
		\item $K \cap \tau$ lies forward of $L \cap \tau$ with respect to the orientation on $\tau$, or
		\item $K$ and $L$ intersect $\tau$ at the same point $x$ and $K <_x L$.
	\end{itemize} 
\end{definition}

The following properties of these orderings may be found in \cite[\S7]{BI}.

\begin{claim}\label{claim.ordersFromArcs}
	\begin{enumerate}
		\item For each \emph{open} transverse arc $\tau$, $<_\tau$ is a linear order, with respect to which $\lcsb(\tau)$ is order-isomorphic to an open interval.
		
		\item \label{item.differentarcs} $\sigma$ and $\tau$ are open transverse arcs, then $<_\sigma$ and $<_\tau$ define the same linear order on $\lcsb(\sigma) \cap \lcsb(\tau)$, which is order-isomorphic to an open interval (possibly empty). 
	\end{enumerate}
\end{claim}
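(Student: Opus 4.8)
The proof follows \cite[\S7]{BI}; it is local-to-global, the two ingredients being Claim~\ref{claim.closed_intervals_merge} (the fiberwise orders $<_x$ are closed intervals) and the local product structure of $\wfbs$.

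\emph{Part (1).} I would first treat a transverse arc $\tau$ contained in a single product chart of $\wfbs$, short enough that each leaf of $\wfbs$ crosses $\tau$ in at most one point. Parametrizing $\tau$ by the transverse $[0,1]$-coordinate of the chart, the assignment $x\mapsto\lcsb(x)$ identifies $(\lcsb(\tau),<_\tau)$ with the linear order obtained from the open interval $(0,1)$ by replacing each point $x$ by the closed interval $\lcsb(x)$ of Claim~\ref{claim.closed_intervals_merge}, with the lexicographic order. Thus $<_\tau$ is a linear order; it has no least or greatest element; it is order-dense, and order-complete by property~\ref{item.convergence_of_leaves} of Definition~\ref{def.branching} (arguing as in the proof of Claim~\ref{claim.closed_intervals_merge}); and it is separable, since $M$ is second countable and the local product structure yields a countable dense family of leaves. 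A separable, order-dense, order-complete linear order without endpoints is order-isomorphic to an open interval, which gives (1) for such $\tau$. For a general open transverse arc, the results of \cite[\S7]{BI} on transversals in $\mt\cong\R^3$ let one reduce to this case (and in fact assume $\tau$ is efficient, so that $<_\tau$ is literally the lexicographic order above on all of $\lcsb(\tau)$); alternatively one covers $\tau$ by a finite chain of short subarcs $\tau_1,\dots,\tau_n$ with $\tau_i\cap\tau_{i+1}$ a subarc, notes (via Part~(2), applied to overlaps and to all pairs $\tau_i,\tau_j$) that the orders $<_{\tau_i}$ are pairwise compatible, and assembles them into a single order on $\lcsb(\tau)=\bigcup_i\lcsb(\tau_i)$ obtained by successively gluing open intervals along order-convex subsets, hence an open interval.

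\emph{Part (2).} Let $L\in\lcsb(\sigma)\cap\lcsb(\tau)$; pick $p\in L\cap\sigma$, $q\in L\cap\tau$, and join $p$ to $q$ by a path in $L$. Covering that path by product charts and using the fixed coorientation of $\wfbs$, one checks that ``lying on the side $L^{\oplus}$ of $L$'' is detected consistently at $p$ along $\sigma$ and at $q$ along $\tau$. Hence for any other leaf $L'$ meeting both $\sigma$ and $\tau$, it meets $\sigma$ on $L^{\oplus}$ if and only if it meets $\tau$ on $L^{\oplus}$, so $<_\sigma$ and $<_\tau$ order $L$ and $L'$ the same way; thus $<_\sigma$ and $<_\tau$ agree on $\lcsb(\sigma)\cap\lcsb(\tau)$. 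This common order is order-convex in $(\lcsb(\sigma),<_\sigma)$: if $K<_\sigma L<_\sigma L'$ with $K,L'\in\lcsb(\tau)$, then $K\cap\tau$ and $L'\cap\tau$ lie on opposite sides of the separating plane $L$, so the connected arc $\tau$ crosses $L$, i.e.\ $L\in\lcsb(\tau)$. It is also open in $(\lcsb(\sigma),<_\sigma)$, since crossing the fixed open arc $\tau$ is an open condition on leaves (local product structure). An open, order-convex subset of an open interval is an open interval, possibly empty.

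\emph{Main obstacle.} The one genuinely nontrivial point is the reduction at the start of Part~(1): that every open transverse arc can be cut into finitely many short efficient subarcs inside product charts, so that $<_\tau$ is even well defined. This uses the global topology of $\mt\cong\R^3$ and the fact that the lifted leaves are properly embedded separating planes, and is precisely the kind of statement established in \cite[\S7]{BI}. Everything afterward is bookkeeping with complete linear orders and the local product structure; in particular, if one restricts to short transversals contained in a single product chart --- which suffices for all uses in this paper --- the obstacle disappears and Part~(1) follows directly from Claim~\ref{claim.closed_intervals_merge}.
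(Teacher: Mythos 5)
The paper does not prove this claim; it simply states that the properties ``may be found in \cite[\S7]{BI}'' and moves on. So there is no in-paper argument to compare against --- you are supplying a proof where the authors chose a citation.

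Your sketch is essentially sound, and your ``main obstacle'' paragraph puts its finger on exactly the right spot: the fact that each lifted leaf is a properly embedded separating plane in $\mt\cong\R^3$ (which in this paper follows from the approximation by taut foliations, \S\ref{ss.sides}) is what makes the oriented crossing number of $\tau$ with a leaf at most one, hence makes $<_\tau$ well defined at all, and this is precisely the input from \cite[\S7]{BI}. The one place that is thinner than you acknowledge is the separability step in Part~(1): the lexicographic sum $\bigsqcup_{x\in(0,1)}\lcsb(x)$ is separable only if all but countably many of the fibers $\lcsb(x)$ are degenerate, and that needs a real argument, not just second countability of $M$. The cleanest route to it --- and essentially what the paper does a few lines later in Proposition~\ref{prop.leafspace} --- is to note that $\lcsb(\tau)$ is the image of an interval in $\lcse$ under the monotone surjection $g_{\eps,s}$ with closed-interval fibers, and a monotone image of an open interval with such fibers is again order-isomorphic to an open interval, which lets you bypass the order-theoretic characterization of $\R$ entirely. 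Part~(2) of your argument (coorientation consistency propagated along a path in $L$ through product charts, plus order-convexity and openness of $\lcsb(\sigma)\cap\lcsb(\tau)$ in $\lcsb(\sigma)$) is correct as written.
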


\begin{definition}[topology of $\lcsb$]
	The \emph{center stable leaf space} is $\lcsb$, with the topology 
$\cT$ generated by all open intervals in $\lcsb(\tau) \subset \lcsb$, over all open transverse arcs $\tau$.
\end{definition}

From Claim~\ref{claim.ordersFromArcs}(\ref{item.differentarcs}), it suffices to take any collection of open transverse arcs that intersect every leaf of $\wfbs$. Since $M$ is compact, one can take a finite collection of open transverse arcs in $M$ and consider all of their lifts to $\mt$. This implies in particular that $\lcsb$ is second countable.

\begin{proposition}
	The center stable leaf space $\lcsb$ is a simply-connected, possibly non-Hausdorff $1$-manifold.
\end{proposition}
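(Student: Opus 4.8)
The plan is to transport the manifold structure from the leaf spaces of the approximating foliations $\fes$ via the maps $\hs$, and independently verify simple connectivity. First I would recall that for each small $\eps$ the leaf space $\lcse$ of the approximating (true) taut foliation $\fes$ on $\mt$ is a simply-connected (possibly non-Hausdorff) $1$-manifold: this is classical, since taut foliations of $\R^3$ by properly embedded planes have leaf spaces that are simply-connected $1$-manifolds (the quotient topology makes it a $1$-manifold because the foliation is locally a product, and simple connectivity follows from $\mt \cong \R^3$ being simply connected and the leaves being contractible). The key point is then to show $\lcsb$ is locally homeomorphic to $\R$; second countability is already noted in the excerpt, so $\lcsb$ is a (possibly non-Hausdorff) $1$-manifold once local Euclideanness is established.

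For local Euclideanness, fix a leaf $L_0 \in \lcsb$ and a point $x_0 \in L_0$, and choose a short open transverse arc $\tau$ through $x_0$. Claim~\ref{claim.ordersFromArcs} says $\lcsb(\tau)$, with the order $<_\tau$, is order-isomorphic to an open interval, and by definition these sets are open in $\cT$ and the order-isomorphism is a homeomorphism onto its image in $\R$. So every point of $\lcsb$ has a neighborhood homeomorphic to an open interval, i.e. $\lcsb$ is locally homeomorphic to $\R$. (Here one should be slightly careful that the order-isomorphism $\lcsb(\tau) \to (a,b)$ is genuinely a homeomorphism for the topology $\cT$ and not just an order-isomorphism; this follows because the generating open sets of $\cT$ restricted to $\lcsb(\tau)$ are exactly the open subintervals, using Claim~\ref{claim.ordersFromArcs}(\ref{item.differentarcs}) to compare overlapping transversals.) Combined with second countability, $\lcsb$ is a $1$-manifold, possibly non-Hausdorff because distinct leaves through a common point, or leaves that are limits from both sides, need not be separated.

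For simple connectivity, I would use the maps $\hs$. Fix $\eps$ small. By properties \ref{item.leaves_to_leaves} and \ref{item.map_surjective} of Definition~\ref{def.wellapproximated}, $\hs$ sends each leaf of $\wfes$ into a leaf of $\wfbs$ and hits every leaf of $\wfbs$; since $\hs$ is $C^0$-close to the identity it sends a transversal to $\wfes$ to (a curve close to) a transversal to $\wfbs$, intersecting the same set of leaves in the same order. This gives a well-defined surjection $\Theta \colon \lcse \to \lcsb$ on leaf spaces, and using the order descriptions above $\Theta$ is continuous and open, hence a quotient map; in fact examining it on transversal charts shows $\Theta$ is a covering-type map — more precisely it is a monotone map collapsing each "interval of leaves of $\wfbs$ through a common point" ($\lcsb(x)$, via Claim~\ref{claim.closed_intervals_merge}) to nothing but rather the point-preimages of $\Theta$ are connected (closed intervals). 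A continuous surjection from a simply-connected $1$-manifold onto a $1$-manifold with connected point-preimages, which is a quotient map, yields a simply-connected target; alternatively, since $\lcsb$ is a $1$-manifold any loop is homotopic to one avoiding the non-Hausdorff points, lifts to $\lcse$ along $\Theta$, bounds there, and pushes down. I expect the main obstacle to be the bookkeeping in showing $\Theta$ is well-defined, continuous, and has connected fibers — i.e. carefully matching the transversal-generated topology on $\lcsb$ with the quotient topology on $\lcse$ — rather than any conceptual difficulty; the non-Hausdorff subtleties are real but do not obstruct either local Euclideanness or simple connectivity.
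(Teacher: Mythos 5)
Your proof takes essentially the same route as the paper, which also constructs the map $g_{\epsilon,s}\colon \lcse \to \lcsb$ (your $\Theta$) from the maps $h^{cs}_\eps$ and deduces the statement from Proposition~\ref{prop.leafspace}. Your direct argument for local Euclideanness from the transversal orders in Claim~\ref{claim.ordersFromArcs} is correct and is a valid alternative to inheriting the chart structure through the quotient; the paper mentions that such a direct proof exists but routes the details through Proposition~\ref{prop.leafspace}.

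There is, however, a genuine slip in the middle of your argument: $\Theta$ is \emph{not} open, and it is not a ``covering-type map.'' If $[\hat L_1, \hat L_2]$ is a nondegenerate fiber of $\Theta$ (which happens precisely when $\fbs$ branches), and $U$ is a small open interval in $\lcse$ about the endpoint $\hat L_1$ that does not reach $\hat L_2$, then $\Theta(U)$ is a half-open interval in $\lcsb$, not an open set; likewise $\Theta$ is nowhere a local homeomorphism near the interior of such a fiber, so ``covering-type'' is the wrong description. What you actually need is that $\cT$ coincides with the quotient topology $\cT_\eps$ induced by $\Theta$, which is the content of Proposition~\ref{prop.leafspace}(3), proved there by comparing generating opens directly and using the closed-interval fiber structure rather than openness of $\Theta$. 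Once $\Theta$ is known to be a quotient map with contractible (closed-interval) fibers, your main deduction that simple connectivity of $\lcse$ descends to $\lcsb$ is sound; the alternative sketch involving loops ``avoiding the non-Hausdorff points'' and ``lifting'' through $\Theta$ should be dropped, since $\Theta$ has no path-lifting property and the non-Hausdorff locus need not be avoidable up to homotopy.
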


The same applies to $\lcub$. This is not difficult to prove directly, and it applies more generally to any codimension-$1$ branching foliation of a closed $n$-manifold, as long as the lifted foliation is by properly embedded $\mathbb{R}^{n-1}$'s in $\mt \simeq \mathbb{R}^n$. In the present case, it follows as well from Proposition~\ref{prop.leafspace} below.

\subsubsection{Leaf spaces and approximating foliations}
Let $\lcse$ and $\lcue$ denote the leaf spaces of the approximating foliations $\fes$ and $\feu$. The maps $h^{cs}_\epsilon$ and $h^{cu}_\epsilon$ induce functions
\[
g_{\epsilon,s}\colon \lcse \rightarrow \lcsb \text{ and } g_{\epsilon,u}\colon \lcue \rightarrow \lcub.
\]
between the corresponding leaf spaces, which are surjective whenever $\eps$ is sufficiently small (cf. Definition~\ref{def.wellapproximated}).

Since $\fes$ is a true foliation, its leaf space $\lcse$ is a simply-connected, possibly non-Hausdorff $1$-manifold (cf. \cite[Appendix B]{BFFP-prequel}).

\begin{remark}
It is possible to modify the proof of \cite[Theorem 7.2]{BI}, where the foliations $\fes$ and maps $h^{cs}_\epsilon$ are constructed, so that the $g_{\epsilon,s}$ are injective in addition to surjective. With this in hand, one could define the topology on $\lcsb$ to be the one induced by this bijection.

Instead of redoing the entire proof of \cite[Theorem 7.2]{BI}, we will use a simpler fact that can easily be extracted from that proof: The maps $h^{cs}_{\epsilon}$ are ``monotone'' in the sense that they preserves the natural linear order on plaques in local charts. 
\end{remark}

\begin{proposition}\label{prop.leafspace}
When $\eps$ is sufficiently small,
\begin{enumerate}
	\item the preimage of each point in $\lcsb$ under $g_{\epsilon,s}$ is a closed interval,
	
	\item $g_{\epsilon,s}\colon \lcse \rightarrow \lcsb$ is continuous, and
	
	\item the topology $\cT$ on $\lcsb$ is equivalent to the quotient topology $\cT_{\eps}$ induced by $g_{\epsilon,s}$.
\end{enumerate}

The same applies for the center unstable foliations.
\end{proposition}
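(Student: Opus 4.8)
The plan is to prove the three statements in order, using the monotonicity of $h^{cs}_\epsilon$ on local plaques as the main tool, together with the fact that the approximating maps are $C^0$-close to the identity and take leaves onto leaves.

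\textbf{Setup and step (1).} First I would fix a finite collection of open transverse arcs in $M$ whose lifts to $\mt$ intersect every leaf of $\wfbs$; since $h^{cs}_\epsilon$ is $C^0$-close to the identity and makes small angles with $\wfbs$, for $\epsilon$ small these arcs (slightly shortened) are also transverse to $\fes$ and their lifts meet every leaf of $\wfes$. On such an arc $\tau$, the monotonicity of $h^{cs}_\epsilon$ says: if $L_\epsilon <_\tau L'_\epsilon$ in $\lcse(\tau)$ then $h^{cs}_\epsilon(L_\epsilon)$ and $h^{cs}_\epsilon(L'_\epsilon)$ are ordered (weakly) the same way in $\lcsb$. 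Hence $g_{\epsilon,s}$ restricted to $\lcse(\tau)$ is a monotone surjection onto $\lcsb(\tau')$ for an appropriate arc $\tau'$, and the preimage of a point is therefore an order-interval. To see it is a \emph{closed} interval (and nonempty), I would use property \ref{item.convergence_of_leaves} of branching foliations together with the leaf-convergence properties of the true foliation $\fes$: if $L^n_\epsilon$ is a monotone sequence of leaves of $\wfes$ all mapping into the same fiber over $L \in \lcsb$, their limit leaf (which exists in the leaf space of a true foliation as a closed set, and along $\tau$ corresponds to a limit point) must still map into $L$ because $h^{cs}_\epsilon$ is continuous on $\mt$ and takes leaves to leaves, so the limit leaf's image is a leaf through the limiting intersection point, which by the no-crossing/ordering structure must be $L$ itself. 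This gives closedness of the fiber.

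\textbf{Step (2): continuity of $g_{\epsilon,s}$.} Continuity is a local statement in the arc charts. A basic open set of $\cT$ is an open interval $I$ in some $\lcsb(\tau)$. Its preimage under $g_{\epsilon,s}$, intersected with $\lcse(\sigma)$ for a transverse arc $\sigma$ with $h^{cs}_\epsilon(\sigma)$ comparable to $\tau$, is, by monotonicity of $g_{\epsilon,s}|_{\lcse(\sigma)}$, an order-interval in $\lcse(\sigma)$; one checks it is open using that preimages of points are the closed intervals from (1), so the preimage of an open interval omits the two "extreme" closed fibers and is therefore open in the order topology of $\lcse(\sigma)$, which matches the manifold topology of $\lcse$. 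Since the $\lcse(\sigma)$ cover $\lcse$ by open sets, $g_{\epsilon,s}$ is continuous.

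\textbf{Step (3): $\cT = \cT_\epsilon$.} One inclusion, $\cT \subseteq \cT_\epsilon$, is immediate from (2): every $\cT$-open set has open $g_{\epsilon,s}$-preimage, hence is $\cT_\epsilon$-open. For the reverse, I must show $g_{\epsilon,s}$ is a quotient map onto $(\lcsb,\cT)$, i.e. a set $V \subseteq \lcsb$ with $g_{\epsilon,s}^{-1}(V)$ open is already $\cT$-open. Working in an arc chart, $g_{\epsilon,s}|_{\lcse(\sigma)}$ is a monotone, surjective, closed-fiber, continuous map between the two order-intervals $\lcse(\sigma)$ and $\lcsb(\tau)$; such a map is automatically a quotient map of ordered spaces (collapsing each closed fiber to a point gives back the target order topology — this is where one uses that $\lcsb(\tau)$ with $<_\tau$ is order-isomorphic to an interval, Claim \ref{claim.ordersFromArcs}). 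Hence $V \cap \lcsb(\tau)$ is open in $\lcsb(\tau)$ for every chart arc $\tau$, so $V \in \cT$. Doing this in each chart and using that the $\lcsb(\tau)$ cover $\lcsb$ finishes the proof; the center unstable case is identical.

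\textbf{Main obstacle.} The delicate point is step (1): showing the fibers of $g_{\epsilon,s}$ are \emph{closed} intervals rather than merely intervals. This is exactly where the branching foliation axiom \ref{item.convergence_of_leaves} must interact correctly with the true foliation $\fes$ and the map $h^{cs}_\epsilon$, and one has to be careful that a monotone limit of leaves of $\wfes$ lying over a fixed $L$ still lies over $L$ and does not "escape" to a neighboring merged leaf — this uses the local product-branched structure and the fact (Claim \ref{claim.closed_intervals_merge}) that $\lcsb(x)$ is itself a closed interval. The rest is a matter of assembling standard facts about monotone maps between (possibly non-Hausdorff) ordered $1$-manifolds in local charts and patching over the finite cover by arc charts.
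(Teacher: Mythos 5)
Your overall strategy is the same as the paper's: monotonicity of $h^{cs}_{\eps}$ in charts gives that fibers of $g_{\eps,s}$ are order-intervals, continuity is checked on arc charts, and the equality $\cT=\cT_{\eps}$ reduces to the fact that a monotone surjection with closed fibers between ordered $1$-manifold charts identifies the quotient topology with the order topology (the paper does this step by hand, producing half-open intervals $I_L, I_E$ around the boundary leaves of a fiber; your chart-by-chart quotient-map lemma is the same computation packaged abstractly). Your steps (2) and (3) are correct as written, granting step (1).

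The one place where the argument does not close is exactly the point you flag in step (1): closedness of the fibers. You conclude that the image of the limit leaf $\hat L_\infty$ is ``a leaf through the limiting intersection point, which by the no-crossing/ordering structure must be $L$ itself.'' That inference is not valid: through a point of $L$ there may be a whole closed interval of leaves of $\wfbs$ (Claim \ref{claim.closed_intervals_merge}), and no-crossing does not prevent $g_{\eps,s}(\hat L_\infty)$ from being a leaf that merges with $L$ at that point but separates from it elsewhere; weak monotonicity only gives a one-sided comparison between $g_{\eps,s}(\hat L_\infty)$ and $L$, not equality. The fix is short but different from what you wrote: since $\hat L_n\to\hat L_\infty$ in the leaf space of the \emph{true} foliation $\wfes$, the leaves $\hat L_n$ converge to $\hat L_\infty$ uniformly on compact sets (holonomy along paths in foliation boxes), so every point of $\hat L_\infty$ is a limit of points of the $\hat L_n$; as $\widetilde h^{cs}_{\epsilon}(\hat L_n)=L$ and $L$ is closed in $\mt$ (a properly embedded plane), the \emph{entire} image $\widetilde h^{cs}_{\epsilon}(\hat L_\infty)$ is contained in $L$, and a leaf of $\wfbs$ contained in $L$, being tangent to the same distribution, is open and closed in $L$, hence equals $L$. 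This is what the paper's terse phrase ``closed because $\widetilde h^{cs}_{\epsilon}$ is continuous'' amounts to. Note also that the paper gets the interval property via the quantitative choice $2\eps<\eps_0$: two leaves in the same fiber are at Hausdorff distance $<2\eps$, so the whole region between them lies in a single product chart and local monotonicity sends that entire region to $L$; this smallness is also what guarantees that a single fiber sits inside one of your arc charts, a fact your write-up uses implicitly.
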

\begin{proof}
Let $\eps_0$ be the local product sizes of $\fbs$, and let $\eps < \eps_0/2$. 
Let $\cT_{\eps}$ be the quotient topology induced
by $g_{\eps,s}$ on $\lcsb$.

\begin{enumerate}
	\item Let $I \subset \lcse$ be the preimage of a leaf $L \in \lcsb$, and suppose that $I$ contains two leaves $\hat L_1$ and $\hat L_2$. We want to show that $\widetilde h^{cs}_{\epsilon}$ takes every leaf between $\hat L_1$ and $\hat L_2$ to $L$.
	
	From property \ref{item.epsilon_close_identity} of Definition 
\ref{def.wellapproximated}, the Hausdorff distance between $\hat L_1$ and $\hat L_2$ is less than $2\eps$. Since $2\eps$ was chosen to be less than the local product structure size, it follows that the region between $\hat L_1$ and $\hat L_2$ has leaf space which is a closed interval. By the local monotonicity of $\widetilde h^{cs}_{\epsilon}$, it follows that $g_{\epsilon,s}$ maps the entire region between $\hat L_1$ and $\hat L_2$ to $L$. This implies that the preimage of $L$ is an interval, which is closed because $\widetilde h_\epsilon^{cs}$ is continuous.
	
	\item Let $U \subset \lcsb$ be open. Around each point in $U$ one can find an open interval $J \subset U$ that is the set of leaves intersecting a small open transversal $\beta$. We want to show that $g_{\eps,s}^{-1}(J)$ is open in $\lcse$.
	
	Let $\hat L_1$ be a leaf in $g_{\eps,s}^{-1}(J)$. Then $\hat L_1$ intersects $\beta$ (or a slightly bigger transversal), so all the leaves of $\wfes$ close enough to $\hat L_1\cap \beta$ intersect $\beta$. Thus an open neighborhood of $\hat L_1$ is contained in $g_{\eps,s}^{-1}(J)$, and $g_{\eps,s}$ is continuous.
	
	\item From (2) it follows that $\cT \subset \cT_{\eps}$. Let us prove the other inclusion.
	
	Suppose $W \in \lcsb$ is an open set in $\cT_{\eps}$, and let $y \in W$. Then $U = (g_{\eps,s})^{-1}(W)$ is an open set containing the closed interval $I = (g_{\eps,s})^{-1}(y)$. Let $L$ and $E$ be the boundary leaves of $I$. Then one can find half-open intervals $I_L, I_E \subset U$ such that $I_L \cap I = L$ and $I_E \cap I = E$. Then $I_L \cup I \cup I_E$ projects to a set in $\lcsb$ which contains an open interval around $y$ in $\lcsb$. Since this applies for every $y \in W$ it follows that $W$ is open in $\cT$.
\end{enumerate}
\end{proof}

This suffices to show that $\lcsb$ is a $1$-manifold. It is possible to modify $g_{\epsilon,s}\colon \lcse \rightarrow \lcsb$ to be a homeomorphism when $\epsilon$ is sufficiently small, but we will not need this fact.

In the sequel, we fix $\epsilon$ small enough so that the previous proposition applies for both the center stable and center unstable foliations.

\subsection{Center ``foliations''}

\subsubsection{The center foliation within a center stable/unstable leaf}\label{sss.ls-center.in.centerstable}

Fix a center stable leaf $L$ of $\wfbs$. We will describe the topology of the center leaf space, $\cL^c_L$, restricted to $L$. The center leaf within a center unstable leaf is defined in the same manner.

\begin{remark}\label{r.center}
Recall from Definition \ref{d.center_leaf} that a center leaf in $\mt$ is defined as a connected component of the intersection between a leaf of $\wfbs$ and a leaf of $\wfbu$. 
Now, the following situation may arise (see Figure \ref{f.2M}): Two leaves $U_1, U_2$ of $\wfbu$ and a leaf $L$ of $\wfbs$ such that the triple intersection $U_1\cap L \cap U_2$ contains a connected component of $c_1$ of $U_1\cap L$ as well as a connected component $c_2$ of $U_2\cap L$. That is, the center leaves $c_1$ and $c_2$ represents the same set in $\mt$. In this case, we also consider $c_1$ and $c_2$ as the \emph{same} leaf of the center foliation in $L$.
\end{remark}

\begin{figure}[ht]
\begin{center}
\includegraphics[scale=0.64]{./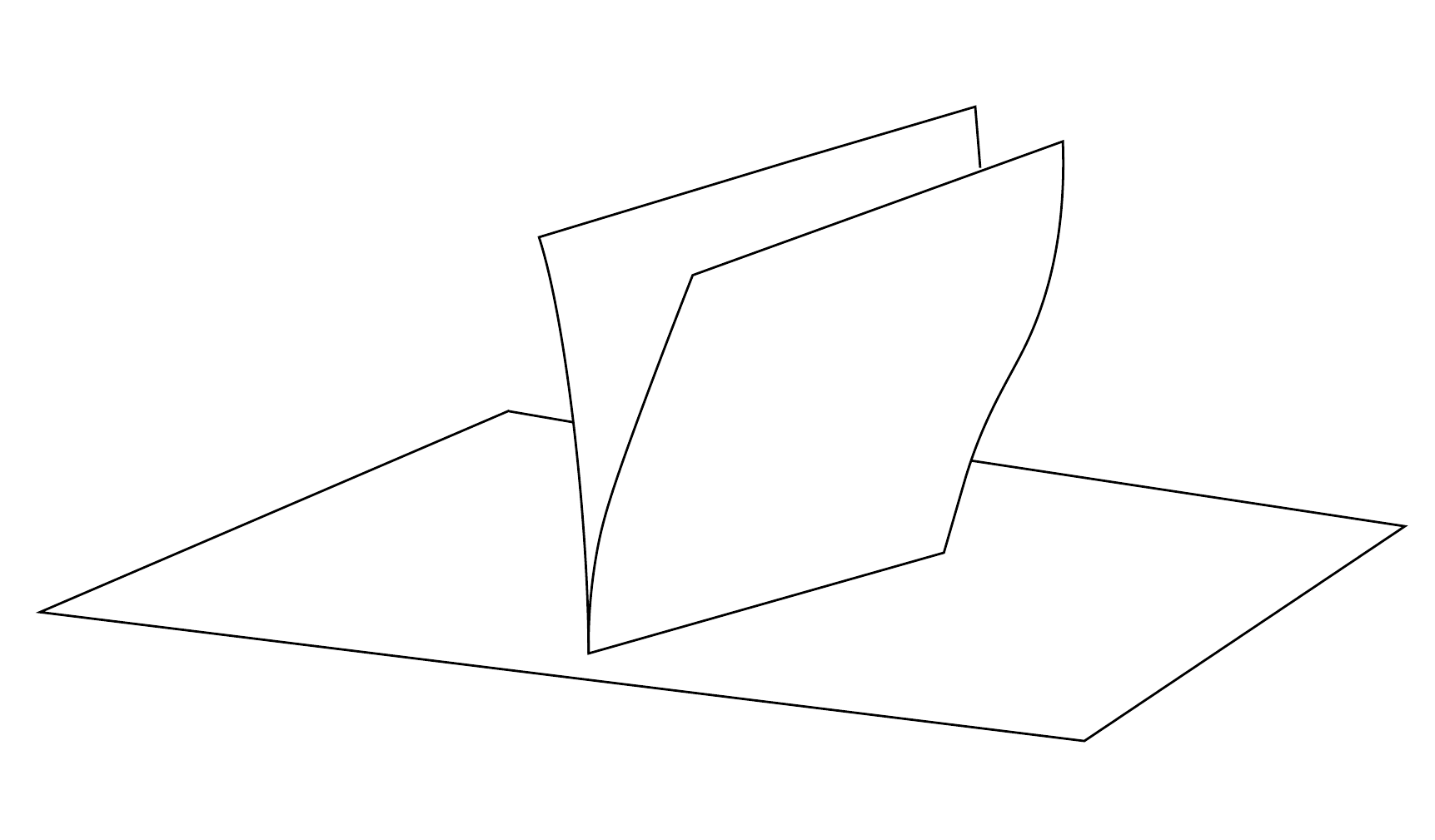}
\begin{picture}(0,0)
\put(-50,30){$L \in \wfbs$}
\put(-80,135){$U_2 \in \wfbu$}
\put(-212,150){$U_1 \in \wfbu$}
\put(-145,48){$c_1=c_2$}
\end{picture}
\end{center}
\vspace{-0.5cm}
\caption{Different center unstable leaves may intersect a given center stable leaf in the same center leaf.}\label{f.2M}
\end{figure}

\begin{definition}[topology ${\mathcal A}$ in $\cL^c_L$]\label{def.center_leaf_space_in_L}
Consider a countable set of open transversals $\tau_i$ 
which are perpendicular to the center bundle in $L$, and
so that the union intersects every center leaf in $L$.
Put the order topology in the set $I_i$ of center leaves
intersecting $\tau_i$. This induces the topology ${\mathcal A}$ 
in $\cL^c_L$.
\end{definition}

Let $L$ be a fixed leaf of $\wfbs$. We again fix an $\epsilon>0$ and consider the approximating foliation $\wfeu$. Since $\wfbu$ is transverse to $L$, so is $\wfeu$ (for $\epsilon$ small enough). Thus, $\wfeu$ induces a $1$-dimensional (non branching) foliation $\fol_{\epsilon}$ on $L$, and hence its leaf space $\cL^c_{L, \epsilon}$ is a $1$-dimensional, not necessarily Hausdorff, simply
connected manifold. 

The behavior described in Remark \ref{r.center} above leads to the following issue: the unique center leaf $c_1=c_2$ is approximated by two distinct leaves of $\fol_{\epsilon}$. Thus, the leaf space, $\cL^c_L$, of the center foliation on $L$ is not in bijection with $\cL^c_{\epsilon}$. However, we still have a surjective, but not necessarily injective, projection $\mathrm{pr_{\eps}}\colon \cL^c_{L, \epsilon} \to \cL^c_{L}$ as in the previous 
subsection.
Let ${\mathcal A}_{\eps}$ be the quotient topology from the map 
$\mathrm{pr_{\eps}}$.

Just as in Proposition \ref{prop.leafspace} one can prove the
following:

\begin{lemma}\label{l.center_leaf_space_in_L}
The set of center leaves in $L$ through a point $x$ is a closed
interval.
 Let $c_0$ be a center leaf in $L$. Let $I = \mathrm{pr}^{-1}(c_0) \subset \cL^c_{\epsilon}$. The set $I$ is a closed interval.
If $\eps < \eps_0$, then the topologies ${\mathcal A}$ and
${\mathcal A}_{\eps}$ are the same.
\end{lemma}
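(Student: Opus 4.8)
The statement of Lemma~\ref{l.center_leaf_space_in_L} has three parts, and the strategy is to mimic the proof of Proposition~\ref{prop.leafspace} essentially verbatim, with the center stable leaf $L$ playing the role of $\mt$, the induced one-dimensional foliation $\fol_\epsilon$ on $L$ playing the role of $\wfes$, and the collection of center leaves in $L$ playing the role of $\wfbs$. The only genuinely new input is to verify the local product structure statement inside $L$ for the center ``branching foliation'', which is where the subtlety of Remark~\ref{r.center} must be handled.

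\textbf{Step 1: local product structure for center leaves in $L$.} First I would record that the center leaves in $L$, regarded as a collection of $C^1$-immersed curves, form a (codimension-one in $L$) branching foliation in the sense of Definition~\ref{def.branching}: they cover $L$, no center leaf crosses itself or another (since $\wfbs$ and $\wfbu$ are branching foliations and intersections of their leaves are embedded, using the identification of Remark~\ref{r.center}), and the convergence property \ref{item.convergence_of_leaves} is inherited from that of $\wfbu$ intersected with $L$. Consequently, as discussed after Definition~\ref{def.branching}, there is a local product structure size $\eps_0$ for this center branching foliation on $L$: around each point one finds a chart $U \simeq \mathbb{D}^1 \times [0,1]$ in which every center leaf meeting $U$ crosses it as a monotone graph. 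The key point, as in Claim~\ref{claim.closed_intervals_merge}, is that within such a chart the set of center leaves through a fixed point is order-isomorphic to a closed interval: given two such leaves $c <_x c'$, a boundary point $y$ of the component of $c \cap c'$ through $x$ lies in a product chart of diameter $< \eps_0$, and every center leaf through $B \cap (c^+ \cap (c')^-)$ must pass through $y$ and hence through $x$, filling the gap.

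\textbf{Step 2: the closed-interval statements.} For the first sentence of the lemma, I would run the gap-filling argument of Step~1 globally: linear order on the set of center leaves through $x$ comes from non-crossing, completeness from \ref{item.convergence_of_leaves}, no gaps from the local product chart, so this set is order-isomorphic to a closed interval (possibly a point). For the statement about $I = \mathrm{pr}_\epsilon^{-1}(c_0)$, I would argue exactly as in part (1) of Proposition~\ref{prop.leafspace}: if $\hat c_1, \hat c_2 \in I$ then, since $h^{cu}_\epsilon$ is $C^0$-close to the identity (property \ref{item.epsilon_close_identity}), the two $\fol_\epsilon$-leaves lie within Hausdorff distance $2\eps < \eps_0$ in $L$, so the region between them has leaf space a closed interval and, by local monotonicity of $h^{cu}_\epsilon$, every $\fol_\epsilon$-leaf between $\hat c_1$ and $\hat c_2$ maps to $c_0$; closedness follows from continuity of $h^{cu}_\epsilon$. (One must check $\mathrm{pr}_\epsilon$ is well-defined and surjective, which is the content of the discussion preceding the lemma.)

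\textbf{Step 3: the two topologies agree.} Finally, to see ${\mathcal A} = {\mathcal A}_\epsilon$, I would copy parts (2) and (3) of Proposition~\ref{prop.leafspace}. For ${\mathcal A} \subset {\mathcal A}_\epsilon$: given a basic ${\mathcal A}$-open set $J$, the set of center leaves meeting a transversal $\beta$ in $L$, any $\fol_\epsilon$-leaf in $\mathrm{pr}_\epsilon^{-1}(J)$ meets $\beta$ (up to slightly enlarging $\beta$), hence so do all nearby $\fol_\epsilon$-leaves, so $\mathrm{pr}_\epsilon^{-1}(J)$ is open and $\mathrm{pr}_\epsilon$ is continuous. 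For ${\mathcal A}_\epsilon \subset {\mathcal A}$: if $W$ is ${\mathcal A}_\epsilon$-open and $y = c_0 \in W$, then $\mathrm{pr}_\epsilon^{-1}(W)$ is an open neighbourhood of the closed interval $I = \mathrm{pr}_\epsilon^{-1}(c_0)$, so one can attach half-open intervals $I_L, I_E$ beyond the two boundary leaves of $I$ inside $\mathrm{pr}_\epsilon^{-1}(W)$; their union projects under $\mathrm{pr}_\epsilon$ to an ${\mathcal A}$-open interval around $y$ contained in $W$.

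\textbf{Main obstacle.} The routine parts are Steps~2 and 3, which are essentially identical to Proposition~\ref{prop.leafspace}. The part requiring genuine care is Step~1, specifically that the center leaves in $L$ really do form a branching foliation to which the local-product-structure machinery applies --- one must invoke the identification in Remark~\ref{r.center} of center leaves $c_1 = c_2$ arising from different center unstable leaves, so that the ``no crossing'' and ``convergence'' axioms hold for the center collection in $L$ and the gap-filling argument produces a genuinely closed interval rather than something with a missing endpoint. I would also note that transversality of $\wfeu$ to $L$ (hence of $\fol_\epsilon$ being well-defined on $L$) requires $\epsilon$ small, which is already part of the hypothesis $\eps < \eps_0$.
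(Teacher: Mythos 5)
Your proposal is correct and follows essentially the same route as the paper, which proves Lemma~\ref{l.center_leaf_space_in_L} simply by transplanting the argument of Proposition~\ref{prop.leafspace} (and of Claim~\ref{claim.closed_intervals_merge}) to the center foliation inside $L$, exactly as you do in Steps~1--3. Your Step~1, checking that the center leaves in $L$ form a branching foliation with a local product structure so that the gap-filling and monotonicity arguments apply, is precisely the point the paper leaves implicit, so no further changes are needed.
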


\subsubsection{Center foliation in $\mt$}\label{sss.ls-center}

Finally, we have to put a topology on the leaf space $\cL^c$ of the center foliation in $\mt$. 

Pick an $0<\eps < \eps_0$ 
so that $\wfes$ and $\wfeu$ are transverse to each other. Call $\fol_{\epsilon}$ the $1$-dimensional foliation obtained as the intersection of $\wfes$ and $\wfeu$. The leaf space $\cL^c_{\epsilon}$ of $\fol_{\epsilon}$ is now a simply connected, possibly non Hausdorff, $2$-dimensional manifold. But as before, there is only a surjective, and not injective, projection $g_{\epsilon}\colon \cL^c_{\epsilon} \to \cL^c$.

The map $g_{\epsilon}$ is defined in the following way: If $\bar c$ is a leaf of $\fol_{\epsilon}$, then it is the intersection of a leaf $\bar U$ of $\wfeu$ and a leaf $\bar S$ of $\wfes$. Then, there exists a unique connected component $c$ of $g_{\epsilon,u}(\bar U)\cap g_{\epsilon,s}(\bar S)$ that is at distance less than $2\epsilon$ from $\bar c$. We define $g_{\epsilon}(\bar c) = c$.

Once again, the topology 
${\mathcal B}_{\eps}$
we put on $\cL^c$ is obtained by identifying elements of $\cL^c_{\epsilon}$ that project to the same element of $\cL^c$ and taking the quotient topology.

As done is the previous two subsections \ref{sss.ls-centerstable} and
\ref{sss.ls-center.in.centerstable}, in order to prove that the topology that we put on $\cL^c$ makes it a simply connected (not necessarily Hausdorff) $2$-manifold, it is enough to show that the preimages of points by $g_{\epsilon}$ are closed, simply connected sets contained in a local chart of $\cL^c_{\epsilon}$. In order to do that, first notice that $\cL^c_{\epsilon}$ is locally homeomorphic to $\lcse \times \lcue$. Indeed, any $\bar c_0 \in \cL^c_{\epsilon}$ is a connected component of $\bar U_0 \cap \bar S_0$, with $\bar U_0\in \lcue$ and $\bar S_0\in \lcse$. Now, if $V_u$ is a small enough open interval in $\lcue$ and $V_s$ is a small enough open interval in $\lcse$, then for any $\bar U \in V_u$ and $\bar S\in V_s$, the intersection $\bar U \cap \bar S$ contains a unique connected component close to $c_0$.
Using this local homeomorphism, the following lemma will imply that the topology $\cL^c$ is as we claimed.

\begin{lemma}\label{rectangle}
 Let $c_0$ be in $\cL^c$. The set $R = g_{\epsilon}^{-1}(c_0)$ is homeomorphic to a closed rectangle in $\lcse \times \lcue$.
\end{lemma}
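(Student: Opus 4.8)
The plan is to reduce Lemma~\ref{rectangle} to the two ``one-dimensional'' facts already available, namely Claim~\ref{claim.closed_intervals_merge} / Proposition~\ref{prop.leafspace} for $\lcsb$ and Lemma~\ref{l.center_leaf_space_in_L} for the center foliation inside a fixed leaf. Write $c_0$ as the projection of a connected component of $\wt S_0 \cap \wt U_0$ with $\wt S_0 \in \wfbs$, $\wt U_0 \in \wfbu$. A leaf $\bar c \in \cL^c_{\epsilon}$ is a component of $\bar S \cap \bar U$, $\bar S \in \lcse$, $\bar U \in \lcue$; the key first observation is that $g_{\epsilon}(\bar c) = c_0$ forces $g_{\epsilon,s}(\bar S)$ to be a leaf of $\wfbs$ whose intersection with $g_{\epsilon,u}(\bar U)$ has a component equal (as a subset of $\mt$) to the component defining $c_0$. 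By Remark~\ref{r.center}, this does \emph{not} pin down $g_{\epsilon,s}(\bar S)$ and $g_{\epsilon,u}(\bar U)$ individually: it only forces $g_{\epsilon,s}(\bar S)$ to lie in a bounded family of center stable leaves all containing the curve $c_0$, and likewise for $g_{\epsilon,u}(\bar U)$. So the first step is to show that $g_{\epsilon,s}(\bar S)$ ranges exactly over the center stable leaves through the curve $c_0$ (the $<_x$-interval of Claim~\ref{claim.closed_intervals_merge}, for $x$ a point of $c_0$), and symmetrically $g_{\epsilon,u}(\bar U)$ ranges over the center unstable leaves through $c_0$.

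Next I would fiber the argument. Let $J_s = (g_{\epsilon,s})^{-1}(\{\text{c.s. leaves through }c_0\}) \subset \lcse$ and $J_u = (g_{\epsilon,u})^{-1}(\{\text{c.u. leaves through }c_0\}) \subset \lcue$. By Proposition~\ref{prop.leafspace}(1), the preimage under $g_{\epsilon,s}$ of a single c.s. leaf is a closed interval; combined with Claim~\ref{claim.closed_intervals_merge} (the set of c.s. leaves through $x$ is itself a closed interval) and the monotonicity of $h^{cs}_\epsilon$, one gets that $J_s$ is a closed interval of $\lcse$, and similarly $J_u$ is a closed interval of $\lcue$. Now using the local homeomorphism $\cL^c_{\epsilon} \simeq \lcse \times \lcue$ established just before the lemma, I claim $R = g_\epsilon^{-1}(c_0)$ is precisely the set of $\bar c \in \cL^c_\epsilon$ lying over a pair $(\bar S, \bar U) \in J_s \times J_u$ \emph{and} satisfying the extra constraint that the chosen component of $\bar S \cap \bar U$ is the one within $2\epsilon$ of $c_0$. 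For $\epsilon < \epsilon_0$ this constraint selects, for each $(\bar S,\bar U) \in J_s \times J_u$, a unique such component (by the local product structure and the fact that $\bar S, \bar U$ are uniformly $C^0$-close to $\wt S_0, \wt U_0$, which do meet along $c_0$). Hence $R$ is the graph of a function $J_s \times J_u \to \cL^c_\epsilon$ that is a section of the projection, and it only remains to check this section is continuous, so that $R$ is homeomorphic to $J_s \times J_u$, a closed rectangle.

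For continuity and well-definedness I would work locally: fix $(\bar S_1,\bar U_1) \in J_s\times J_u$ with chosen component $\bar c_1$, and for $(\bar S,\bar U)$ near $(\bar S_1,\bar U_1)$ note $\bar S \cap \bar U$ has, by the local product structure of the pair $(\wfes,\wfeu)$ near any point of $\bar c_1$, a unique component near $\bar c_1$; this component depends continuously on $(\bar S,\bar U)$ in the topology of $\cL^c_\epsilon$, and since it stays within $2\epsilon$ of $c_0$ it is the distinguished one. This gives a continuous local section, and these patch together on $J_s\times J_u$ because the distinguished component is globally characterized by the $2\epsilon$ condition. One then invokes Lemma~\ref{l.center_leaf_space_in_L} (or rather its proof technique) to handle the ``boundary'' behavior: at an endpoint $\bar S$ of $J_s$, $g_{\epsilon,s}(\bar S)$ is a boundary c.s. leaf through $c_0$, and the section extends continuously up to that boundary because the approximating foliations vary continuously and $h^{cs}_\epsilon$ is continuous. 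Putting the pieces together, $R$ is homeomorphic to the closed rectangle $J_s \times J_u$ in $\lcse \times \lcue$, which under the local homeomorphism is a closed rectangle in $\cL^c_\epsilon$.

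\textbf{Main obstacle.} The delicate point is not that the preimage is ``roughly a product'' — that follows formally from the one-dimensional results — but rather the bookkeeping forced by Remark~\ref{r.center}: a single center leaf $c_0$ can be a component of $\bar S \cap \bar U$ for many pairs $(\bar S,\bar U)$ \emph{and} for a fixed such pair the intersection $\bar S\cap\bar U$ might have several components, only one of which maps to $c_0$. So the real content is verifying (a) that the set of admissible pairs $(g_{\epsilon,s}(\bar S), g_{\epsilon,u}(\bar U))$ is exactly the full rectangle (c.s. leaves through $c_0$) $\times$ (c.u. leaves through $c_0$) with no missing corners, and (b) that the ``choose the component within $2\epsilon$ of $c_0$'' rule picks out a unique, continuously-varying section over the whole closed rectangle, including its boundary. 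Both (a) and (b) rely on the uniform local product structure size $\epsilon_0$ and the $\epsilon$-closeness of $h_\epsilon^{cs}$, $h_\epsilon^{cu}$ to the identity, exactly as in the proof of Proposition~\ref{prop.leafspace}; I expect the write-up to be a careful but not conceptually new elaboration of that argument, now carried out in the two transverse directions simultaneously.
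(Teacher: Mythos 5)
Your proposal is correct, and it rests on the same two ingredients as the paper's argument — the one-dimensional statements (Proposition~\ref{prop.leafspace} and Lemma~\ref{l.center_leaf_space_in_L}) and the existence/uniqueness of the intersection component at distance less than $2\epsilon$ — but the bookkeeping is organized differently. The paper works from inside $R$: given $\bar c_1,\bar c_2\in R$ it produces the ``corner'' element $\bar c_3$ over the mixed pair $(\bar S_2,\bar U_1)$ (using, implicitly, that the curve $c_0$, being contained in both $S_2$ and $U_1$, is automatically a full connected component of $S_2\cap U_1$), and then fills in the fibers by the interval argument of Lemma~\ref{l.center_leaf_space_in_L}, concluding that $R$ projects to closed intervals in $\lcse$ and $\lcue$; it never needs to identify those intervals. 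You instead characterize the two edges a priori as the full preimages $J_s,J_u$ of the sets of center stable, resp.\ center unstable, leaves containing $c_0$, and exhibit $R$ as a continuous section over $J_s\times J_u$. This buys a more explicit description of the rectangle, but it makes your step (a) do real work: you must check, from transversality of $E^{cs}$ and $E^{cu}$ and the $\epsilon$-closeness of the approximating leaves, that \emph{every} pair in $J_s\times J_u$ yields an intersection component $2\epsilon$-close to $c_0$, whereas the paper only needs this for pairs already coming from elements of $R$. That verification is of exactly the same nature as the existence and uniqueness already built into the definition of $g_\epsilon$ (and asserted in the paragraph preceding the lemma), so it is not a gap — but it, together with the order-convexity behind ``$J_s$ is a closed interval'' (a leaf squeezed at one point of $c_0$ between two leaves containing $c_0$ must contain all of $c_0$, by an open--closed argument along $c_0$ in local product charts), is where your write-up would need the care that the paper's corner trick lets it avoid.
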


\begin{proof}
 Let $\bar c_1,\bar c_2 \in R$. Let $\bar U_1$ be the leaf in $\lcue$ containing $\bar c_1$ and let $\bar S_2$ be the  the leaf in $\lcse$ containing $\bar c_2$. Let $U_1 = g_{\epsilon, u}(\bar U_1)$ and $S_2 = g_{\epsilon, s}(\bar S_2)$. Since $\bar c_1,\bar c_2 \in R$, the center leaf $c_0$ is a connected component of $U_1\cap S_2$. Thus $\bar U_1$ and $\bar S_2$ must intersect and the intersection
contains a unique connected component $\bar c_3$ at distance at most $2\epsilon$ from $c_0$.
 
 Now, the proof of Lemma \ref{l.center_leaf_space_in_L} shows that $\bar c_1$ and $\bar c_3$ are two ends of an interval in the leaf space of $\fol_{\epsilon}$ restricted to $\bar U_1$ that is entirely contained in $R$. Similarly, for $\bar c_2$ and $\bar c_3$ considered as elements of the leaf space of $\fol_{\epsilon}$ restricted to $\bar S_2$. In turns, the arguments of the proof of Lemma \ref{l.center_leaf_space_in_L} imply that the set $R$ projects to a closed interval in both $\lcse$ and $\lcue$, i.e., it is a closed rectangle in $\lcse \times \lcue$.
\end{proof}

Just as in the previous two sections we can also put a 
topology ${\mathcal B}$ on $\cL^c$ directly as follows:

\begin{definition}{(topology ${\mathcal B}$ on $\cL^c$)}
In $M$ pick a collection of very small open rectangles $R_i$ 
which are almost perpendicular to the center bundle,
and with boundary two arcs in a leaves  of $\lcsb$
and two arcs in leaves of $\lcub$. 
Consider all lifts $R$ of these to $\mt$. The set of 
center leaves intersecting $R$ is naturally bijective
to an open rectangle and put the topology making this
a local homeomorphism. The topology ${\mathcal B}$ is
generated by these rectangles.
\end{definition}

First we justify why the set of center leaves through $R$ is 
naturally an open rectangle. Let $L_1, L_2$ be the center
stable leaves containing the two arcs in the boundary of $R$,
and $U_1, U_2$ be the corresponding center unstable leaves.
The set of center stable leaves between $L_1, L_2$ (not
including $L_1, L_2$) is naturally
ordered isomorphic to an open interval. This was proved in
subsection \ref{sss.ls-centerstable}. The same for the center
unstable foliation. The product is an open rectangle. The set
of center leaves intersecting $R$ is a quotient of this.
The sets which are quotiented to a point  are compact subrectangles.
The proof is the same as the previous lemma. Hence the
quotient is naturally a rectangle. 
In addition if a collection of center leaves intersects
two such rectangles $R, R'$, then the identifications in 
$R$ also produce the same identifications in $R'$ and the
order of the center stable and center unstable foliations in the subsets are the same
whether in $R$ or $R'$. Hence in the identification, the
topologies agree. 

Just as in the previous sections one can prove:

\begin{lemma}{}
For $\epsilon < \eps_0$, the topologies ${\mathcal B}$ 
and ${\mathcal B}_{\eps}$ are the same.
\end{lemma}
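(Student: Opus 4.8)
The statement asserts that, for $\epsilon < \epsilon_0$, the two topologies $\mathcal B$ and $\mathcal B_\epsilon$ on the center leaf space $\cL^c$ of $\mt$ coincide. My plan is to follow exactly the same template already used twice in the excerpt: once in Proposition~\ref{prop.leafspace} for $\lcsb$, and once in Lemma~\ref{l.center_leaf_space_in_L} for $\cL^c_L$. The key structural input is already in place: Lemma~\ref{rectangle} identifies the fibers of $g_\epsilon\colon \cL^c_\epsilon \to \cL^c$ as closed rectangles inside a local chart $V_s \times V_u$ of $\cL^c_\epsilon$, and the paragraph following Lemma~\ref{rectangle} verifies that the $\mathcal B$-charts built from small rectangles $R \subset \mt$ are mutually compatible. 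So the proof is a topology bookkeeping argument comparing $\mathcal B$ with the quotient topology $\mathcal B_\epsilon$ induced by $g_\epsilon$.

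\emph{Step 1: $g_\epsilon$ is continuous from $\cL^c_\epsilon$ with its manifold topology to $(\cL^c, \mathcal B)$.} Given a $\mathcal B$-basic open rectangle $W$ — the set of center leaves meeting a small rectangle $R \subset \mt$ bounded by two center stable and two center unstable arcs — I must show $g_\epsilon^{-1}(W)$ is open. A leaf $\bar c$ of $\fol_\epsilon$ with $g_\epsilon(\bar c) \in W$ is the intersection of $\bar U \in \lcue$ and $\bar S \in \lcse$, and $g_{\epsilon,u}(\bar U)$, $g_{\epsilon,s}(\bar S)$ meet $R$; by continuity of $g_{\epsilon,s}$ and $g_{\epsilon,u}$ (Proposition~\ref{prop.leafspace}(2)) and transversality, all nearby $\bar U', \bar S'$ still have images meeting $R$, so a neighborhood of $\bar c$ in $\cL^c_\epsilon \simeq \lcse \times \lcue$ maps into $W$. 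This gives $\mathcal B \subset \mathcal B_\epsilon$.

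\emph{Step 2: $\mathcal B_\epsilon \subset \mathcal B$.} Let $W$ be $\mathcal B_\epsilon$-open and $c_0 \in W$. Then $U = g_\epsilon^{-1}(W)$ is an open set of $\cL^c_\epsilon$ containing the closed rectangle $R = g_\epsilon^{-1}(c_0)$ supplied by Lemma~\ref{rectangle}. Using the local homeomorphism $\cL^c_\epsilon \simeq \lcse \times \lcue$ near $R$, I enlarge $R$ slightly to an open rectangle $R' \subset U$ in $\lcse \times \lcue$; its image $g_\epsilon(R')$ contains the set of center leaves meeting a small rectangle neighborhood of $c_0$ in $\mt$, hence a $\mathcal B$-open neighborhood of $c_0$ contained in $W$. (This is the exact analogue of part (3) of the proof of Proposition~\ref{prop.leafspace}, with intervals replaced by rectangles.) Combining Steps~1 and~2 gives $\mathcal B = \mathcal B_\epsilon$.

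\emph{Main obstacle.} The one place requiring genuine care, rather than a mechanical translation of the $1$-dimensional argument, is the passage from the product-chart picture $\lcse \times \lcue$ to genuine neighborhoods in $\mt$, because of the merging phenomenon of Remark~\ref{r.center}: two a priori distinct leaves $\bar U_1, \bar U_2$ of $\wfeu$ may cut $\bar S$ in arcs that project to the \emph{same} center leaf, so one must be sure that "the set of center leaves meeting $R$" is genuinely the quotient of the open rectangle $\{$center stable between $L_1,L_2\} \times \{$center unstable between $U_1,U_2\}$ by the compact-subrectangle identifications, and that this quotient really is a rectangle — but this is precisely what the paragraph after Lemma~\ref{rectangle} establishes, so I will simply cite it. The remaining verification that the $\epsilon$-approximation does not introduce extra identifications beyond those already present uses $\epsilon < \epsilon_0$ together with the distance-$2\epsilon$ bound in the definition of $g_\epsilon$, exactly as in Lemma~\ref{l.center_leaf_space_in_L}.
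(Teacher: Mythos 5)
Your proof is correct and follows essentially the same route as the paper's, which simply cites Lemma~\ref{rectangle} for the key structural fact and then states that the remainder is identical to the arguments in the earlier subsections. You have written out the bookkeeping (the two inclusions mirroring parts (2) and (3) of the proof of Proposition~\ref{prop.leafspace}, with intervals replaced by rectangles) that the paper leaves implicit.
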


The main property is to prove is exactly that of Lemma
\ref{rectangle}. The rest follows just as in the previous
subsections.

\subsection{From foliations to branching foliations}
Using the leaf space, one can carry over a number of concepts from foliations to branching foliations.

\subsubsection{Uniform and $\R$-covered branching foliations}
A branching foliation is said to be \emph{$\R$-covered} if its leaf space is homeomorphic to $\R$. It is \emph{uniform} if every two leaves in the universal cover are a finite Hausdorff distance apart.

By Proposition~\ref{prop.leafspace} a branching foliation is uniform or $R$-covered if and only if its approximating foliations are, for $\eps$ sufficiently small.

\subsubsection{Saturations and minimality}
A foliation that is preserved by a homeomorphism $f$ is said to be $f$-minimal if the only closed, saturated, $f$-invariant sets are the empty set and the whole manifold. We will define $f$-minimality identically for branching foliations, but we must be careful about what we mean by ``saturated'':

\begin{definition}\label{def.fsaturated}
	A set $C \subset M$ is \emph{$\fbs$-saturated} if, for every $x\in C$, there is a leaf of $\fbs$ that contains $x$ and is contained in $C$.
	
	A \emph{saturation} of a saturated set $C \subset M$ is a collection of leaves $X \subset \fbs$ whose union is $C$.
\end{definition}

Note that this is much weaker than asking that \emph{every} leaf intersecting $C$ is contained in $C$. In particular, our notion of saturation has the peculiar property that the complement of a saturated set need not be saturated (see Figure \ref{f.3M}).

In addition, a saturated set may have different saturations. However, a saturated set always has a unique \emph{maximal saturation}, consisting of all leaves that are contained in it.

\begin{definition}\label{def.fminimal}
	We say that $\fbs$ is \emph{$f$-minimal} if the only closed, $\fbs$-saturated, and $f$-invariant subsets of $M$ are $\emptyset$ and $M$.
\end{definition}
We emphasize that ``closed'' is meant as a subset of $M$, not $\lcsb$.
\begin{figure}[ht]
	\begin{center}
		\includegraphics[scale=0.64]{./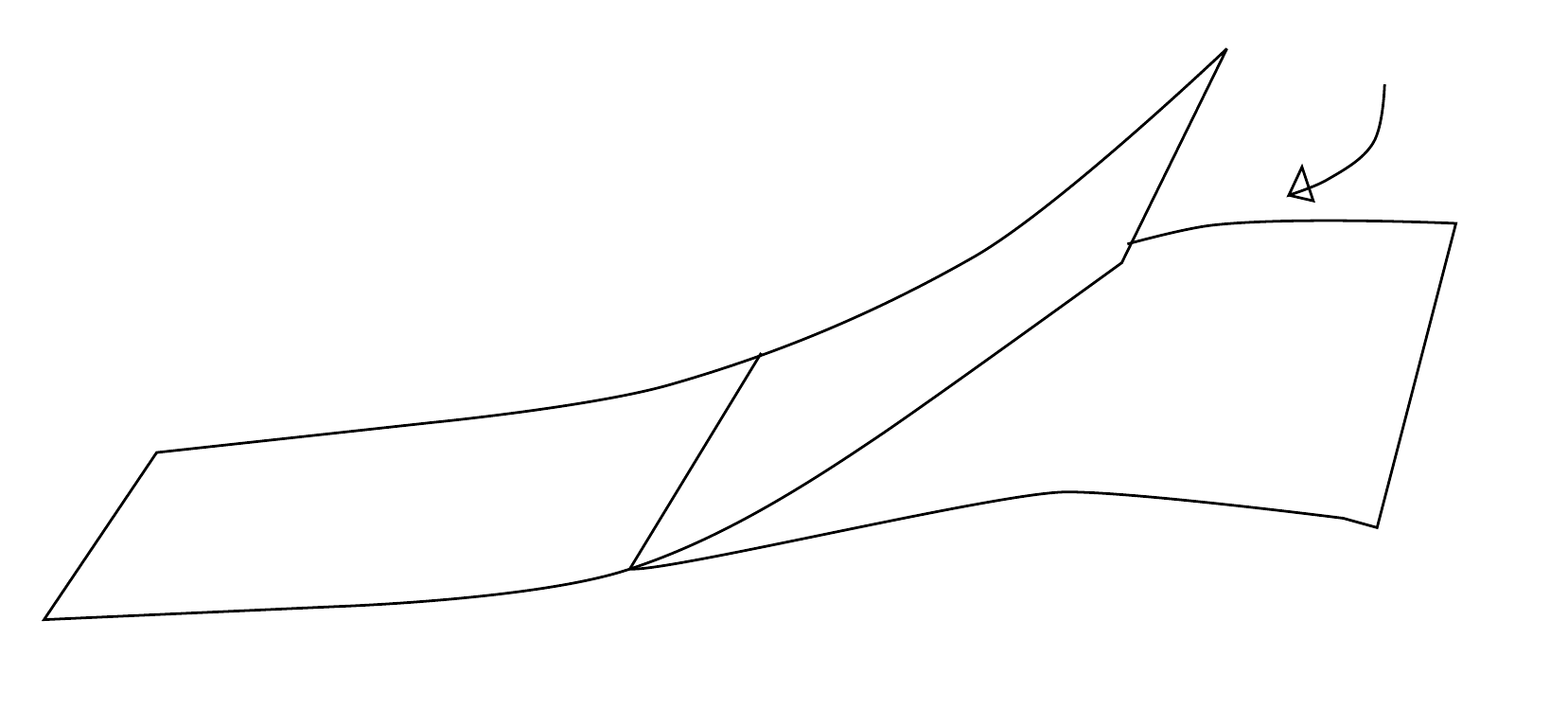}
		\begin{picture}(0,0)
		\put(-54,44){$L_2$}
		\put(-109,112){$L_1$}
		\put(-39,125){$R$}
		\end{picture}
	\end{center}
	\vspace{-0.5cm}
	\caption{$L_1$ and $L_2$ are two leaves in $C$, but the region $R$ is not in $C$. Then, in parts of $R$, all the center stable leaves intersect the branch locus between $L_1$ and $L_2$, so have parts in $C$ and parts not in $C$ (and therefore $M \setminus C$ is not saturated by center stable leaves).}\label{f.3M}
\end{figure}

Saturated sets and saturations are defined similarly in the universal cover. Here, a saturation can be naturally thought of as a subset of the leaf space $\lcsb$. However, the topology of a saturated set in $\mt$ does not necessarily agree with the topology of a saturation in $\lcsb$:
\begin{remark}\label{r.closedsetb}
	Let $C \subset \mt$ be $\wfbs$-saturated. It is possible for $C$ to be closed in $\mt$, but have a saturation $\mathcal{C} \subset \lcsb$ that is not closed in $\lcsb$. However, it is easy to see that $C$ is a closed in $\mt$ if and only if its \emph{maximal} saturation is closed in $\lcsb$.  
	
	It is true but less immediate that the only saturation of $\mt$ that is closed in $\lcsb$ is all of $\lcsb$ (Lemma \ref{closedsets}).
\end{remark}

\subsubsection{Perfect fits.}\label{ss.perfectfits} 
The notion of ``perfect fits'' from the theory of codimension-$1$ foliations (see \cite[\S4.1]{BFFP-prequel}) applies to branching foliations once it is modified appropriately. 

We will need the $2$-dimensional version of this concept, in \S\ref{sec-centerdynamics}, to understand the center and stable foliations within a center stable leaf. Given a center stable leaf $L$, let $\cC_L$ and $\cS_L$ be the center and stable foliations within $L$, and let $\cL^c_L$ and $\cL^s_L$ be the corresponding leaf spaces.

\begin{definition}\label{def.perfect_fits_branching}
	A leaf $c \in \cC_L$ and a leaf $s \in \cS_L$ make a $\cC\cS$-\emph{perfect fit} if they do not intersect, but there is a local transversal $\tau$ to $\cC_L$  through $c$ such that every leaf in $\cC_L(\tau)$ that lies sufficiently close to one side of $c$ (in the linear order $<_\tau$) intersects $s$. 
	
	They make a $\cS\cC$-\emph{perfect fit} if there is a local transversal $\tau'$ to $\cS_L$  through $s$ such that every leaf in $\cS_L(\tau)$ that lies sufficiently close to one side of $s$ intersects $c$.
	
	We say that $c$ and $s$ make a \emph{perfect fit} if they make both a $\cC\cS$- and $\cS\cC$-perfect fit.
\end{definition}

\begin{remark}
	Note that when defining  $\cC\cS$-perfect fits it is important to use the linear order $<_\tau$ on $\cC_L(\tau)$, defined in \S\ref{sss.ls-centerstable}, since there may be center leaves on the same side of $c$ as $s$ that merge with $c$.
	
	Since $\cS_L$ is a true foliation, the linear order $<_{\tau'}$ on $\cS_L(\tau')$ comes directly from the transversal $\tau'$, so the notion of a $\cS\cC$-perfect fit is exactly as in \cite[\S4.1]{BFFP-prequel}.
	
	One may equivalently define $\cC\cS$-perfect fits as follows: Given a stable leaf $s$ in $L$, let $I_s \subset \cL^c_L$ be the set of center leaves that intersect $s$. Then $c$ and $s$ makes a $\cC\cS$-perfect fit if and only if $c \in \partial I_s$.
\end{remark}

\begin{lemma}
	Let $c$ and $s$ be center and stable leaves in a center stable leaf $L$ that make a $\cC\cS$-perfect fit. Then there is a stable leaf $s'$ such that $c$ and $s'$ make a perfect fit.
	
	The symmetric statement holds for $\cS\cC$-perfect fits.
\end{lemma}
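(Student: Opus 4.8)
The plan is to start from the hypothesis that $c$ and $s$ make a $\cC\cS$-perfect fit and produce a stable leaf $s'$ for which the $\cS\cC$-perfect fit condition also holds, the $\cC\cS$-part being automatic or easily recovered from the construction. First I would set up the picture: by definition there is a local transversal $\tau$ to $\cC_L$ through $c$ such that all center leaves in $\cC_L(\tau)$ sufficiently close to $c$ on one chosen side, say the positive side $c^+$, intersect $s$, while $c$ itself does not meet $s$. Let $I_s \subset \cL^c_L$ be the set of center leaves meeting $s$; the hypothesis says $c \in \partial I_s$, approached from the $c^+$ side. I would then look at how these center leaves meet $s$: for each center leaf $c_t$ in this one-sided neighborhood of $c$, $c_t \cap s$ is a single point (since $\cS_L$ is a true foliation and $c_t$ is transverse to it within $L$), and as $t \to 0$ these intersection points escape to an end of $s$. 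The leaf $s'$ we want is essentially "the stable leaf at that escaping end."

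The key steps, in order, would be: (1) Parametrize a one-sided neighborhood of $c$ in $\cC_L(\tau)$ as $\{c_t : 0 < t < t_0\}$, all meeting $s$, and track the points $p_t = c_t \cap s$. These lie on $s$ and, because $c_t \to c$ while $c \cap s = \emptyset$, they leave every compact subset of $s$ as $t \to 0$; so they converge to one of the two ideal ends of $s$ in the (intrinsically $\R$) leaf $s$. (2) Use completeness of the leaves and the local product structure of $\wfbs$ within $L$ (the $2$-dimensional leafwise version: $L$ is foliated by $\cS_L$ with a local product chart of some size $\eps_0$) together with property \ref{item.convergence_of_leaves} of Definition~\ref{def.branching} — or rather its leafwise consequence for $\cC_L$ — to extract a limit: pass to the stable leaves $s_t$ through the points $q_t \in c_t$ with $q_t \to q \in c$, and show $s_t \to s'$ for some stable leaf $s'$ of $\cS_L$. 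Concretely, fix a point $q \in c$, take $q_t \in c_t$ converging to $q$ along $\tau$, let $s_t$ be the stable leaf through $q_t$; these are true-foliation leaves so by the usual foliation argument a subsequence converges to the stable leaf $s'$ through $q$, which meets $c$ at $q$. (3) Show $c$ and $s'$ make an $\cS\cC$-perfect fit: take a local transversal $\tau'$ to $\cS_L$ through $s'$; the stable leaves $s_t$ approach $s'$ from one side, and each $s_t$ meets $c_t$, hence (sliding along the product chart / using that $s_t$ meets $c$'s transversal $\tau$ near $q$) the stable leaves on that side of $s'$ sufficiently close to $s'$ intersect $c$. This is exactly $c \in \partial I'_{c}$-type statement for $\cS\cC$-perfect fits, and since in a product chart one cannot have $s' \cap c \neq \emptyset$ while also having nearby transverse behavior forced the right way, one checks $s' \cap c = \emptyset$ as well (if $s'$ met $c$ they would coincide near $q$ inside $L$ by transversality being violated — more carefully, $s'$ and $c$ are transverse in $L$, so meeting at $q$ is fine; the perfect-fit requires non-intersection, so one must instead choose $s'$ to be the stable leaf at the end, not through $q$).

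Here is the main obstacle, and the point where step (2) needs care: the naive choice "stable leaf through $q \in c$" will generally \emph{intersect} $c$ (at $q$), so it is not the leaf making a perfect fit with $c$. The correct object is the stable leaf that is the limit of the leaves $s_t$ \emph{as seen from the $s$-side}, i.e. the limit in $\cL^s_L$ of the leaves $s$ as we push the intersection points $p_t$ off the end of $s$ — equivalently, the leaf $s' \in \partial(\text{something})$. So I would instead argue: the map $t \mapsto (\text{leaf of } \cC_L \text{ through } p_t) = c_t$ shows $I_s$ accumulates on $c$; dually, consider in $\cL^s_L$ the leaves meeting $c_t$ for $t$ near $0$ — these form intervals $J_{t}$, and their "inner" endpoints, as $t \to 0$, converge (completeness of $\cL^s_L$ as a $1$-manifold, plus a compactness/properness argument using that $L$ is a properly embedded plane) to a stable leaf $s'$. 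By construction $s'$ does not meet $c$ (it is a boundary leaf, reached as a limit, and any intersection point would force, via the product structure, nearby $s_t$ to meet $c$ on the wrong side — contradiction), yet every stable leaf just inside meets $c$, giving the $\cS\cC$-perfect fit; and symmetrically $c$ and $s'$ make a $\cC\cS$-perfect fit because the $c_t$ filling the neighborhood of $c$ all meet $s'$ (they meet all stable leaves between $s$ and $s'$, including those accumulating on $s'$). The delicate point throughout is that $\cC_L$ is a \emph{branching} foliation, so one must consistently use the linear order $<_\tau$ on $\cC_L(\tau)$ rather than thinking of center leaves as literally disjoint; the $2$-dimensional perfect-fit machinery set up just above, together with Lemma~\ref{l.center_leaf_space_in_L} (closed-interval fibers, identification of topologies), is exactly what makes the limiting arguments legitimate. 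The $\cS\cC \Rightarrow \cC\cS$ direction (the "symmetric statement") is handled by the mirror-image argument, swapping the roles of $\cC_L$ and $\cS_L$, using that now it is $\cS_L$ that is a true foliation so the order on transversals is the naive one and the argument is, if anything, easier.
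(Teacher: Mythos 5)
Your overall strategy --- locate $s'$ as a boundary/limit leaf in $\cL^s_L$ and then verify both halves of the perfect fit --- is the right one, and you correctly flag the pitfall that the stable leaf through a point of $c$ cannot serve as $s'$. But the replacement construction is not actually pinned down, and the one interpretation you give is wrong. You propose extracting $s'$ from ``inner endpoints'' of the intervals $J_t = I_{c_t}$ (stable leaves meeting $c_t$). This does not produce the right leaf: each $J_t$ contains $s$ and, for $t$ small, also the stable leaves through $c$ near $q_0$, so $J_t$ stretches across all the candidate leaves; its two endpoints lie \emph{past} $s$ on one side and out near the far end of $I_c$ on the other, and neither converges to the leaf you want. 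What you need is a boundary leaf of the set $I_c$ of stable leaves meeting $c$ \emph{itself}, not $c_t$. Concretely: walk along the ray $c^+$ of $c$ determined by the direction of the $c_t$'s; the stable leaves through $c^+$ trace a monotone arc in $\cL^s_L$ lying entirely in $I_c$. The $\cC\cS$-perfect fit hypothesis (via the fact that the arcs traced by $c_t|_{[q_t,p_t]}$ converge locally to this one but must also reach the fixed leaf $s$) forces this arc to accumulate on a leaf of $\cL^s_L$ rather than escape to an end, and $s'$ is the accumulation leaf on the arc in $\cL^s_L$ toward $s$ --- a choice one must make explicitly, since non-Hausdorffness can produce several pairwise non-separated limit leaves.

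With $s'$ so defined, $s'\notin I_c$ gives $c\cap s'=\emptyset$ and the $\cS\cC$-perfect fit at once: the stable leaves approaching $s'$ from the $I_c$-side meet $c$. Your stated verification of this step slips here: you invoke ``each $s_t$ meets $c_t$,'' but the definition requires stable leaves to meet $c$, not $c_t$, and that is not automatic --- it is precisely what $s'\in\partial I_c$ buys you. Finally, the $\cC\cS$-perfect fit of $c$ and $s'$ follows because the monotone arc traced by $c_t$ in $\cL^s_L$ runs from a leaf near $s(q_0)$ out to $s$ and hence, for $t$ small, passes through $s'$; so every $c_t$ meets $s'$, which is what the one-sided neighborhood condition asks. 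The paper cites \cite[Lemma~4.2]{BFFP-prequel} for this, so I cannot compare line by line, but the argument above is the natural ``obvious modification'' of the true-foliation proof and is what your proposal needs to be made to say.
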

\begin{proof}
	This is \cite[Lemma 4.2]{BFFP-prequel}, whose proof remains valid with the obvious modifications.
\end{proof}

\section{Branching foliations and good lifts}\label{sec-branching_goodlift}


Fix a closed $3$-manifold $M$ whose fundamental group is not virtually solvable, a partially hyperbolic diffeomorphism $f \colon M \to M$ homotopic to the identity, and a good lift $\ft$. We will assume that $f$ is orientable (Definition~\ref{def:oriented}) so that we have center stable and center unstable branching foliations $\fbs$ and $\fbu$ which are well-approximated by taut foliations (Theorem~\ref{teo-BI}). This can be achieved by taking an iterate of $f$ and lifting to a finite cover of $M$ -- we will deal with the effects of replacing $f$ and $M$ in \S\ref{sec-doubleinvndc}.

In this section we will study the way that a good lift $\ft$ acts on the lifted branching foliations $\wfbs, \wfbs$ in the universal cover $\mt$.

\subsection{Translation-like behavior}\label{ss.translation-like}
In this section, we will see that the action of $\ft$ on the center stable leaf space must look locally like a translation. Identical statements hold for the center unstable foliation.

\begin{remark}
	In fact, the results in this subsection are not really particular to partially hyperbolic diffeomorphisms. They apply to any diffeomorphism that is homotopic to the identity that preserves a branching foliation well-approximated by taut foliations.
In addition in this subsection we also do not need
to assume that $\pi_1(M)$ is virtually solvable.
\end{remark}

The key to this section is the following fact:

\begin{lemma}[Big Half-Space Lemma]
	Let $L$ be a leaf of $\wfbs$. For any $R>0$, there exists a ball of radius $R$ contained in each complementary region of $L$.
\end{lemma}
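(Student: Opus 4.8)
The plan is to exploit the fact that the approximating foliation $\fes$ is taut and has no compact leaves, together with the product structure that relates $\wfbs$ to $\wfes$ via the map $\whs$. First I would fix a leaf $L$ of $\wfbs$ and a complementary region $U$ of $L$, and pick an approximating leaf $\hat L$ of $\wfes$ with $\whs(\hat L)$ close to $L$ (within $\eps$ in the $C^0$ distance), so that $\hat L$ essentially bounds a region $\hat U$ comparable to $U$: it suffices to find a ball of radius $R+1$ inside $\hat U$, since $\whs$ is $\eps$-close to the identity. Thus the problem reduces to the analogous statement for the \emph{true} taut foliation $\wfes$.

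For the reduction to foliations, I would invoke the standard theory: since $\fes$ is taut without compact leaves on a closed aspherical $3$-manifold with $\pi_1$ not virtually solvable, its lift $\wfes$ to $\mt \simeq \R^3$ has leaves that are properly embedded planes, and each complementary half-space is ``large'' in the sense required. Concretely, the cleanest route is to use a transversal: take a flow transverse to $\fes$ (tautness gives a closed transversal, or at least a transverse flow) and observe that pushing $L$ (resp. $\hat L$) along this flow into $U$ for longer and longer times yields leaves whose one complementary side contains $L$; because leaves don't accumulate on compact sets in a uniform way, one can grow embedded balls. Alternatively, and perhaps more robustly, I would argue by contradiction: if some complementary region $U$ of $L$ contained no ball of radius $R$, then $U$ lies within the $R$-neighborhood of $L$; combined with the properness of $L$ and an argument using the local product structure, this forces $L$ to be a ``bounded'' leaf, and then a limiting argument (using property \ref{item.convergence_of_leaves} of Definition~\ref{def.branching}, or the analogous compactness for $\wfes$) produces a leaf which is compact after projecting to $M$, or one invariant under a $\ZZ^2$ or solvable subgroup of deck transformations, contradicting either the no-compact-leaves conclusion of \S\ref{sec-branching} or the hypothesis that $\pi_1(M)$ is not virtually solvable.

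I expect the main obstacle to be making the reduction from the branching foliation to its approximation genuinely rigorous: one must check that the ``complementary region'' of a branching leaf $L$ really does contain (up to an $\eps$-loss) the complementary region of a nearby approximating leaf $\hat L$, which requires care because $\whs$ is only a local diffeomorphism on leaves and $L$ may self-merge. The point is that $\whs$ being $C^0$-close to the identity means $\whs^{-1}(U)$ contains a slightly shrunken copy of $U$, and any leaf $\hat L$ with $\whs(\hat L) = L$ has the property that one of its complementary regions maps into $U$; chasing the definitions carefully, a radius-$(R+1)$ ball in that complementary region of $\hat L$ maps to a set containing a radius-$R$ ball in $U$. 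Once this is pinned down, the remaining content is the classical fact that taut foliations without compact leaves on these manifolds have ``big'' complementary half-spaces in the universal cover, which I would cite from the foliation literature (e.g.\ the consequences of Novikov's theorem and Palmeira's theorem as in \cite{CandelConlonI}) rather than reprove.
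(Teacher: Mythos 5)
Your proof takes essentially the same approach as the paper's: reduce to the analogous statement for a nearby leaf of the approximating true foliation $\wfes$ (where the paper simply cites \cite[Lemma 3.3]{BFFP-prequel} rather than re-proving the foliation case), using that $\wfes$ is $\eps$-close to $\wfbs$ so a ball of slightly larger radius in a complementary region of the approximating leaf produces the desired ball in the complementary region of $L$. Your concern about making the reduction rigorous is well placed (the paper glosses over it), but your resolution via the $C^0$-closeness of $\hat L$ to $L$ is the right one.
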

\begin{proof}
	This lemma holds for true foliations -- see \cite[Lemma 3.3]{BFFP-prequel} -- so it suffices to consider a leaf corresponding to $L$ in the approximating foliation $\wfes$ for $\epsilon$ sufficiently small.
\end{proof}
\begin{remark}
 Note that the tautness of the foliation is essential for this result to hold. The branching foliation in the non-dynamically coherent example of \cite{HHU-noncoherent}, for instance, do not satisfy that lemma.
\end{remark}

\begin{definition}[Regions between leaves]
	Let $K, L \in \wfbs$ be distinct leaves. In the leaf space, $\lcsb \smallsetminus \{K, L\}$ consists of three open connected components. Only one of these components accumulates on both $K$ and $L$ --- we call this the \emph{open $\lcsb$-region between $K$ and $L$}.  Its closure in $\lcsb$, which is obtained by adjoining $K$ and $L$, is called the \emph{closed $\lcsb$-region between $K$ and $L$}.
\end{definition}

\begin{remark}
	Note that the subset of $\mt$ that corresponds to the open $\lcsb$-region between two leaves may not be open. However, the subset of $\mt$ that corresponds to the closed $\lcsb$-region between two leaves is closed. It is also connected, but its interior may not be. See Figure \ref{f.4M}.
	
	\begin{figure}[ht]
		\begin{center}
			\includegraphics[scale=0.64]{./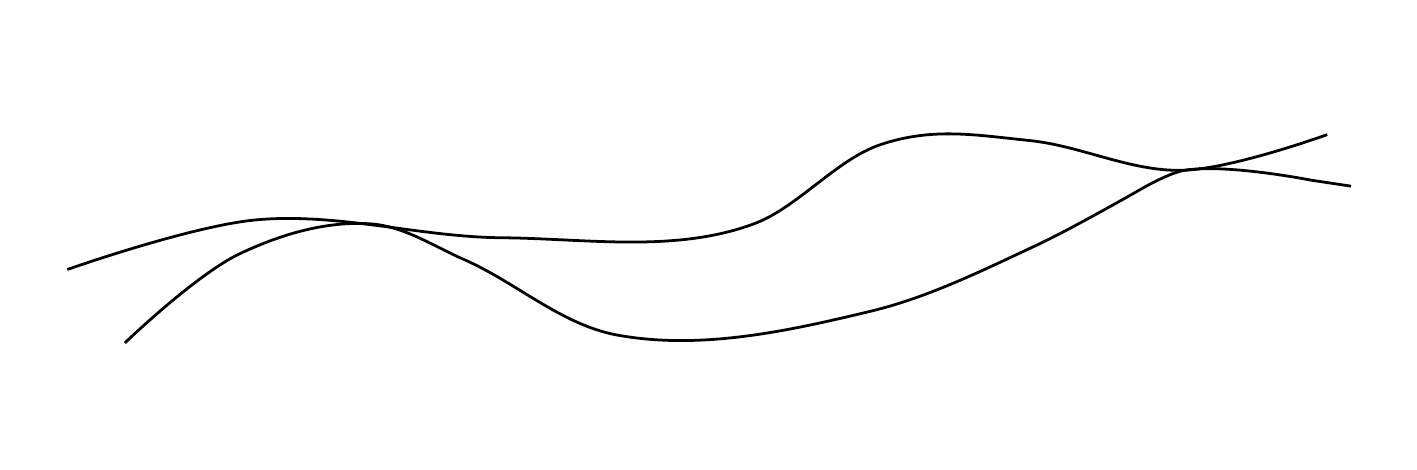}
			\begin{picture}(0,0)
			\put(-94,44){$V$}
			\put(-24,43){$L$}
			\put(-24,68){$K$}
			\put(-129,72){$U$}
			\put(-139,4){$W$}
			\end{picture}
		\end{center}
		\vspace{-0.5cm}
		\caption{The interior of the closed region between leaves may not be connected.}\label{f.4M}
	\end{figure}
\end{remark}

The following is the equivalent of \cite[Proposition 3.5]{BFFP-prequel}. The same proof applies if one considers complementary regions and regions between leaves as subsets of $\mt$ and $\lcsb$ as appropriate.

\begin{proposition}\label{p.translatedleaves_nonDC}
	If $L \in \wfbs$ is not fixed by a good lift $\ft$, then
	\begin{enumerate}
		\item the closed $\lcsb$-region between $L$ and $\ft(L)$ is an interval,
		\item $\ft$ takes each coorientation at $L$ to the corresponding coorientation at $\ft(L)$, and
		\item the subset of $\mt$ corresponding to the closed $\lcsb$-region between $L$ and $\ft(L)$ is contained in the closed $2R$-neighborhood of $L$, where $R = \max_{y \in \mt} d(y,\ft(y))$.
	\end{enumerate}
\end{proposition}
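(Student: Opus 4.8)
The plan is to reduce the statement to the corresponding fact for the approximating foliation $\wfes$, exactly as was done for the Big Half-Space Lemma above. The cited result \cite[Proposition 3.5]{BFFP-prequel} establishes all three conclusions for a good lift of a diffeomorphism preserving a \emph{true} taut foliation; the content here is only to check that the passage to branching foliations via $g_{\epsilon,s}$ and $\hs$ does not destroy any of the three conclusions. So the first step is to fix $\epsilon$ small enough that Proposition~\ref{prop.leafspace} applies, giving $g_{\epsilon,s}\colon \lcse \to \lcsb$ continuous, surjective, monotone, with closed-interval point-preimages, and identifying the topology $\cT$ on $\lcsb$ with the quotient topology. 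We also need a good lift of $f$ compatible with $\wfes$: since $\hs$ is $C^0$-close to the identity and carries leaves of $\wfes$ to leaves of $\wfbs$, and $\ft$ commutes with deck transformations and moves points a bounded distance, one gets an induced bounded-displacement map on $\lcse$ covering the action of $\ft$ on $\lcsb$ through $g_{\epsilon,s}$ — this is the standard ``the branching foliation leaf space dynamics is a monotone quotient of the approximating foliation leaf space dynamics'' observation, which I would either invoke from \S\ref{sec-branching_goodlift} or spell out in a sentence.

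For conclusion (1): pick $\hat L \in \lcse$ with $g_{\epsilon,s}(\hat L) = L$. Since $L$ is not fixed by $\ft$, $\hat L$ is not fixed by the lifted dynamics on $\lcse$ (if it were, $L = g_{\epsilon,s}(\hat L)$ would be fixed). Apply \cite[Proposition 3.5]{BFFP-prequel} in $\lcse$: the closed region between $\hat L$ and its image is an interval $[\hat L, \hat L']$ in $\lcse$ with $g_{\epsilon,s}(\hat L') = \ft(L)$. The image of a closed interval under the monotone continuous map $g_{\epsilon,s}$ is again a closed interval, and it is precisely the closed $\lcsb$-region between $L$ and $\ft(L)$ (here one uses that $g_{\epsilon,s}$ is monotone with connected fibers, so it sends the ``region between'' to the ``region between''). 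That gives (1). For conclusion (2): coorientations on $\wfbs$ pull back to coorientations on $\wfes$ under $\hs$ — this is part of the setup of the approximating foliations, since $\hs$ has degree one on transversals — and \cite[Proposition 3.5]{BFFP-prequel}(2) says the lifted map respects them at $\hat L$; pushing forward through $g_{\epsilon,s}$, which is orientation-preserving on the leaf spaces by construction (Proposition~\ref{prop.leafspace} part (3) and the monotonicity of $\hs$), yields (2) for $\ft$ at $L$.

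For conclusion (3): this is the metric statement, and here one works in $\mt$ rather than in the leaf space. Let $R = \max_{y\in\mt} d(y,\ft(y))$. The subset of $\mt$ corresponding to the closed $\lcsb$-region between $L$ and $\ft(L)$ is a closed connected set $Z$ with $\partial Z \subset L \cup \ft(L)$, and $\ft(L)$ lies within the $R$-neighborhood of $L$ pointwise (each $y \in L$ has $d(y,\ft(y)) \le R$ and $\ft(y) \in \ft(L)$). Now suppose for contradiction some point $z \in Z$ is at distance $> 2R$ from $L$. Then a ball $B$ of radius slightly more than $R$ around $z$ is disjoint from $L$ and from the $R$-neighborhood of $L$, hence disjoint from $\ft(L)$ as well, so $B \subset Z \setminus(L\cup\ft(L))$, i.e. $B$ lies in the open region strictly between $L$ and $\ft(L)$. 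Applying $\ft^{-n}$: the images $\ft^{-n}(B)$ march monotonically through the leaf space away from $\ft(L)$ and past $L$ (using (1), (2) and the fact that $\ft$ has no fixed leaf in the closed region — if it did, iterating would contradict that the region between $L$ and $\ft(L)$ is a bounded-length interval), each of diameter $\ge 2R$, eventually forcing a point of $\ft^{-n}(B)$ to lie on the far side of $L$; but $\ft^{-1}$ moves points at most $R$, so $\ft^{-1}$ cannot move a set across the ``gap'' — more precisely, $d(\ft^{-1}(Z), Z)$-type estimates show $\ft^{-1}(L)$ stays within $R$ of $L$, and combining with the nesting of regions gives that the full backward orbit of $z$ stays in the $2R$-neighborhood of $L$, contradiction. (This is verbatim the argument of \cite[Proposition 3.5]{BFFP-prequel}(3); the only new point is that ``ball of radius $R$ in each complementary region of $L$'' is available for branching leaves by the Big Half-Space Lemma just proved, so the parts of the argument that produce large balls to derive contradictions go through unchanged.)

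The main obstacle, and the only place requiring genuine care, is verifying that the monotone surjection $g_{\epsilon,s}$ really does intertwine the leaf-space dynamics of $\wfes$ with that of $\wfbs$ in an orientation- and region-preserving way — i.e. that everything about ``regions between leaves,'' which for branching foliations is a slightly subtle notion (cf. Figure~\ref{f.4M}, where the interior of such a region can be disconnected), is compatible with the corresponding notion upstairs in $\lcse$. Once that bookkeeping is in place, all three conclusions are immediate consequences of \cite[Proposition 3.5]{BFFP-prequel} applied to $\wfes$, together with the Big Half-Space Lemma for the metric estimate in (3). I expect the write-up to be short: essentially ``the same proof applies, reading regions and coorientations as subsets of $\mt$ and $\lcsb$, using Proposition~\ref{prop.leafspace} to transfer from the approximating foliation.''
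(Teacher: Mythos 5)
There is a genuine gap, and it sits exactly at the spot you yourself flag as "the only place requiring genuine care": the sentence asserting that ``one gets an induced bounded-displacement map on $\lcse$ covering the action of $\ft$ on $\lcsb$ through $g_{\epsilon,s}$.'' No such map exists, because $\fes$ is \emph{not} $f$-invariant -- this is stated explicitly later in the paper when proving Gromov hyperbolicity (``Since $\fes$ is not $f$-invariant, we have to adjust the proof given in \cite{BFFP-prequel}''). The map $g_{\epsilon,s}$ is a monotone quotient with nontrivial closed-interval fibers, so a continuous monotone section intertwining $\ft$ would have to make coherent choices inside every fiber; there is no canonical way to do that, and even if one could, the resulting self-map of $\lcse$ would not be induced by a diffeomorphism of $\mt$, so neither the bounded-displacement property nor the geometric input of \cite[Proposition 3.5]{BFFP-prequel} would apply to it. The Big Half-Space Lemma reduces cleanly to $\wfes$ precisely because it is a statement about a \emph{single} leaf with no dynamics attached; Proposition~\ref{p.translatedleaves_nonDC} is fundamentally about the pair $(L,\ft(L))$ and so the analogy breaks.

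What the paper actually does is rerun the prequel's proof of \cite[Proposition 3.5]{BFFP-prequel} directly in the branching setting, being careful to read ``region between leaves'' alternately as a subset of $\lcsb$ or of $\mt$. The only geometric ingredients are the Big Half-Space Lemma (already transferred) and the bounded displacement of $\ft$ in $\mt$; neither requires passing to $\wfes$. Your sketch of item (3) is, in fact, exactly this: a direct argument in $\mt$ using bounded displacement and the Big Half-Space Lemma. So you inadvertently use the paper's approach for (3) but try to use an unavailable reduction for (1) and (2). If you really want to route (1) through $\wfes$, the repair is to note that any $\hat L \in g_{\epsilon,s}^{-1}(L)$ and $\hat L' \in g_{\epsilon,s}^{-1}(\ft(L))$ are Hausdorff distance $\le R + 2\eps$ apart in $\mt$ (no dynamics on $\wfes$ needed), and then extract from the prequel a purely foliation-geometric lemma that the region between two leaves of a taut $\R^2$-leaved foliation a bounded Hausdorff distance apart is an interval; after that you still have to verify that $g_{\epsilon,s}$ carries this interval onto the closed $\lcsb$-region, which requires picking $\hat L$, $\hat L'$ at the correct endpoints of the fibers. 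Item (2), the coorientation statement, cannot be phrased at all for $\wfes$ because there is no map acting on $\wfes$; you would need a separate direct argument there anyway.
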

\begin{remark}
 In the above proposition, we may a priori have that $L$ and $\ft(L)$ merge.
\end{remark}

Using Proposition \ref{p.translatedleaves_nonDC} we therefore also obtain the equivalent of \cite[Proposition 3.7]{BFFP-prequel}.

\begin{proposition}\label{p.dichotomy_branching}
	The set $\Lambda \subset \lcsb$ of leaves that are fixed by $\ft$ is closed and $\pi_1(M)$-invariant. Each connected component $I$ of $\lcsb \setminus \Lambda$ is acted on by $\ft$ as a translation, and every pair of leaves in $I$ are a finite Hausdorff distance apart.
\end{proposition}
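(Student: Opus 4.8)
The plan is to derive Proposition~\ref{p.dichotomy_branching} from the two structural results that precede it (Propositions~\ref{p.translatedleaves_nonDC} and~\ref{p.dichotomy_branching}'s immediate predecessor), in close analogy with \cite[Proposition~3.7]{BFFP-prequel}, keeping careful track of the distinction between subsets of $\mt$ and subsets of the (possibly non-Hausdorff) leaf space $\lcsb$.

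First I would show that $\Lambda$ is closed in $\lcsb$. Suppose $L_n \in \Lambda$ converges to $L$ in $\lcsb$; I want $\ft(L) = L$. If not, then $L$ and $\ft(L)$ are distinct, so by Proposition~\ref{p.translatedleaves_nonDC} the closed $\lcsb$-region $J$ between them is a nondegenerate interval, and $\ft$ carries the coorientation at $L$ to the one at $\ft(L)$. Using continuity of the $\ft$-action on $\lcsb$ (the action of $f$ on $M$ lifts to a homeomorphism of $\mt$, hence descends to a homeomorphism of $\lcsb$ by Proposition~\ref{prop.leafspace}), $\ft(L_n) \to \ft(L)$; but $\ft(L_n) = L_n \to L \neq \ft(L)$, a contradiction once $n$ is large enough that $L_n$ sits strictly on one side of the open region between $L$ and $\ft(L)$. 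The $\pi_1(M)$-invariance of $\Lambda$ is immediate from the fact that a good lift commutes with every deck transformation (Definition~\ref{d.goodlift}): if $\ft(L)=L$ and $\gamma \in \pi_1(M)$, then $\ft(\gamma L) = \gamma \ft(L) = \gamma L$.

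Next, fix a connected component $I$ of the open set $\lcsb \setminus \Lambda$. Since $\ft$ is a homeomorphism fixing $\Lambda$ setwise, $\ft$ permutes the components of the complement, and since it preserves coorientations (Proposition~\ref{p.translatedleaves_nonDC}(2)) and moves every non-fixed leaf off itself, it must send $I$ to itself, acting without fixed points; a fixed-point-free homeomorphism of a connected $1$-manifold that preserves the natural order is a translation — here one should note $I$ is an \emph{honest} (Hausdorff) open interval, because any branch point of $\lcsb$ would, after separating $I$ into more than one "end," be a fixed leaf by an argument with Proposition~\ref{p.translatedleaves_nonDC}; alternatively one cites that the components of $\lcsb \setminus \Lambda$ on which $\ft$ acts freely are intervals exactly as in \cite[Proposition~3.7]{BFFP-prequel}. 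Finally, for the uniform-Hausdorff-distance claim: given $K, K' \in I$ with, say, $K <_{\tau} K'$, pick $n$ with $\ft^{n}(K') >_\tau K'$ and $\ft^{-n}(K) <_\tau K$ (possible since $\ft|_I$ is a translation), so the closed $\lcsb$-region between $K$ and $K'$ is contained in the closed $\lcsb$-region between $\ft^{-n}(K)$ and $\ft^{n}(K) = \ft^{n}(\ft^{-n}(K))$; by Proposition~\ref{p.translatedleaves_nonDC}(3) applied to the leaf $\ft^{-n}(K)$ and its image under the good lift $\ft^{n}$ (which moves points at most $nR$), the corresponding subset of $\mt$ lies in the $2nR$-neighborhood of $\ft^{-n}(K)$, hence $K$ and $K'$ are within Hausdorff distance $4nR$ of each other.

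The main obstacle I anticipate is the point glossed over above: verifying that each component $I$ of $\lcsb \setminus \Lambda$ is a genuine (Hausdorff) interval rather than a branched piece of the non-Hausdorff leaf space, and that "fixed-point-free order-preserving homeomorphism $\Rightarrow$ translation" is applied legitimately. I would handle this by the same reasoning as \cite[Proposition~3.7]{BFFP-prequel}: if two leaves $L, L' \in I$ were non-separated in $\lcsb$, one uses the Big Half-Space Lemma together with Proposition~\ref{p.translatedleaves_nonDC}(3) (the region between a leaf and its $\ft$-image has bounded width) to force a fixed leaf between or at the non-separation point — contradicting $L, L' \in I \subset \lcsb \setminus \Lambda$. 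Everything else is a routine transcription of the true-foliation argument, with "region between leaves" and "complementary region" interpreted in $\lcsb$ and $\mt$ respectively as the statement of Proposition~\ref{p.translatedleaves_nonDC} already instructs.
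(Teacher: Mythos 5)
Your proposal is correct and follows the same route the paper takes: the paper simply declares that, given Proposition~\ref{p.translatedleaves_nonDC}, the argument of \cite[Proposition~3.7]{BFFP-prequel} transcribes directly (interpreting ``region between leaves'' in $\lcsb$ and its realization in $\mt$ as appropriate), and your write-up is a faithful unpacking of exactly that transcription. One small slip in the last step: the conditions ``$\ft^n(K') >_\tau K'$ and $\ft^{-n}(K) <_\tau K$'' hold for every $n>0$ and do not by themselves sandwich the $\lcsb$-region between $K$ and $K'$ inside the region between $\ft^{-n}(K)$ and $\ft^n(K)$ (also $\ft^n(\ft^{-n}(K)) = K$, not $\ft^n(K)$); what you want is $n$ large enough that $\ft^n(K) >_\tau K'$, after which applying Proposition~\ref{p.translatedleaves_nonDC}(3) to $K$ with the good lift $\ft^n$ (displacement $\le nR$) places the region between $K$ and $K'$ in the $2nR$-neighborhood of $K$, and the symmetric estimate with $\ft^{-m}(K') <_\tau K$ finishes the finite-Hausdorff-distance claim.
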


In the above proposition, one has to be mindful again that ``open'' and ``closed'' refer to the topology on the leaf space $\lcsb$, and not the topology on $\mt$.

When $\fbs$ is $f$-minimal (Definition~\ref{def.fminimal}), we deduce the following dichotomy from Proposition \ref{p.dichotomy_branching}:
\begin{corollary}\label{c.minimalcase_branching_case}
	If $\fbs$ is $f$-minimal, then either
	\begin{enumerate}
		\item $\ft$ fixes every leaf of $\wfbs$, or
		\item $\fbs$ is $\R$-covered and uniform, and $\ft$ acts as a translation on the leaf space $\lcsb$.
	\end{enumerate}
\end{corollary}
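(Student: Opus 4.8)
The plan is to deduce Corollary~\ref{c.minimalcase_branching_case} directly from Proposition~\ref{p.dichotomy_branching} together with the $f$-minimality hypothesis. Recall that Proposition~\ref{p.dichotomy_branching} tells us that the set $\Lambda \subset \lcsb$ of $\ft$-fixed leaves is closed in $\lcsb$ and $\pi_1(M)$-invariant, and that on each component of the complement $\ft$ acts as a translation (with leaves in such a component pairwise at finite Hausdorff distance). The first move is to note that the union of all leaves in $\Lambda$ projects to a subset $C \subset M$ which is closed, $\fbs$-saturated (each point of $C$ lies on some leaf wholly contained in $C$, namely a fixed one), and $f$-invariant (since $\ft$ commutes with deck transformations, $\Lambda$ descends to an $f$-invariant set of leaves; closedness in $M$ follows from closedness of the maximal saturation in $\lcsb$, cf. Remark~\ref{r.closedsetb}). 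By $f$-minimality, $C = \emptyset$ or $C = M$.

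If $C = M$, then every leaf of $\wfbs$ is fixed by $\ft$, which is conclusion (1). So suppose $C = \emptyset$, i.e. $\Lambda = \emptyset$. Then $\lcsb \setminus \Lambda = \lcsb$ is itself a connected component (one must first check connectedness of $\lcsb$ — but $\lcsb$ is a simply-connected $1$-manifold, hence connected). Proposition~\ref{p.dichotomy_branching} then says $\ft$ acts on all of $\lcsb$ as a translation, and any two leaves of $\wfbs$ are a finite Hausdorff distance apart; the latter is exactly the statement that $\fbs$ is uniform. It remains to see that $\fbs$ is $\R$-covered, i.e. that $\lcsb$ is homeomorphic to $\R$ rather than some non-Hausdorff simply-connected $1$-manifold; this should follow from the fact that a simply-connected $1$-manifold admitting a fixed-point-free homeomorphism acting as a translation (every orbit escaping to both ends) must be Hausdorff, hence $\cong \R$ — alternatively one invokes that a uniform branching foliation is $\R$-covered via Proposition~\ref{prop.leafspace} and the corresponding fact for the approximating (taut) foliations, as remarked just after the definition of $\R$-covered/uniform.

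The main obstacle — really the only non-bookkeeping point — is the passage from ``$\Lambda = \emptyset$'' to ``$\lcsb$ is $\R$-covered,'' i.e. ruling out non-Hausdorff behaviour in the leaf space. I expect this to be handled exactly as in the dynamically coherent case \cite[\S3]{BFFP-prequel}: uniformity (finite pairwise Hausdorff distance of leaves, which we already have from Proposition~\ref{p.dichotomy_branching}) forces the leaf space to be Hausdorff, since two non-separated leaves would have to be at infinite Hausdorff distance, and a Hausdorff simply-connected $1$-manifold is $\R$. The rest — identifying the descended set $C$ as closed, saturated, and $f$-invariant, and invoking minimality — is a routine translation of the foliation argument \cite[Corollary 3.8]{BFFP-prequel} to the branching setting, using Definition~\ref{def.fminimal} and Remark~\ref{r.closedsetb} to make sure ``closed'' is interpreted in $M$.
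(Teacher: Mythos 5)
Your overall architecture is the same as the paper's: apply Proposition~\ref{p.dichotomy_branching}, project the union of the $\ft$-fixed leaves to a closed, $\fbs$-saturated, $f$-invariant set in $M$, and invoke $f$-minimality. But there is a genuine gap at the step you dispose of in one line: ``if $C=M$ then every leaf of $\wfbs$ is fixed.'' For a true foliation this is immediate, since each point lies on a unique leaf; for a branching foliation it is exactly the delicate point, because a point of $\mt$ may lie on many leaves, so knowing that every point lies on \emph{some} fixed leaf does not by itself exclude non-fixed leaves all of whose points are also covered by fixed leaves (this is the phenomenon behind Remark~\ref{r.closedsetb}: a saturation whose union of points is all of $\mt$ need not be all of $\lcsb$). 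The paper explicitly redoes the dynamically coherent argument for precisely this reason, and closes the gap either by the general Lemma~\ref{closedsets} (a closed set of leaves of $\lcsb$ whose union of points is $\mt$ must be all of $\lcsb$ --- note $\Lambda$ is closed by Proposition~\ref{p.dichotomy_branching}, so the lemma applies), or directly: if $\Lambda\neq\lcsb$, a complementary component $I$ is open in $\lcsb$ and translated by $\ft$, so the union $J\subset\mt$ of the leaves in $I$ has nonempty interior, and points interior to $J$ lie on no fixed leaf, contradicting $\widetilde B=\mt$. Your proposal contains no substitute for this argument; note also that Remark~\ref{r.closedsetb} cannot be cited in the way you do, since $\Lambda$ is not a priori the \emph{maximal} saturation of $\widetilde B$ (the direction actually needed --- a closed set of leaves has closed union of points --- follows instead from property (iv) of Definition~\ref{def.branching}).

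By contrast, the case $C=\emptyset$, where you concentrate your effort, is the easy half: the paper simply quotes Proposition~\ref{p.dichotomy_branching}, whose proof (via Proposition~\ref{p.translatedleaves_nonDC}: the closed $\lcsb$-region between $L$ and $\ft(L)$ is an interval, coherently cooriented, and these intervals exhaust the component) already gives that the unique component is homeomorphic to $\R$ with $\ft$ acting as a translation, and that leaves are pairwise a finite Hausdorff distance apart; this yields ``$\R$-covered and uniform'' at once. Your primary justification for Hausdorffness of the leaf space --- that two non-separated leaves would have to be at infinite Hausdorff distance --- is asserted without proof and is not how the paper proceeds; the alternative you mention (transferring to the approximating taut foliations via Proposition~\ref{prop.leafspace}) is viable but unnecessary once the interval structure from Proposition~\ref{p.translatedleaves_nonDC} is in hand. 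So the proposal gets the routine bookkeeping right but omits the one argument that distinguishes the branching case from \cite[Corollary~3.10]{BFFP-prequel}.
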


\begin{proof}
	Although the proof is conceptually identical to that of the corresponding result in the dynamically coherent case, \cite[Corollary 3.10]{BFFP-prequel}, we will redo it since the distinction between the topology in $\lcsb$ and $\mt$ becomes important.
	
	Let $\Lambda$ be the set of leaves that are fixed by $\ft$. Since $\ft$ commutes with deck transformation, each deck transformation preserves $\Lambda$.
	In particular, if $I$ is a component of $\lcsb \setminus \Lambda$ and $g \in \pi_1(M)$, one has either $g(I) = I$ or $g(I) \cap I = \emptyset$.

	So $\Lambda$ is invariant under $\ft$ and deck transformations, saturated by $\wfbs$, and closed in $\lcsb$ (by Proposition \ref{p.dichotomy_branching}).
	
	Let $\widetilde B \subset \mt$ be the union of the points
in all leaves in $\widetilde \Lambda$, and let $B = \pi(\widetilde B) \subset M$.
	Since $\Lambda$ is closed in $\lcsb$, $\widetilde B$ is closed in $\mt$, and $B$ is closed in $M$. In addition. $B$ is $f$-invariant. Since $\fbs$ is $f$-minimal, $B$ is either $\emptyset$ or $M$.
	
	If $B$ is empty then $\Lambda$ is empty and Proposition \ref{p.dichotomy_branching} implies that we are in case (2).
	
	If $B = M$ then $\widetilde B = \mt$, and we have to prove that $\Lambda = \lcsb$. This follows from the more general Lemma \ref{closedsets}, but it also has the following more direct proof:
	
	Suppose $\Lambda \neq \lcsb$. Let $I$ be a connected component of $\lcsb \smallsetminus \Lambda$.  Let $J$ be the set of points of $\mt$ contained in a leaf in $I$. The set $I$ is open (in $\lcsb$) and $\ft$ translates leaves in $I$. It follows that the interior in $\mt$ of $J$ is non-empty. 
	These points in the interior of $J$ are not contained
	in $\widetilde B$. This contradicts $\wt B = \mt$.
	So $\Lambda = \lcsb$ and we are in case (1).
\end{proof}

This immediately implies the trichotomy in \S\ref{sss-introLeafSpaces}.

\subsection{Ruling out fixed points}\label{ss.nofixedpoints}
Let us now find conditions under which we show that our good lift $\ft$ has no fixed points in $\mt$. We will use the following lemma.

\begin{lemma}\label{l.nofixedpoints_branching_case}
Let $L \in \wfbs$ be a center stable leaf that is fixed by $\ft$. Suppose that for every $y \in L$ one can find a leaf $L' \in \wfbs$ that is fixed by $\ft$ and intersects the unstable leaf through $y$ in a point other than $y$. Then no nontrivial power of $\ft$ fixes a point in $L$.
\end{lemma}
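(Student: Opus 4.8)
The goal is to rule out periodic points of $\ft$ inside a fixed center stable leaf $L$, using that through \emph{every} point of $L$ there is a fixed center stable leaf $L'$ meeting the unstable through that point transversally (i.e.\ in another point). The strategy is by contradiction: suppose some power $\ft^k$ fixes a point $p \in L$. First I would replace $\ft$ by $\ft^k$ (which is still a good lift, since a power of a good lift is good) and reduce to the case $\ft(p) = p$. Now consider the strong unstable leaf $W^u(p)$. Since $\ft$ fixes $p$ and preserves the unstable foliation, $\ft(W^u(p)) = W^u(p)$, and because $E^u$ is uniformly expanded while $\ft$ moves points a uniformly bounded distance, the standard argument shows $\ft$ acts on the line $W^u(p)$ as a contraction towards $p$ — in particular $p$ is the unique fixed point of $\ft$ on $W^u(p)$, and every point of $W^u(p)$ converges to $p$ under backward iteration and escapes to the two ends under forward iteration (or vice versa; fix the convention so that forward iteration contracts).

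\medskip
\textbf{Key steps.} By hypothesis, pick $y \in L$, $y \ne p$, lying on $W^u(p)$; there is a fixed leaf $L' \in \wfbs$ with $L' \cap W^u(p) \ni y'$ for some $y' \ne y$. Actually I want to apply the hypothesis at $p$ itself: there is a fixed leaf $L' \in \wfbs$ meeting $W^u(p)$ at a point $q \ne p$. Since $L'$ is $\ft$-invariant and $\ft$ preserves $W^u(p)$, the point set $L' \cap W^u(p)$ is $\ft$-invariant; but $\ft$ restricted to $W^u(p)$ has only $p$ as a fixed point and pushes every other point monotonically towards an end of $W^u(p)$. Hence the forward (or backward) $\ft$-orbit of $q$ leaves every compact set of $W^u(p)$, so it leaves every compact set of $\mt$, yet it stays on the leaf $L'$ — and here is where tautness and the Big Half-Space Lemma enter. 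The contradiction should come from comparing this with the fact that $L'$ is a fixed leaf: iterating $q$ forward, $\ft^n(q) \in L' \cap W^u(p)$ with $\ft^n(q) \to$ an end $\xi$ of $W^u(p)$. The unstable leaf $W^u(p)$ is a properly embedded line; as $\ft^n(q)$ converges to $\xi$, the points $\ft^n(q)$ also converge (in the appropriate sense) to $p$ along the \emph{complementary} direction if we chose the contracting side correctly — so let me instead choose $q$ and the iteration direction so that $\ft^n(q) \to p$. Then $\ft^n(q) \in L'$ for all $n$, and $\ft^n(q) \to p \in L$. By property \ref{item.convergence_of_leaves} of Definition~\ref{def.branching} (closedness of the branching foliation), a subsequence of the fixed leaves $L'$ (all equal to $L'$) converges to a leaf containing $p$; but $L'$ is properly embedded and $\ft$-invariant, so $p \in L'$. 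Now $p \in L \cap L'$ and also $q \in W^u(p) \cap L'$ with $q \ne p$: this forces $W^u(p) \subset L'$ (the unstable foliation is subordinate to the center stable branching foliation — the dual of the statement in the excerpt that $E^s$ integrates inside center \emph{stable} leaves, so $E^u$ integrates inside center \emph{unstable} leaves; hmm, that's the wrong one). Let me reconsider: $W^u(p)$ need not lie in $L'$. Instead the contradiction is geometric: $\ft$ contracts $W^u(p)$ towards $p$, while on $L'$ the dynamics near the fixed point set must be compatible; the real point is that $q$ and $p$ both lie on $W^u(p)$ and on nearby fixed leaves that are uniformly close (Proposition~\ref{p.translatedleaves_nonDC}(3) does not apply since they're fixed, but the fixed leaves through points of $W^u(p)$ sweep out a set), producing a local product picture in which $W^u(p)$ would have to be tangent to $E^c \oplus E^s$, contradicting transversality of $E^u$.

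\medskip
\textbf{Main obstacle.} The delicate point — and the step I expect to be hardest — is turning ``the $\ft$-orbit of $q$ on $W^u(p)$ accumulates at $p$, and $q$ lies on a fixed leaf $L'$'' into a genuine contradiction. The clean way is: since $\ft$ acts as a contraction on $W^u(p)$ with fixed point $p$, for the chosen end we get $\ft^n(q) \to p$; the leaves $L'$ being fixed and the convergence $\ft^n(q)\to p$ give, via closedness of the branching foliation and proper embeddedness, that $p \in L'$ after possibly passing to a limit leaf through $p$ contained in $L'$'s closure — but $L'$ is a single properly embedded plane, so $p \in L'$. Then $W^u(p)$ meets $L'$ at $p$ and at $q\neq p$; since $W^u(p)$ is transverse to $\wfbs$ and $L'$ is a leaf, $W^u(p)$ can meet $L'$ only in isolated points, which is fine, but now apply the hypothesis \emph{again} at $q$: there is a fixed leaf through a point of $W^u(q) = W^u(p)$ other than $q$; iterating this construction and using that the fixed leaves are uniformly Hausdorff-close on components where $\ft$ translates (Proposition~\ref{p.dichotomy_branching}) — no, they are fixed. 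The correct finish, I believe, is the one used in \cite[Section 3]{BFFP-prequel} for the coherent case: the contracting dynamics on $W^u(p)$ together with a fixed leaf $L'$ crossing $W^u(p)$ at two points bounds a compact ``bigon'' region whose image under $\ft$ is strictly smaller, forcing the unstable length between the two intersection points to shrink to zero — impossible since $q\neq p$. I would write the argument this way, citing the analogous \cite[Lemma~3.x]{BFFP-prequel} and noting the branching modifications (using $<_\tau$-orders and property~\ref{item.convergence_of_leaves} in place of leaf-space properness). The subtlety to be careful about throughout is that $L$ and $L'$ may merge along parts of their branch locus, so one must work with the intersection component through $p$ and use the local product-branched-foliated charts rather than treating leaves as disjoint.
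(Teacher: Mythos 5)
There is a genuine gap. You assemble the right objects (a fixed point $p$ of $\ft^k$, the unstable leaf $u(p)$ through it, the invariance of $L'\cap u(p)$, escape/convergence of the orbit of $q$ along $u(p)$), but you never actually derive a contradiction. The step you dismiss as unproblematic — ``$W^u(p)$ can meet $L'$ only in isolated points, which is fine'' — is precisely the configuration that must be excluded, and your proposed finish does not exclude it: the ``bigon'' bounded by $L'$ and the unstable segment between the two intersection points is not $\ft^k$-invariant (only $p$ is fixed, $q$ is not), forward iteration \emph{expands} unstable length rather than shrinking it, and backward iteration merely re-derives $\ft^{-nk}(q)\to p$, which by itself is not contradictory (a leaf of a branching foliation may well contain $p$ in addition to $q$). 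Note also that your initial claim that $\ft$ contracts $W^u(p)$ toward $p$ has the dynamics backwards ($E^u$ is expanded, so $p$ is repelling on $u(p)$ under forward iteration); you hedge this, but it feeds the later confusion about which direction of iteration should give the contradiction.

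The missing ingredient is the elementary fact that a curve transverse to $\wfbs$ — in particular an unstable leaf — crosses each leaf of $\wfbs$ at most once: in $\mt$ the leaves are properly embedded planes separating $\mt$, and since the coorientation is preserved in the orientable setting, every crossing of a transversal is in the positive direction, so a second crossing of the same leaf is impossible (this uses that the approximating foliations are taut, and is invoked repeatedly in the paper, e.g.\ ``no transversal can intersect the same leaf twice''). With this fact the lemma is two lines, and it is exactly the paper's proof: if $\ft^n$ fixes $x\in L$ with, say, $n>0$, apply the hypothesis at $y=x$ to get a fixed leaf $L'$ meeting $u(x)$ in its unique intersection point $z\neq x$; since $L'$ and $u(x)$ are both $\ft^n$-invariant, $z$ is fixed by $\ft^n$, and then the unstable segment from $x$ to $z$ has fixed endpoints while its length is expanded by $\ft^n$ — a contradiction. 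Your detour through closedness of the branching foliation (Definition~\ref{def.branching}\ref{item.convergence_of_leaves}) to conclude $p\in L'$ is unnecessary and, without the single-crossing property, does not close the argument.
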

\begin{proof}
Suppose that $\ft^n$ fixes a point $x \in L$ for some $n \neq 0$. One can assume after possibly switching signs that $n > 0$. Then expansion of the unstable leaf $u$ through $x$ implies that no leaf $L'$ that intersects $u$ at a point other than $x$ can be fixed.
\end{proof}

Compare this with the simpler statement in the dynamically coherent setting, \cite[Lemma~3.13]{BFFP-prequel}, where it suffices to assume $L$ is not isolated in the set of fixed leaves.

\begin{corollary}
	If $\ft$ fixes every center stable leaf, then it has no fixed or periodic points in $\mt$.
\end{corollary}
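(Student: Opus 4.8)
The idea is to apply Lemma~\ref{l.nofixedpoints_branching_case} to show first that no \emph{power} of $\ft$ fixes a point, and then promote this to a statement about $\ft$ itself. Since $\ft$ fixes every center stable leaf, to invoke the lemma it suffices to verify its hypothesis: given any leaf $L \in \wfbs$ (all of which are fixed) and any $y \in L$, we must produce a \emph{different} fixed leaf $L'$ that meets the unstable leaf through $y$ at a point other than $y$. But every center stable leaf is fixed by $\ft$, so this reduces to a purely topological statement: the unstable leaf $u$ through $y$ must meet some center stable leaf other than $L$. This is where the bulk of the argument lies.

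\textbf{Main step.} To see that $u$ exits $L$, first recall that $E^u$ is uniquely integrable, so $u$ is a well-defined properly embedded line through $y$ transverse to $\wfbs$. If $u$ were contained in $L$ it would be tangent to $E^s\oplus E^c$ as well as to $E^u$ at $y$, which is impossible. So $u$ leaves $L$; since $u$ is transverse to the branching foliation and the leaves form a local product structure, as we move along $u$ away from $y$ we immediately enter the complementary regions of $L$ and hence cross into leaves distinct from $L$. Concretely, pick a point $z \in u$, $z \neq y$, close enough to $y$ to lie in a product chart $U$ of $\wfbs$ around $y$; then $z$ lies in some leaf $L'$ of $\wfbs$ with $L'\cap U$ a plaque through $z$ not equal to the plaque of $L$, so $L' \neq L$ (leaves don't cross, property~\ref{item.no_crossing_of_leaves}). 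This $L'$ is fixed by hypothesis, and it meets $u$ at $z\neq y$. Thus Lemma~\ref{l.nofixedpoints_branching_case} applies to every fixed leaf $L$, giving: no nontrivial power of $\ft$ fixes any point of any leaf of $\wfbs$. Since every point of $\mt$ lies in some leaf, no nontrivial power of $\ft$ has a fixed point; in particular $\ft$ itself has no fixed point, and $f$ has no periodic point on $M$ of any period since a periodic point of $f$ would lift to a periodic point of $\ft$ (here one uses that $\ft$ commutes with deck transformations, which is part of being a good lift).

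\textbf{Expected obstacle.} The only delicate point is confirming that the unstable leaf through $y$ genuinely escapes $L$ and that the nearby leaf $L'$ it meets is a \emph{distinct} leaf of the branching foliation rather than $L$ re-entering itself or a self-merging phenomenon. The local product (branched) structure of $\wfbs$ around $y$ handles this cleanly: in a product chart of diameter less than the local product structure size $\epsilon_0$, each point lies on plaques that are graphs over the $\mathbb{D}^2$-factor, and $u$, being transverse, crosses from the plaque of $L$ into plaques at a different height. Property~\ref{item.no_crossing_of_leaves} then forbids the leaf carrying such a plaque from equalling $L$. With this in hand the corollary is immediate; there is no need to worry about the subtleties distinguishing the topology of $\lcsb$ from that of $\mt$ that were delicate in earlier results, because everything here is local.
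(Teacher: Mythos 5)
Your main argument is correct and takes the same route the paper intends: since every center stable leaf is fixed, the hypothesis of Lemma~\ref{l.nofixedpoints_branching_case} is verified trivially by moving a small amount along the unstable leaf through $y$ into a product chart, where one finds a distinct leaf $L'$ (because $L$, being a properly embedded plane that separates $\mt$, meets the transverse curve $u$ only at $y$, so any $L'$ through $z\in u\smallsetminus\{y\}$ is automatically $\neq L$). Applying the lemma to every leaf then gives that no nontrivial power of $\ft$ has a fixed point, which is exactly the statement.

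However, your final sentence overclaims. You assert that ``$f$ has no periodic point on $M$ of any period since a periodic point of $f$ would lift to a periodic point of $\ft$.'' This is false: if $f^n(p)=p$, a lift $\wt p$ of $p$ satisfies only $\ft^n(\wt p)=\gamma(\wt p)$ for some deck transformation $\gamma$, which need not be the identity. (If it were, $p$ would be a \emph{contractible} periodic point in the sense of Definition~\ref{def.contractible_periodic_point}.) Commuting with deck transformations does not force $\gamma=\id$. Indeed, discretized Anosov flows — which satisfy the hypothesis of the corollary — have an abundance of periodic points in $M$. The corollary's conclusion is only that $\ft$ has no fixed or periodic points \emph{in $\mt$}, and that is what the earlier part of your argument establishes. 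The extraneous claim about $f$ should be dropped; the correct non-existence statement about periodic points of $f$ itself (Corollary~\ref{c.contractible}) concerns contractible periodic points and requires additional hypotheses.
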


This follows immediately from the lemma. We will now exclude the existence of fixed or periodic points under the assumption of $f$-minimality.

\begin{theorem}\label{t.good_lifts_have_no_fixed_points}
 	If $\fbs$ or $\fbu$ is $f$-minimal, then $\ft$ does not have any fixed or periodic points in $\mt$.
\end{theorem}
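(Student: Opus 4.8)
The idea is to reduce to the previously-established Corollary above (the case where $\ft$ fixes every center stable leaf), using the minimality dichotomy of Corollary~\ref{c.minimalcase_branching_case}. Suppose, for concreteness, that $\fbs$ is $f$-minimal; the argument for $\fbu$ is symmetric. By Corollary~\ref{c.minimalcase_branching_case}, there are exactly two possibilities for the good lift $\ft$: either $\ft$ fixes every leaf of $\wfbs$, or $\fbs$ is $\R$-covered and uniform and $\ft$ acts as a nontrivial translation on $\lcsb$. In the first case, the Corollary just before this theorem tells us immediately that $\ft$ has no fixed or periodic points in $\mt$, so there is nothing left to do. The whole content of the proof is therefore the second case: $\ft$ acts as a translation on $\lcsb$.

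\textbf{The translation case.} Here I would argue that a translation on the leaf space cannot have a fixed (or periodic) point at all, by a displacement/continuity argument. If $\ft(x) = x$ for some $x \in \mt$, consider the leaf(-ves) $L \in \wfbs$ through $x$. Since $x$ is fixed, $\ft$ maps $\lcsb(x)$ (the closed interval of leaves through $x$, per Claim~\ref{claim.closed_intervals_merge}) to $\lcsb(\ft(x)) = \lcsb(x)$; so $\ft$ preserves this closed interval of leaves in $\lcsb$. A nontrivial translation of $\R$ preserves no point and no nonempty proper closed bounded subset, so this interval would have to be all of $\lcsb$ — impossible unless $\lcsb$ is a single point, which it is not (the manifold is not covered by $S^2 \times S^1$). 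More carefully: $\lcsb(x)$ is a bounded interval in $\lcsb \cong \R$, and a nontrivial translation cannot map a bounded interval to itself. This rules out fixed points; for periodic points one replaces $\ft$ by $\ft^n$, which still acts as a nontrivial translation on $\lcsb$ (a power of a translation is a translation, and it is nontrivial since $\ft$ has no fixed leaf and $\ft^n$ permutes the components of $\lcsb \setminus \Lambda = \lcsb$, forcing it to translate — more directly, $\ft$ acts freely and properly on $\R$, hence so does $\ft^n$).

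\textbf{Combining the cases.} Putting the two cases together: if $\fbs$ is $f$-minimal, $\ft$ (and every power) has no fixed points in $\mt$. If instead $\fbu$ is $f$-minimal, the same argument applied to $\wfbu$ and $\lcub$ gives the conclusion, since $\ft$ is simultaneously a good lift for the action on both leaf spaces. Hence in either hypothesis $\ft$ has no fixed or periodic points in $\mt$.

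\textbf{Expected main obstacle.} The delicate point is the translation case: one must be careful that the relevant object being preserved by $\ft$ is genuinely a \emph{bounded} subset of $\lcsb \cong \R$ and that ``translation'' is being used in the correct sense (acting freely, with all orbits escaping to the ends), so that no periodic behavior can hide. The branching phenomenon — that $\lcsb(x)$ may be a nondegenerate interval rather than a point — needs Claim~\ref{claim.closed_intervals_merge} to be handled cleanly, and one should make sure the argument does not secretly assume $\lcsb$ is Hausdorff (in the $\R$-covered case it is, so this is fine, but it is worth flagging). The transition from fixed points to periodic points via passing to $\ft^n$ should be routine once one notes that $\ft^n$ is again a good lift and, in the translation case, still acts without fixed leaves on $\lcsb$.
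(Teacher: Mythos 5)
Your proposal is correct and takes essentially the same route as the paper: reduce via the dichotomy of Corollary~\ref{c.minimalcase_branching_case} (with $\fbs$ assumed $f$-minimal, the $\fbu$ case being symmetric), handle the fixed-leaf case by Lemma~\ref{l.nofixedpoints_branching_case} and its corollary, and rule out the translation case by noting a periodic point is incompatible with $\ft$ translating $\lcsb$. The paper phrases the last step as $\ft^i(L)\cap L=\emptyset$ for $|i|$ large, while you phrase it via the $\ft^n$-invariant interval $\lcsb(x)$ of Claim~\ref{claim.closed_intervals_merge}; this is the same underlying point.
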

\begin{proof}
Assume without loss of generality that $\fbs$ is $f$-minimal. By the dichotomy in Corollary \ref{c.minimalcase_branching_case}, $\ft$ either fixes every leaf of $\wfbs$, or acts as a translation on $\lcsb$.

If $\ft$ fixes every leaf of $\wfbs$ the result follows from Lemma~\ref{l.nofixedpoints_branching_case}. If $\ft$ acts as a translation on $\lcsb$, then for any leaf $L$ of $\wfbs$ one has $\ft^i(L) \cap L = \emptyset$ for $|i|$ sufficiently large.
\end{proof}

A noteworthy consequence is the non-existence of ``contractible periodic points'' under the assumption of $f$-minimality.

\begin{definition}\label{def.contractible_periodic_point}
Let $g$ be a homeomorphism of a manifold homotopic to the identity.
A point $p$ is a \emph{contractible periodic point} if $g^n(p) = p$ for some $n \neq 0$ and there is a homotopy $H\colon M \times [0,1] \to M$ from the identity to $g$, such that the concatenation of the paths $H(p,\cdot), H(g(p), \cdot), \dots, H(g^{n-1}(p),\cdot)$ is homotopically trivial.
\end{definition}
Notice that if $p$ is a contractible periodic point of $g$ of period $n$ then there exists a good lift $\wt g$ of $g$ and a lift $\wt p$ of $p$ such that $\wt g^n (\wt p) =  \wt p$. Thus, Theorem \ref{t.good_lifts_have_no_fixed_points} immediately yields:
\begin{corollary}\label{c.contractible}
If $f$ admits a $f$-minimal branching center stable or center unstable foliation, then $f$ has no contractible periodic points.
\end{corollary}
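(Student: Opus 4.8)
The final statement to prove is Corollary~\ref{c.contractible}: if $f$ admits an $f$-minimal center stable or center unstable branching foliation, then $f$ has no contractible periodic points.

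The plan is to reduce directly to Theorem~\ref{t.good_lifts_have_no_fixed_points}, which already says that a good lift $\ft$ has no fixed or periodic points in $\mt$ whenever $\fbs$ or $\fbu$ is $f$-minimal. So the entire content of the corollary is the observation, stated immediately before it in the excerpt, that a contractible periodic point lifts to a genuine periodic point of a good lift. The steps are: (1) Suppose for contradiction that $p$ is a contractible periodic point of $f$ of period $n$, witnessed by a homotopy $H\colon M\times[0,1]\to M$ from the identity to $f$ for which the concatenated loop $\gamma := H(p,\cdot)\ast H(f(p),\cdot)\ast\cdots\ast H(f^{n-1}(p),\cdot)$ is null-homotopic. (2) Lift $H$ to a homotopy $\widetilde H$ on $\mt$ starting at the identity; its time-one map $\ft := \widetilde H(\cdot,1)$ is a good lift of $f$ (it is at bounded distance from the identity because $M$ is compact, and it commutes with all deck transformations since it comes from lifting a homotopy to the identity). (3) Pick a lift $\wt p$ of $p$. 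Following the lifted homotopy, $\ft(\wt p)$ is the endpoint of the lift of the path $H(p,\cdot)$ starting at $\wt p$; iterating, $\ft^n(\wt p)$ is the endpoint of the lift of the concatenated path $\gamma$ starting at $\wt p$. Since $\gamma$ is null-homotopic (in particular a loop), its lift is a loop, so $\ft^n(\wt p) = \wt p$. (4) This contradicts Theorem~\ref{t.good_lifts_have_no_fixed_points}, which forbids periodic points of $\ft$ in $\mt$ when $\fbs$ or $\fbu$ is $f$-minimal. Hence no contractible periodic point exists.

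Concretely I would write: assume $f$ has a contractible periodic point $p$ of period $n$ with witnessing homotopy $H$. Let $\ft$ be the good lift given by lifting $H$, and let $\wt p$ be any lift of $p$. The path $t\mapsto \ft^{n}(\widetilde H(\cdot,t))$ applied appropriately shows that $\ft^{n}(\wt p)$ differs from $\wt p$ by the deck transformation corresponding to the homotopy class of $\gamma$; since $[\gamma]=1$ in $\pi_1(M)$, that deck transformation is the identity, so $\ft^{n}(\wt p)=\wt p$. Then Theorem~\ref{t.good_lifts_have_no_fixed_points} gives the contradiction, since by hypothesis one of $\fbs,\fbu$ is $f$-minimal. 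The only mild care needed is the bookkeeping in step (3): one must check that composing the lifted homotopies correctly identifies $\ft^n(\wt p)$ with the endpoint of the lift of the concatenation $H(p,\cdot)\ast\cdots\ast H(f^{n-1}(p),\cdot)$ rather than of some other path — this is exactly the standard computation relating the ``suspension'' description of a good lift to deck transformations, and is precisely what Definition~\ref{def.contractible_periodic_point} is designed to make work.

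There is no real obstacle here; the corollary is a formal consequence of Theorem~\ref{t.good_lifts_have_no_fixed_points} together with the elementary lifting observation recorded just above its statement. The only thing one should be slightly careful about is that ``contractible'' in Definition~\ref{def.contractible_periodic_point} refers to the concatenated orbit path being homotopically trivial — not merely each individual $H(f^i(p),\cdot)$ — and it is exactly this hypothesis that ensures the lifted periodic point is fixed by a power of the \emph{same} good lift $\ft$ (rather than by $\ft$ followed by a nontrivial deck transformation, which would not contradict anything). With that noted, the proof is a two-line deduction.
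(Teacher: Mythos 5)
Your proof is correct and is essentially the paper's own argument: the paper records, in the sentence immediately preceding the corollary, that a contractible periodic point yields a lift $\wt p$ fixed by a power of a good lift $\ft$, and then invokes Theorem~\ref{t.good_lifts_have_no_fixed_points}. You have simply spelled out the lifting bookkeeping in more detail.
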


This completes the proof of Theorem \ref{thm-nocontractible} in the $f$-minimal case. The hyperbolic and Seifert fibered cases follow from Proposition \ref{p.hypSeifminimal_nonDC}.

\subsection{Fundamental groups of leaves}\label{ss.fundamental_group_branching}
The leaves of $\fbs$ and $\fbu$ are immersed surfaces which may not be injectively immersed. In the universal cover, however, the leaves of $\wfbs$ and $\wfbu$ are properly embedded planes (cf.~Section \ref{ss.leafspace}).

It follows that there may be a closed loop in a leaf with a corresponding element of $\pi_1(M)$ that fixes no lift of that leaf in the universal cover. These elements are not useful for our purposes, so we will remove them by convention:
\begin{convention}
	When working with a fixed lift $L$ of a leaf $C$ of $\fbs$ or $\fbu$, we will say that an element $\gamma \in \pi_1(M)$ is in the fundamental group of $C$ if it stabilizes $L$.
\end{convention}

There is another way of seeing this notion of fundamental group arise: Recall (Theorem \ref{teo-BI}) that the branching foliations are approximated by true foliations $\feu$ and $\fes$ and that there exists maps, $\hs$ and $\hu$ 
mapping leaves of $\fes$ (or $\feu$) to those of $\fbs$ (or $\fbu$).
Then, a loop is in the fundamental group of a leaf $C$ of $\fbs$ if and only if it is freely homotopic to a loop in a corresponding leaf $C_{\epsilon}$ of $\fes$, for every $\epsilon$ small enough. Notice that if there are several leaves that project to $C$, in the universal cover, take a lift $L$ and it follows from Proposition \ref{prop.leafspace} that the set of leaves that projects to $L$ is an interval in the leaf space of $\wfes$. It follows that $\hs$ lifts to a equivariant (with respect to the defined fundamental group of $C$) diffeomorphism from the boundary leaves of the closed interval to $L$. We call such a leaf $L_\epsilon$ and denote $C_\epsilon=\pi(L_{\epsilon})$. 

In other words, for us, the fundamental group of $C$ based at $y$ will be exactly $(\hs)_*(\pi_1(C_{\epsilon},y_0))$ where $\hs(y_0)=y$. 

In particular, since $\fes$ and $\feu$ are taut foliations without Reeb components, each leaf is $\pi_1$-injective in $M$. Thus, this second interpretation helps explain our convention: the closed loops in a leaf of $\fbs$ are either in the fundamental group as we defined it, or they are due to a self-intersection. In that case, they are not an essential feature of the leaf, as they stopped being closed when pulled-back to the approximating leaf.

Following our convention, we will then say that a leaf $C$ of the branching foliation is a plane, a cylinder, or a M\"obius band if its corresponding approximated leaf $C_{\epsilon}$ is, respectively, a plane, a cylinder, or a M\"obius band, for any small enough $\epsilon$.

Using these conventions, \cite[Proposition 3.14]{BFFP-prequel} holds for the leaves of the branching foliations whenever $\ft$ has no fixed points in the leaf (cf.~Lemma \ref{l.nofixedpoints_branching_case}). For ease of reference, we restate it here.

\begin{proposition}\label{p.planeannuliorfixedpoints_branching}  
Assume that $\ft$ fixes a leaf $L$ of $\wfbs$ then, $C = \pi(L)$ has cyclic fundamental group (thus it is either a plane, an annulus or a M\"{o}bius band), or $L$ has a point fixed by $\ft$.
\end{proposition}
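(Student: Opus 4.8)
The plan is to reduce the statement to its analogue for true foliations, \cite[Proposition 3.14]{BFFP-prequel}, using the correspondence between a branching leaf and its approximating leaves set up in \S\ref{ss.fundamental_group_branching}. First I would invoke Proposition~\ref{prop.leafspace}: the set of leaves of $\wfes$ that $\whs$ maps onto $L$ is a closed interval $[L_1^\epsilon,L_2^\epsilon]$ in $\lcse$, and by our convention the fundamental group of $C=\pi(L)$ is identified (via $(\hs)_*$) with the fundamental group of the approximating leaf $C_\epsilon=\pi(L_\epsilon)$, where $L_\epsilon$ is a boundary leaf of that interval. So it suffices to understand the topology of $C_\epsilon$.

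The key step is to transport the hypothesis ``$\ft$ fixes $L$'' to the approximating picture. Here one uses that $\ft$ permutes the leaves of $\wfes$ (up to first moving to a further subsequence of $\epsilon$'s, using $f$-invariance of $\fbs$ and the fact that the $\fes$ approximate $\fbs$); since $\ft(L)=L$, $\ft$ must send the interval $[L_1^\epsilon,L_2^\epsilon]$ of $\wfes$-leaves over $L$ to itself, hence it fixes the (ordered) pair of boundary leaves, and after passing to $\ft^2$ if necessary it fixes each of $L_1^\epsilon$ and $L_2^\epsilon$ individually. Now apply \cite[Proposition 3.14]{BFFP-prequel} to the true foliation $\fes$ and the $\ft^2$-fixed leaf $L_\epsilon := L_1^\epsilon$: either $C_\epsilon=\pi(L_\epsilon)$ has cyclic fundamental group — hence is a plane, an annulus, or a M\"obius band — or $L_\epsilon$ contains a point fixed by $\ft^2$.

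In the first case we are done, because by definition the branching leaf $C$ is a plane/annulus/M\"obius band exactly when $C_\epsilon$ is, and cyclic fundamental group is precisely the dichotomy asserted. In the second case, I must produce a point of $L$ fixed by $\ft$ (not merely $\ft^2$) from a point of $L_\epsilon$ fixed by $\ft^2$. Since $\whs\colon L_\epsilon\to L$ is an equivariant local diffeomorphism (with respect to the identified fundamental groups) and is $\ft$-equivariantly defined — $\whs$ intertwines the actions on $\wfes$ and $\wfbs$ — the image $\whs$ of an $\ft^2$-fixed point of $L_\epsilon$ is an $\ft^2$-fixed point of $L$; and one removes the power by the standard observation (already used in Lemma~\ref{l.nofixedpoints_branching_case} and Theorem~\ref{t.good_lifts_have_no_fixed_points}) that if $\ft^2$ has a fixed point in a leaf $L$ fixed by $\ft$, then on the one-dimensional center leaf space or via the orientation-preserving action of Proposition~\ref{p.dichotomy_branching} one upgrades this to a genuine $\ft$-fixed point (or, more simply, one runs \cite[Prop.\ 3.14]{BFFP-prequel} for $\ft$ itself rather than $\ft^2$ by noting $\ft$ already fixes the ordered pair of boundary leaves of the interval and hence each of them, since distinct leaves through a common point are linearly ordered and $\ft$ preserves the coorientation by Proposition~\ref{p.translatedleaves_nonDC}).

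The main obstacle I anticipate is the bookkeeping in that last step: making precise that $\ft$, not just $\ft^2$, fixes the boundary leaf $L_\epsilon$ and that $\whs$ is genuinely $\ft$-equivariant on the nose, so that fixed points pull back correctly. This rests on Proposition~\ref{p.translatedleaves_nonDC}(2) (preservation of coorientation, hence of the two ends of the interval of leaves over $L$, rather than a swap) together with the construction of $\whs$ in \S\ref{ss.fundamental_group_branching} as a lift of an $f$-equivariant map; once those are in hand the argument is a direct translation of \cite[Proposition 3.14]{BFFP-prequel}, as the excerpt already signals.
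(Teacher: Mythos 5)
Your argument has a fatal flaw at the very step you identify as the crux: you assert that ``$\ft$ permutes the leaves of $\wfes$ (up to first moving to a further subsequence of $\epsilon$'s)''. This is false, and no subsequence of $\epsilon$'s rescues it: the approximating foliations $\fes$ produced by Burago--Ivanov are not $f$-invariant. Only the branching foliation $\fbs$ is $f$-invariant. The paper flags this repeatedly; for instance the proof of Lemma~\ref{GromovhypnonDC} begins its adjustment with ``Since $\fes$ is not $f$-invariant, we have to adjust the proof given in \cite{BFFP-prequel}.'' Consequently $\ft$ does not act on $\lcse$ at all, the interval $[L_1^\epsilon,L_2^\epsilon]$ is not $\ft$-invariant, $\ft(L_1^\epsilon)$ is typically not a leaf of $\wfes$, and \cite[Proposition 3.14]{BFFP-prequel} cannot be applied to the true foliation $\fes$ with the lift $\ft$. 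Every subsequent step of your proposal depends on this.

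The role of the approximating foliations in \S\ref{ss.fundamental_group_branching} is only to justify the \emph{convention} that $\pi_1(C)$ means the stabilizer of the lift $L$ in $\pi_1(M)$ (equivalently the $\pi_1$-injective image $(\hs)_*\pi_1(C_\epsilon)$). The proposition is then obtained by running the argument of \cite[Proposition~3.14]{BFFP-prequel} \emph{directly on $L$ in $\mt$}, with that convention, not by transferring the dynamics to $\fes$. What makes this possible, without any $\ft$-action on $\fes$, is that $L$ itself is a properly embedded plane in $\mt\simeq\R^3$ and the strong stable foliation inside $L$ is a genuine (non-branching) one-dimensional foliation, since $E^s$ is uniquely integrable and any stable leaf meeting $L$ lies in $L$. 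If $\ft$ has no fixed point in $L$, then $\ft$ and every element of $\mathrm{Stab}(L)$ act freely on the stable leaf space $\cL^s_L$ (stable leaves are never closed), and the axis/commutation machinery of \cite[Appendix~E]{BFFP-prequel} applied to this $1$-manifold forces $\mathrm{Stab}(L)$ to be abelian, hence cyclic. That is the correct route, and it uses the leaf $L$ and the foliation by stables inside it, not $\fes$.

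A secondary, smaller issue: even granting your (false) premise, the reduction from $\ft^2$ back to $\ft$ is circular as written -- you appeal to Lemma~\ref{l.nofixedpoints_branching_case} and Theorem~\ref{t.good_lifts_have_no_fixed_points}, but the former is a non-existence statement (and applies under additional hypotheses on neighboring fixed leaves) and the latter is proved later and under $f$-minimality; neither produces an $\ft$-fixed point from an $\ft^2$-fixed point.
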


\begin{remark}\label{r.distanceleaf}
Similarly, because of possible self-intersections, we need to be careful on how to define the path-metric on a leaf of $\fbs$ or $\fbu$.

If $C$ is a leaf of, say, $\fbs$, we define a \emph{path} on $C$ as a continuous curve $\eta$ that is the projection of a continuous curve $\wt \eta$ in a lift $L$ of $C$ to $\mt$. We then define the path-metric on $C$ as usual, but considering only the paths as defined before.

Notice that not every continuous curve $\eta$ on $C$ is a path in the above sense, as there might not exists any lift of $\eta$ that stays on only one lift of $C$.
\end{remark}

Still the analogue of \cite[Lemma 3.11]{BFFP-prequel} holds: 

\begin{lemma}\label{lema-boundedinfixcsleaf-nonDC} 
If $\ft$ fixes every leaf of $\wfbs$ (resp. $\wfbu$) then there is $K>0$ such that for every $L \in \wfbs$ (resp. $L \in \wfbu$) we have that $d_L(x, \ft(x))<K$. 
\end{lemma}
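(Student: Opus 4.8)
The plan is to follow the proof of \cite[Lemma~3.11]{BFFP-prequel}, adapted to the branching setting, and to deduce the uniform bound from a compactness argument on $M$. Throughout, $d_L$ denotes the intrinsic (Riemannian path) metric of the leaf $L$, which in $\mt$ is a properly embedded plane, so there is no issue here with the path-metric subtleties of Remark~\ref{r.distanceleaf}; those enter only through the requirement that the paths we construct lie inside a single leaf.

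First I would set $R := \max_{y \in \mt} d(y,\ft(y))$, which is finite because $\ft$ is a good lift (Definition~\ref{d.goodlift}). For $x \in \mt$, every leaf $L$ containing $x$ is fixed by $\ft$ by hypothesis, so $\ft(x) \in L$ as well; since $L$ is path-connected, $d_L(x,\ft(x)) < \infty$ for each such $L$. Define
\[
\Phi(x) \ :=\ \sup\bigl\{\, d_L(x,\ft(x)) \ :\ L \in \lcsb(x)\,\bigr\}.
\]
If $\gamma \in \pi_1(M)$, then $\lcsb(\gamma x) = \gamma\cdot\lcsb(x)$, and since $\gamma$ is an isometry taking $L$ to $\gamma L$ and commutes with $\ft$ (so $\ft(\gamma x) = \gamma\ft(x)$), we get $d_{\gamma L}(\gamma x,\ft(\gamma x)) = d_L(x,\ft(x))$ and hence $\Phi(\gamma x) = \Phi(x)$. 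Thus $\Phi$ descends to a function on $M$, and it suffices to show $\Phi$ is finite and locally bounded on $\mt$: covering $M$ by finitely many sets on which (the descended) $\Phi$ is bounded then yields a single $K$, and any leaf $L$ through any $x \in L$ satisfies $d_L(x,\ft(x)) \le \Phi(x) \le K$.

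The core step is therefore local boundedness of $\Phi$ near a fixed $x_0 \in \mt$ (which also gives finiteness). The set $\lcsb(x_0)$ is a compact interval of leaves (Claim~\ref{claim.closed_intervals_merge}), each fixed by $\ft$. Given $L_0 \in \lcsb(x_0)$, choose a path $\beta_0$ in $L_0$ from $x_0$ to $\ft(x_0)$ of length at most $d_{L_0}(x_0,\ft(x_0)) + 1 =: K_0$, and cover $\beta_0$ by a number of product charts of $\fbs$ that is bounded in terms of $K_0$ and the local product structure size $\eps_0$ (using compactness of $M$). For $x$ close enough to $x_0$ and $L \in \lcsb(x)$, the leaf $L$ is $C^0$-close to $L_0$ on the compact set $\beta_0$, so in each chart $L$ has a plaque near the corresponding plaque of $L_0$; since $L$ is a single leaf and consecutive plaques of $L_0$ along $\beta_0$ overlap, the corresponding plaques of $L$ overlap as well once $L$ is close enough to $L_0$. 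Concatenating these plaque-arcs and then pushing the endpoints to $x$ and $\ft(x)$ within $L$ (inside the end charts) produces an honest path in $L$ from $x$ to $\ft(x)$ of length at most $K_0 + C(\eps_0,K_0)$. Taking the supremum over the compact interval $\lcsb(x_0)$ (covered by finitely many such neighborhoods) and over $L \in \lcsb(x)$ bounds $\Phi$ on a neighborhood of $x_0$. For $\wfbu$ the argument is identical.

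The main obstacle is precisely the branching. Unlike in \cite[Lemma~3.11]{BFFP-prequel}, one must make sense of ``the plaque of $L$ near a plaque of $L_0$'' and check that the chain of overlapping plaques survives when nearby leaves merge with $L_0$, with each other, or with $L$. This is handled by the monotone local product structure and the compatible linear orderings of $\S\ref{ss.leafspace}$ (Claim~\ref{claim.closed_intervals_merge}, Claim~\ref{claim.ordersFromArcs}, Proposition~\ref{prop.leafspace}): the leaves through a small transversal are linearly ordered, and this order agrees on overlapping transversals, so a leaf that is $C^0$-close to $L_0$ on a compact arc genuinely traverses the same finite chain of overlapping charts. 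Everything else is the routine compactness argument above.
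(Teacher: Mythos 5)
Your argument is correct and is the natural one; the paper itself only invokes the true-foliation analogue (Lemma~3.11 of \cite{BFFP-prequel}) and leaves the branching adaptation to the reader, which is essentially what you have supplied. One wrinkle to smooth out: as written, the claim ``for $x$ close enough to $x_0$ and $L\in\lcsb(x)$, the leaf $L$ is $C^0$-close to $L_0$ on $\beta_0$'' is false for a \emph{fixed} $L_0$ when $\lcsb(x_0)$ is a nontrivial interval --- such an $L$ is only guaranteed to be close to \emph{some} leaf of $\lcsb(x_0)$, not to the particular $L_0$ you chose, since two leaves through $x_0$ may diverge quickly away from $x_0$. The compactness step you invoke at the end (finitely many leaf-space neighborhoods $N(L_0^1),\dots,N(L_0^k)$ covering $\lcsb(x_0)$) repairs this, but you should also say explicitly why, for $x$ sufficiently near $x_0$, \emph{every} $L\in\lcsb(x)$ lands in that finite union: this is property~\ref{item.convergence_of_leaves} of Definition~\ref{def.branching} (leaves through points near $x_0$ can only accumulate on leaves through $x_0$) combined with openness of the $N(L_0^i)$ in $\lcsb$. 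With the quantifiers reordered so that this covering comes first, the plaque-chaining (using the pointed convergence and the monotone linear orders to single out the correct plaque of $L$ in each chart) and the rest of the argument are fine.
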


\subsection{Gromov hyperbolicity of leaves}\label{ss.gromov_hyperbolicity}

We now prove a version of \cite[Lemma 3.20]{BFFP-prequel} in
the non dynamically coherent setting.

\begin{lemma}\label{GromovhypnonDC}
If $\fbs$ is $f$-minimal, and $\ft$ fixes every leaf of $\wfbs$. Then each leaf of $\fbs$ is Gromov hyperbolic.
\end{lemma}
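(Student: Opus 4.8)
The plan is to deduce Gromov hyperbolicity of the leaves of $\fbs$ from the known fact that leaves of the approximating taut foliations $\fes$ enjoy it, or more precisely to run the same argument used for true foliations in \cite[Lemma 3.20]{BFFP-prequel} with the branching-foliation modifications already set up in this section. The essential input from \cite{BFFP-prequel} is a Candel-type / leafwise-curvature argument: a taut foliation of an atoroidal (or more generally, aspherical with no compressible tori) $3$-manifold with no $\pi_1$-trivial and no $\ZZ^2$-subgroup leaves has all leaves Gromov hyperbolic, and the hypotheses we need are that no leaf is a plane with special recurrence, a cylinder, or a torus. So the structure of the proof is: (1) reduce to ruling out $\ZZ^2$ in (the fundamental group of) leaves and ruling out the bad planar/cylindrical cases; (2) invoke the argument of \cite[Lemma 3.20]{BFFP-prequel} verbatim, using the conventions on fundamental groups of leaves of \S\ref{ss.fundamental_group_branching} and the path-metric of Remark \ref{r.distanceleaf}.

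First I would set up the dictionary between a leaf $C$ of $\fbs$ and its approximating leaf $C_\epsilon$ of $\fes$. By the convention in \S\ref{ss.fundamental_group_branching}, $\pi_1(C)$ is identified with $(\hs)_*\pi_1(C_\epsilon)$, and by the discussion there $\hs$ lifts to a $\pi_1(C)$-equivariant diffeomorphism from a boundary leaf $L_\epsilon$ of the interval $g_{\epsilon,s}^{-1}(L)$ onto the lift $L$ of $C$. Since the path-metric on $C$ (Remark \ref{r.distanceleaf}) is measured along curves lifting to $L$, and $\hs$ is $C^0$-close to the identity and a local diffeomorphism on leaves, this lift $L_\epsilon \to L$ is a quasi-isometry (it is a diffeomorphism with derivative bounded above and below by compactness of $M$ and uniform transversality). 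Gromov hyperbolicity is a quasi-isometry invariant for geodesic metric spaces, so it suffices to show that $C_\epsilon$, equivalently every leaf of $\fes$, is Gromov hyperbolic.

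Next I would rule out the bad leaf types for $\fes$. The foliations $\fes$ are taut with no compact leaves (\S\ref{sec-branching}, ``Tautness''), and since $\pi_1(M)$ is not virtually solvable, by Novikov's theorem and the structure theory for taut foliations, no leaf has a cylinder subsurface giving a $\ZZ$ acting with nontrivial holonomy in a Reeb-like fashion, and crucially no leaf can be a torus or contain a $\ZZ^2$ (a $\ZZ^2$ in a leaf of a taut foliation would force an essential torus, and then the manifold would have solvable or reducible pieces forbidding partial hyperbolicity via \cite{HHU-nocompactleaves} together with the standing hypothesis on $\pi_1(M)$). This is exactly the place where $f$-minimality of $\fbs$ and the fact that $\ft$ fixes every leaf get used, just as in \cite{BFFP-prequel}: $f$-minimality together with $\ft$ fixing every leaf forbids closed center leaves of the wrong type and hence the cylindrical/toral degenerations; alternatively the minimality is used to prevent a leaf of $\fbs$ from being a plane that is not recurrent in a way that would obstruct the Candel-metric argument. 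With those cases excluded, the argument of \cite[Lemma 3.20]{BFFP-prequel} applies to $\fes$ and gives that every leaf of $\fes$ is Gromov hyperbolic, and by the previous paragraph so is every leaf of $\fbs$.

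I expect the main obstacle to be bookkeeping rather than conceptual: carefully checking that the identification of $\pi_1(C)$ with $\pi_1(C_\epsilon)$ interacts correctly with the path-metric of Remark \ref{r.distanceleaf} on self-intersecting leaves, so that the quasi-isometry claim between $L$ and $L_\epsilon$ is genuinely valid (the subtlety is that a curve in $C$ that is not a ``path'' in the sense of Remark \ref{r.distanceleaf} has no counterpart, but this is precisely what the convention is designed to handle). The other point requiring care is importing the exclusion of $\ZZ^2$-in-leaves and of exceptional planar leaves: here one should cite the relevant results already invoked in \S\ref{sec-branching} (tautness, no compact leaves from \cite{HHU-nocompactleaves}, $\pi_1(M)$ not virtually solvable) together with the $f$-minimality hypothesis, exactly as in the dynamically coherent case, rather than reproving anything.
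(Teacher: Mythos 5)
Your reduction to the approximating foliation (the quasi-isometry between a leaf $C$ of $\fbs$ and the boundary leaf $C_\epsilon$ of $\fes$, using the conventions of \S\ref{ss.fundamental_group_branching} and Remark \ref{r.distanceleaf}) is fine, but the core of your argument has a genuine gap: you mischaracterize what has to be ruled out. Candel's theorem does not say that a taut foliation with no toral leaves and no $\ZZ^2$ in leaf fundamental groups has Gromov hyperbolic leaves; the alternative to hyperbolicity is the existence of a holonomy-invariant transverse measure of zero Euler characteristic, and such a measure can a priori be carried by plane or cylinder leaves of subexponential growth. So excluding tori and $\ZZ^2$ subgroups (which follows from tautness and the standing hypotheses, and is not where $f$-minimality enters) does not finish the argument. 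Note also that cylinder leaves cannot be excluded here at all --- in the relevant situations leaves of $\fbs$ are planes or annuli (cf.\ Corollary \ref{c.L53}) --- so "ruling out the cylindrical degenerations" is neither possible nor needed.

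The second problem is your plan to invoke the argument of \cite[Lemma 3.20]{BFFP-prequel} "verbatim" on $\fes$: that argument uses $f$-invariance of the foliation, and $\fes$ is \emph{not} $f$-invariant, only $\fbs$ is. This is exactly the difficulty the proof must address, and it is where the hypotheses really get used: one assumes a holonomy-invariant transverse measure $\mu$ for $\fes$ exists, lifts it to a measure on $\lcse$, and pushes it forward by $g_{\epsilon,s}$ to a measure $\widetilde\nu$ on $\lcsb$, which is $\ft$-invariant precisely because $\ft$ fixes every leaf of $\wfbs$, and $\pi_1(M)$-invariant. Viewing the induced measure $\nu$ as a measure on transversals to $\fbs$ in $M$, invariance plus backward iteration (unstable segments become arbitrarily short, hence fit in a foliation box) shows $\nu$ of any unstable segment is uniformly bounded; this yields a full unstable leaf $\zeta$ with $\nu(\zeta)=0$, and then the union $Y$ of leaves of $\fbs$ missing all $f$-iterates of $\zeta$ is a nonempty, proper, closed, saturated, $f$-invariant set (leaves in the support of $\widetilde\nu$ cannot meet $\zeta$), contradicting $f$-minimality. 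Your proposal never constructs this contradiction and attributes the use of $f$-minimality and leaf-fixing to forbidding "closed center leaves of the wrong type," which is not the mechanism; as written, the argument would not go through.
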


\begin{proof}
The foliation $\fes$ is taut. Thus, Candel's theorem \cite{Candel} asserts that either all the leaves of $\fes$ are Gromov hyperbolic or there is a holonomy invariant transverse measure (of
zero Euler characteristic).

Assume for a contradiction that $\mu$ is a holonomy invariant transverse measure.

Since $\fes$ is not $f$-invariant, we have to adjust the proof given in \cite{BFFP-prequel}.

The transverse measure $\mu$ lifts to a measure $\widetilde\mu$ transverse to $\wfes$. Thus, $\widetilde\mu$ defines a measure on $\lcse$, the leaf space of $\fes$. 

Let $g_{\epsilon,s}\colon \lcse \to \lcsb$ be the canonical projection between the leaf spaces of $\fes$ and $\fbs$ (see section \ref{sss.ls-centerstable}). Let $\widetilde\nu := \left(g_{\epsilon,s}\right)_{\ast} \widetilde\mu$ be the corresponding measure on $\lcsb$. Now $\widetilde\nu$ is $\ft$-invariant since $\ft$ is the identity on $\lcsb$, and it is also $\pi_1(M)$-invariant as $\widetilde\mu$ is. 
The support of $\widetilde\nu$ in $\lcsb$ is a 
closed set $Z$ in $\lcsb$ that is 
$\ft$-invariant and $\pi_1(M)$-invariant.

The measure $\widetilde\nu$ on $\lcsb$ can also be considered as a measure on the set of transversals to $\wfbs$ in $\widetilde M$: For any transversal $\tau$ to $\wfbs$ in $\widetilde M$, we define $\wt\nu(\tau)$ as the $\wt\nu$-measure of the set of leaves in $\lcsb$ that intersects $\tau$.
Notice that the measure of a point in $\mt$ (which can be thought of
as a degenerate transversal) can be positive if the image of that point in $\lcsb$ is an interval.

Note also that we refrained from calling $\wt\nu$ a transverse measure to $\wfbs$ because it is by no means holonomy invariant. In fact holonomy itself is not well defined for a branching foliation.
Still $\wt\nu$ satisfies the property that
if $\tau_1, \tau_2$ are transversals and every leaf 
intersecting $\tau_1$, also intersects $\tau_2$, then
$\widetilde \nu(\tau_1) \leq \widetilde \nu(\tau_2)$.

Projecting down to $M$,the measure $\widetilde \nu$ induces a measure $\nu$ on the set of transversals  
to $\fbs$ on $M$.

Let $\tau$ be any unstable segment in $M$.
Since $\ft$ fixes every leaf of $\wfbs$,
the measure of $f^{i}(\tau)$  ($= \nu(f^i(\tau))$) 
is equal to $\nu(\tau)$
for any integer $i$. We can choose $i$ very big and negative
so that the length of $f^{i}(\tau)$ is extremely small. 
Therefore it is contained in a small foliated box of $\fbs$, which
is the projection of a compact foliated box of $\fes$. 
It follows that $\nu(\tau)$ is uniformly bounded.
In particular this implies that the $\nu$-measure of any
unstable leaf in $M$ is bounded above.
In turns, it implies that for any $j > 0$ (assumed big enough),
there is an unstable segment $u_j$ of length $> j$
which has $\nu(u_j)$ measure $< 1/j$. Taking the midpoint of these 
segments and a converging subsequence, we obtain a full unstable leaf,
call it $\zeta$, so that $\zeta$ has $\nu(\zeta) = 0$ (since $\nu(\zeta) < 1/j$ for all big enough $j$).

Let $Y$ be the union of the leaves of $\fbs$ that do
not intersect $\zeta$ or any of its iterates by $f$.
Then $Y$ is a closed subset of $M$ and clearly $f$-invariant.
Let $L$ be a leaf in $\wfbs$ which is in $Z$, the support of $\widetilde\nu$.
Then by definition of support of $\widetilde\nu$, it follows
that  $\pi(L)$ cannot intersect
$\zeta$ or any of its iterates by $f$. Hence $\pi(L)$ is in $Y$.
In particular $Y$ is not empty.
This contradicts the fact that $\fbs$ is $f$-minimal,
and hence cannot happen.

This finishes the proof of the lemma.
\end{proof}


\section{Center dynamics in fixed leaves}\label{sec-centerdynamics}

This section deals with the dynamics of center leaves within center stable (and center unstable) leaves. It is one of the first places where we encounter significant difficulties compared with the dynamically coherent setting. 

In \cite[Proposition 4.4]{BFFP-prequel} we found a condition for the existence of center leaves that are fixed by a good lift, but the proof does not work without dynamical coherence \cite[Remark 4.8]{BFFP-prequel}.

Throughout this section we continue to assume that $f$ is orientable (Definition~\ref{def:oriented}).

\begin{definition} \label{d.coarsely_contracting}
	Let $c \subset M$ be a center leaf that is fixed by $f$. We say that $c$ is \emph{coarsely contracting} if it is homeomorphic to the line, and it contains an non-empty compact interval $I$ such that for each compact interval $J \subset c$ whose interior contains $I$ has the property that $f(J) \subset \mathring{J}$.
	
	We say that $c$ is called \emph{coarsely expanding} if it is coarsely contracting for $f^{-1}$.
	
	We also naturally extend the definition of coarse contraction/expansion to leaves that are periodic under $f$.
\end{definition}

The following is the main result of this section.

\begin{proposition}\label{p.alternnonDC} 
	Suppose that $\fbs$ is $f$-minimal, and there is a good lift $\ft$ that fixes every center stable leaf but no center leaf in $\mt$. Then every $f$-periodic center leaf in $M$ is coarsely contracting.
\end{proposition}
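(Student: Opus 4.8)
The plan is to work in the universal cover $\mt$ with a fixed good lift $\ft$ that fixes every leaf of $\wfbs$ but no center leaf, and analyze what happens inside a single fixed center stable leaf $L$. Suppose $c \subset M$ is an $f$-periodic center leaf; after replacing $f$ by a power we may assume $f(c) = c$. Since $c$ carries nontrivial holonomy structure, by Proposition \ref{p.planeannuliorfixedpoints_branching} and the surrounding discussion (in particular the fact that $\ft$ has no fixed points in fixed leaves, Lemma \ref{l.nofixedpoints_branching_case} and its corollary, together with $f$-minimality via Theorem \ref{t.good_lifts_have_no_fixed_points}), $c$ must be either a line or a circle; and since $\ft$ fixes every center stable leaf but has no fixed points, I expect that the relevant $c$ for which coarse contraction is the question are exactly the lines — the circle case should be handled by the non-existence of periodic (hence fixed-up-to-deck) points, or else should be ruled out separately. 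So the real content is: given a line center leaf $c$ fixed by $f$, lying in a center stable leaf $L$ with chosen lift $\widetilde c \subset \widetilde L$ invariant under $\ft$, show $f|_c$ is coarsely contracting.

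The key mechanism: $\ft$ moves points a uniformly bounded distance (say by $R$), and by Lemma \ref{lema-boundedinfixcsleaf-nonDC} it moves points a bounded distance $K$ even in the intrinsic path metric $d_L$ of $L$. Restricting attention to $\widetilde c$ with its intrinsic metric (which, since $c$ is a line, is a copy of $\R$), this says $\ft$ moves points of $\widetilde c$ a bounded $d_L$-distance — but I need to upgrade this to a bound in the intrinsic metric of $\widetilde c$ itself, which is where the stable foliation $\cS_L$ inside $L$ and the perfect-fit machinery of \S\ref{ss.perfectfits} enter. The structure I want to exploit is the following: since $\ft$ fixes no center leaf of $L$ but fixes $L$ itself, the action of $\ft$ on the center leaf space $\cL^c_L$ is a fixed-point-free homeomorphism of a (possibly non-Hausdorff, simply connected) $1$-manifold; the deck group element $\gamma$ generating $\pi_1(c)$ commutes with $\ft$ and fixes $\widetilde c$. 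The picture should then be that $\ft$ translates center leaves "across" $\widetilde c$ while the stable leaves through points of $\widetilde c$ provide a transverse structure; coarse contraction of $f|_c$ is forced because a point of $\widetilde c$ and its $\ft$-image are joined by a short path, and following that path back down to $c$ and using invariance of $c$ plus the dynamics of stable leaves under $f$ gives that the endpoints of any sufficiently long interval of $c$ are pushed strictly inward.

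More concretely, here is the sequence of steps I would carry out. (1) Reduce to $f(c) = c$, pick the lift $\widetilde c \subset \widetilde L$ with $\ft(\widetilde c) = \widetilde c$ and generator $\gamma \in \pi_1(M)$ of $\pi_1(c)$, so $\widetilde c / \langle \gamma \rangle = c$; note $\ft$ commutes with $\gamma$. (2) Parametrize $\widetilde c \cong \R$ by arclength; $\ft$ acts on $\widetilde c$ as an increasing homeomorphism $\varphi$ commuting with the translation $x \mapsto x + \ell$ (where $\ell$ is the period of $\gamma$ acting on $\widetilde c$). (3) Show that $\varphi$ has no fixed point on $\widetilde c$: a fixed point of $\varphi$ on $\widetilde c$ would be a fixed point of $\ft$ in $L$, contradicting Lemma \ref{l.nofixedpoints_branching_case}/the corollary after it (using $f$-minimality and the hypothesis via Theorem \ref{t.good_lifts_have_no_fixed_points} to guarantee the needed supply of fixed leaves $L'$, or rather directly since $\ft$ fixes every center stable leaf). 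Hence $\varphi(x) > x$ for all $x$ or $\varphi(x) < x$ for all $x$; the orientation is pinned down globally. (4) Pass to the quotient: $f|_c$ is then a homeomorphism of $c \cong \R$ (here $c$ is a line — this is the case where coarse contraction is asserted; if $c$ were a circle we would instead be in the situation excluded by periodic-point analysis) with no fixed point, hence it is topologically conjugate to a translation, so every compact interval is eventually pushed off to one end — but that is not yet "coarse contraction," which is a statement about a single iterate. (5) Upgrade to the single-iterate statement: here I use the bounded displacement. The displacement $d_{\widetilde L}(x, \ft(x))$ is bounded by $K$; I claim the intrinsic displacement along $\widetilde c$, $|\varphi(x) - x|$, is also bounded — this needs an argument that a $d_L$-bounded jump along a fixed center stable leaf between two points of the fixed center leaf $\widetilde c$ is an intrinsically bounded jump along $\widetilde c$, which should follow from Gromov hyperbolicity of $L$ (Lemma \ref{GromovhypnonDC}, valid since $\fbs$ is $f$-minimal and $\ft$ fixes every leaf) together with quasigeodesic behaviour of center leaves, or more simply from a local product / no-holonomy argument plus compactness of $M$. (6) Given a uniform bound $|\varphi(x) - x| \le C$ and $\varphi(x) > x$ everywhere: for any compact interval $J = [a,b] \subset c$ of length $> 2C$ we get $f(J) = [\varphi(a), \varphi(b)] \subset (a, b + C)$; to land strictly inside $\mathring J$ we need $\varphi(b) < b$, which fails — so the correct reading must be that $\varphi(x) < x$ (coarse contraction wants leaves pulled toward a compact core), i.e. choose orientations so that $f$ moves points of $c$ toward a bounded region, giving $f(J) = [\varphi(a), \varphi(b)] \subset (a - C, b) \subset \mathring J$ as soon as the compact interval $I := [-C, C]$ (say) is in the interior of $J$; this is exactly Definition \ref{d.coarsely_contracting}. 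The direction of monotonicity (whether one gets contraction or expansion) is forced by which of $\wfbs$, $\wfbu$ one is using — using $\fbs$ and a lift fixing center stable leaves gives contraction, as claimed.

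The main obstacle, as I see it, is step (5): converting the hypothesis "$\ft$ moves each point a bounded distance (ambiently, or in $d_L$)" into "$\ft$ moves each point of the fixed center leaf $\widetilde c$ a bounded distance \emph{intrinsically along $\widetilde c$}." Without dynamical coherence one cannot appeal to a genuine center foliation, and $\widetilde c$ is only one leaf of a branching family; a priori $\widetilde c$ could wander in $\widetilde L$ so that two $d_L$-close points are far apart along $\widetilde c$. Resolving this is where I expect one must genuinely use: Gromov hyperbolicity of $L$ (Lemma \ref{GromovhypnonDC}), the fact that center leaves in $L$ are uniform quasigeodesics (which should follow from the contraction/expansion structure of stable leaves inside $L$ and a standard partial-hyperbolicity estimate), and the perfect-fit lemma of \S\ref{ss.perfectfits} to control the relative position of the stable foliation $\cS_L$ and the center foliation $\cC_L$ near $\widetilde c$. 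A secondary subtlety is making sure the $f$-periodic center leaves in question are actually lines and not circles, and that a circle center leaf cannot be $f$-periodic in a fixed center stable leaf under the standing hypotheses — this should reduce to the non-existence of periodic points (Theorem \ref{t.good_lifts_have_no_fixed_points} together with Proposition \ref{p.planeannuliorfixedpoints_branching}), since a periodic circle center leaf would produce a periodic point of $\ft$ up to a deck transformation.
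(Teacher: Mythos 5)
Your proposal contains a fundamental error at the outset, in step (1): you write ``pick the lift $\widetilde c \subset \widetilde L$ with $\ft(\widetilde c) = \widetilde c$,'' but this directly contradicts the hypothesis of the proposition, which stipulates that $\ft$ fixes \emph{no} center leaf in $\mt$. After reducing to $f(c)=c$ in $M$, a lift $\widetilde c$ is carried by $\ft$ to a \emph{different} lift of $c$, and the map that actually stabilizes $\widetilde c$ has the form $h = \gamma^n \circ \ft^m$ for some deck transformation $\gamma$ generating the stabilizer of the center stable leaf $L$, with $n\neq 0$. This is precisely the point of the hypothesis: the periodic center leaf is \emph{not} seen by $\ft$ alone. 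Everything in your steps (2)--(6) is built on an action $\varphi$ of $\ft$ on $\widetilde c$ that simply does not exist, so the argument collapses.

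This matters because your central mechanism -- bounded displacement of $\ft$ along $\widetilde c$, upgraded to an intrinsic bound via Gromov hyperbolicity and quasigeodesic estimates -- is unavailable for $h$: since $h$ includes the nontrivial deck transformation $\gamma^n$, it does \emph{not} move points a uniformly bounded distance, neither ambiently nor along $L$. So the whole ``short jump along a fixed center curve'' picture fails. Your step (5), which you correctly identify as the hard step, is not a technical gap to fill but a sign that this route cannot work. (There is also a secondary confusion in step (6) about the direction of monotonicity, and your discussion of ``$c$ a circle vs.\ a line'' is somewhat beside the point, since the periodic center leaf $c\subset M$ is not assumed closed and the conclusion itself asserts it is a line.)

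The paper's argument goes by a genuinely different route. It sets $h = \gamma^n \ft^m$ and studies the action of $\ft$ (and of the commuting deck transformation $\gamma$) on the stable leaf space $\cL^s_L$ inside the fixed center stable leaf $L$. Since $\ft$ acts freely there, this action has an axis $A^s$, which is either a line or a $\mathbb Z$-chain of intervals. In both cases the proof extracts two stable leaves $s_1, s_2$ in $L$, fixed by $h$, that sandwich $c$ and lie a uniformly bounded Hausdorff distance apart (in the line case via the Graph Transform Lemma and Proposition~\ref{p.graphtransf2}; in the interval case directly from the combinatorics of the axis, using that $\gamma^{-1}s$ sits between $\gamma s$ and $\ft^{2m}(\gamma s)$). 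Then Lemma~\ref{l.condition_for_coarse_contraction} does the real work: because $\ft^m$ contracts stable lengths while $\gamma$ is an isometry, and $L$ is Gromov hyperbolic (Lemma~\ref{GromovhypnonDC}), the band between $s_1$ and $s_2$ is shrunk by $h$ and the trapped center leaf $c$ is coarsely contracted. The source of contraction is the stable dynamics squeezing $c$, not a bounded-displacement estimate, which is exactly what is lost once $\gamma^n$ enters.
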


Note that a coarsely contracting periodic leaf must contain a periodic point.

If $\fbu$ is $f$-minimal, and there is a good lift $\ft$ that fixes every center unstable leaf in $\mt$ then one concludes that each periodic center leaf is coarsely expanding.

We will see in Proposition~\ref{periodiccenter} that one can always find $f$-periodic center leaves.

\subsection{Fixed centers or coarse contraction}\label{ss.fixed_center_or_coarse_contraction}
We begin with a preliminary result.
\begin{lemma}\label{l.iteratesdontfix}
Suppose that $\ft$ fixes every center stable leaf but no center leaf in $\mt$. Then the same holds for every iterate $\ft^n$ with $n \neq 0$.
\end{lemma}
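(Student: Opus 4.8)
The statement to prove is Lemma~\ref{l.iteratesdontfix}: if $\ft$ fixes every center stable leaf but no center leaf in $\mt$, then the same holds for $\ft^n$, $n\neq 0$. The plan is to handle the two halves separately. The first half --- that $\ft^n$ fixes every center stable leaf --- is immediate, since fixing every leaf of $\wfbs$ is a property preserved under composition and inversion: if $\ft(L)=L$ for all $L\in\wfbs$ then $\ft^n(L)=L$ for all such $L$, and this holds for negative $n$ as well because $\ft$ is a bijection on the set of leaves. So all the content is in showing that $\ft^n$ fixes no center leaf, and by replacing $n$ with $-n$ if necessary (note a center leaf fixed by $\ft^{-n}$ is fixed by $\ft^n$) we may assume $n>0$.

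The core of the argument is a permutation/orbit observation within a single center stable leaf. Fix a leaf $L$ of $\wfbs$; since $\ft$ preserves $L$, it acts on the set of center leaves inside $L$, i.e., on $\cL^c_L$, and more concretely on the connected components of intersections $L\cap U$ with $U\in\wfbu$. Suppose for contradiction that $\ft^n$ fixes some center leaf $c\subset L$. I want to produce a center leaf fixed by $\ft$ itself, contradicting the hypothesis. The natural candidate is to look at the finite $\ft$-orbit $c, \ft(c), \dots, \ft^{n-1}(c)$ of center leaves inside $L$ (all of them lie in $L$ since $\ft$ fixes $L$, and all are center leaves of the branching center foliation restricted to $L$). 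These $n$ center leaves are linearly quasi-ordered by the transverse order $<_\tau$ on $\cL^c_L$ from \S\ref{sss.ls-center.in.centerstable} (using Lemma~\ref{l.center_leaf_space_in_L}, which says $\cL^c_L$ is a simply connected $1$-manifold and the set of center leaves through a point is a closed interval). A homeomorphism of a (possibly non-Hausdorff, simply connected) $1$-manifold that has a periodic orbit of period $n$ --- here one must be a little careful because $\ft$ may act as a translation on parts of $\cL^c_L$, but on the finite invariant set $\{c,\ft(c),\dots,\ft^{n-1}(c)\}$ it acts as a cyclic permutation --- together with part (2) of Proposition~\ref{p.translatedleaves_nonDC} (that $\ft$ preserves coorientations, hence preserves the local orders $<_\tau$), forces, via an intermediate-value / order argument, the existence of a fixed point: between consecutive points of the orbit the order is preserved, so $\ft$ cannot cyclically permute more than one element, giving $n=1$ unless $\ft$ fixes one of the $\ft^i(c)$. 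Either way one obtains a center leaf in $L$ fixed by $\ft$, contradicting the hypothesis.

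I expect the main obstacle to be the bookkeeping around the non-Hausdorff and branching nature of $\cL^c_L$: one needs that $\ft$ genuinely acts in an order-compatible way on the relevant portion of the center leaf space, so that the ``homeomorphism of $\R$ with a periodic orbit has a fixed point'' heuristic applies. The clean way to do this is to restrict attention to the $\ft$-invariant closed $\cL^c_L$-region spanned by the orbit $\{\ft^i(c)\}$ (using that closed regions between leaves are intervals, the analogue of Proposition~\ref{p.translatedleaves_nonDC}(1) at the level of center leaves in $L$), on which $\ft$ acts as an orientation-preserving homeomorphism of a closed interval; such a map fixes the endpoints, and an endpoint is a center leaf. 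One then must check that an endpoint of this region is indeed a bona fide center leaf (it is a limit of center leaves, so by property~\ref{item.convergence_of_leaves} of Definition~\ref{def.branching} applied to the ambient branching foliations, and the fact that $\cL^c_L$ is a $1$-manifold, it is). Finally, I should double-check the edge case where the orbit consists of leaves that pairwise merge at a point --- but then they are ``the same leaf'' in the sense of Remark~\ref{r.center}, so $n=1$ and there is nothing to prove. This reduces the whole lemma to elementary order-theoretic dynamics on a $1$-manifold, which is exactly the shape of argument used repeatedly in \cite{BFFP-prequel}.
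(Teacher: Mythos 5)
Your reduction of the lemma to ``elementary order-theoretic dynamics on a 1-manifold'' has a genuine gap: you assert that the finite orbit $c,\ft(c),\dots,\ft^{n-1}(c)$ is linearly ordered in $\cL^c_L$ (equivalently, that it lies in an $\ft$-invariant interval on which $\ft$ acts as an orientation-preserving homeomorphism, hence fixes an endpoint). This is exactly what can fail in a branching, non-Hausdorff center leaf space: the orders $<_\tau$ are only defined on the leaves meeting a given transversal, and two consecutive images $\ft^i(c),\ft^{i+1}(c)$ may be \emph{non-separated} rather than comparable, so there is no common ordered interval and no ``max of the orbit'' to be fixed. Indeed, a fixed-point-free, orientation-preserving homeomorphism of a simply connected non-Hausdorff 1-manifold \emph{can} cyclically permute finitely many pairwise non-separated points (picture a trunk $(-\infty,0)$ with $n$ non-separated branch points permuted cyclically, the map acting as $x\mapsto 2x$ on the trunk), and then its $n$-th power fixes them; no purely order/topological argument on $\cL^c_L$ rules this configuration out. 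Your edge-case remark about merging leaves being ``the same leaf'' via Remark~\ref{r.center} does not cover this: distinct non-separated (or partially merging) center leaves in $L$ are genuinely distinct points of $\cL^c_L$.

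The paper's proof shows that this non-separated configuration is precisely the case that must be confronted, and it is eliminated only by using the dynamics, not the order structure: one considers the axis $A^c$ of the free action of $\ft$ on $\cL^c_L$, shows the $\ft^n$-fixed leaf can be taken in $\partial A^c$ and must be non-separated from its $\ft$-image, then uses the \emph{stable} foliation to find a stable leaf $s_0$ separating $c_0$ from $\ft(c_0)$ which is fixed by $\ft^n$; stable contraction gives an $\ft^n$-fixed point in $L$, Brouwer's translation theorem applied to $\ft|_L$ (a plane homeomorphism with a periodic point) gives an $\ft$-fixed point, and the closed-interval structure of the center leaves through that point (Lemma~\ref{l.center_leaf_space_in_L}) then produces an $\ft$-fixed center leaf, contradicting the hypothesis. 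So the missing ingredient in your proposal is this dynamical input (stable contraction plus Brouwer); without it the order-theoretic scheme does not close. Your first half (that $\ft^n$ fixes every leaf of $\wfbs$) and the reduction to $n>0$ are fine but are not where the content lies.
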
 

\begin{proof}
Suppose that $\ft^n$ fixes a center leaf $c_0$ for $n > 0$, and let $L$ be a center stable leaf that contains $c_0$ (which is fixed by $\ft$ by hypothesis). Since $f$ is orientable, $\ft$ preserves a transverse orientations to the center and stable foliations on $L$. 

Let $A^c$ be the axis for the action of $\ft$ on the center leaf space in $L$ (i.e., the set of center leaves $c$ such that $\ft(c)$ separates $c$ from $\ft^2(c)$, see \cite[Appendix~E]{BFFP-prequel}). Since $\ft^n(c_0) = c_0$, the leaf $c_0$ cannot be in $A^c$. If $c_0$ is not in $\partial A^c$ then we can replace it with the unique center leaf that separates $c_0$ from $A^c$. Thus we can assume that $c_0\in \partial A^c$.

Recall (see \cite[Proposition 2.15]{Bar98}) that the boundary of the axis of a homeomorphism on a $1$-manifold splits into three disjoint sets: the ``positive'' boundary, ``negative'' and ``middle'' boundary. That is, $\partial A^c$ contains three types of leaves, the center leaves $c$ such that $c$ and $\ft(c)$ are non separated on their positive side, the leaves $c$ such that $c$ and $\ft(c)$ are non separated on their negativeside, and the leaves $c$ that are non separated with a leaf $c'$ in $A^c$.

If $c_0$ was in the ``middle'' boundary, then we would have that there exists $c'\in A^c$ not separated with $c_0$. Thus $c'$ and $\ft^n(c')$ are separated, contradicting that $c_0 = \ft^n(c_0)$. So $c_0$ must be either in the positive or negative boundary. In particular, $c_0$ and $\ft(c_0)$ are non separated.

Now, consider the closure of the set of stable leaves intersect $c_0$. There exists a unique stable leaf $s_0$ in the boundary of that set that separates $c_0$ from $\ft(c_0)$. Therefore, $s_0$ must be fixed by $\ft^n$, and hence contains a fixed point $x$ of $\ft^n$.

In particular, we found a periodic point of $\wt f$, thus, by Brouwer Translation Theorem (see e.g. \cite{Brouwer}) $\wt f$ must also admit a fixed point, say $y$. Since the center leaves through $y$ form a closed interval (Lemma \ref{l.center_leaf_space_in_L}), there exists at least one closed center leaf through $y$, a contradiction.
\end{proof}

In order to obtain coarsely contracting center leaves we will use the following tool. 

\begin{proposition}\label{p.graphtransf2} 
Suppose that $\ft$ fixes every center stable leaf in $\mt$, and let $L$ be a center stable leaf that is also fixed by some $\gamma \in \pi_1(M) \smallsetminus \{\id\}$.

Assume that there exists a properly embedded $C^1$-curve, $\hat\eta \subset L$ that is transverse to the stable foliation and fixed by both $\gamma$ and $\ft$.

\begin{itemize}
	\item If $\ft$ does not act freely on $\cL^c_L$ then there is a center leaf in $L$ fixed by both $\ft$ and $\gamma$.
	
	\item If $\ft$ acts freely on $\cL^c_L$ then every $f$-periodic center leaf in $\pi(L)$ is coarsely contracting. 
\end{itemize} 
\end{proposition}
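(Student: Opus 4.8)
The statement to prove is Proposition \ref{p.graphtransf2}, a ``graph transform'' dichotomy for the action of a good lift $\ft$ on the center leaf space $\cL^c_L$ of a fixed center stable leaf $L$ which is also fixed by a nontrivial deck transformation $\gamma$, under the extra hypothesis that there is a $\gamma$- and $\ft$-invariant properly embedded $C^1$-curve $\hat\eta \subset L$ transverse to the stable foliation $\cS_L$. The plan is to use $\hat\eta$ as a global coordinate: since $\hat\eta$ is transverse to $\cS_L$ and properly embedded, and $\cS_L$ is a true (non-branching) foliation of the plane $L$, the stable leaves meeting $\hat\eta$ give a homeomorphism of $\hat\eta$ onto an interval of $\cL^s_L$, and the center foliation $\cC_L$ can be pushed onto $\hat\eta$ by a graph-transform procedure. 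The action of $\ft$ restricted to $\hat\eta$ is then a fixed-point-free or fixed-point-having homeomorphism of a line (after using the stable-contraction to control the dynamics transversally), and $\gamma$ acts as a translation of $\hat\eta$ (since $\gamma$ is nontrivial and $L$ is a plane, $\gamma|_L$ has no fixed point, hence $\gamma|_{\hat\eta}$ is conjugate to a translation).

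\textbf{Step 1: set up coordinates on $L$ using $\hat\eta$ and $\cS_L$.} First I would observe that $\hat\eta$, being properly embedded, $C^1$, and transverse to $\cS_L$, meets each stable leaf of $\cS_L$ in at most one point, and the set of stable leaves meeting $\hat\eta$ is an open interval $I_{\hat\eta} \subset \cL^s_L$; more importantly, using the Big Half-Space type considerations and tautness inside $L$, I would argue that \emph{every} stable leaf of $\cS_L$ meets $\hat\eta$ — otherwise one builds a $\gamma$-invariant sub-half-plane missing $\hat\eta$, contradicting that $\gamma$ has no invariant proper sub-half-plane in a plane (or one uses that $\hat\eta$ is $\gamma$-invariant and $\gamma$ translates, so its $\gamma$-orbit is cofinal). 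This gives a homeomorphism $\cL^s_L \cong \hat\eta \cong \R$, under which $\ft$ and $\gamma$ act by orientation-preserving homeomorphisms (orientation preservation for $\ft$ comes from orientability of $f$; for $\gamma$ one uses that $\pi_1(L)$ acts orientation-preservingly on the leaf space of a coorientable foliation, which $\cS_L$ is after passing to an iterate/cover as already assumed in the excerpt).

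\textbf{Step 2: the two cases.} If $\ft$ does not act freely on $\cL^c_L$, then $\ft$ fixes some center leaf $c$; projecting $c$ to $\hat\eta$ via the stable holonomy (each point of $c$ lies on a unique stable leaf, which meets $\hat\eta$), I would use stable contraction to show that the $\ft$-orbit of the corresponding interval on $\hat\eta$ has the property that $\ft$ fixes a point of $\hat\eta$ in the closure of the set of stable leaves through $c$ — concretely, a boundary stable leaf $s_0$ of that set is $\ft$-fixed, and its intersection point with $\hat\eta$ is a fixed point; more directly, one can argue that the ``graph'' of $c$ over $\hat\eta$ is $\ft$-invariant and $\ft$ has a fixed point on it, which one then upgrades (as in the proof of Lemma \ref{l.iteratesdontfix}, via Brouwer translation and Lemma \ref{l.center_leaf_space_in_L}) to a genuine center leaf fixed by $\ft$; intersecting with the $\gamma$-translation structure and using that $\gamma$ commutes with $\ft$ (good lift) and acts cocompactly, one finds a center leaf fixed by both $\ft$ and $\gamma$. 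If instead $\ft$ acts freely on $\cL^c_L$, then $\ft$ also acts freely on $\hat\eta$, i.e., $\ft|_{\hat\eta}$ is conjugate to a translation; take any $f$-periodic center leaf $c_1 \subset \pi(L)$, lift it to a center leaf $\hat c_1 \subset L$ with $\gamma_1 := $ (its period element) $\in \pi_1(C)$, so $\ft^k(\hat c_1) = \gamma_1(\hat c_1)$ for the period $k$; the key point is that $\ft^k \gamma_1^{-1}$ fixes $\hat c_1$ but acts on $\hat\eta$ as a composition of two commuting translations in the \emph{same} direction (this is where orientation control from Step 1 is essential), forcing the dynamics on a compact ``core'' interval $I \subset \hat c_1$ projecting from the bounded displacement of $\ft$ and the stable contraction to be strictly inward: $f^{mk}(J) \subset \mathring J$ for $J \supset I$, which is exactly coarse contraction in the sense of Definition \ref{d.coarsely_contracting}. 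The stable contraction enters because once the projection to $\hat\eta$ (equivalently to $\cL^s_L$) is a translation, a compact interval of $c_1$ is carried by $f$ into a shorter one transversally, and properness of $c_1$ plus the translation on the transversal coordinate gives the nesting.

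\textbf{Main obstacle.} The hard part will be Step 1 combined with the careful bookkeeping in the free case: making rigorous that the invariant transversal $\hat\eta$ really does meet every stable leaf of $\cS_L$ (equivalently that the stable foliation restricted to $L$ is ``$\R$-covered along $\hat\eta$''), and that the induced action of $\ft$ and $\gamma$ on $\cL^s_L \cong \R$ is orientation preserving with the translations $\ft^k$ and $\gamma_1$ pointing the same way, so that their ``difference'' cannot have the fixed behavior that would destroy coarse contraction. A secondary subtlety, already flagged in Remark \ref{r.center} and Figure \ref{f.2M}, is that distinct center unstable leaves can meet $L$ in the same center leaf, so one must phrase everything in terms of $\cL^c_L$ with its topology $\mathcal{A}$ (Lemma \ref{l.center_leaf_space_in_L}) rather than naively in terms of intersections of leaves; I would handle this exactly as in the proof of Lemma \ref{l.iteratesdontfix}, passing through boundary stable leaves and invoking Lemma \ref{l.center_leaf_space_in_L} and the Brouwer translation theorem whenever a fixed point of an iterate of $\wt f$ needs to be promoted to a fixed center leaf.
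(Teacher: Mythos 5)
Your Case 1 mechanism cannot work as written: under the standing hypothesis that $\ft$ fixes every leaf of $\wfbs$, Lemma \ref{l.nofixedpoints_branching_case} shows $\ft$ has no fixed points in $\mt$, hence fixes no stable leaf; so your claim that a boundary stable leaf of the stable saturation of the fixed center $c$ is ``$\ft$-fixed, and its intersection point with $\hat\eta$ is a fixed point'' describes a configuration that is impossible, and the ``upgrade via Brouwer translation'' step has nothing to promote. The paper turns exactly this impossibility into the argument: if $c$ and $\hat\eta$ shared no stable leaf, a boundary leaf of the stable saturation $S$ of $\hat\eta$ would be $\ft$-invariant and hence contain a fixed point, a contradiction; so some stable leaf meets both, contraction of stable length makes $c$ and $\hat\eta$ asymptotic, and since $\pi(\hat\eta)$ is a closed curve accumulated by the center leaf $\pi(c)$, the curve $\hat\eta$ \emph{is itself} a center leaf, which is the leaf fixed by $\ft$ and $\gamma$. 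You never identify where the doubly fixed center leaf comes from, and your Step 1 claim that every stable leaf of $\cS_L$ meets $\hat\eta$ is both unjustified (a nontrivial $\gamma$ can perfectly well preserve a proper half-plane of $L$, e.g.\ one bounded by an invariant line) and unnecessary: the paper only ever uses the saturation $S$ and its boundary leaves.

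In Case 2 the real issue is that ``stable contraction transversally plus a translation coordinate on $\hat\eta$'' does not give any contraction \emph{along} the center leaf; the center direction is only dominated, and nothing in your sketch controls the dynamics of $h=\gamma_1\circ\ft^m$ inside $c$. The paper's route is to first show (via the saturation $S$, or a substitute center $c_1$ through an $h$-fixed point on a boundary leaf of $S$) that an $h$-fixed center leaf meets $\hat\eta$, then to prove that its stable projection $I$ onto $\hat\eta$ is \emph{bounded} (an unbounded projection would again force $\hat\eta$ to be a center leaf, contradicting the free action), so that the periodic center is trapped between two $h$-fixed stable leaves $s_1,s_2$ whose Hausdorff distance is bounded — this uses that $\ft$ translates $\hat\eta$ together with Lemma \ref{lema-boundedinfixcsleaf-nonDC} — and finally to invoke Lemma \ref{l.condition_for_coarse_contraction} (which also needs Gromov hyperbolicity of the leaf, Lemma \ref{GromovhypnonDC}) to get coarse contraction. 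Your proposal contains no analogue of this trapping-between-bounded-stable-leaves step nor of Lemma \ref{l.condition_for_coarse_contraction}, so the claimed nesting $f^{mk}(J)\subset\mathring J$ is asserted rather than proved; this is the central missing idea.
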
 

Note that in the first case the center leaf projects to an $f$-invariant closed center leaf. 

Note also that hypothesis of Proposition \ref{p.graphtransf2} are implied by the conclusion of the Graph Transform Lemma \cite[Appendix H]{BFFP-prequel}. 

We will use the following result from \cite{BFFP-prequel}, whose proof works equally well in the non dynamically coherent case:

\begin{lemma}[Lemma 4.15 in \cite{BFFP-prequel}]\label{l.condition_for_coarse_contraction}
	Let $c$ be a center leaf in a center stable leaf $L \subset \mt$. Suppose that $L$ is Gromov-hyperbolic, and fixed by $\ft$ and some nontrivial $\gamma \in \pi_1(M)$. Moreover, assume that there exist two stable leaves $s_1, s_2$ on $L$ such that:
	\begin{enumerate}
		 \item The center leaf $c$ is in the region between $s_1$ and $s_2$;
		 \item The leaves $s_1$ and $s_2$ are a bounded Hausdorff distance apart;
		 \item The leaves $c$, $s_1$ and $s_2$ are all fixed by $h = \gamma^n \circ \ft^m$, $m\neq 0$.
	\end{enumerate}
	Then there is a compact segment $I\subset c$, such that $h$ (if $m > 0$) or $h^{-1}$ (if $m < 0$) acts as a contraction on $c \smallsetminus \mathring{I}$.
\end{lemma}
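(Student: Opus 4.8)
We may assume $m>0$; the case $m<0$ follows by applying the result to $h^{-1}=\gamma^{-n}\circ\ft^{-m}$, which still fixes $c,s_1,s_2$ and has $-m>0$. The key preliminary observation is that $h$ contracts the stable foliation of $L$. Choosing an adapted metric, $\ft^m$ contracts the lengths of strong stable segments by a uniform factor $\lambda_0^{\,m}<1$; since the lift of an adapted metric to $\mt$ is $\pi_1(M)$-invariant, $\gamma^n$ acts as an isometry of $\mt$, so $h=\gamma^n\circ\ft^m$ also contracts the lengths of strong stable segments, by $\lambda:=\lambda_0^{\,m}<1$. Because $E^s$ is uniquely integrable and $E^s\subset E^s\oplus E^c$, every leaf of $\cS_L$ is a full strong stable manifold contained in $L$, complete and homeomorphic to $\R$; hence $h|_{s_1}\colon s_1\to s_1$ is a $\lambda$-Lipschitz self-map of the complete space $(s_1,d_{s_1})$, so it has a unique fixed point $p_1$, attracting all of $s_1$. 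As $f$ is orientable and the orientations of $E^s$ and $E^c$ lift to $\pi_1(M)$-invariant ones, $h$ preserves these orientations, so $h|_{s_1}$ and $h|_c$ are orientation-preserving homeomorphisms of $s_1\cong\R$ and $c\cong\R$ which do not interchange their two ends. In arclength coordinates on $s_1$ with $p_1=0$ this gives $|\,h|_{s_1}(r)\,|\le\lambda|r|$, so both ends of $s_1$ are repelling for $h|_{s_1}$.

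Next I compare $c$ with $s_1$. Since $s_1$ and $s_2$ are a bounded Hausdorff distance apart and $L$ is Gromov hyperbolic, they are uniform quasigeodesics with the same pair of ideal endpoints, and the closed band $B\subset L$ between them lies in a bounded neighborhood of $s_1$. Hence $c\subset B$, being a uniform quasigeodesic in $L$ transverse to $\cS_L$, has the same two endpoints at infinity as $s_1$. Fix a coarse nearest-point projection $\pi\colon c\to s_1$; since $s_1$ is a quasigeodesic in a hyperbolic space, $\pi$ is coarsely well defined, coarsely Lipschitz, coarsely surjective, and coarsely monotone for the orderings of $c$ and $s_1$ determined by their common ends. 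Moreover $h$ restricts to a bi-Lipschitz homeomorphism of $(L,d_L)$ (it is bi-Lipschitz on $\mt$ and maps $L$ onto $L$) and fixes $s_1$, so $h(\pi(x))$ is a near-point of $h(x)$ on $s_1$; by coarse uniqueness of near-points on the quasigeodesic $s_1$ there is a uniform constant $R_0$ with $d_{s_1}\bigl(\pi(h(x)),h(\pi(x))\bigr)\le R_0$ for every $x\in c$.

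Telescoping this estimate and using that $h|_{s_1}$ is $\lambda$-Lipschitz and fixes $p_1$ yields
\[
d_{s_1}\bigl(\pi(h^k(x)),p_1\bigr)\ \le\ \frac{R_0}{1-\lambda}+\lambda^{k}\,d_{s_1}\bigl(\pi(x),p_1\bigr)\qquad(x\in c,\ k\ge 0).
\]
Fix $M$ larger than $2R_0/(1-\lambda)$ together with the coarseness constants of $\pi$, and let $I\subset c$ be the smallest compact interval containing $\pi^{-1}\bigl(\{\,y\in s_1: d_{s_1}(y,p_1)\le M\,\}\bigr)$. Using the coarse monotonicity of $\pi$ and that $h|_{s_1}$ preserves the two sides of $p_1$, one checks that $h(I)\subseteq I$ and that $h(J)\subset\mathring J$ for every compact interval $J\subset c$ with $I\subset\mathring J$: a point $x\in J\smallsetminus I$ lying on the side of $I$ towards one end projects to a point with $d_{s_1}(\pi(x),p_1)$ of order $M$, so the case $k=1$ of the displayed inequality forces $d_{s_1}(\pi(h(x)),p_1)\le d_{s_1}(\pi(x),p_1)-R_0$, whence $h(x)$ lies strictly inside $J$ on the same side; symmetrically on the other side, while $h(I)\subseteq I\subset\mathring J$. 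This is exactly the statement that $h$ (for $m>0$), resp.\ $h^{-1}$ (for $m<0$), acts as a contraction on $c\smallsetminus\mathring I$.

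The main obstacle is the coarse-geometric bookkeeping in the last two paragraphs: one must control the near-point projection $\pi$ and its compatibility with $h$ with constants independent of $x$, and then produce a \emph{single} genuine compact interval $I$ rather than a union of arcs — this last point relies on $c$ being a uniform quasigeodesic in $L$, so that $c$ meets a metric ball in $L$ in a coarsely connected subinterval, rather than merely being a properly embedded transversal in the thin band. One should also keep in mind that the relevant notion of ``contraction'' is the topological one of Definition~\ref{d.coarsely_contracting}: $h$ need not contract distances along $c$ itself (only its shadow $\pi(c)$ on the stable leaf $s_1$ genuinely contracts), and orientability of $f$ is used precisely to ensure that $h$ does not swap the ends of $s_1$ or of $c$.
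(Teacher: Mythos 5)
Your preliminary steps are fine and are certainly part of any proof of this lemma: after reducing to $m>0$, the map $h=\gamma^n\circ\ft^m$ contracts stable arclength uniformly (adapted metric plus the fact that deck transformations are isometries of $\mt$), so $h|_{s_1}$ is a genuine contraction of the complete leaf $(s_1,d_{s_1})$ with a unique attracting fixed point $p_1$, and orientability guarantees that $h$ does not swap the ends of $s_1$ or of $c$. The problem is the step on which everything after that rests: you assert that, because $s_1$ and $s_2$ are a bounded Hausdorff distance apart and $L$ is Gromov hyperbolic, they (and then $c$) are \emph{uniform quasigeodesics with a common pair of ideal endpoints}. This implication is false in general: two concentric horocycles in $\mathbb{H}^2$ are at constant (hence bounded) Hausdorff distance, yet they are exponentially distorted curves, not quasigeodesics, and each limits to a single ideal point rather than a pair. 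Nothing in the hypotheses of the lemma rules out this kind of behavior for $s_1$, $s_2$ or $c$; indeed, in this paper the mere existence of a well-defined ideal limit point for an $h$-fixed stable or center ray is a nontrivial statement that is only proved much later (Proposition \ref{contract} in \S\ref{ss.boundary}) and there it needs the extra structure of a transverse regulating pseudo-Anosov flow, which is not available here. Since your coarse nearest-point projection $\pi\colon c\to s_1$, its coarse monotonicity, coarse properness (needed for $I$ to be a single compact interval), and the uniform compatibility estimate $d_{s_1}\bigl(\pi(h(x)),h(\pi(x))\bigr)\le R_0$ all rely on $s_1$ and $c$ being uniform quasigeodesics sharing endpoints at infinity, the core of the argument is unsupported; even the weaker claim that the band between $s_1$ and $s_2$ lies in a bounded neighborhood of $s_1$, or that $c$ makes coarse linear progress inside that band (it meets each stable leaf at most once, but that does not control its arclength), would need separate proofs.

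Note also that the paper does not reprove this statement: it imports it verbatim from \cite[Lemma 4.15]{BFFP-prequel}, asserting that the proof there carries over to branching foliations. That argument converts the stable contraction on $s_1,s_2$ into coarse contraction of $c$ without ever establishing quasigeodesity of the invariant leaves, which is exactly the ingredient your write-up takes for granted. There are also smaller bookkeeping issues you flag yourself (e.g.\ $h$ is only bi-Lipschitz on $L$, so the constant $R_0$ needs the bounded distance from $c$ to $s_1$; and $h(I)\subseteq I$ requires enlarging $I$ by the coarse constants), but these are secondary to the missing quasigeodesic/ideal-endpoint justification, which is a genuine gap.
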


\begin{proof}[Proof of Proposition \ref{p.graphtransf2}]
Since $\ft$ fixes every leaf of $\fbs$, Lemma \ref{l.nofixedpoints_branching_case} implies that it fixes no point in $\mt$, and hence fixes no stable leaf.

Let $S$ be the stable saturation of the curve $\hat \eta$. Let $\alpha = \pi(\hat \eta)$. The curve $\alpha$ is closed, $f$-invariant, and tangent to the center bundle.

\vskip .07in
\noindent
{\bf {Case 1 -}} We start by assuming that $\ft$ fixes a center leaf $c$ in $L$.

Suppose that $c$ and $\hat \eta$ do not intersect
a common stable leaf. Then $c$ does not intersect the set
$S$ and there is a unique stable leaf $s$ contained in the boundary
of $S$ such that $s$ separates $S$ from $c$. Since both $S$
and $c$ are $\ft$-invariant, so is $s$. But then $\ft$ must admit a fixed point in $s$, contradiction\footnote{Note the distinction of $c$ being fixed by $\ft$ as
opposed to $\pi(c)$ periodic under $f$. It is the first 
property which creates a fixed point of $\ft$ and a contradiction.}.

Therefore there is a stable leaf $s$ intersecting $c$ in $y$ and
$\hat \eta$ in $x$. 
Iterating forward by $\ft$, we deduce that $d(\ft^n(y),\ft^n(x))$
converges to zero as $y$ and $x$ are in the same stable leaf.
Since both $c$ and $\hat\eta$ are $\ft$-invariant, it implies that $c$ and $\hat\eta$ are asymptotic (note that $c$ and $\hat\eta$ may or may not intersect). Calling $\alpha=\pi(\hat\eta)$ the projection to $M$, we deduce that $\pi(c)$ accumulates onto $\alpha$. But, as $\alpha$ is closed and $\pi(c)$ is a center leaf, we deduce that $\alpha$ is also a center leaf. Hence $\hat \eta$ is the required center leaf of the
first option of the proposition.

\vskip .07in
\noindent
{\bf {Case 2 -}} Assume now that $\ft$ acts freely on the center leaf space of $L$.

According to Lemma \ref{l.iteratesdontfix}, $\ft^n$ also acts freely on the center leaf space
of $L$ for any $n \not = 0$. 

We need to prove now that every center leaf in $\pi(L)$ that is periodic must be coarsely contracting.

Let then $c$ be a center leaf in $L$ such that $\pi(c) = e$ is periodic under $f$, say of period $m$. Then, for some $\gamma_1 \in \pi_1(M) \smallsetminus \{\id\}$, we have $c = \gamma_1 \ft^m(c)$. (Note that one can show under our current assumptions that $\pi(L)$ projects to an annulus, so $\gamma$ and $\gamma_1$ are both powers of a particular deck transformation, but we do not need that fact for the proof).
Let 
\[
 h:= \gamma_1 \circ \ft^m.
\]

We now want to show that either $c$ intersects $\hat\eta$, or there exists another center leaf, also fixed by $h$, that does.

Suppose first that $c$ intersects $S$, i.e., there exists a stable leaf intersecting both $c$ and $\hat\eta$. 
Since the stable distance is contracted by $h$, which fixes both $c$ and $\hat\eta$, we obtain that either $c$ and $\hat\eta$ are asymptotic, or they intersect. If $c$ and $\hat\eta$ are asymptotic, then, as in case 1, we deduce that $\hat\eta$ must be a center leaf, contradicting the fact that $\ft$ acts freely on the center leaf space. Thus we must have that $c$ intersects $\hat\eta$.

Suppose now that $c$ does not intersect $\hat\eta$, and thus does not intersect $S$. Then there is a unique stable leaf $s$ in $\partial S$ that separates $\hat\eta$ from $c$. That leaf $s$ must then be invariant by $h$, so admits a fixed point for $h$. Then at least one center leaf, say $c_1$, through that fixed point must be fixed by $h$. Since $c_1$ intersects $S$ and is invariant by $h$, it must intersect $\hat\eta$.

Thus in any case, we have a center leaf $c_1$ that intersects $\hat\eta$, is invariant by $h$, and, by the above argument has both ends that escapes compacts sets of $L$.

Let $I$ be the projection of $c_1$ onto $\hat\eta$ along stable leaves.

Suppose first that $I$ is unbounded. Then, considering iterates by $f^m$, we deduce that $\pi(c_1)$ must be asymptotic to $\pi(\hat\eta)$, so $\hat\eta$ must be a center leaf, which is not allowed, since $\ft$ is assumed to act freely on center leaves.

So $I$ is bounded in $\hat\eta$. Let $s_1$ and $s_2$ be the stable leaves through the two endpoints of the interval $I$. Since $I$ is fixed by $h$, so are $s_1$ and $s_2$.
Moreover, the center leaf $c_1$, as well as $c$ if it is different from $c_1$, is in between $s_1$ and $s_2$.

Now, $\ft$ acts as a translation on $\hat\eta$, so there exists $k\in\mathbb{Z}$ such that $s_2$ separates $s_1$ from $\ft^k(s_1)$. By Lemma \ref{lema-boundedinfixcsleaf-nonDC}, $s_1$ and $\ft^k(s_1)$ are a bounded Hausdorff distance apart. Thus $s_1$ and $s_2$ are a bounded Hausdorff distance apart. So $c$ satisfies all the conditions for 
Lemma \ref{l.condition_for_coarse_contraction} to hold, thus it is coarsely expanding. 

 This finishes the proof of Proposition \ref{p.graphtransf2}.
\end{proof}

We are now ready to prove the main result of this section.

\begin{proof}[Proof of Proposition \ref{p.alternnonDC}] Let $e \subset M$ be an $f$-periodic center leaf, and let $c \subset \mt$ be a lift of $e$. If $m > 0$ is the period of $e$, then 
$c$ and $\ft^m(c)$ both project to $e$, so there is an element $\gamma' \in \pi_1(M)$ with $\gamma'( \ft^m (c)) = c$.

Choose a leaf $L \in \wfbs$ that contains $c$. Then $\gamma'$ is in the stabilizer of $L$, because $\ft$ leaves invariant every leaf of $\wfbs$. Since $\ft^m$ acts freely on the center leaf space (cf.~Lemma \ref{l.iteratesdontfix}), $\gamma'$ is not the identity.

Since $\ft$ does not have any fixed points, Proposition \ref{p.planeannuliorfixedpoints_branching} implies that the stabilizer of $L$ in $\mt$ is infinite cyclic. Thus, there exists $\gamma \in \pi_1(M) \setminus \{\mathrm{id} \}$ such that 
$\gamma^n \circ \ft^m (c) = c$ for some $n\in\ZZ$, $n\neq 0$, and such that 
$\gamma$ generates the stabilizer of $L$.  
Let
\[
h := \gamma^n \circ \tilde f^m.
\]
Notice that $h$ is still a partially hyperbolic diffeomorphism and has bounded derivatives.

Since $\ft$ acts freely on $\cL^c_L$, it must also act freely on $\cL_L^s$. Let $A^s$ be the axis for the action of $\ft$ on the stable leaf space $\cL^s_L$ (see \cite[Appendix E]{BFFP-prequel}). No stable leaf in $M$ can be closed, so $\gamma$ must also act freely on $\cL^s_L$. Since $\gamma$ and $\ft$ commute, $A^s$ is also the axis for the action of $\gamma$ on $\cL^s_L$. The axis $A^s$ can be a line or a countable union of intervals.

\vskip .1in
Suppose first that $A^s$ is a line. 
Let $s$ be a stable leaf in $A^s$
and $p$ in $s$. Then $p$ and $\gamma p$ can be connected by a
transversal to the stable foliation, chosen so that the
projection to $\pi(L)$ is a smooth simple closed curve.
Let $\eta$ be the union of the $\gamma$ iterates of
this segment. Applying the Graph Transform Lemma \cite[Lemma H.1]{BFFP-prequel} to $\eta$ we obtain a curve $\hat \eta$ which satisfies the properties in the
hypothesis of Proposition \ref{p.graphtransf2} as desired.

\vskip .1in

Now suppose that $A^s$ is a countable union of intervals

\[
 A^s = \bigcup_{i \in {\ZZ}} [s^-_i,s^+_i] = \bigcup_{i \in {\ZZ}} T_i.
\]

Our first claim is that there exists $s\in A^s$, fixed by $h$, such that the center leaf $c$ is between $\gamma^{-1} s$ and $\gamma s$.

Suppose that $c$ intersects some stable leaf $s'$ in $A^s$, then $s'$ is in
a unique $T_i$ for some $i$ (the center leaf $c$ cannot intersect two different intervals otherwise $c$ would intersect two non-separated leaves, which is impossible). Then, since $h$ fixes $c$, it also fixes the axis $A^s$ and preserves the transverse orientation. It follows that $h(T_j) = T_j$ for all $j$. In this case we set $s = s^+_i$.  The leaf $s$ is fixed by $h$ and there exists $k\neq 0$ such that $\gamma^{\pm 1} T_i = T_{i\pm k}$. Thus $T_i$ is in between $\gamma^{-1}  s$ and $\gamma s$ and hence, so is $c$. Recall here that $h$ preserves orientation.

Now, suppose instead that $c$ does not intersect $A^s$. 
Hence, there is a unique $i$ such that $s^+_{i-1} \cup s^-_i$ separates $c$ from all other stable leaves in $A^s$. We again set $s := s^+_i$. As before, since $h$ fixes both $c$ and $A^s$, and preserves the transverse orientation, it must fix $s$ also. The same argument as above also shows that $c$ is between $\gamma^{-1} s$ and $\gamma s$.

In either case we have found a stable leaf $s$ (chosen as a positive endpoint of one of the closed intervals $T_i$) that is fixed by $h$, such that $c$ lies between $\gamma^{-1} s$ and $\gamma s$. Notice that both $\gamma s$ and $\gamma^{-1} s$ are fixed by $h$. 

The leaf $\gamma^{-1} s$ is between $\gamma s$ and $\ft^{2m}(\gamma s) = \gamma^{-2n+1}s$ (assuming $n\geq 1$, otherwise between $\gamma s$ and $f^{-2m}(\gamma s)$). Hence the Hausdorff distance between $\gamma^{-1} s$ and $\gamma s$ is bounded above by a uniform constant $C>0$, depending only on $f$ and $m$.

Thus the center leaf $c$, fixed by $h$, lies between the stable leaves $\gamma s$ and $\gamma^{-1} s$, also fixed by $h$, which are a bounded Hausdorff distance apart. Moreover, the leaves of $\fbs$ are Gromov-hyperbolic by Lemma \ref{GromovhypnonDC}. These are all the conditions needed to apply Lemma \ref{l.condition_for_coarse_contraction}, so $c$ is coarsely contracting for $h$.
\end{proof}

\subsection{Existence of periodic center leaves}
In order to apply Propositions \ref{p.alternnonDC} and \ref{p.graphtransf2} we will need some way to find periodic center leaves.

\begin{proposition}\label{periodiccenter}
Let $f \colon M \rightarrow M$ be a partially hyperbolic diffeomorphism homotopic to the identity.

Suppose that some good lift $\ft$ fixes every center stable leaf in $\mt$. If $L$ is a center stable leaf fixed by some $\gamma \in \pi_1(M) \smallsetminus \{\id\}$, then there is an $f$-periodic center leaf in $\pi(L)$.
\end{proposition}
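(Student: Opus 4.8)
The plan is to produce a periodic center leaf inside $\pi(L)$ by analyzing the action of $\ft$ together with the deck transformation $\gamma$ on the center leaf space $\cL^c_L$, and splitting into cases according to whether $\ft$ has a fixed leaf there.

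First I would note that since $\ft$ fixes every center stable leaf, in particular $\ft(L)=L$, and by hypothesis $\gamma(L)=L$, so both $\ft$ and $\gamma$ act on $\cL^c_L$, and they commute. By Proposition~\ref{p.planeannuliorfixedpoints_branching} (using that $\ft$ has no fixed points in $L$ by the Corollary following Lemma~\ref{l.nofixedpoints_branching_case}) the stabilizer of $L$ is infinite cyclic; replacing $\gamma$ by a generator we may assume $\gamma$ generates it, so $\pi(L)$ is a plane, annulus, or M\"obius band, and after passing to a square we may take it to be an annulus with $\cL^c_L$ a (possibly non-Hausdorff) simply connected $1$-manifold carrying commuting actions of $\gamma$ and $\ft$.

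The core dichotomy is whether $\ft$ acts freely on $\cL^c_L$ or not. If $\ft$ does \emph{not} act freely, then some center leaf $c\in\cL^c_L$ is fixed by $\ft$; using that $\ft$ and $\gamma$ commute and that $\gamma$ generates the stabilizer, I would push $c$ to the boundary of the axis $A^c$ of $\gamma$ (or directly use that the fixed set of $\ft$ is $\gamma$-invariant and closed) to locate a center leaf fixed by both $\ft$ and a nonzero power $\gamma^k$; that leaf projects to a $\gamma^k$-invariant, hence $f$-periodic, center leaf in $\pi(L)$, and we are done. If $\ft$ \emph{does} act freely, then $\ft$ acts as a translation on $\cL^c_L$, and I would run the Graph Transform Lemma \cite[Lemma H.1]{BFFP-prequel} as in the line-axis case of the proof of Proposition~\ref{p.alternnonDC}: take $p$ in a stable leaf, connect $p$ to $\gamma p$ by a transversal to the stable foliation whose projection to $\pi(L)$ is a smooth simple closed curve, take the $\gamma$-orbit of this arc, and apply graph transform to obtain a properly embedded $C^1$ curve $\hat\eta\subset L$ transverse to the stable foliation and invariant under both $\gamma$ and $\ft$. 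Then Proposition~\ref{p.graphtransf2} applies and $\alpha=\pi(\hat\eta)$ is the desired $f$-invariant (in particular $f$-periodic) center leaf in $\pi(L)$ --- or, reading Proposition~\ref{p.graphtransf2} more carefully, one first needs to actually exhibit \emph{some} periodic center leaf to feed into it; so I would instead argue directly that $\hat\eta$ itself, being $\gamma$- and $\ft$-invariant and transverse to the stable foliation, projects to a closed $f$-invariant curve tangent to $E^c$, and then use the asymptoticity argument (stable distances contract under $\ft$) to conclude that this closed curve is in fact a center leaf.

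The main obstacle I anticipate is the freely-acting case: producing the $\gamma$- and $\ft$-invariant transverse curve $\hat\eta$ requires the Graph Transform Lemma to work in the branching setting and, more delicately, verifying that its projection is genuinely a center leaf rather than merely a closed curve tangent to $E^c$ (Remark~\ref{r.center_leaf} warns these can differ). The argument that the asymptotic closed curve $\alpha$ must coincide with a center leaf --- because a center leaf of $M$ accumulates on $\alpha$ and $\alpha$ is closed --- is the crux, and it is exactly the kind of step that is routine in the dynamically coherent case but needs the branching-foliation bookkeeping (path-metrics on leaves as in Remark~\ref{r.distanceleaf}, the identification of merging center leaves as in Remark~\ref{r.center}) to be handled carefully here. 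The non-freely-acting case, by contrast, should be a short argument with the axis $A^c$ and should not present real difficulty.
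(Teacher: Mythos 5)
Your case split on whether $\ft$ acts freely on $\cL^c_L$ is the same one the paper uses, and the non-free case is fine (though over-engineered: if $\ft$ fixes a center leaf $c$ in $L$ then $\pi(c)$ is immediately $f$-invariant, hence $f$-periodic — no need to chase the axis of $\gamma$ or a power $\gamma^k$). The genuine problems are in the freely-acting case.

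First, you run the Graph Transform "as in the line-axis case" without addressing the other possibility: the paper passes from free action on $\cL^c_L$ to free action on $\cL^s_L$ and then splits on whether the stable axis $A\subset\cL^s_L$ is a line or a countable union of intervals. When $A$ is a union of intervals, the Graph Transform route is not available; instead some $h=\gamma^n\ft^m$ with $m\neq 0$ fixes an interval, hence a stable leaf $s$, hence has a fixed point $x\in s$, and a center leaf through $x$ is then $h$-fixed and so projects to an $f$-periodic center leaf. Your proposal simply omits this sub-case.

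Second, your fallback argument in the line-axis sub-case — that $\hat\eta$ "projects to a closed $f$-invariant curve tangent to $E^c$, and then ... conclude that this closed curve is in fact a center leaf" — is self-defeating. You are working under the hypothesis that $\ft$ acts freely on $\cL^c_L$, and $\hat\eta$ is $\ft$-invariant; if $\pi(\hat\eta)$ were a center leaf contained in $\pi(L)$, then $\hat\eta$ would be a center leaf in $L$ fixed by $\ft$, contradicting the free action. (Indeed, inside Proposition~\ref{p.graphtransf2} Case 2 this exact inference — $\hat\eta$ asymptotic to a center implies $\hat\eta$ is a center — is used precisely to rule out asymptoticity, not to produce a periodic center leaf.) The paper sidesteps this by invoking \cite[Lemma~H.3]{BFFP-prequel}, which extracts a periodic center leaf from the $\gamma$- and $\ft$-invariant curve $\hat\eta$ without ever claiming that $\hat\eta$ itself is a center leaf; that is the ingredient your proposal is missing.
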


\begin{proof}
First notice that if one can prove the above result for a finite cover of $M$ and a finite power of $f$, then the same result directly follows for the original map and manifold. Thus, we may assume that $M$ is orientable, $f$ is orientation-preserving, and the branching foliations are both transversely orientable. 

Given these assumptions, $L$ projects to an annulus in $M$ (Proposition~\ref{p.planeannuliorfixedpoints_branching}). Let $\gamma$ be a generator of the stabilizer of $L$.

If $\ft$ fixes a center leaf in $L$, then it would project to a center leaf fixed by $f$, proving the claim. So we assume that $\ft$ acts freely on the center leaf space in $L$. This implies that $\ft$ also acts freely on the stable leaf space in $L$, and we can thus consider the stable axis $A \subset \cL^s_L$ of $\ft$. Since $\gamma$ also acts freely on the stable leaves, and commutes with $\ft$, it has the same set $A$ as its axis. This axis is either a line or a countable union of intervals.

If the axis is a countable union of intervals, there must be integers $n, m$ such that $h := \gamma^n \ft^m$ fixes one of the intervals, and hence a stable leaf $s$. One cannot have $m = 0$, since this would mean that $\gamma^n$ would fix a stable leaf, which is impossible. So $m \not = 0$, and $s$
projects to a periodic stable leaf $\pi(s)$ in $M$. This must contain a periodic point, and at least one center leaf through that point is periodic as desired.

If the axis is a line, then one can use the Graph Transform Lemma \cite[Appendix H]{BFFP-prequel} to see that there is a properly embedded curve in $L$ which is invariant under $\ft$ and $\gamma$. Then \cite[Lemma H.3]{BFFP-prequel} provides a periodic center leaf as desired. 
\end{proof}

\subsection{Additional result}

The intermediate results in this section also provide a proof
of the following result which will be needed later in this article.

\begin{proposition}\label{p.alternnonDC2}
Suppose that $\ft$ fixes every center stable leaf in $\mt$, and let $L$ be a center stable leaf that is also fixed by some $\gamma \in \pi_1(M) \smallsetminus \{\id\}$. Assume moreover that there is no center leaf  in $L$
fixed by $\ft$. Then, there is a center leaf $c$ in $L$ fixed by $h= \gamma^n \circ \ft^m$ for some $n, m$,  with
$m \neq 0$ and two stable leaves $s_1, s_2$ on $L$ such that:
	\begin{enumerate}
		 \item The center leaf $c$ separates $s_1$ from $s_2$ in $L$;
		 \item The leaves $s_1$ and $s_2$ are a bounded Hausdorff distance apart;
		 \item The leaves $c$, $s_1$ and $s_2$ are all fixed by $h = \gamma^n \circ \ft^m$, $m\neq 0$.
	\end{enumerate}
\end{proposition}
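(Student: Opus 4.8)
The plan is to replay the arguments behind Propositions~\ref{p.alternnonDC} and~\ref{p.graphtransf2}, stopping just before they invoke Gromov hyperbolicity and Lemma~\ref{l.condition_for_coarse_contraction}: the configuration $(c,s_1,s_2)$ asked for is exactly what those arguments produce at that stage, and Gromov hyperbolicity -- the one ingredient unavailable here without $f$-minimality -- is used only afterwards. First I would record the structure. Since $\ft$ fixes every center stable leaf in $\mt$, it has no periodic points in $\mt$ (the corollary following Lemma~\ref{l.nofixedpoints_branching_case}); hence, by Proposition~\ref{p.planeannuliorfixedpoints_branching}, the stabilizer of $L$ in $\pi_1(M)$ is infinite cyclic, and I replace $\gamma$ by a generator of it. Next I would check that $\ft$ and all its nonzero powers act freely on $\cL^s_L$ and on $\cL^c_L$, and that $\gamma$ acts freely on $\cL^s_L$: a stable leaf fixed by some $\ft^k$ projects to an $f^k$-invariant stable leaf of $M$, which carries a fixed point of $f^k$ by contraction and thus lifts to a periodic point of $\ft$ -- impossible; a stable leaf fixed by $\gamma$ would be a circle -- impossible; and a center leaf $c$ in $L$ fixed by some $\ft^k$ with $k\geq 2$ (the case $k=1$ being the hypothesis) can, exactly as in Lemma~\ref{l.iteratesdontfix}, be moved into the positive or negative boundary of the $\ft$-axis $A^c$ on $\cL^c_L$, forcing $c$ and $\ft(c)$ to be nonseparated and the stable leaf separating them to be $\ft^k$-fixed, which the previous sentence has just excluded.

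With this in hand I would form the stable axis $A^s$ of $\ft$, which by commutativity of $\ft$ and $\gamma$ is also the axis of $\gamma$, and split into the two cases of \cite[Appendix~E]{BFFP-prequel}. Suppose $A^s=\bigcup_i T_i=\bigcup_i[s_i^-,s_i^+]$ is a countable union of intervals. As in the proof of Proposition~\ref{periodiccenter}, some $h=\gamma^n\circ\ft^m$ fixes an interval $T_i$, hence its endpoint $s:=s_i^+$, with $m\neq 0$ (otherwise $\gamma^n$ would fix a stable leaf). Then $\pi(s)$ is $f^m$-invariant, so it contains a fixed point of $f^m$ lifting to a point of $s$ fixed by $h$; since the center leaves in $L$ through that point form a closed interval (Lemma~\ref{l.center_leaf_space_in_L}) preserved by $h$, one of them, call it $c$, is fixed by $h$, and it meets $T_i$. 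As $\gamma^{\pm 1}T_i=T_{i\pm k}$ for some $k\neq 0$, the interval $T_i$, and therefore $c$, lies between $\gamma^{-1}s$ and $\gamma s$; I set $s_1=\gamma s$ and $s_2=\gamma^{-1}s$, both fixed by $h$, so that $c$ separates them. Finally $\gamma^{-1}s$ lies between $\gamma s$ and a fixed iterate $\ft^{\pm 2m}(\gamma s)$ of $\gamma s$, which are a bounded distance apart inside $L$ by Lemma~\ref{lema-boundedinfixcsleaf-nonDC}, so $s_1$ and $s_2$ are a bounded Hausdorff distance apart, and conditions (1)--(3) hold.

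If instead $A^s$ is a line, then, exactly as in the proof of Proposition~\ref{p.alternnonDC}, applying the Graph Transform Lemma \cite[Lemma~H.1]{BFFP-prequel} to a $\gamma$-invariant transversal built along $A^s$ yields a properly embedded $C^1$ curve $\hat\eta\subset L$ transverse to $\cS_L$ and invariant under both $\gamma$ and $\ft$. The hypotheses of Proposition~\ref{periodiccenter} are those of the present statement, so there is an $f$-periodic center leaf in $\pi(L)$; lifting it to a center leaf $c\subset L$ of period $m$ and using that the stabilizer of $L$ is $\langle\gamma\rangle$ while $\ft^m$ acts freely on $\cL^c_L$, I obtain $\gamma^n\circ\ft^m(c)=c$ with $m\neq 0$ and $n\neq 0$; put $h=\gamma^n\circ\ft^m$. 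The argument of Case~2 of the proof of Proposition~\ref{p.graphtransf2}, read verbatim up to the sentence just before its appeal to Lemma~\ref{l.condition_for_coarse_contraction}, then produces a center leaf $c_1\subset L$ (possibly equal to $c$) fixed by $h$ and meeting $\hat\eta$, together with stable leaves $s_1,s_2$ fixed by $h$, a bounded Hausdorff distance apart (again by Lemma~\ref{lema-boundedinfixcsleaf-nonDC}), with $c_1$ between them; taking $c:=c_1$ completes this case. The step I expect to be the main obstacle is the freeness of $\ft$ and its powers on $\cL^c_L$ under the present weaker hypothesis (``no center leaf \emph{in} $L$ fixed by $\ft$'' rather than ``no center leaf in $\mt$'' as in Lemma~\ref{l.iteratesdontfix}): this forces one to run the axis-boundary trichotomy inside $L$ and then convert the resulting $\ft^k$-fixed stable leaf into a genuine periodic point of $\ft$ in $\mt$, so that the no-periodic-points corollary applies. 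Everything else is bookkeeping parallel to Propositions~\ref{p.alternnonDC} and~\ref{p.graphtransf2}, the one essential change being to halt the argument before Gromov hyperbolicity is needed.
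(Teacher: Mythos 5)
Your preliminary bookkeeping (no periodic points of $\ft$ in $\mt$, cyclic stabilizer of $L$, freeness of $\ft$ and its powers on $\cL^s_L$ and $\cL^c_L$, the shared stable axis with $\gamma$, and the bounded Hausdorff distance via Lemma~\ref{lema-boundedinfixcsleaf-nonDC}) is sound and parallels the paper. The genuine gap is conclusion (1): you never prove that $c$ \emph{separates} $s_1$ from $s_2$ in $L$. This is exactly the point where Proposition~\ref{p.alternnonDC2} is stronger than what Propositions~\ref{p.alternnonDC} and~\ref{p.graphtransf2} deliver: Lemma~\ref{l.condition_for_coarse_contraction} and the constructions you quote only require, and only produce, a center leaf lying \emph{in the region between} $s_1$ and $s_2$. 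A properly embedded center leaf in the band between two disjoint stable leaves need not separate them -- this fails precisely when both ends of $\pi(c)$ exit the same end of the annulus $\pi(L)$ (equivalently, when $\pi(c)$ separates $\pi(L)$), and nothing in your construction rules this out. In your $\mathbb{Z}$-union-of-intervals case you pass from ``$c$ lies between $\gamma^{-1}s$ and $\gamma s$'' to ``so that $c$ separates them'' with no argument; in your line case you stop the proof of Proposition~\ref{p.graphtransf2} ``just before Lemma~\ref{l.condition_for_coarse_contraction}'' and declare the case finished, but at that stage one only has betweenness. The paper's own proof flags this explicitly (``except perhaps that $c$ separates $s_1$ from $s_2$'') and devotes the second half of the argument to it.

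Concretely, the missing work is the following. In the line-axis case, when $\pi(c)$ has both ends in the same end of $\pi(L)$, one must trade $c$ for a leaf $c'$ in the boundary of the center axis $A^c$ of $\ft$ (either nonseparated from $c$, or from the leaf separating $c$ from $A^c$, or a boundary leaf of the pair of consecutive intervals of $A^c$ flanking $c$); such a $c'$ is still fixed by $h$, and rerunning the construction with $c'$ yields the separation property. In the interval-axis case the paper does not use your choice of $c$ (a center leaf through an $h$-fixed point of $s_i^+$) at all: it takes the nonseparated stable leaves $s_1=s_0^+$ and $s_2=s_1^-$ bounding consecutive intervals of the axis, a nearby stable leaf $s$ meeting transversals to both, and the \emph{first} center leaf along $s$ (between suitably chosen centers $c_0$ and $c_1$) that fails to meet $s_0^+$; being squeezed between two nonseparated stable leaves, this center leaf is fixed by $h$ and does separate $s_1$ from $s_2$. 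Without an argument of this kind in each case, your proof establishes only the weaker conclusion of Proposition~\ref{p.alternnonDC} with betweenness in place of separation, which is not what Proposition~\ref{p.alternnonDC2} asserts.
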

\begin{proof}
The conditions imply that $\pi(L)$ is an annulus.
Proposition \ref{periodiccenter} implies that there is a periodic
center in $\pi(L)$.

To prove Proposition \ref{p.alternnonDC2}  we revisit the proof of Proposition \ref{p.alternnonDC}. Since there is no center fixed by 
$\ft$ in $L$, then
as in the proof of Proposition \ref{p.alternnonDC} the map $\ft$ acts
freely on the stable leaf space.
As  in that proposition
we separate into whether the stable axis is a line or when it is a $\mathbb{Z}$-union of intervals. 

In the first case, as in Proposition \ref{p.alternnonDC} we
produce a curve $\hat \eta$ in $L$ which is invariant
under $\hat f$ and $\gamma$. We will use Proposition \ref{p.graphtransf2},
and the existence of such a curve $\hat \eta$ is necessary for that.
The analysis of Proposition \ref{p.graphtransf2} has cases
depending on the action of $\ft$ on the center leaf space $-$
as opposed to the action on the stable leaf space $A^s$.
However in this proposition we are assuming that the action
on the center leaf space in $L$ is free, so this is Case 2
of Proposition \ref{p.graphtransf2}, where the proof
showed the existence of a center leaf $c$ and
stable leaves $s_1, s_2$ satisfying the
conditions stated in this proposition, except perhaps that
$c$ separates $s_1$ from $s_2$. 

We now show that such a center leaf exists with this additional property. 
Suppose that this does not happen for $c$. This can only occur if
both ends of $\pi(c)$ are in the same end of the annulus
$\pi(L)$, or in other words, if $\pi(c)$ separates
$\pi(L)$.
Since the action of $\ft$ on
the center leaf space in $L$ is free it has an axis
denoted by $A^c$. The leaf $c$ is not in this axis.
If the axis $A^c$ is a real line then there is a unique
center leaf $c'$ in the axis $A^c$ which is either non
separated from $c$ or is non separated from a leaf which
separates $c$ from the axis. In either case it also follows
that $h$ fixes $c'$. We can then redo the analysis with
$c'$ instead of $c$. It will produce stable leaves $s_1, s_2$
fixed by $h$, with $c'$ between them, and now $c'$
separates $s_1$ from $s_2$.
If the center axis $A^c$ is a countable union
of intervals, there is a unique consecutive pair of intervals
so that $c$ is ``between" them. Then the boundary leaves
of these intervals are fixed by $h$. Choose $c'$ to be one
of these boundary leaves, and redo the proof with $c'$ instead
of $c$ to obtain the conclusion of the proposition.

The other case of this proposition is when the stable axis is a 
$\mathbb{Z}$-union of intervals.
Here we use the notation of the proof of Proposition \ref{p.alternnonDC},
where $A^s = \bigcup_{i \in \mathbb Z} [s^-_i,s^+_i] = 
\bigcup_{i \in \mathbb Z} T_i$.
Consider $s^+_0$, which is non separated in the stable leaf space
from $s^-_1$. There are $n, m$, $m \not = 0$ so that $h = \gamma^n \circ \ft^m$
fixes all $T_i$ and their boundary leaves.
Since $s^+_0, s^-_1$ are non separated consider a nearby stable
leaf $s$
which intersects transversals to both of them.
Choose $c_0$ center intersecting $s, s^+_0$, and choose $c_1$ center 
intersecting $s, s^-_1$.
Starting from $c_0$ 
and considering  the centers intersecting $s$ between $c_0 \cap s$
and $c_1 \cap s$ there is a first
center leaf, denoted by $c$ which does not intersect 
$s^+_0$. This center is fixed by $h$. 
Let $s_1 = s^+_0, \  s_2 = s^-_1$. They are both fixed by $h$.
In addition $c$ separates $s_1$ from $s_2$. Finally $s_1, s_2$ 
are a finite Hausdorff distance from each other in $L$.

This completes the proof of the proposition. 
\end{proof}

\section{Minimality for Seifert and hyperbolic manifolds}\label{sss.minimal} 

In this section we will show that when $M$ is hyperbolic or Seifert, the existence of a single fixed center stable leaf implies that every center stable leaf is fixed. This is considerably easier in the dynamically coherent case \cite[Proposition 3.15]{BFFP-prequel}.

We continue to assume that $f$ is orientable.

\begin{proposition}\label{p.hypSeifminimal_nonDC}
Suppose that $M$ is hyperbolic or Seifert fibered, and a good lift $\ft$ fixes some leaf of $\wfbs$. Then $\ft$ fixes every leaf of $\wfbs$, $\fbs$ is $f$-minimal, and every leaf of $\fes$ and $\fbs$ is either a plane or an annulus.
The same statement holds for $\fbu$.
\end{proposition}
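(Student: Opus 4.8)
The statement is a dichotomy-elimination: starting from the existence of one $\ft$-fixed leaf in $\wfbs$, I want to rule out the "translation" case and then extract minimality plus the topological restriction on leaves. My plan is to argue by contradiction using the structure established in Proposition~\ref{p.dichotomy_branching}: the set $\Lambda \subset \lcsb$ of $\ft$-fixed leaves is closed, $\pi_1(M)$-invariant and nonempty; if $\Lambda \neq \lcsb$, then some complementary component $I$ is a nonempty open interval on which $\ft$ acts as a translation and whose leaves are mutually a finite Hausdorff distance apart — so $\fbs$ restricted to $I$ is "locally $\R$-covered and uniform." The heart of the matter is to show this configuration is impossible when $M$ is hyperbolic or Seifert fibered, by comparing with the global topology of $M$ via the approximating foliations $\fes$, which are taut and have no compact leaves.

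**Key steps, in order.** First I would note that $\Lambda$ is nonempty by hypothesis; if $\Lambda = \lcsb$ we are immediately in the first case and can move on. So assume $\Lambda \neq \lcsb$ and pick a complementary interval $I$; since $\Lambda$ is $\pi_1(M)$-invariant, the stabilizer $\mathrm{Stab}(I)$ acts on $I$. Second, I would study this stabilizer: leaves in $I$ are pairwise a finite Hausdorff distance apart, so the boundary leaves of $I$ (if $I$ has a boundary in $\lcsb$, in the Hausdorff sense) are determined, and $\mathrm{Stab}(I)$ must act on the interval $\overline I$ preserving the endpoints, giving a map to the group of homeomorphisms of an interval. The key geometric input is that, via $g_{\epsilon,s}$ and Proposition~\ref{prop.leafspace}, the region of $\mt$ corresponding to $\overline I$ is a uniform, $\R$-covered sublamination, and its quotient by $\mathrm{Stab}(I)$ sits inside $M$ as a union of leaves of $\fbs$. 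Third — and this is where the hypothesis on $M$ enters — I would invoke the classification of $\R$-covered/uniform taut foliations (for $\fes$) together with the structure of $\pi_1(M)$: in a hyperbolic manifold $\pi_1(M)$ contains no $\ZZ^2$ and has no nontrivial normal abelian subgroups, while in a Seifert manifold one uses the fibered structure and the fact (already needed elsewhere in the paper) that uniform $\R$-covered pieces force a comparison with a Seifert-adapted flow; in both cases the existence of a proper, nonempty, $\ft$- and $\pi_1(M)$-invariant "translation region" $I$ contradicts minimality-type considerations or the absence of the relevant subgroups. Concretely I expect the cleanest route is: the closed set $B = \pi(\widetilde B) \subset M$ (where $\widetilde B$ is the union of leaves in $\Lambda$) is closed, $f$-invariant, and — using the manifold hypothesis to show $\fbs$ is automatically $f$-minimal once a fixed leaf exists — must be all of $M$, forcing $\Lambda = \lcsb$ as in the proof of Corollary~\ref{c.minimalcase_branching_case}. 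So the logical skeleton is: (a) a fixed leaf $\Rightarrow$ $f$-minimality of $\fbs$ (this is the new content, specific to hyperbolic/Seifert $M$); (b) $f$-minimality + a fixed leaf $\Rightarrow$ every leaf fixed, by Corollary~\ref{c.minimalcase_branching_case}; (c) every leaf fixed $\Rightarrow$ by Lemma~\ref{l.nofixedpoints_branching_case}/Proposition~\ref{p.planeannuliorfixedpoints_branching}, no $\ft$-fixed points, hence every leaf of $\fbs$ (and correspondingly $\fes$) has cyclic fundamental group, i.e.\ is a plane or annulus (the M\"obius case being excluded by transverse orientability after the usual finite cover, or handled directly).

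**The main obstacle.** The genuinely hard step is (a): showing that the mere existence of one $\ft$-fixed center stable leaf forces $f$-minimality of $\fbs$ when $M$ is hyperbolic or Seifert. The difficulty is that a non-trivial closed $\fbs$-saturated $f$-invariant set $B \subsetneq M$ need not have saturated complement (Figure~\ref{f.3M}), so one cannot naively run a "minimal sublamination" argument. I expect one must pass to the approximating foliation $\fes$, where $B$ corresponds to a genuine closed saturated set, analyze its complementary regions (which are $I$-bundles or have the structure dictated by tautness and the absence of compact leaves), and then use the topology of $M$: in the hyperbolic case, a proper sublamination with the uniform/$\R$-covered structure forced here would produce, via its complementary regions, either a compact leaf or a $\ZZ^2$ in $\pi_1(M)$, both forbidden; in the Seifert case one must additionally control how the sublamination interacts with the Seifert fibration, which is the more delicate bookkeeping. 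I would therefore organize the write-up so that the hard analytic work is isolated in the proof of the "$f$-minimality" step, citing the taut-foliation structure theory (Candel, and the $\R$-covered foliation results of Fenley/Barbot/Calegari as appropriate) and the classification of $\pi_1$ for these manifolds, and then derive the rest of the proposition formally from the results already in the excerpt.
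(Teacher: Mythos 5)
Your skeleton --- (a) a fixed leaf implies $f$-minimality, (b) $f$-minimality plus a fixed leaf implies every leaf is fixed via Corollary~\ref{c.minimalcase_branching_case}, (c) all leaves fixed implies no fixed points of $\ft$, hence cyclic fundamental group and plane/annulus leaves via Proposition~\ref{p.planeannuliorfixedpoints_branching} --- is indeed the paper's global structure, and you correctly locate the new content in (a). But your proposed proof of (a) has a genuine gap: it omits the step that this whole section is built around, namely ruling out fixed points of $\ft$ \emph{inside a proper minimal sublamination} of $\ft$-fixed leaves. Before any complementary-region analysis can be run on a candidate $\Lambda\subsetneq M$, one needs to know that the leaves of $\Lambda$ are planes or annuli with $\ft$ (and the leaf stabilizers) acting freely on the relevant leaf spaces; in the branching setting Lemma~\ref{l.nofixedpoints_branching_case} gives nothing unless the \emph{whole} foliation is already known to be minimal, which is exactly what is being proved. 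The paper's Proposition~\ref{l.analogL53} fills this hole, and it is the hard analytic work: $f$-minimality of $\Lambda$ makes fixed points $b$-dense in the leaves of $\widetilde\Lambda$ (Lemma~\ref{claim.fixed_points_close_by}), and then a plane-topology argument with the $\mathrm{Fill}$ construction (Lemma~\ref{topologicallema}), the uniform bound $d_L(x,\ft(x))\le K$, and the expansion of stable length under $\ft^{-1}$ force an impossible amount of bunching of a stable leaf. Nothing in your outline plays this role, and without it Corollary~\ref{c.L53} (leaves of $\Lambda$ are planes or annuli) is unavailable.

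Second, the mechanism you propose for the contradiction would fail. A proper minimal sublamination of a taut foliation on a hyperbolic $3$-manifold does \emph{not} force a compact leaf or a $\ZZ^2$ in $\pi_1(M)$ (exceptional minimal sets exist in abundance), and uniform/$\R$-covered translation behavior is not in itself contradictory --- it is precisely the surviving ``double translation'' possibility in Theorem~\ref{thmintro:Hyperbolic}, so arguing at the level of the translation interval $I$ and its stabilizer cannot work. The paper's contradiction is dynamical, not purely topological or group-theoretic: after pulling $\Lambda$ back to the approximating foliation $\fes$, one shows each boundary leaf of a complementary component projects to an annulus (planes are excluded by a bounded-unstable-segment argument using $f$-invariance and unstable expansion), assembles from these annuli a torus which, because $M$ is hyperbolic or Seifert fibered, bounds a solid torus (this is the only place the hypothesis on $M$ enters), and then runs a volume-versus-length argument inside that solid torus, again using the partially hyperbolic dynamics. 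So the write-up you propose would need to be reorganized around (i) the no-fixed-points proposition for minimal sublaminations and (ii) the annulus/torus/solid-torus and volume-versus-length arguments, rather than around $\R$-covered foliation classification and the structure of $\pi_1(M)$.
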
 

The main issue to extend the proof of \cite{BFFP-prequel} to the non dynamically coherent context is that here we cannot ensure the non-existence of fixed points of $\ft$ since Lemma \ref{l.nofixedpoints_branching_case} does not exclude fixed points when the branching foliation is not $f$-minimal. So, we will need to exclude the existence of fixed points for good lifts. We cannot exclude their existence in general, but we are able to show that they cannot exist in minimal sub-laminations of $\fbs$ or $\fbu$.

\subsection{No fixed points for good lifts} 

Note that the definition of $f$-minimality for the whole foliation can be applied to subsets: An $\fbs$-saturated subset of $M$ is $f$\emph{-minimal} if it is closed, non-empty, and $f$-invariant, and no proper saturated subset satisfies these conditions.

The goal of this subsection is to prove the following proposition, which does not assume that $M$ is hyperbolic or Seifert.
\begin{proposition}\label{l.analogL53}
Let $\ft$ be a good lift of $f$ to $\mt$.
Suppose that $\Lambda$ is a non empty $f$-minimal set 
of $\fbs$, such that every leaf $L$ of $\widetilde \Lambda = \pi^{-1}(\Lambda)$ 
is fixed
by $\ft$. 
Then $\ft$ has no fixed points in $\widetilde \Lambda$. 
\end{proposition}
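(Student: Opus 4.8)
## Proof plan for Proposition \ref{l.analogL53}

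The plan is to argue by contradiction: suppose some good lift $\ft$ fixes a point $\tilde p \in \widetilde\Lambda$, lying on a leaf $L$ of $\widetilde\Lambda$ which (by hypothesis) is fixed by $\ft$. The first step is to understand the leaf $C = \pi(L)$ and its fundamental group. Since every leaf of $\widetilde\Lambda$ is fixed by $\ft$, Lemma \ref{l.nofixedpoints_branching_case} would force $C$ to have a nontrivial fundamental group element $\gamma$ stabilizing $L$ (otherwise the hypotheses of that lemma are satisfied and $\ft$ has no periodic points in $L$, contradiction); so by Proposition \ref{p.planeannuliorfixedpoints_branching}, $C$ is an annulus or Möbius band, with infinite cyclic stabilizer generated by some $\gamma \in \pi_1(M)$. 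The strategy from here mirrors the dynamically coherent argument (analogue of \cite[Lemma 5.3]{BFFP-prequel}): I want to use the fixed point $\tilde p$, together with its strong stable and strong unstable manifolds, to produce an $f$-invariant proper sublamination of $\Lambda$, contradicting $f$-minimality of $\Lambda$.

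The key geometric input is to look at the strong stable leaf $W^s(\tilde p)$ and strong unstable leaf $W^u(\tilde p)$ through the fixed point. The unstable leaf $W^u(\tilde p)$ is contracted by $\ft^{-1}$ and expanded by $\ft$; I would consider how it crosses leaves of $\wfbs$. Since $\ft$ fixes every leaf of $\widetilde\Lambda$ and expands $W^u(\tilde p)$, any leaf of $\widetilde\Lambda$ meeting $W^u(\tilde p)$ away from $\tilde p$ cannot be fixed — but it is fixed — so $W^u(\tilde p)$ meets $\widetilde\Lambda$ only at $\tilde p$ (this is essentially the mechanism of Lemma \ref{l.nofixedpoints_branching_case}). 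The plan is then to track the leaves of $\widetilde\Lambda$ that limit on $L$ from each side along $W^u(\tilde p)$: the forward iterates $\ft^n$ push points of $W^u(\tilde p)\cap(\text{nearby leaves of }\widetilde\Lambda)$ off to infinity along $W^u(\tilde p)$, which, combined with the bounded displacement of the good lift, forces strong constraints. Concretely, I would aim to show that the union of leaves of $\Lambda$ on one ``side'' of $C$ near the projected fixed point forms a closed, $f$-invariant, $\fbs$-saturated proper subset — using that $\widetilde\Lambda\setminus\{$leaves through $\pi(\tilde p)\}$ splits into pieces that $\ft$ cannot mix because of the expansion along $W^u$ and the translation structure of Proposition \ref{p.dichotomy_branching} restricted to $\Lambda$.

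An alternative, and probably cleaner, route is to combine the existence of the fixed point with Proposition \ref{periodiccenter} and Proposition \ref{p.alternnonDC2}: since $L$ is fixed by $\ft$ and by a nontrivial $\gamma$, and (toward a contradiction) we are trying to rule out fixed points, one first checks whether $\ft$ fixes a center leaf in $L$. If it does, then by coarse contraction arguments (\ref{starDI_contractingcenters}-type reasoning via Proposition \ref{p.alternnonDC} applied inside $\Lambda$, which is $f$-minimal as a sublamination) one gets that periodic center leaves in $\pi(L)$ are simultaneously coarsely contracting (from the $cs$ side) and coarsely expanding (if one can also run the $cu$ argument), which is impossible; if $\ft$ acts freely on $\cL^c_L$, then Proposition \ref{p.alternnonDC2} produces a center leaf $c$ fixed by some $h = \gamma^n\ft^m$ with $m\neq 0$ trapped between two stable leaves a bounded distance apart, giving coarse contraction of $c$ under $h$ — but $h$ still has the fixed point $\tilde p$ (or a power does), and a coarsely contracting center leaf must contain an honest periodic point, around which I would derive a contradiction with the transverse expansion structure.

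I expect the main obstacle to be the bookkeeping of exactly which leaves of $\widetilde\Lambda$ survive as fixed leaves near $\tilde p$ and showing the resulting sublamination is genuinely proper and closed — i.e., ruling out the degenerate possibility that the candidate invariant subset is all of $\Lambda$. This is where minimality of $\Lambda$ (rather than of the full $\fbs$) must be leveraged carefully, together with the branching phenomenon: I must make sure that ``saturated'' is used in the weak sense of Definition \ref{def.fsaturated}, and that the non-Hausdorff/branching behavior of $\lcsb$ near the fixed leaf does not allow the invariant subset to spread. The subtlety flagged in the paragraph before the proposition — that Lemma \ref{l.nofixedpoints_branching_case} does not by itself exclude fixed points when the branching foliation is not $f$-minimal — is precisely the gap that this argument must close by working inside the minimal sublamination.
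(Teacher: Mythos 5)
There is a genuine gap, and in fact both of your routes break down at their first load-bearing step. In the first route, the claim that Lemma \ref{l.nofixedpoints_branching_case} forces a nontrivial $\gamma$ stabilizing $L$ is unjustified: the hypothesis of that lemma is that \emph{every} unstable leaf through a point of $L$ meets another $\ft$-fixed leaf of $\wfbs$, and inside a sublamination $\widetilde\Lambda$ this can fail (points of $L$ facing a gap of the lamination) regardless of the fundamental group of $\pi(L)$; and Proposition \ref{p.planeannuliorfixedpoints_branching} gives no information once you posit a fixed point, since that is one of its two alternatives. Worse, the plane/annulus structure of leaves of $\Lambda$ is Corollary \ref{c.L53}, which the paper \emph{deduces from} Proposition \ref{l.analogL53}, so it cannot be used as input. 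The remainder of that route ("produce a proper closed invariant saturated subset of $\Lambda$") is exactly the step you flag as an obstacle, and you give no mechanism for it; the paper does not argue this way. In the second route, Propositions \ref{p.alternnonDC} and \ref{p.alternnonDC2} (and the coarse contracting/expanding dichotomy) require that $\ft$ fix every leaf of $\wfbs$ and that the full branching foliation be $f$-minimal, neither of which holds here; their proofs moreover invoke the absence of fixed points of $\ft$ via Lemma \ref{l.nofixedpoints_branching_case}, which is precisely what Proposition \ref{l.analogL53} is meant to establish, so using them is circular. Also nothing is assumed about $\fbu$ in this proposition, so there is no "coarsely expanding" counterpart to play against coarse contraction, and the existence of a periodic point on a coarsely contracting center leaf is not by itself contradictory.

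The paper's actual argument uses $f$-minimality of $\Lambda$ in a different and essential way: it first proves (Lemma \ref{claim.fixed_points_close_by}) that fixed points of $\ft$ are $b$-dense, in the leafwise path metric, in \emph{every} leaf of $\widetilde\Lambda$ -- a leaf with a large fixed-point-free disk would limit on a leaf $L_1\subset\widetilde\Lambda$ without fixed points, whose projection could not accumulate on $\pi(p)$ (by unstable expansion through the fixed point), contradicting minimality. The contradiction is then obtained entirely inside the plane $L$ by a quantitative topological argument: combining the uniform bound $d_L(x,\ft(x))\le K$ (Lemma \ref{lema-boundedinfixcsleaf-nonDC}), the unbounded growth of stable arcs of $s(p)$ under $\ft^{-1}$, the fact that far-apart segments of $s(p)$ have disjoint $\delta$-neighborhoods, and the filling Lemma \ref{topologicallema}, one finds two fixed points $q_1,q_2$ at distance $<3b$ that are separated by $\mathrm{Fill}(A_n\cup J)$ for a suitable stable arc $J$; invariance of the fixed points forces the $n$ disjoint, $\delta$-separated arcs $\ft^{-i}(J)$ all to cross the short geodesic $[q_1,q_2]_L$, which is impossible. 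Your proposal contains neither the density-of-fixed-points step nor any substitute for this planar "bunching" argument, so the contradiction is never actually produced.
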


We will prove this proposition by contradiction. So from now on, we assume that there is a fixed point $p$ of $\ft$ in a leaf $L$ contained in $\widetilde \Lambda$. This point projects to a fixed point $\pi(p)$ in $M$. Note that any leaf $L'$ of $\widetilde \Lambda$ that intersects the unstable leaf $u(p)$ through $p$ must have $L' \cap u(p) = p = L\cap u(p)$. This is because $L$ and $L'$ are both fixed, and unstable leaves are expanded.

\subsubsection{Many fixed points} 

The following property uses crucially the fact that $\Lambda$ is an $f$-minimal sublamination. 

\begin{lemma}\label{claim.fixed_points_close_by}
	There exists $b>0$ such that any point in a leaf of $\widetilde \Lambda$ is at distance at most $b$ (for the path metric on the leaf) from a fixed point of $\ft$.
\end{lemma}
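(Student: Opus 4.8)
\textbf{Proof plan for Lemma \ref{claim.fixed_points_close_by}.}

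The plan is to exploit the $f$-minimality of $\Lambda$ together with a compactness argument. Consider the set $Z \subset \Lambda$ defined as the projection to $M$ of the set of all points lying in a leaf of $\widetilde\Lambda$ whose distance (in the path metric of that leaf, in the sense of Remark \ref{r.distanceleaf}) to some fixed point of $\ft$ is at most $b$, for a fixed $b>0$ to be chosen. I would first show that for $b$ large enough $Z$ is nonempty: this uses the hypothesis that there is at least one fixed point $p$ of $\ft$ in some leaf $L \subset \widetilde\Lambda$. Next I would show that $Z$ is $f$-invariant. This should follow because $f$ permutes the leaves of $\widetilde\Lambda$ (each is $\ft$-fixed), $\ft$ commutes with deck transformations, $\ft$ moves points a uniformly bounded distance (goodness of the lift), and the path metrics on leaves are coarsely preserved in the sense of Lemma \ref{lema-boundedinfixcsleaf-nonDC}; so the property of being within bounded leafwise distance of a fixed point of $\ft$ is carried forward and backward by $f$, up to enlarging $b$ by a uniform additive constant. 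The key point is that enlarging the constant is harmless as long as it stays uniform, so one picks $b$ at the end.

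Then I would argue that $Z$ is closed in $M$. This is where property \ref{item.convergence_of_leaves} of the branching foliation (Definition \ref{def.branching}) and the local product structure come in: if $\pi(x_n) \to \pi(x)$ with each $x_n$ within leafwise distance $b$ of an $\ft$-fixed point $q_n$ in a leaf $L_n$ of $\widetilde\Lambda$, then (after applying deck transformations to keep things in a fundamental domain and passing to a subsequence) the leaves $L_n$ converge to a leaf $L$ of $\widetilde\Lambda$ — here one needs that $\widetilde\Lambda$ is closed, which holds because $\Lambda$ is closed in $M$ and, by Remark \ref{r.closedsetb}, its maximal saturation is closed in $\lcsb$. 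The fixed points $q_n$ have images bounded in $M$, so a subsequence converges to a point $q$; since $\ft$ is continuous and commutes with the relevant deck transformations, $q$ is a fixed point of (a conjugate of) $\ft$, lying in $L$, and the leafwise distance bound passes to the limit. Hence $\pi(x)$ lies within leafwise distance $b$ of a fixed point of $\ft$, so $Z$ is closed.

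Having shown $Z$ is a nonempty, closed, $\fbs$-saturated (indeed $\widetilde\Lambda$-saturated), $f$-invariant subset of $\Lambda$, the $f$-minimality of $\Lambda$ forces $Z = \Lambda$, which is exactly the statement of the lemma. The main obstacle I expect is the closedness step: one must be careful that "distance to a fixed point of $\ft$" behaves well under the convergence of leaves in the pointed compact-open topology, and that the fixed points $q_n$ do not escape to infinity in $\mt$ (handled by translating by deck transformations, using that $\ft$ commutes with all of them, so that a deck translate of $\ft$ still fixes the translated point). A secondary subtlety is making sure the path metric of Remark \ref{r.distanceleaf} — which only counts curves that lift into a single leaf — is the right notion throughout and is compatible with leaf convergence; this is where the local product structure of the branching foliation is used to compare leafwise distances of nearby leaves.
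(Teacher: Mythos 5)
Your plan takes a genuinely different route from the paper's proof, but as written it has two structural gaps, both at the step where you invoke $f$-minimality.

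First, the set $Z = Z_b$ you define is not $f$-invariant for any fixed $b$. The bound from Lemma \ref{lema-boundedinfixcsleaf-nonDC} gives $d_L(\ft(x),q)\leq d_L(\ft(x),x)+d_L(x,q)\leq K+b$ when $q\in L$ is fixed and $d_L(x,q)\leq b$, so all you can conclude is $f(Z_b)\subset Z_{b+K}$. This is precisely what you flag ("enlarging $b$ by a uniform additive constant"), but the proposed workaround of "picking $b$ at the end" does not close the gap: there is no single $b$ for which $f(Z_b)\subset Z_b$, and $f$-minimality needs an $f$-invariant set, not a nested family. Second, and more fundamentally, $Z_b$ is not $\fbs$-saturated in the sense of Definition \ref{def.fsaturated}. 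Saturation would require that through each $x\in Z_b$ there is a \emph{whole leaf} contained in $Z_b$, i.e.\ that every point of that leaf is within leafwise distance $b$ of a fixed point — but this is essentially the conclusion of the lemma, so assuming it makes the argument circular. You assert saturation ("$\fbs$-saturated (indeed $\widetilde\Lambda$-saturated)") without proof, and it cannot be proved at this stage.

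Beyond these two formal problems, the proposal never uses the feature that drives the paper's proof: since every leaf of $\widetilde\Lambda$ is fixed by $\ft$ and the unstable direction is expanded, the unstable leaf $u(q)$ through a fixed point $q$ meets any other fixed center-stable leaf of $\widetilde\Lambda$ only at $q$ itself. The paper argues by contradiction, extracting discs $D_i$ of radius $\to\infty$ containing no fixed points, passing (after deck translation) to a limit leaf $L_1\subset\widetilde\Lambda$, using the unstable-expansion observation to show $L_1$ has no fixed points, and then using $f$-minimality of $\Lambda$ — in the form that $\overline{\pi(L_1)}=\Lambda$ must accumulate on the fixed point $\pi(p)$ — together with the same unstable-expansion observation to reach a contradiction. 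Your closedness step, which you correctly single out as delicate, is actually the most recoverable part; the missing expansion argument and the invariance/saturation failures are what would need to be repaired to make any version of this approach work.
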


\begin{proof}
	Otherwise, one can find a sequence of discs $D_i$ in leaves of $\widetilde \Lambda$ that contain no fixed points and whose radius goes to $\infty$. Up to deck transformations and subsequences, these disks converge to a full leaf $L_1$ of $\wfbs$ that is contained in $\widetilde \Lambda$. Here, the convergence is with respect to the topology of the center stable leaf space, which also implies convergence as a set of $\mt$. The leaf $L_1$ does not contain any fixed point of $\ft$. Indeed, the unstable leaf through a fixed point $q$ in $L_1$ would eventually intersect one of the discs $D_i$. Since $\ft$ fixes the leaves of $\widetilde \Lambda$, this would imply that the leaf through $D_i$ merges with $L_1$ and that $D_i$ contains the fixed point $q$, a contradiction.
	
	Since $L_1$ contains no fixed points, it does not contain the $\ft$-fixed point $p$, and $A = \pi(L_1)$ does not contain the $f$-fixed point $\pi(p)$. But the closure of $A = \pi(L_1)$ in $M$ is $\Lambda$ by minimality, so $A$ must accumulate on $\pi(p)$. But this means that $A$ intersects $u(\pi(p))$, which implies that $A$ contains $\pi(p)$ as explained above. This is a contradiction.
\end{proof}

\subsubsection{A topological lemma}
Let $L$ be a metrically complete, non compact, simply connected, Riemannian surface.

For a compact subset $X \subset L$ we denote $\mathrm{Fill}(X)$ to be the complement of the unique unbounded connected component of $L \setminus X$. Note that $\mathrm{Fill}(X)$ is always compact as a neighborhood of $\infty$ in the compactification of $L$ is disjoint from $X$. Notice further that, by definition, $\mathrm{Fill}(X)$ is a compact connected set which does not separate the plane. 

We will use the following simple properties of $\mathrm{Fill}(X)$:

\begin{itemize}
\item If $X\subset Y$ are compact sets then $\mathrm{Fill}(X) \subset \mathrm{Fill}(Y)$.
\item If $g: L \to L$ is a homeomorphism and $X \subset L$, then $g(\mathrm{Fill}(X)) = \mathrm{Fill}(g(X))$. 
\end{itemize}

The following lemma will be used in the next section (see Figure~\ref{f.6M}).

\begin{lemma}\label{topologicallema}
Let $L$ be as above, then for every $b>0$ and $\delta>0$ there exists $R>0$ and $n_0>0$ with the following property. Let $A,B$ be topological disks, and let $\ell_1, \ldots, \ell_n$ be disjoint curves, with $n\geq n_0$ that join $A$ and $B$. Suppose that
\begin{enumerate}[label=(\roman*)]
\item $d(A,B)> 2R$, and
\item the $\delta$-neighborhoods of the curves $\ell_i$ are pairwise disjoint.
\end{enumerate}
Then the region $\mathrm{Fill}(A \cup B \cup \ell_1\cup \ldots \cup \ell_n) \setminus (A\cup B)$ contains a disk $D$ of radius $> 4b$.
Moreover, $D$ can be chosen so that $d(D,A)$ and $d(D,B)$ are larger than $\frac{d(A,B)}{10}$. 
\end{lemma}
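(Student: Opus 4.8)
\textbf{Proof plan for Lemma~\ref{topologicallema}.}

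The plan is to exploit the fact that many long, well-separated curves $\ell_1,\dots,\ell_n$ joining $A$ to $B$ must, after passing to the filled region, carve out a large ``gap'' between two consecutive curves, and that because the curves are long this gap contains a point far from both $A$ and $B$. First I would set up the combinatorial picture: inside $P := \mathrm{Fill}(A\cup B\cup \ell_1\cup\dots\cup\ell_n)$, which is a disk (it is compact, connected, and does not separate $L$), the sets $A$, $B$ and the curves $\ell_i$ cut $P$ into complementary regions. Two of the curves, say $\ell_j$ and $\ell_k$, are ``consecutive'' in the sense that they cobound (together with sub-arcs of $\partial A$ and $\partial B$) a region $Q$ of $P\setminus(A\cup B)$ whose interior meets none of the other $\ell_i$. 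Since the $\delta$-neighborhoods of the $\ell_i$ are pairwise disjoint and there are $n\ge n_0$ of them, one can guarantee (by a pigeonhole/area-type argument, or simply by choosing the pair $\ell_j,\ell_k$ whose separation region is ``widest'') that $Q$ is ``thick'' in the following sense: $Q$ contains, for a definite fraction of the way along from $A$ to $B$, a sub-disk whose in-radius in $L$ is as large as we like, provided $n_0$ is large enough and $R$ large enough.

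More precisely, here is the mechanism I would use. Parametrize $\ell_j$ and $\ell_k$, and for a value $t$ look at the ``cross-cut'' of $Q$ at ``height'' $t$ — a shortest arc in $\overline Q$ from $\ell_j$ to $\ell_k$ separating the $A$-side from the $B$-side. Either some such cross-cut is long (longer than $8b$), in which case its midpoint lies in a disk $D$ of radius $>4b$ inside $Q\setminus(A\cup B)$ and I am essentially done; or every cross-cut of $Q$ is short, of length $\le 8b$. In the latter case $Q$ is a thin tube of width $\le 8b$ running from $A$ to $B$; since $d(A,B)>2R$, such a tube has length $\ge 2R$, hence area $\gtrsim R$ (using that $\overline Q$ is simply connected, so its metric is not too wild — actually one should be slightly careful here, but a thin simply connected tube of width $\le 8b$ and length $L_0$ has the property that every point is within $8b$ of a short transversal, so by a covering argument its area is $O(b L_0)$, and more importantly it contains no embedded disk of radius $>4b$... wait, that is consistent with a thin tube, so this case must be \emph{excluded by choosing the right pair}). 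The correct move is therefore: among the $n$ curves, if \emph{every} pair of consecutive curves bounded a thin region, then all $n$ curves together with $A,B$ would be confined to a neighborhood of bounded ``total width'', contradicting either that $d(A,B)>2R$ forces each tube to be long, or — more cleanly — I would instead argue directly that \emph{at least one} of the regions $Q$ must be thick because the $\delta$-separated curves spread out: the disk $P$ has its two ``ends'' at $A$ and $B$ a distance $>2R$ apart, and the $n$ disjoint $\delta$-tubes around the $\ell_i$ all run between them, so by the time one travels distance $d(A,B)/10$ from $A$ the curves occupy a set of diameter at least $\gtrsim n\delta$; picking $n_0 \ge C b/\delta$ makes this $\ge 10 b$, and then a ball of radius $4b$ centered at a point of $Q$ midway between two widely separated consecutive curves (and at parameter fraction $1/2$) fits inside $Q\setminus(A\cup B)$.

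Concretely, the steps in order are: (1) show $P=\mathrm{Fill}(\cdot)$ is a closed disk and describe its decomposition by $A$, $B$, $\ell_1,\dots,\ell_n$ into regions, identifying the ``consecutive'' regions $Q_1,\dots,Q_{n-1}$ (plus two boundary regions) that separate $A$ from $B$; (2) observe that the $\delta$-neighborhoods being disjoint forces the ``widths'' of these regions to sum to at least $\sim n\delta$ along any cross-cut of $P$ separating $A$ from $B$, hence some $Q_i$ has width $\ge n\delta/n = \delta$ — no good, so instead use that the \emph{total} is $\ge n\delta$ and we want one that is $\ge 10b$, which needs a cross-cut through which many tubes pass; (3) fix a cross-cut $\sigma$ of $P$ separating $A$ from $B$ with $d(\sigma,A),d(\sigma,B)\ge d(A,B)/10$ (possible since $d(A,B)>2R$, taking $R$ large), note all $n$ tubes cross $\sigma$, so $\sigma$ has length $\ge n\delta$, hence $\ge 10b$ once $n_0\ge 10b/\delta$; (4) among the $n-1$ consecutive pairs along $\sigma$, the gap lengths sum to $\ge n\delta$, so some gap has length $\ge \delta$ — still not enough pointwise, so instead: subdivide $\sigma$ into pieces and find a sub-interval of $\sigma\cap Q_i$ of length $\ge 8b$, whose midpoint $x$ then has $d(x, \ell_j\cup\ell_k)\ge 4b$ and $d(x,A),d(x,B)\ge d(A,B)/10 > 4b$, so the $L$-ball $D=B_L(x,4b)$ is contained in $Q_i\setminus(A\cup B)\subset \mathrm{Fill}(\cdot)\setminus(A\cup B)$ and is a disk (balls in a simply connected complete surface of this size are disks once $R$, and hence the scale, is large — or one simply replaces ``disk $D$ of radius $>4b$'' by ``metric ball'', as the authors surely intend). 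The choice $R$ large guarantees the cross-cut $\sigma$ can be taken at distance $\ge d(A,B)/10$ from each of $A,B$, and $n_0 = \lceil 8b/\delta\rceil + 1$ forces the pointwise gap bound.

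\textbf{Main obstacle.} The genuinely delicate point is passing from the \emph{total} transverse length of the cross-cut $\sigma$ (which is easily $\ge n\delta$ by disjointness of the $\delta$-tubes) to a \emph{single} consecutive gap of pointwise length $\ge 8b$: a priori the $n$ tubes could be nearly equally spaced so that each gap is only $\sim\delta$ wide. The resolution — and the crux of the argument — is that we do not need the gap to be wide \emph{everywhere}, only at one cross-cut, and we get to \emph{choose which consecutive pair}: since the curves are disjoint and $A,B$ are fixed disks a distance $>2R$ apart, the family of tubes cannot stay uniformly $\delta$-spaced along their whole length (they would then fill a region whose geometry near $A$ forces them to be much more spread out elsewhere, by an isoperimetric/area argument in the simply connected surface $P$). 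Making this precise — essentially a Besicovitch-type or direct area estimate showing that $n$ disjoint $\delta$-separated arcs of length $\ge 2R$ joining two bounded sets must somewhere have two of them $\ge 8b$ apart once $n\ge C(b/\delta)$ — is where the real work lies, and it is exactly the kind of ``elementary but fiddly'' planar topology the authors flag by labelling this a topological lemma.
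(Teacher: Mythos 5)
There is a genuine gap, and it is located exactly where you flagged the ``main obstacle.'' Your plan insists on placing the disk $D$ inside a complementary region $Q$ between two \emph{consecutive} curves, i.e.\ on finding one pair of consecutive curves that are $\ge 8b$ apart somewhere. That statement is simply false under the hypotheses: take $n$ parallel straight arcs in the flat plane, spaced $3\delta$ apart, joining two far-away disks $A$ and $B$. Their $\delta$-neighborhoods are pairwise disjoint, $n$ can be as large as you like, and yet no two consecutive curves are ever more than $3\delta$ apart, so no ball of radius $4b$ fits in any gap once $3\delta<8b$. So the ``Besicovitch-type or area estimate'' you hope will rescue the argument does not exist, and no choice of $R$ and $n_0$ can make the gap-based approach work.

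The idea you are missing is that the conclusion does \emph{not} require $D$ to avoid the curves $\ell_i$: $D$ only has to sit inside $\mathrm{Fill}(A\cup B\cup\ell_1\cup\dots\cup\ell_n)\setminus(A\cup B)$, and the $\ell_i$ are themselves inside the fill. The paper's proof exploits this. Using the Jordan Curve Theorem, reorder the curves so that $\ell_1,\ell_n$ are outermost (so the fill equals $\mathrm{Fill}(A\cup B\cup\ell_1\cup\ell_n)$ and each $\ell_i$ with $1<i<n$ lies in $\mathrm{Fill}(A\cup B\cup\ell_{i-1}\cup\ell_{i+1})$), take $R>100b$, $n_0>100b/\delta$, and pick the center $x$ \emph{on the middle curve} $\ell_{n/2}$ at a point with $d(x,A),d(x,B)>d(A,B)/4$. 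Then for any geodesic ray from $x$, its first exit point $y$ through $\partial\,\mathrm{Fill}(\cdot)\setminus(A\cup B)$ either lies on $\partial A\cup\partial B$, forcing $d(x,y)>R/4>4b$, or lies on $\ell_1\cup\ell_n$, in which case the ray must have crossed $\ell_i$ for all $i$ between $n/2$ and $1$ (or $n$), and the pairwise disjoint $\delta$-neighborhoods force $d(x,y)>4b$. Hence $B(x,4b)$ lies in the fill, avoids $A\cup B$, and is at distance $>d(A,B)/10$ from both, with no need for any wide gap. In your example above this is exactly what happens: the ball sits in the middle of the stack of curves and contains many of them. Your cross-cut observation (all $n$ tubes cross a separating arc, so its length is $\ge n\delta$) is in the same spirit as the paper's counting, but the argument only closes once the disk is allowed to swallow the curves.
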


 \begin{figure}[ht]
 \begin{center}
 \includegraphics[scale=0.68]{./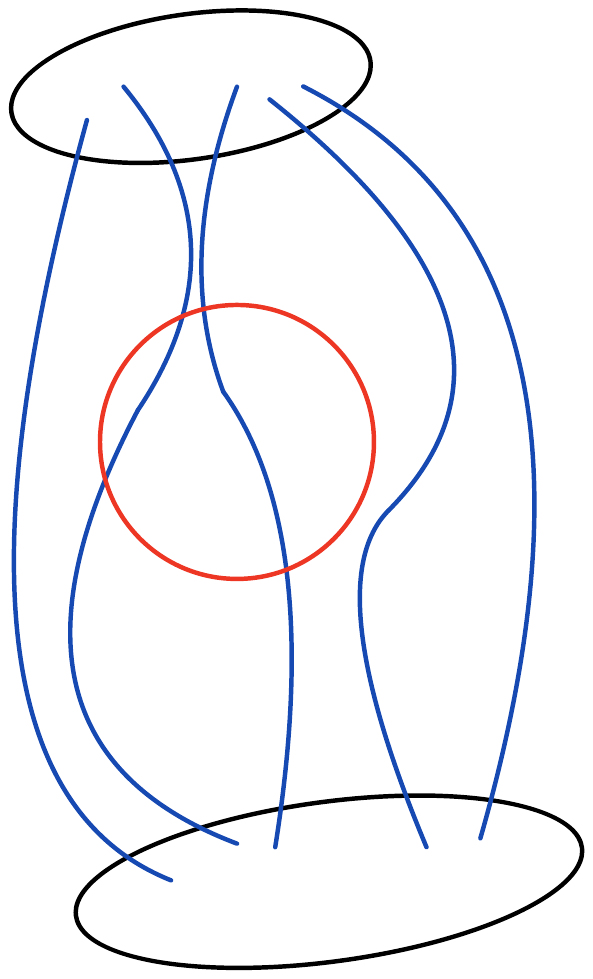}
 \begin{picture}(0,0)
\put(-100,12){$A$}
\put(-100,185){$B$}
\put(-100,105){$D$}
\put(-144,79){$\ell_1$}
\put(-120,59){$\ell_2$}
\put(-34,64){$\ell_n$}
 \end{picture}
 \end{center}
 \vspace{-0.5cm}
 \caption{A depiction of Lemma \ref{topologicallema}.}\label{f.6M}
 \end{figure}

\begin{proof}
Using the Jordan Curve Theorem we can reorder the curves so that:

\begin{itemize}
\item $\mathrm{Fill}(A \cup B \cup \ell_1\cup \ldots \cup \ell_n) = \mathrm{Fill}(A \cup B \cup \ell_1\cup \ell_n)$, 
\item for $1<i<n$ we have that $\ell_i \subset \mathrm{Fill}(A \cup B \cup \ell_{i-1} \cup \ell_{i+1})$. 
\end{itemize}

Take $R > 100 b$ and $n_0 > \frac{100 b}{\delta}$. Without loss of generality we can assume that $n$ is even. This way, we can choose a point $x \in \ell_{n/2}$ such that $d(x,A)>\frac{d(A,B)}{4}$ and $d(x,B)> \frac{d(A,B)}{4}$. We claim that $B(x,4b) \subset \mathrm{Fill}(A \cup B \cup \ell_1\cup \ell_n)$. By our choice of $x$ it will follow that $B(x,4b)$ is at distance larger than $\frac{d(A,B)}{10}$ from $A$ and $B$. 

To see this, consider $r$ a geodesic ray starting from $x$ and $y$ the first point of intersection of $r$ with $\partial \mathrm{Fill}(A \cup B \cup \ell_1\cup \ldots \cup \ell_n) \setminus (A\cup B)$.  By our ordering, there are two possibilities: 

\begin{itemize}
\item either $y$ belongs to $\partial A \cup \partial B$,
\item or $y$ belongs to $\ell_1 \cup \ell_n$. 
\end{itemize}

By our assumptions, if $y \in \partial A \cup \partial B$ then the distance $d(x,y)> R/4 > 4b$. On the other hand, if $y \in \ell_1$ then by our choice of reordering we deduce that $r$ must intersect $\ell_i$ for all $1\leq i \leq n/2$. Since the points of intersection are at distance pairwise larger than $\delta$ we deduce that $d(x,y)>4b$. Similarly, if $y \in \ell_n$ we also get $d(x,y)>4b$. This completes the proof. 
\end{proof}

\subsubsection{Proof of Proposition \ref{l.analogL53}} 
We will use the fact that $\wt f^{-1}$ expands stable length repeatedly. To simplify notation we set $g:= \wt f^{-1}$. The rest of this subsection is devoted to the proof of Proposition \ref{l.analogL53}.

According\footnote{It is not hard to see that the proof applies to the fixed sublamination.} to Lemma \ref{lema-boundedinfixcsleaf-nonDC} there is a constant $K>0$ such that, for any $z\in L$, we have

\[ d_L(z,g(z)) \leq K, \]

\noindent where $d_L$ denotes the path-metric on $L$. From now on, during this subsection we will always work in $L$ so we will drop the subscript and write $d:=d_L$. 

To finish the proof, our aim will be to show that the fact that $\ft$ moves points a bounded distance in $L$, together with the exponential contraction of length along the stable leaf $s(p)$ under iteration by $\ft$ will force an arbitrarily large amount of ``bunching'' of $s(p)$, which is impossible for leaves of planar foliations.

Indeed, since $s(p)$ is a leaf of a foliation of the plane, there exists some constants $\delta, \eta>0$ such that if $I, J \subset s(p)$ are closed segments which are at distance larger than $\eta$ in the $s(p)$ metric, then, their $\delta$-neighborhoods are disjoint in $L$. 
Now, this implies in particular that the volume of the $\delta$-neighborhood of a segment of $s(p)$ must grow to infinity with its length (and therefore, the diameter grows to infinity with the length).

Without loss of generality, we can assume that $\delta, \eta<1$ and $K>1$. 

To prove Proposition \ref{l.analogL53} we will fix $b>0$ as given by Lemma \ref{claim.fixed_points_close_by} and $\delta>0$ by the considerations above. Lemma \ref{topologicallema} then gives us values of $R>0$ and $n_0>0$ associated to $b$ and $\delta$ so that its statement holds. We will fix

$$ n > \max \left\{ \frac{10 R}{K} , \frac{10 b}{\delta}, n_0 \right\}. $$

\smallskip

We introduce the following notation: given any $a,b\in s(p)$, we write $[a,b]^s$ to indicate the closed segment along the stable leaf $s(p)$ between $a$ and $b$ oriented from $a$ to $b$. 

\smallskip

We will then pick points in $y, z \in s(p)$ with the following properties: 

\begin{itemize}
\item $d(y,z) > 200Kn $
\item $g([y,z]^s) \cap [y,z]^s = \emptyset$ (equivalently, $z \in [y,g(y)]^s$). 
\end{itemize}

The existence of points like this follows from the fact that if $y_0$ is any point in $s(p)$ then the length of $g^k([y_0,g(y_0)]^s)$ grows to infinity, and thus its diameter grows too. See Figure~\ref{f.7M}.

 \begin{figure}[ht]
 \begin{center}
 \includegraphics[scale=0.68]{./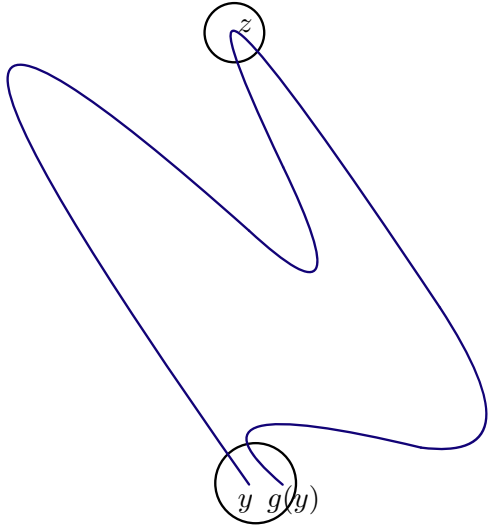}
 \begin{picture}(0,0)
\put(-128,13){$y$}
\put(-128,192){$z$}
\put(-117,13){$g(y)$}
 \end{picture}
 \end{center}
 \vspace{-0.5cm}
 \caption{Choosing points $y$ and $z$ in $s(p)$.}\label{f.7M}
 \end{figure}

We will pick $A_i = B(y,Ki)$ and $B_i= B(z,Ki)$ the neighborhoods of radius $Ki$ of the points $y,z$. Given our choices, notice that $g(y)\in A_1$, $g(z)\in B_1$, and, for any $i$, $g(A_{i}) \subset A_{i+1}$ as well as $g(B_{i}) \subset B_{i+1}$. 

The following holds: 

\begin{lemma}\label{l.fundamentaldomaincurve}
Every arc $J \subset [y, g^n(z)]^s$ which is disjoint from $A_n \cup B_n$ is completely contained in a fundamental domain of $s(p)$ for the action of $g$. More precisely, there exists $\ell$ such that $J \subset [g^\ell(y),g^\ell(z)]^s$ or $J \subset [g^\ell(z),g^{\ell+1}(y)]^s$. 
\end{lemma}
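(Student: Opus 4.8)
The idea is that $g:=\wt f^{-1}$ expands stable length and fixes $p$, so $s(p)$ is $g$-invariant and the iterates of $g$ tile the relevant ray of $s(p)$ into fundamental domains whose endpoints are exactly the cut points appearing in the statement; at the same time $g$ displaces every point of $L$ by at most $K$, so all those cut points between $y$ and $g^n(z)$ are trapped inside $A_n\cup B_n$. Hence an arc disjoint from $A_n\cup B_n$ can meet no cut point and is confined to a single half-domain.

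First I would fix the combinatorial picture. Since $g$ expands stable length along $s(p)$ and $p$ is its only fixed point there (a second fixed point would, by $f$-invariance and contraction of the $f$-contracted stable length, have to collapse onto $p$), on the ray of $s(p)$ through $y$ issuing from $p$ the map $g$ is a fixed-point-free homeomorphism pushing points away from $p$; thus $\{g^\ell([y,g(y)]^s)\}_{\ell\in\ZZ}$ tiles that ray. As $f$ is orientable, $g$ preserves the orientation of $s(p)$, so the hypothesis $g([y,z]^s)\cap[y,z]^s=\emptyset$ forces $z\in[y,g(y)]^s$, whence $[y,g(y)]^s=[y,z]^s\cup[z,g(y)]^s$; applying powers of $g$ gives, for each $\ell$, the two half-domains $[g^\ell(y),g^\ell(z)]^s$ and $[g^\ell(z),g^{\ell+1}(y)]^s$. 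Consecutive fundamental domains share an endpoint, so $[y,g^n(z)]^s$ is precisely the ordered concatenation of the half-domains $[g^\ell(y),g^\ell(z)]^s$ for $0\le\ell\le n$ together with $[g^\ell(z),g^{\ell+1}(y)]^s$ for $0\le\ell\le n-1$, and its interior cut points are exactly the $g^\ell(z)$ with $0\le\ell\le n-1$ and the $g^\ell(y)$ with $1\le\ell\le n$.

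The only quantitative ingredient is the displacement bound. Lemma~\ref{lema-boundedinfixcsleaf-nonDC}, applied to the fixed sublamination, gives $d_L(w,g(w))\le K$ for every $w\in L$; telescoping along the orbit yields $d_L(y,g^\ell(y))\le\ell K$ and $d_L(z,g^\ell(z))\le\ell K$, so for all $0\le\ell\le n$ one has $g^\ell(y)\in A_n$ and $g^\ell(z)\in B_n$ (reading $A_n,B_n$ as closed balls of radius $Kn$, or balls of radius slightly larger, to absorb the borderline case $\ell=n$, which costs nothing since $n$ is taken with large slack). In particular every interior cut point of $[y,g^n(z)]^s$ lies in $A_n\cup B_n$.

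The conclusion is then purely order-theoretic: an arc $J\subset[y,g^n(z)]^s$ is a connected subset of a linearly ordered segment, hence a subsegment; if $J$ misses $A_n\cup B_n$ it contains none of the interior cut points, so it must lie between two consecutive ones, i.e.\ inside a single half-domain $[g^\ell(y),g^\ell(z)]^s$ or $[g^\ell(z),g^{\ell+1}(y)]^s$. The step I expect to need the most care is the bookkeeping in the second paragraph: verifying that orientation-preservation together with $g([y,z]^s)\cap[y,z]^s=\emptyset$ really produces the clean tiling, so that $[y,g^n(z)]^s$ decomposes exactly as stated and its cut points sit where claimed; everything afterward is the triangle inequality plus the elementary fact that a connected subset of an interval avoiding finitely many points lies in one complementary subinterval.
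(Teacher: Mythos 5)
Your proof is correct and follows essentially the same route as the paper's (very terse) argument: the displacement bound $d_L(w,g(w))\le K$ forces all the cut points $g^\ell(y)\in A_n$ and $g^\ell(z)\in B_n$ for $0\le\ell\le n$, so every fundamental domain of the tiling of $[y,g^n(z)]^s$ meets both balls and a connected arc avoiding $A_n\cup B_n$ must stay in a single half-domain. Your extra care with the tiling (no fixed point of $g$ on the ray, orientation preservation) just makes explicit what the paper leaves implicit.
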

\begin{proof}
This is because $[y,z]^s$ intersects $A_1$ and $B_1$ so every fundamental domain as above intersects both $A_n$ and $B_n$. 
\end{proof}

We can apply Lemma \ref{topologicallema} to get:

\begin{lemma}\label{l.bigdiskinfill} 
We have that $\mathrm{Fill}(A_n \cup B_n \cup [y,g^n(z)]^s) \setminus (A_{10n} \cup B_{10n})$ contains a disk of radius $4b$. 
\end{lemma}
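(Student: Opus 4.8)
The plan is to feed the right data into Lemma~\ref{topologicallema}, with the role of $A,B$ played by the balls $A_n,B_n$ and with the curves $\ell_1,\dots,\ell_n$ extracted from the $g$-translates of the stable segment joining $y$ and $z$. Recall that along $s(p)$ the points occur in the order $y<z<g(y)<g(z)<\dots<g^n(y)<g^n(z)$: this is because $z\in[y,g(y)]^s$ and $g=\wt f^{-1}$ preserves the orientation of $s(p)$, as $f$ is orientable. Hence for $i=1,\dots,n$ the closed stable segment $\gamma_i:=[g^i(y),g^i(z)]^s$ is contained in $[y,g^n(z)]^s$, and the $\gamma_i$ are pairwise disjoint, consecutive ones being separated along $s(p)$ by $[g^i(z),g^{i+1}(y)]^s$. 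Since $d_L(w,g(w))\le K$ for all $w$ (Lemma~\ref{lema-boundedinfixcsleaf-nonDC}), we get $d_L(y,g^i(y))\le iK\le nK$ and $d_L(z,g^i(z))\le iK\le nK$, so $g^i(y)\in\overline{A_n}$ and $g^i(z)\in\overline{B_n}$; and since $d_L(y,z)>200Kn$ the closed balls $\overline{A_n},\overline{B_n}$ are disjoint, so $\gamma_i$ really does run from $\overline{A_n}$ to $\overline{B_n}$. I would then let $\ell_i\subset\gamma_i$ be a sub-arc from $\partial A_n$ to $\partial B_n$ whose interior is disjoint from $\overline{A_n}\cup\overline{B_n}$ (such a sub-arc exists because $\gamma_i$ connects the disjoint closed sets $\overline{A_n}$ and $\overline{B_n}$); the $\ell_i$ are pairwise disjoint because the $\gamma_i$ are.

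Next I would verify the two hypotheses of Lemma~\ref{topologicallema}, where $R,n_0$ are the constants it supplies from $b$ and $\delta$. For (i): $d_L(A_n,B_n)\ge d_L(y,z)-2Kn>198Kn$, and $n>10R/K$ forces $Kn>10R$, so this exceeds $2R$. For (ii): $\ell_i\subset\gamma_i$, and for $i\neq j$ the closed stable segments $\gamma_i,\gamma_j$ are at $s(p)$-distance at least the length of $[g^i(z),g^{i+1}(y)]^s$, which is $\ge d_L(g^i(z),g^{i+1}(y))\ge d_L(y,z)-3nK>197Kn>\eta$; so by the defining property of $\delta,\eta$ the $\delta$-neighborhoods of $\gamma_i$ and $\gamma_j$ are disjoint in $L$, hence so are those of $\ell_i$ and $\ell_j$. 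Since also $n>n_0$, Lemma~\ref{topologicallema} yields a disk $D$ of radius $>4b$ contained in $\mathrm{Fill}(A_n\cup B_n\cup\ell_1\cup\dots\cup\ell_n)\setminus(A_n\cup B_n)$ with $d_L(D,A_n)>d_L(A_n,B_n)/10$ and $d_L(D,B_n)>d_L(A_n,B_n)/10$.

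Finally I would locate $D$: by monotonicity of $\mathrm{Fill}$ and $\ell_i\subset\gamma_i\subset[y,g^n(z)]^s$ we get $D\subset\mathrm{Fill}(A_n\cup B_n\cup[y,g^n(z)]^s)$; and $d_L(D,y)\ge d_L(D,A_n)>d_L(A_n,B_n)/10>19.8Kn>10Kn$, so $D$ is disjoint from $A_{10n}=B(y,10Kn)$, and symmetrically $D$ is disjoint from $B_{10n}$. Thus $D\subset\mathrm{Fill}(A_n\cup B_n\cup[y,g^n(z)]^s)\setminus(A_{10n}\cup B_{10n})$, and as it has radius $>4b$ it contains a disk of radius $4b$, as claimed.

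I expect the genuine content to lie entirely in Lemma~\ref{topologicallema}; here the only subtleties are bookkeeping — getting the cyclic order of the $g^i(y),g^i(z)$ along $s(p)$ correct (so the $\gamma_i$ are disjoint sub-segments of $[y,g^n(z)]^s$) and checking that the chosen $n$ makes all the inequalities go through — together with one minor technical nuisance: Lemma~\ref{topologicallema} wants $A,B$ to be topological disks, so strictly speaking one should replace the metric balls $A_n,B_n$ by slightly larger disk neighborhoods, which changes nothing since every estimate above has ample room.
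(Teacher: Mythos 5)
Your proof is correct and follows essentially the same route as the paper: both feed sub-segments of $[y,g^n(z)]^s$ joining $A_n$ to $B_n$ into Lemma~\ref{topologicallema} (the paper takes the $2n$ segments, two per fundamental domain $[g^\ell(y),g^{\ell+1}(y)]^s$, while you take the $n$ segments $[g^i(y),g^i(z)]^s$ --- either count exceeds $n_0$), using separation along $s(p)$ and the choice of $\delta,\eta$ for the disjoint-neighborhood hypothesis, and then use the clause $d(D,A_n),\,d(D,B_n)>d(A_n,B_n)/10>10Kn$ to push the disk outside $A_{10n}\cup B_{10n}$. The topological-disk caveat you raise about the metric balls $A_n,B_n$ is not addressed in the paper either (it applies the lemma to them directly), and as you note it is harmless.
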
 
\begin{proof}
Note that $[g^{\ell}(y),g^{\ell+1}(y)]^s$ contains at least two segments joining $A_{10n}$ to $B_{10n}$ if $0\leq \ell <n$ (see Figure~\ref{f.7M}). Thus there are at least $2n$ such curves, which since they are segments separated in $s(p)$ must have pairwise disjoint $\delta$-neighborhoods. Thus, by our choice of constants $b, \delta, K$ and $n$ above, we can apply Lemma \ref{topologicallema} to find a disk of radius $\geq 4b$ inside $\mathrm{Fill}(A_{n} \cup B_{n} \cup [y,g^{n}(z)]) \setminus (A_{n} \cup B_{n})$ which is at distance larger than $\frac{d(A_n,B_n)}{10}$ from $A_n$ and $B_n$. Thus, the disk is contained in $\mathrm{Fill}(A_n \cup B_n \cup [y,g^n(z)]^s) \setminus (A_{10n} \cup B_{10n})$ as required. 
\end{proof}

Using Lemma \ref{claim.fixed_points_close_by} we can find a fixed point $q \in \mathrm{Fix}(g)$ such that $B(q,2b) \subset \mathrm{Fill}(A_n \cup B_n \cup [y,g^n(z)]^s) \setminus (A_{10n} \cup B_{10n})$. 

We can show the following:

\begin{lemma}\label{l.jordanregion} 
There exists an arc $J \subset [y,g^n(z)]^s$ such that one of the following holds:
\begin{enumerate}
\item either $J$ intersects $A_n$ only at its endpoints and $q \in \mathrm{Fill}(A_n \cup J)$, or,
\item $J$ intersects $B_n$ only at its endpoints and $q \in \mathrm{Fill}(B_n \cup J)$. 
 \end{enumerate}
 Moreover, $J$ is contained in a fundamental domain: for some $0 \leq \ell \leq n$ we either have $J \subset [g^\ell(y), g^{\ell+1}(y)]^s$ or $J \subset [g^{\ell}(z), g^{\ell+1}(z)]^s$.
\end{lemma}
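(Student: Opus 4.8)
The plan is to extract the arc $J$ from the stable segment $[y,g^n(z)]^s$ by tracking how this segment enters and exits the disks $A_n$ and $B_n$, and using the fact that $q$ lies in $\mathrm{Fill}(A_n \cup B_n \cup [y,g^n(z)]^s) \setminus (A_{10n}\cup B_{10n})$, hence $q$ is separated from infinity in $L$ by the Jordan set $A_n\cup B_n\cup[y,g^n(z)]^s$ but is itself outside $A_n$ and $B_n$.

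First I would record the basic combinatorial structure of $[y,g^n(z)]^s$. By construction the successive ``nodes'' $y,z,g(y),g(z),g^2(y),\dots,g^n(y),g^n(z)$ along $s(p)$ lie respectively in $A_1\subset A_n$, $B_1\subset B_n$, $A_2\subset A_n$, etc. So $[y,g^n(z)]^s$ is a concatenation of subarcs, alternately running from (a point of) $A_n$ to (a point of) $B_n$ and back, with all ``turn-around'' points inside $A_n\cup B_n$. Remove from $[y,g^n(z)]^s$ the open portions lying in the interiors of $A_n$ and $B_n$; what remains is a finite disjoint union of closed subarcs $J_1,\dots,J_m$, each of which meets $A_n\cup B_n$ only at its two endpoints, and each of which (by Lemma~\ref{l.fundamentaldomaincurve}, since it is disjoint from the interiors of $A_n,B_n$ hence from $A_1,B_1$) is contained in a single fundamental domain $[g^\ell(y),g^{\ell+1}(y)]^s$ or $[g^\ell(z),g^{\ell+1}(z)]^s$ for some $0\le\ell\le n$. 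Each $J_i$ has both endpoints on the \emph{same} disk (call it $A_n$ or $B_n$), because an arc crossing from $A_n$ to $B_n$ would, being inside a single fundamental domain, have to pass through a point of $[y,z]^s$ already lying in $A_1$ or $B_1$ — more carefully, one argues from the ordering of the nodes that a subarc clipped between an exit of one disk and the next entry returns to the same disk. (If the two endpoints of some $J_i$ were on different disks one could still take that $J_i$, but in fact the clipping produces same-disk arcs; this is a routine check.)

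Next comes the topological selection step, which is the heart of the argument. Consider the compact connected set $\mathrm{Fill}\bigl(A_n\cup B_n\cup[y,g^n(z)]^s\bigr)$; by hypothesis it contains $q$, while $q\notin A_{10n}\cup B_{10n}$, so in particular $q\notin A_n\cup B_n$. Thus $q$ lies in the union $\bigcup_i \mathrm{Fill}(A_n\cup B_n\cup J_i)\setminus(A_n\cup B_n)$ together possibly with components of $L\setminus(A_n\cup B_n\cup[y,g^n(z)]^s)$ that are bounded; but every bounded complementary component of the Jordan-type set $A_n\cup B_n\cup[y,g^n(z)]^s$ has boundary contained in $A_n\cup B_n$ plus a sub-collection of the $J_i$'s whose union with $A_n\cup B_n$ still fills to a set containing that component. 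Concretely: since $A_n,B_n$ are disjoint disks joined by the disjoint arcs $J_i$ (each with both endpoints on one disk, so really each $J_i$ together with an arc of $\partial A_n$ or $\partial B_n$ bounds a sub-disk), the filled region decomposes as $A_n\cup B_n$ union a finite collection of ``bigon-like'' regions, one cobounded by each $J_i$ and a boundary arc of its disk. Since $q$ lies in the filled region but not in $A_n\cup B_n$, it must lie in one of these regions, i.e.\ there is a single index $i$ with $q\in\mathrm{Fill}(A_n\cup J_i)$ (if $J_i$'s endpoints are on $A_n$) or $q\in\mathrm{Fill}(B_n\cup J_i)$. Set $J:=J_i$; this gives case (1) or (2) of the lemma, and the ``moreover'' clause is exactly the fundamental-domain containment recorded above.

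The main obstacle I anticipate is making the ``decompose the filled region into bigons cobounded by the $J_i$'' step precise while correctly handling the possibility that the arcs $J_i$ nest inside one another (one $J_i$ lying in the bounded region cut off by another) or that the clipping produces an arc running between the two disks. The clean way around this is to invoke the reordering already performed in the proof of Lemma~\ref{topologicallema}: after reordering we may assume $\ell_1,\dots$ are nested and only the two outermost matter for the fill, and correspondingly only the outermost clipped arc $J_{\text{out}}$ on each side contributes to $\partial\,\mathrm{Fill}(A_n\cup B_n\cup[y,g^n(z)]^s)$; since $q$ is in the fill but outside $A_{10n}\cup B_{10n}\supset\{$points near the disks$\}$, one then shows $q$ is actually caught by one of these two outermost arcs by a connectedness/separation argument (a path from $q$ to $\infty$ must cross $A_n\cup B_n\cup J_{\text{out}}$, and if it avoids $A_n\cup B_n$ it crosses $J_{\text{out}}$, placing $q$ inside $\mathrm{Fill}(A_n\cup J_{\text{out}})$ or $\mathrm{Fill}(B_n\cup J_{\text{out}})$). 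Taking $J$ to be that outermost arc finishes the proof.
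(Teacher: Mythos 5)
Your clipping produces arcs with the wrong combinatorics, and the error is load-bearing. Since the nodes alternate $y\in A_1,\ z\in B_1,\ g(y)\in A_1,\ g(z)\in B_1,\dots$, every time $[y,g^n(z)]^s$ exits $A_n$ it next enters $B_n$ (and vice versa) --- there is no excursion that leaves $A_n$, stays outside $B_n$, and returns to $A_n$. So when you delete the interiors of \emph{both} $A_n$ and $B_n$, each remaining subarc $J_i$ has one endpoint on $\partial A_n$ and the other on $\partial B_n$, contrary to your claim that ``each $J_i$ has both endpoints on the same disk.'' Your justification (``an arc crossing from $A_n$ to $B_n$ would have to pass through a point already lying in $A_1$ or $B_1$'') is not correct --- passing through $A_1$ or $B_1$ is exactly what happens \emph{at the endpoints}, and the interior happily crosses between the two disks. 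This breaks the subsequent ``bigon'' decomposition: an arc from $A_n$ to $B_n$, attached to $A_n$ at a single point, has $\mathrm{Fill}(A_n\cup J_i)=A_n\cup J_i$ and encloses nothing, so it cannot ``catch'' $q$. Your fallback via the nested $\ell_i$ has the same defect: the two outermost arcs each run between the disks, and the bounded region containing $q$ is bounded by \emph{both} of them together with boundary arcs of $A_n$ and $B_n$; you never recover a single subarc of $[y,g^n(z)]^s$ meeting one disk only at its two endpoints.

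The paper avoids this by clipping against only one disk at a time. It first shows, via the plane-topology fact that $\mathrm{Fill}(X\cup Y)=X\cup Y$ when $X\cap Y$ is connected, that $\mathrm{Fill}(A_n\cup B_n\cup[y,g^n(z)]^s)=\mathrm{Fill}(A_n\cup[y,g^n(z)]^s)\cup\mathrm{Fill}(B_n\cup[y,g^n(z)]^s)$. Say $q$ lands in the first piece. Now one looks at the excursions of $[y,g^n(z)]^s$ outside $A_n$ \emph{only}: each such excursion has both endpoints on $\partial A_n$ and its interior is allowed to pass through $B_n$ (indeed it must, since it travels out to $g^{\ell}(z)$ and back). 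These are exactly the arcs $J$ the lemma asks for, and each one is a subarc of some full fundamental domain $[g^\ell(y),g^{\ell+1}(y)]^s$ because the arc re-enters $A_n$ precisely at the $A$-nodes $g^\ell(y)$. One of these excursions cuts off the region containing $q$, giving $q\in\mathrm{Fill}(A_n\cup J)$. Note also that Lemma~\ref{l.fundamentaldomaincurve} as stated applies to arcs disjoint from $A_n\cup B_n$ and yields a half fundamental domain; the excursion $J$ here meets $B_n$, and the correct ``moreover'' conclusion is the full fundamental domain $[g^\ell(y),g^{\ell+1}(y)]^s$, which follows from the node structure rather than directly from that lemma --- another point your write-up would need to adjust.
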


\begin{proof}
This follows from the fact that $\mathrm{Fill}(A_n \cup B_n \cup [y,g^n(z)]^s)$ is contained in a union of sets of this form. 

To see this note that $\mathrm{Fill}(A_n \cup B_n \cup [y,g^n(z)]^s) = \mathrm{Fill}(A_n \cup [y,g^n(z)]) \cup \mathrm{Fill}(B_n \cup [y, g^n(z)]^s)$ because $A_n$ and $B_n$ are disjoint topological disks and $[y,g^n(z)]^s$ is a topological interval. Indeed, by Jordan's theorem $\hat A=\mathrm{Fill}(A_n \cup [y,g^n(z)]^s)$ is a topological disk with an arc attached (i.e. the segment $[w,g^n(z)]^s$ where $w$ is the last point of intersection of $[y,g^n(z]]^s$ and similarly $\hat B=\mathrm{Fill}(B_n \cup [y,g^n(z)]^s)$ is a topological disk with an arc attached. One has that $\mathrm{Fill}(A_n\cup B_n \cup [y,g^n(z)]^s)= \mathrm{Fill}(\hat A \cup \hat B)$. Since the intersection of these sets is connected (because their intersection retracts to $[y,g^n(z)]^s$) we deduce\footnote{Here we are using the fact from plane topology that generalizes Jordan's curve theorem stating that if $X,Y$ are compact connected sets, then their union separates the plane if and only if their intersection is not connected.} that $\mathrm{Fill}(\hat A \cup \hat B)= \hat A \cup \hat B$. 

The fact that $J$ is contained in a fundamental domain is direct consequence of the fact that it intersects $A_n$ (or $B_n$) only in its boundaries, and thus Lemma \ref{l.fundamentaldomaincurve} can be applied. \end{proof}

Both cases are analogous, so we will assume from now on that the first option happens, namely, $q \in \mathrm{Fill}(A_n \cup J)$ for a curve $J \subset [y, g^n(z)]^s$ which intersects $A_n$ only at its endpoints and such that $J$ is contained in a fundamental domain of $s(p)$. 

To reach a contradiction, the idea will be to find fixed points $q_1,q_2$ which are sufficiently close, and such that one belongs to $\mathrm{Fill}(A_n \cup J)$ and the other does not. If we choose them appropriately, we will be able to see that $g^i(J)$ will intersect a geodesic joining $q_1$ and $q_2$ for several values of $i$ (before the set $g^i(A_n)$ becomes too big). This will produce some accumulation of the arcs $g^{i}(J)$ (which are segments of $s(p)$ far along $s(p)$), this is not possible and gives the desired contradiction. 

\begin{lemma}\label{l.fixedpointoutside}
There are fixed points $q_1, q_2\in \mathrm{Fix}(g)$ such that $d(q_1,q_2)< 3b$ and we have that $q_1 \in \mathrm{Fill}(A_n \cup J) \setminus A_{10n} $ while $q_2 \notin \mathrm{Fill}(A_n \cup J)$. 
\end{lemma}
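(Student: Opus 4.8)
The plan is to take $q_1:=q$ for the point inside $\mathrm{Fill}(A_n\cup J)$, and to produce $q_2$ by crossing the bounding arc $J$ right next to $q$ and then promoting a point on the far side of $J$ to an honest fixed point of $g$ via Lemma~\ref{claim.fixed_points_close_by}.

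That $q_1=q$ does the job on the ``inside'' is immediate from the choice of $q$: we have $B(q,2b)\subseteq\mathrm{Fill}(A_n\cup B_n\cup[y,g^n(z)]^s)\setminus(A_{10n}\cup B_{10n})$, so in particular $q\notin A_{10n}$, while $q\in\mathrm{Fill}(A_n\cup J)$ by Lemma~\ref{l.jordanregion}; hence $q_1\in\mathrm{Fill}(A_n\cup J)\setminus A_{10n}$.

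For $q_2$, the key observation is that $\partial\,\mathrm{Fill}(A_n\cup J)\subseteq\partial A_n\cup J\subseteq A_{10n}\cup J$, so — since $B(q,2b)$ is disjoint from $A_{10n}$ — the only part of this frontier that $B(q,2b)$ can meet lies on the stable arc $J$. I would combine this with the facts that $q$ was produced from the center of the radius‑$4b$ disk $D$ of Lemma~\ref{l.bigdiskinfill}, which sits at distance $>d(A_n,B_n)/10\gg b$ from $A_n$, and that $J$ is contained in a single fundamental domain of $s(p)$ for $g$ (Lemmas~\ref{l.jordanregion} and \ref{l.fundamentaldomaincurve}), to conclude that near $q$ the region $\mathrm{Fill}(A_n\cup J)$ behaves like a thin collar of $J$: namely that $d(q,J)<b$, and that upon crossing $J$ on the side away from $q$ there is a point $w'$ with $d(q,w')<2b$ whose $b$‑ball is disjoint from $\mathrm{Fill}(A_n\cup J)$. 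Applying Lemma~\ref{claim.fixed_points_close_by} to $w'$ then yields $q_2\in\mathrm{Fix}(g)$ with $d(q_2,w')<b$, so $q_2\notin\mathrm{Fill}(A_n\cup J)$, while $d(q_1,q_2)\le d(q,w')+d(w',q_2)<2b+b=3b$, which is what we want.

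The main obstacle is precisely this geometric claim: one must rule out that the stable arc $J$ wraps far around $q$ inside $L$ — so that $q$ is far from $J$ in every direction — and one must control the thickness of $\mathrm{Fill}(A_n\cup J)$ transverse to $J$ near $q$, so that a genuinely exterior $b$‑ball sits within distance $2b$ of $q$. This is where one has to exploit that $J$ lives in one fundamental domain, together with the quantitative choices $R>100b$ and $n>\max\{10R/K,\,10b/\delta,\,n_0\}$ fixed in the construction, to force $\mathrm{Fill}(A_n\cup J)$ to be a narrow neighborhood of $J$ away from $A_n$; once that is in place, placing $q_1=q$ and $q_2$ across $J$ as above, and checking the triangle inequality, is routine.
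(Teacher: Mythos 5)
There is a genuine gap, and it sits exactly where you flagged it. Your construction hinges on the claim that, near $q$, the set $\mathrm{Fill}(A_n \cup J)$ is a thin collar of $J$ — concretely, that $d(q,J)<b$ and that a point $w'$ just across $J$, within $2b$ of $q$, has its whole $b$-ball outside $\mathrm{Fill}(A_n \cup J)$. Nothing in the construction gives this. The only constraints on $J$ are that it is a subarc of $s(p)$ lying in one fundamental domain and meeting $A_n$ only at its endpoints; the $\delta$-separation of distant stable segments prevents $J$ from self-approaching, but it does not prevent $J$ (together with a piece of $\partial A_n$) from enclosing a region containing metric balls of radius far larger than $b$ — indeed the whole point of Lemma \ref{l.bigdiskinfill} is that such fills do contain balls of radius $4b$. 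All that is known about $q$ is that $B(q,2b)\subset \mathrm{Fill}(A_n\cup B_n\cup [y,g^n(z)]^s)\setminus(A_{10n}\cup B_{10n})$ and, via Lemma \ref{l.jordanregion}, that $q\in\mathrm{Fill}(A_n\cup J)$; it is perfectly possible that $B(q,2b)$ (and much more) is entirely contained in $\mathrm{Fill}(A_n\cup J)$, in which case no point within $3b$ of $q$ lies outside the fill and your choice $q_1=q$ cannot work. A second, smaller issue: even if $q$ were close to $J$, crossing $J$ locally need not exit $\mathrm{Fill}(A_n\cup J)$, since the other side of $J$ at that spot may lie in another bounded complementary component (or the same one) and hence still in the fill; one would need the crossing point to lie on the outer boundary, which proximity alone does not give. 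The quantitative choices $R>100b$, $n>\max\{10R/K,10b/\delta,n_0\}$ play no role in controlling the transverse thickness of the fill, so the "narrow neighborhood" claim cannot be extracted from them.

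The statement does not require $q_1=q$, and the paper exploits that freedom to avoid your obstacle entirely: starting from $q$, take an unbounded geodesic ray $r$ that stays at distance $>2b$ from $A_{10n}$ (possible by the choice of $q$), and cover $r$ by balls of radius $b$. By Lemma \ref{claim.fixed_points_close_by} each such ball contains a fixed point of $g$; since $\mathrm{Fill}(A_n\cup J)$ is bounded, the fixed points obtained far out along $r$ lie outside it, while the one near $q$ lies inside. Taking the first consecutive pair where the transition happens gives $q_1\in\mathrm{Fill}(A_n\cup J)$, $q_2\notin\mathrm{Fill}(A_n\cup J)$ with $d(q_1,q_2)<3b$, and $q_1\notin A_{10n}$ because it lies within $b$ of a ray that keeps distance $>2b$ from $A_{10n}$. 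Note that this argument needs no information at all about the shape of $\mathrm{Fill}(A_n\cup J)$ near $q$, only its boundedness and the $b$-density of $\mathrm{Fix}(g)$; to repair your proof you would either have to prove the thin-collar property (which is false in general) or switch to an "exit along a path" argument of this kind.
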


\begin{proof}
We will use Lemma \ref{claim.fixed_points_close_by}. By the choice of the point $q$ we can consider an unbounded geodesic ray $r$ starting at $q$ which is at distance larger than $2b$ from $A_{10n}$. One can cover $r$ by balls of radius $b$, in each such ball there is a fixed point, and eventually, the fixed point is not in $\mathrm{Fill}(A_n \cup J)$ which is a bounded set. So, there is a pair of such points such that one belongs to $\mathrm{Fill}(A_n \cup J)$ and the other does not. Their distance is less than $3b$. 
\end{proof}

We are now ready to prove Proposition \ref{l.analogL53} by finding a contradiction, that will be produced using the following:

\begin{lemma}
For every $0\leq i \leq n$ we have that $g^i(J) \cap [q_1,q_2]_L \neq \emptyset$ where $[q_1,q_2]_L$ denotes a geodesic segment joining $q_1$ and $q_2$. 
\end{lemma}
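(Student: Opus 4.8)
The plan is to transport the separation established in Lemma~\ref{l.fixedpointoutside} forward under the iterates $g^i$ and then run a connectedness argument on the short geodesic $\sigma := [q_1,q_2]_L$. Fix $0\le i\le n$. Since $g^i$ is a homeomorphism of $L$, $g(q_1)=q_1$ and $g(q_2)=q_2$, applying $g^i$ to the inclusions $q_1\in\mathrm{Fill}(A_n\cup J)$, $q_2\notin\mathrm{Fill}(A_n\cup J)$ and using the equivariance $g^i(\mathrm{Fill}(X))=\mathrm{Fill}(g^i(X))$ yields
\[
q_1\in\mathrm{Fill}\big(g^i(A_n)\cup g^i(J)\big),\qquad q_2\notin\mathrm{Fill}\big(g^i(A_n)\cup g^i(J)\big).
\]
So it suffices to show that $\sigma$ must cross $g^i(J)$ in order to exit this filled region.

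The first step is to keep $\sigma$ and $g^i(A_n)$ far apart. Because $d_L(w,g(w))\le K$ for all $w\in L$, the triangle inequality gives $g(A_j)\subset A_{j+1}$, hence $g^i(A_n)\subset A_{n+i}\subset A_{2n}$ since $i\le n$. On the other hand $\sigma$ has length $d(q_1,q_2)<3b$, so $d(q_1,w)<3b$ for every $w\in\sigma$, and $q_1\notin A_{10n}$, whence
\[
d(w,y)\ \ge\ d(q_1,y)-d(q_1,w)\ >\ 10Kn-3b\ >\ 2Kn,
\]
the last inequality because $n>10b/\delta>10b$ and $K>1$ give $8Kn>80b>3b$. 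Thus $\sigma$ is disjoint from $A_{2n}$, in particular from $g^i(A_n)$ and from $\partial g^i(A_n)$, and $q_1,q_2\notin g^i(A_n)\cup\partial g^i(A_n)$.

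Now comes the topological heart. The arc $g^i(J)$ meets $g^i(A_n)$ exactly in its two endpoints, which lie on $\partial g^i(A_n)$; therefore $\mathrm{Fill}\big(g^i(A_n)\cup g^i(J)\big)$ is $g^i(A_n)$ together with a closed ``lune'' $\bar D$ — a topological disk whose boundary circle is $g^i(J)\cup\beta$ for some sub-arc $\beta\subset\partial g^i(A_n)$ — since removing the arc from the half-open annulus $L\setminus g^i(A_n)$ cuts it into one bounded and one unbounded piece. As $q_1$ lies in this Fill but neither in $g^i(A_n)$ nor on $\partial g^i(A_n)$, it lies in the open lune $D$ (if instead $q_1\in g^i(J)$ we are already done, and likewise if $q_2\in g^i(J)$), while $q_2$ lies in the open set $L\setminus\bar D$. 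Since $\sigma$ is connected and meets both $D$ and $L\setminus\bar D$, it meets the frontier $\bar D\setminus D=g^i(J)\cup\beta$; but $\beta\subset\partial g^i(A_n)$ is disjoint from $\sigma$ by the previous step, so $\sigma\cap g^i(J)\neq\emptyset$, which is exactly the claim.

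The only genuinely delicate ingredient is the description of $\mathrm{Fill}(\mathrm{disk}\cup\mathrm{arc})$ as a disk with an attached lune and the identification of its frontier with $g^i(J)\cup\beta$; this is elementary plane topology, but it must be set up carefully so that the connectedness dichotomy applies, and the degenerate cases $q_1\in g^i(J)$ or $q_2\in g^i(J)$ should be recorded explicitly. All the quantitative input is already packaged in the choice of $n$ relative to $b$, $\delta$, $K$, so no further estimates are needed beyond the one displayed above.
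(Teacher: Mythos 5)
Your proof is correct and follows essentially the same route as the paper: transport the separation of $q_1$ and $q_2$ by the equivariance $g^i(\mathrm{Fill}(X))=\mathrm{Fill}(g^i(X))$ at the fixed points, keep the short geodesic $[q_1,q_2]_L$ away from $g^i(A_n)\subset A_{n+i}$ by the same distance estimate, and conclude by a frontier-crossing argument. The only cosmetic difference is that the paper invokes directly that the segment must meet $\partial\,\mathrm{Fill}(g^i(A_n)\cup g^i(J))\subset g^i(A_n)\cup g^i(J)$, whereas you unpack the filled set as disk-plus-lune before running the connectedness argument; both yield the same conclusion.
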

\begin{proof}
Note first that since $d(q_1, q_2) < 3b$ and $q_1 \notin A_{10n}$ we know that the geodesic segment $[q_1,q_2]_L$ is disjoint from $A_{5n}$ (recall that $\delta<1$ and that $n > \frac{10b}{\delta}$). 

Since $q_1 \in \mathrm{Fill}(A_n\cup J)$ is fixed we get that $q_1 = g^i(q_1) \in g^i(\mathrm{Fill}(A_n \cup J)) = \mathrm{Fill}(g^i(A_n)\cup g^i(J))$. Similarly, we get that since $q_2 \notin \mathrm{Fill}(A_n\cup J)$ we have that $q_2 \notin \mathrm{Fill}(g^i(A_n)\cup g^i(J))$. 

This implies that $\partial \mathrm{Fill}(g^i(A_n) \cup g^i(J))$ must intersect $[q_1,q_2]_L$. Since $g^i(A_n) \subset A_{n+i}$ which is disjoint from $[q_1, q_2]_L$ we deduce that $g^{i}(J)$ must intersect $[q_1,q_2]_L$ as we wanted to show. 
\end{proof}

The contradiction is now the fact that $g^i(J)$ are curves whose $\delta$-neighborhoods are disjoint and all intersect $[q_1,q_2]_L$ which is a geodesic segment of length $<3b$. This produces $n$ different points at pairwise distance $\geq \delta$ in $[q_1,q_2]_L$ which is a contradiction since $n > \frac{10b}{\delta}$.

\subsection{Proof of Proposition \ref{p.hypSeifminimal_nonDC}}

We are now ready to prove Proposition \ref{p.hypSeifminimal_nonDC}.

This proof follows the same structure as the one of \cite[Proposition 3.15]{BFFP-prequel} 
and we will continuously refer to it. Recall the standing assumption that $f$ is orientable. 

Consider $\Lambda$ an $f$-minimal non empty subset. We need to show that $\Lambda = M$. We assume by contradiction that $\Lambda \neq M$.

Since $\fbs$ has no closed leaves and $\Lambda$ is $f$-minimal, there cannot be any isolated leaves in $\Lambda$ (for the topology of the stable leaf space).

Now, Proposition \ref{l.analogL53}  allows us to assert that $\ft$ has no fixed points in leaves of $\wt \Lambda$. Then, Corollary \ref{c.L53} implies that each leaf of $\Lambda$ is either a plane or an annulus.

\vskip .1in
We fix an $\epsilon$ small enough and let $\Lambda'$ be the pull back of $\Lambda$ to
the approximating foliation $\fes$. That is, $\Lambda'
= (h^{cs}_{\epsilon})^{-1}(\Lambda)$.
Let $V$ be a connected component of $\mt \setminus
\widetilde  \Lambda'$.

\begin{claim}
	The projection $\pi(V)$ to $M$ has finitely many boundary leaves.
\end{claim}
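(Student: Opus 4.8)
The claim asserts that a connected component $V$ of $\mt \smallsetminus \widetilde{\Lambda}'$ projects to a subset $\pi(V)$ of $M$ with only finitely many boundary leaves. The approach mirrors \cite[Proposition 3.15]{BFFP-prequel}, with the key input being the structural facts just established: by Proposition \ref{l.analogL53}, $\ft$ has no fixed points in leaves of $\widetilde{\Lambda}$, so by Corollary \ref{c.L53} (combined with Proposition \ref{p.planeannuliorfixedpoints_branching}) every leaf of $\Lambda$ — and hence of $\Lambda'$ — is a plane or an annulus. Since $\Lambda$ is $f$-minimal and $\fbs$ has no closed leaves, $\Lambda$ has no isolated leaves, so locally $\Lambda'$ looks like a lamination with no isolated leaves, and $V$ is a complement of such a lamination inside $\mt \simeq \R^3$.

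\textbf{Key steps.} First I would observe that the boundary $\partial V$ is a union of leaves of $\widetilde{\Lambda}'$, each accessible from $V$, and that $\pi$ restricted to this boundary set descends to a collection of leaves of $\Lambda'$ in $M$; ``boundary leaf of $\pi(V)$'' means a leaf $\ell$ of $\Lambda'$ in $M$ which has a lift in $\partial V$. Second, I would use the fact that $\Lambda$ (equivalently $\Lambda'$) is $f$-minimal: the set of boundary leaves is closed and $f$-invariant in an appropriate sense — more precisely, I would argue that if there were infinitely many boundary leaves of $\pi(V)$ then one could extract, using compactness of $M$ and property \ref{item.convergence_of_leaves} of Definition \ref{def.branching}, a limit leaf $\ell_\infty$ of $\Lambda'$ which is a non-isolated accumulation of boundary leaves from the $V$-side. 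Third, I would derive a contradiction from the local product structure: near a point of $\ell_\infty$, the foliation $\fes$ is a genuine product chart, and the complementary region $V$ would have to have infinitely many ``prongs'' or boundary sheets accumulating on $\ell_\infty$ within a single chart, which is impossible for a connected component of the complement of a foliation (a complementary region meets each small flow box in a single sub-box on one side). The finiteness then follows because each boundary leaf of $\pi(V)$ contributes a definite amount — e.g. using that $\pi(V)$ is an open set whose boundary leaves are pairwise disjoint planes or annuli in the compact manifold $M$, and a lamination by planes/annuli in a compact $3$-manifold with no isolated leaves that bounds a fixed complementary region can only do so along finitely many leaves (otherwise the complementary region's volume or the transverse structure forces an accumulation contradicting local product structure).

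\textbf{Main obstacle.} The delicate point is the passage from ``infinitely many boundary leaves'' to an actual accumulation contradicting the local product structure: one must be careful that distinct lifts of the \emph{same} leaf of $\Lambda'$ in $M$ could appear as different pieces of $\partial V$, and that the branching of $\fbs$ (though $\fes$ is a true foliation, so this is mitigated by working with the approximating foliation) does not create spurious boundary components. I expect the cleanest route is: work entirely with the true foliation $\fes$ restricted near $\Lambda'$, use that $V$ is a component of the complement of the \emph{closed} set $\widetilde{\Lambda}'$ so its frontier is a union of leaves each of which is \emph{two-sidedly} approached or has $V$ on exactly one side, invoke that $\Lambda'$ has no isolated leaves to see each frontier leaf is a limit of other $\Lambda'$-leaves \emph{not} in $\partial V$ (they are in $V$ or beyond), and then a Kneser-type finiteness argument: a compact $3$-manifold cannot contain infinitely many disjoint incompressible (here $\pi_1$-injective, by tautness of $\fes$) planes or annuli that are pairwise non-parallel and all bound a common complementary region. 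Since $\pi(V)$ is a single region, its boundary leaves are all ``parallel-incomparable'' and their number is bounded by the topological complexity of $M$. This Kneser/Haken finiteness step is where the real content lies, and it is exactly the argument carried out in \cite[Proposition 3.15]{BFFP-prequel}, so I would cite and adapt it directly.
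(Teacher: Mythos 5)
Your proposal is substantially more complicated than what the paper does, and contains gaps that would need real work to fill. The paper simply observes that this is a standard fact about complementary regions of laminations in compact manifolds and cites \cite[Lemma 5.2.5]{CandelConlonI}; that result holds for any lamination in a closed manifold and is completely independent of the leaves being planes or annuli, of $f$-minimality, of tautness, and of $\pi_1$-injectivity. It follows from the compactness of $M$ and the local product structure alone (formalized via the octopus decomposition of the metric completion of a complementary component, which the paper invokes explicitly a few lines later for the next claim). Bringing in the classification of leaf types and a ``Kneser/Haken finiteness'' argument is not only unnecessary here, it is the wrong tool: Kneser/Haken finiteness concerns disjoint embedded \emph{compact} incompressible surfaces, whereas your boundary leaves are typically noncompact, so that theorem does not apply in the form you want.

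There is also a concrete gap in your step three. You claim that infinitely many boundary leaves accumulating on a limit leaf $\ell_\infty$ from the $V$-side contradicts the local product structure because ``a complementary region meets each small flow box in a single sub-box on one side.'' This is false: a complementary region of a lamination can meet a single flow box in infinitely many gaps (think of the complement of a Cantor set cross a disk), and boundary leaves of a complementary region most certainly \emph{can} accumulate on one another in the ambient manifold. The finiteness asserted by the claim is not ``boundary leaves cannot accumulate'' — it is a statement about the number of distinct boundary leaves of the completion, and its proof in Candel--Conlon uses compactness of $M$ via a holonomy/compact-core argument, not a local non-accumulation principle. So the contradiction you reach for simply is not there. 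The efficient route is the one the paper takes: cite the standard lemma and move on.
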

This is a standard fact in the theory of foliations \cite[Lemma 5.2.5]{CandelConlonI}.

\begin{claim}\label{claim_piLannulus_branching}
Each leaf $L \subset \partial V$ projects to an annulus $\pi(L)$ in $M$.
\end{claim}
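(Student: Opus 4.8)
The plan is to transfer the statement to the branching foliation and then use that $\pi(L)$ sits inside the exceptional minimal set $\Lambda$ as one of its boundary leaves. Since $V$ is a connected component of $\mt\smallsetminus\wt\Lambda'$ with $\wt\Lambda'=(\widetilde{h^{cs}_{\epsilon}})^{-1}(\wt\Lambda)$, a boundary leaf $L\subset\partial V$ is a leaf of $\wfes$ inside $\wt\Lambda'$, so its image $\widehat L:=\widetilde{h^{cs}_{\epsilon}}(L)$ is a leaf of $\wfbs$ contained in $\wt\Lambda$; by the monotonicity of $g_{\epsilon,s}$ (Proposition~\ref{prop.leafspace}), $\widehat L$ is a boundary leaf of the complementary region $\widehat V$ of $\wt\Lambda$ corresponding to $V$, and $\widehat V$ is $\ft$-invariant because $\ft$ fixes every leaf of $\wt\Lambda$. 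By the discussion in \S\ref{ss.fundamental_group_branching}, $\pi(L)$ and $\pi(\widehat L)$ have the same topological type, so it suffices to prove that $\pi(\widehat L)$ is an annulus.

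Corollary~\ref{c.L53} already gives that every leaf of $\Lambda$, and in particular $\pi(\widehat L)$, is a plane or an annulus (this is where Proposition~\ref{l.analogL53}, that $\ft$ has no fixed point in leaves of $\wt\Lambda$, enters). So the whole content of the claim is to exclude the plane. For this I would use that $\fbs$ has no compact leaves and $\Lambda\neq M$, so that $\Lambda$ is a genuine exceptional minimal set: it is nowhere dense, and each of its leaves --- in particular $\pi(\widehat L)$ --- is dense in $\Lambda$ and hence accumulates on itself, necessarily from the side of $\Lambda$, since the other side is the open region $\pi(\widehat V)$ which misses $\Lambda$.

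The main obstacle is to convert this self-accumulation into nontriviality of $\pi_1(\pi(\widehat L))$. One route is the structure theory of complementary regions of exceptional minimal sets (Dippolito's decomposition, or the existence of resilient leaves; cf.~\cite{CandelConlonI}): a boundary leaf of such a region carries nontrivial germinal holonomy, hence has a nontrivial loop and cannot be simply connected. A more self-contained route, which I would prefer to write out, is a compactness argument: if $\pi(\widehat L)$ were a plane then $\mathrm{Stab}(\widehat L)\subset\pi_1(M)$ would be trivial; using the finiteness of the set of boundary leaves of $\pi(\widehat V)$ (the preceding Claim, transported through $\widetilde{h^{cs}_{\epsilon}}$) together with the torsion-freeness of $\pi_1(M)$, one deduces that $\mathrm{Stab}(\widehat V)$ is trivial as well, so $\pi$ restricts to a homeomorphism on $\overline{\widehat V}$ and $\overline{\pi(\widehat V)}$ is homeomorphic to the closed, non-compact set $\overline{\widehat V}$ --- impossible inside the compact manifold $M$. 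Either way $\pi(\widehat L)$ is an annulus, and the argument for $\wfbu$ is identical. I expect the only delicate point to be the bookkeeping in this last paragraph, precisely relating $\mathrm{Stab}(\widehat L)$, $\mathrm{Stab}(\widehat V)$ and the finitely many boundary leaves of $\pi(\widehat V)$; everything else reduces to results already in hand, the crucial one being Proposition~\ref{l.analogL53}.
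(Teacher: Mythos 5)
Your reduction (transferring to the branching leaf $\widehat L$ and, via Corollary~\ref{c.L53}, reducing the claim to excluding the plane) is fine, but neither of your two routes for excluding the plane actually closes. Route A rests on premises that are not available: $\Lambda$ is only \emph{$f$-minimal}, and $\Lambda'=(\hs)^{-1}(\Lambda)$ is certainly not a minimal set of the approximating foliation $\fes$, so ``every leaf is dense in $\Lambda$'' is not something you can quote (and to the extent leafwise density does hold here, it holds for the \emph{branching} foliation, for which -- as the paper itself points out in the proof of Lemma~\ref{GromovhypnonDC} -- holonomy is not even well defined). Moreover, the statement that a boundary leaf of a complementary region of an exceptional minimal set must carry nontrivial germinal holonomy is not a citable theorem at this regularity: Sacksteder/resilient-leaf results require transverse $C^2$ smoothness, which the Burago--Ivanov approximating foliations do not have, and in any case they produce \emph{some} leaf with contracting holonomy, not the particular boundary leaf $L$ you need.

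Route B, your preferred one, breaks at exactly the two decisive points. First, triviality of $\mathrm{Stab}(\widehat V)$ does not follow from triviality of $\mathrm{Stab}(\widehat L)$: other boundary leaves of $\widehat V$ may be annuli with infinite stabilizers, and the finiteness you invoke is finiteness of the boundary leaves of $\pi(V)$ (i.e.\ of orbits under the isotropy group), not of the boundary leaves of $\widehat V$ in $\mt$, so there is no finite set on which to run a kernel argument. This is precisely the gap the paper fills before saying anything about isotropy: assuming $\pi(L)$ is a plane, it uses the octopus decomposition of $\pi(V)$, the $\pi_1$-injectivity of leaves of the taut foliation $\fes$, and irreducibility of $M$ to show that \emph{all} boundary leaves are planes and that $\pi(V)$ is an open disk times an interval with exactly two boundary leaves; only then is the isotropy of $V$ trivial. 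Second, and more seriously, your final step is not a contradiction: even when $\pi$ is injective on $\overline V$, the image of a closed noncompact set need not be closed in $M$, and there is no topological obstruction to a noncompact product region sitting inside the compact manifold $M$ (any open subset of $M$ is such a set). This is exactly where the dynamics must enter, and does in the paper: the leaves $A=\hs(\pi(L))$ and $B=\hs(\pi(L'))$ are $f$-invariant, the unstable segments crossing the product region from $A$ to $B$ have uniformly bounded length because no unstable ray can stay in the region, and expansion of unstable length under iteration forces all these segments to have zero length, i.e.\ $A=B$ and $V=\emptyset$, contradicting $\Lambda\neq M$. Your proposal contains no dynamical ingredient at all, and a purely topological argument cannot rule out the plane case here.
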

\begin{proof}
Suppose that $\pi(L)$ is a plane. Recall (see \cite[Lemma 5.2.14]{CandelConlonI}) that $\pi(V)$ has an octopus decomposition and a compact core. So for any $\delta >0$, the subset of points in $\pi(L)$ that are at distance greater then $\delta$ from another boundary component of $\pi(V)$ is precompact. Since $\pi(L)$ is supposed to be a plane, that subset must be contained in a closed disk $D$. Then $\pi(L)\smallsetminus D$ is an annulus that is $\delta$-close to another boundary component, $\pi(L')$ of $\pi(V)$. Moreover, the subset of $\pi(L')$ that is $\delta$-close to $\pi(L)\smallsetminus D$ then also has to be an annulus. If $\pi_1(L')$ were not a plane it would be an annulus and its non-trivial curve corresponds to a curve homotopic to the boundary of the closed disk $D$ which is homotopically trivial in $M$. Since the leaves of $\fes$ are $\pi_1$-injective, this 
implies that $\pi(L')$ is also a plane.

Since $M$ is irreducible this implies that $\pi(V)$ is homeomorphic to an open  disk times
an interval. So $\pi(V)$ has only two boundary components, both of which are planes. In particular, the isotropy group of $V$ is trivial and $\pi(V)$ is homeomorphic to $V$.

We will now switch to the branching foliation to finish the proof. Let $A = \hs\left(\pi(L)\right)$ and $B= \hs\left(\pi(L')\right)$.
Since we chose $\epsilon$ small enough, up to taking $\delta$ small enough also, the unstable segments through $A\smallsetminus \hs(D)$ intersect $B$, and their length is uniformly bounded. Moreover, no unstable ray of $A$ can stay in $\hs(\pi(V))$. This is because $\pi(V)$ is 
homeomorphic to an open disk times an interval.
 So, since $D$ is compact, the length of every unstable segment between $A$ and $B$ is bounded by a uniform constant. Notice that, since $\fbs$ is a branching foliation, we may have $A\cap B \neq \emptyset$, i.e., some of these unstable segments may be points.

Since $L$ and $L'$ are in $\partial V$, which is a connected component of $\mt \smallsetminus\widetilde\Lambda'$, we have that $A,B \in \partial \left(M \smallsetminus \Lambda\right)$. So in particular, $A$ and $B$ are fixed by $f$. Hence, the set of unstable segments between $A$ and $B$ is also invariant by $f$. Since the length of unstable segments between $A$ and $B$ are bounded above and $f$ expands the unstable length, all the unstable segments must have zero length. i.e., $A=B$. Which implies that $V$ is empty, which contradicts the assumption that $\Lambda \neq M$.
\end{proof}

Thus we showed that every component of $\pi(\partial V)$ is an annulus.
We can then apply without change the (topological) arguments of the proof of
\cite[Proposition 3.15]{BFFP-prequel} to obtain a torus $T$, composed of annuli along leaves of $\fes$, together with annuli transverse to $\fes$, that bounds a solid torus $U'$ in $\pi(V)$.

Now consider $U = \hs(U')$. Because of the collapsing of leaves, $U$ may not be a solid torus. If $U$ is empty for any any such component $U'$,
this would directly contradict the assumption $\Lambda\neq M$.
So for some such complementary component $U'$, the set $U$  is not
empty and it is contained in a solid torus (the $\epsilon$-tubular neighborhood of $U'$ in $M$). We can then use the same ``volume vs.~length'' argument on $U$, exactly as in the end of the proof of \cite[Proposition 3.15]{BFFP-prequel}, to get a final contradiction. This ends the proof of Proposition \ref{p.hypSeifminimal_nonDC}.

\subsection{Some consequences}

An important consequence of Proposition \ref{l.analogL53} is the following: 

\begin{corollary}\label{c.L53}
Suppose that $f$ is a partially hyperbolic 
diffeomorphism in $M$ that is homotopic to the
identity. Let $\ft$ be a good lift of $f$ to $\mt$.
Suppose that $\Lambda$ is a non empty (saturated) $f$-minimal subset 
of $\fbs$ such that
every leaf of the lift $\widetilde \Lambda$ to $\mt$ is fixed by $\ft$.
Then every leaf in the $f$-minimal set $\Lambda$ of $\fbs$, 
is either a plane or an annulus.
\end{corollary}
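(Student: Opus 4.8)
The plan is to derive Corollary~\ref{c.L53} quickly from Proposition~\ref{l.analogL53}, Proposition~\ref{p.planeannuliorfixedpoints_branching}, and the standing assumption that $f$ is orientable.

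First I would apply Proposition~\ref{l.analogL53}: since $\Lambda$ is a non-empty $f$-minimal subset of $\fbs$ all of whose lifted leaves are fixed by $\ft$, that proposition says $\ft$ has no fixed points in $\widetilde\Lambda$. Now take any leaf $C$ of $\Lambda$ and a lift $L$ of $C$ to $\mt$; then $L$ is a leaf of $\widetilde\Lambda$, so $\ft$ fixes $L$ but has no fixed point on $L$. Thus Proposition~\ref{p.planeannuliorfixedpoints_branching} applies with its second (``$L$ has a point fixed by $\ft$'') alternative ruled out, and it gives that $C$ has cyclic fundamental group; equivalently, writing $G := \mathrm{Stab}(L) \leq \pi_1(M)$ for the fundamental group of $C$ in the sense of the convention of \S\ref{ss.fundamental_group_branching}, the group $G$ is trivial or infinite cyclic and $C = L/G$ is a plane, an annulus, or a M\"obius band.

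It then remains only to exclude the M\"obius band, and here I would use orientability of $f$. The line bundles $E^s$ and $E^c$ admit orientations over $M$, so their lifts $\widetilde{E^s}$, $\widetilde{E^c}$ carry $\pi_1(M)$-invariant orientations on $\mt$. Since $L$ is a leaf of $\wfbs$, one has $T_xL = \widetilde{E^c}(x)\oplus\widetilde{E^s}(x)$ at every $x\in L$, so the induced orientation of $TL$ is preserved by every element of $G$. Hence $G$ acts on the plane $L$ by orientation-preserving homeomorphisms, the quotient $C = L/G$ is an orientable surface, and therefore $C$ is a plane or an annulus, not a M\"obius band. Since $C$ was an arbitrary leaf of $\Lambda$, every leaf of $\Lambda$ is a plane or an annulus.

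I do not expect a genuine obstacle: all the weight is carried by Proposition~\ref{l.analogL53}, which is already proved. The one step to be careful about is the M\"obius-band exclusion --- specifically, verifying that the orientations of $E^s$ and $E^c$ as bundles over $M$ really do force the deck transformations stabilizing $L$ to act orientation-preservingly on $L\cong\mathbb{R}^2$ through the splitting $TL = \widetilde{E^c}|_L\oplus\widetilde{E^s}|_L$.
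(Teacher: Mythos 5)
Your proof is correct and takes essentially the same approach as the paper: both begin by invoking Proposition~\ref{l.analogL53} to rule out fixed points of $\ft$ on a lifted leaf $L$, then conclude the leaf has cyclic fundamental group, and finally exclude the M\"obius band via orientability. The only (minor, harmless) difference is the routing: you apply the already-stated Proposition~\ref{p.planeannuliorfixedpoints_branching} directly, whereas the paper's proof re-derives the cyclic--fundamental-group conclusion from scratch by noting that $\ft$ acts freely on the stable leaf space of $L$, that any $\gamma$ in the stabilizer of $L$ also acts freely, and then invoking \cite[Corollary~E.4]{BFFP-prequel} for commuting free actions on a simply connected $1$-manifold; your version is slightly more modular, and your spelled-out explanation of why orientability of $E^s\oplus E^c$ forces the stabilizer to act orientation-preservingly on $L$ (and hence excludes M\"obius quotients) is more detailed than the paper's brief remark, but both arguments establish the same thing.
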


\begin{proof}
Let $A$ be a leaf of $\Lambda$ and $L$ a lift in $\mt$. By Proposition \ref{l.analogL53}, $L$ does not admit any fixed points of $\ft$. Hence, $\ft$ acts freely on the space of stable leaves in $L$.

Now, recall that $\pi_1(A)$ can be defined as the elements $\gamma \in \pi_1(M)$ that fix $L$ (see section \ref{ss.fundamental_group_branching}). So if $\gamma \in \pi_1(A)$, it must also act freely on the space of stable leaves in $L$. As $\ft$ commutes with every deck transformation, Corollary E.4 of \cite{BFFP-prequel} (which still applies in the context of branching foliation, as does all of \cite[Appendix E]{BFFP-prequel}) implies that $\pi(A)$ is abelian, i.e., $A$ is either a plane or an annulus (again with the understanding that $A$ might actually only be an immersion of one of these manifolds in $M$ and recalling that all bundles were assumed to be orientable in this section, so in particular
the leaves cannot be M\"{o}bius bands).
\end{proof}

As a consequence, we also get the following result that completes the proof of Theorem \ref{thm-nocontractible} as announced. 

\begin{corollary}
Suppose that $f$ is a partially hyperbolic diffeomorphism
homotopic to the identity. Suppose that $f$ is either volume preserving or transitive,
or that $M$ is either hyperbolic or Seifert. Let $\ft$ be a good lift of $f$.
Then $\ft$ has no periodic points. In particular, $f$ has no contractible periodic points.
\end{corollary}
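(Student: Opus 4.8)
The plan is to reduce to the cases where Theorem~\ref{t.good_lifts_have_no_fixed_points} and Proposition~\ref{p.hypSeifminimal_nonDC} apply. First I would recall that by \cite{BW}, when $f$ is transitive or volume-preserving the branching foliations $\fbs$ and $\fbu$ are automatically $f$-minimal; in that situation Theorem~\ref{t.good_lifts_have_no_fixed_points} directly says that any good lift $\ft$ has no fixed or periodic points in $\mt$. So the transitive and volume-preserving cases are essentially immediate once one invokes $f$-minimality of the associated branching foliations.

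Next I would treat the hyperbolic and Seifert fibered cases. Here one no longer has $f$-minimality of $\fbs$ or $\fbu$ for free, so one must work harder. By Corollary~\ref{c.minimalcase_branching_case} (or rather its unconditional analogue in this setting), a good lift $\ft$ either acts as a translation on the center stable leaf space $\lcsb$ — in which case no leaf, hence no point, can be periodic, since $\ft^i(L)\cap L=\emptyset$ for $|i|$ large — or $\ft$ fixes some leaf of $\wfbs$. In the latter case, Proposition~\ref{p.hypSeifminimal_nonDC} tells us that $\ft$ fixes \emph{every} leaf of $\wfbs$ and that $\fbs$ is $f$-minimal. Then again Theorem~\ref{t.good_lifts_have_no_fixed_points} applies and rules out periodic points of $\ft$. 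Either way, $\ft$ has no periodic points.

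Finally, to pass from "no periodic points of the good lift" to "no contractible periodic points of $f$", I would invoke the observation recorded just before Corollary~\ref{c.contractible}: if $p$ is a contractible periodic point of $f$ of period $n$, then there is a good lift $\wt g$ of $f$ and a lift $\wt p$ of $p$ with $\wt g^{\,n}(\wt p)=\wt p$, so $\wt p$ is a periodic point of the good lift, contradicting what we just proved. One technical caveat to address: Theorem~\ref{teo-BI} and Proposition~\ref{p.hypSeifminimal_nonDC} require $f$ to be orientable, so \emph{a priori} one must pass to a finite iterate and a finite cover to produce the invariant branching foliations. I would point out that the existence of a periodic point of a good lift upstairs descends to the original $f$ and $M$ (a periodic point of a good lift of an iterate, lifted to a finite cover, projects to a periodic point of a good lift of $f$ itself), so there is no loss of generality; this is the one place where a little care is needed, but it is routine. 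The main conceptual obstacle is really contained in Proposition~\ref{p.hypSeifminimal_nonDC} and Proposition~\ref{l.analogL53}, which have already been established, so the corollary itself is a short assembly of existing pieces.
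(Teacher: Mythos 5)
Your proposal is correct and follows essentially the same route as the paper: reduce via a finite cover and iterate to get invariant branching foliations, split into the translation case (handled directly) and the fixed-leaf case (where $f$-minimality comes from \cite{BW} in the transitive/volume-preserving setting and from Proposition~\ref{p.hypSeifminimal_nonDC} in the hyperbolic/Seifert setting), apply Theorem~\ref{t.good_lifts_have_no_fixed_points}, and use the observation preceding Corollary~\ref{c.contractible} for contractible periodic points. The only minor slip is that your reduction statement is phrased in the wrong direction --- what one actually needs is that a periodic point of a good lift $\ft$ of $f$ yields a periodic point of $\ft^k$, which is a good lift of the orientation-preserving lift of $f^k$ to the finite cover --- but this is routine and matches the paper's implicit reduction.
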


\begin{proof}
Up to finite covers and iterates, we may assume that $f$ 
preserves the branching foliations $\fbs, \fbu$. 

If $\ft$ acts as a translation on either $\fbs$ or $\fbu$, then it does not have periodic points.

Otherwise, since we showed that under our assumptions the branching foliations are $f$-minimal. The result then follows from Theorem \ref{t.good_lifts_have_no_fixed_points}.
\end{proof}

\section{Double invariance implies dynamical coherence}\label{sec-doubleinvndc}


In this section we show that if the center-stable and center-unstable branching foliations are minimal and leafwise fixed by a good lift $\ft \colon \mt \to \mt$, then, $f$ has to be dynamically coherent (i.e., the branching foliations do not branch). Therefore, we will be able to apply the results from the dynamically coherent setting. 

The universal cover $\mt$ of $M$ is homeomorphic to $\mathbb{R}^3$ (since it admits a partially hyperbolic diffeomorphism, see \cite[Appendix B]{BFFP-prequel}).
We do not assume anything further on $M$ in this section.

Recall also that a center leaf is a connected component of the intersection of a leaf of $\wfbs$ and one of $\wfbu$ (cf. Definition \ref{d.center_leaf}).

This section (and the proof of dynamical coherence) is split in three
parts. First, in subsection \ref{ss.center_leaves_are_fixed}, we show that, for an appropriate lift of $M$ and power of $f$, double invariance of the foliations implies that the center leaves are fixed. The lift and power we need to consider here is in order to have everything orientable and coorientable. Then, in section \ref{ss.dcifndc}, we show that if a good lift fixes every center leaf, then it must be dynamically coherent. Finally, in section \ref{dcwithout}, we show that if a lift and power of a partially hyperbolic diffeomorphism is dynamically coherent and fixes the center leaves, then the original diffeomorphism is itself dynamically coherent (and a good lift of a power of it will fix every center leaf).

\subsection{Center leaves are all fixed} \label{ss.center_leaves_are_fixed}
To begin, we would like to show that $\ft$ fixes every center leaf. The results of \S\ref{sec-centerdynamics} already provide at least one fixed center leaf:
\begin{lemma}
	Let $f \colon M \to M$ be an orientable partially hyperbolic diffeomorphism homotopic to the identity with $f$-minimal branching foliations $\fbs,\fbu$.
	If there is a good lift $\ft$ that fixes every leaf of $\wfbs$ and $\wfbu$, then $\ft$ fixes some center leaf.
\end{lemma}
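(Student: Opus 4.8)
The plan is to combine the existence results from \S\ref{sec-centerdynamics} with the structural results on fixed center stable leaves. First I would locate a center stable leaf with nontrivial fundamental group. Since $\pi_1(M)$ is not virtually solvable, $M$ cannot have all leaves of $\fbs$ simply connected (for instance by Proposition~\ref{p.planeannuliorfixedpoints_branching} together with standard facts about taut foliations and tautness of $\fes$; in fact there is a standard argument that a taut foliation of such a manifold must have a leaf with nontrivial fundamental group). So pick a leaf $C$ of $\fbs$ with $\pi_1(C) \neq 1$, and a lift $L \subset \mt$ of $C$, which is then fixed by some $\gamma \in \pi_1(M) \smallsetminus \{\id\}$. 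Since $\ft$ fixes every leaf of $\wfbs$ by hypothesis, $L$ is fixed by $\ft$ as well.

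Next I would invoke Proposition~\ref{periodiccenter}: since $\ft$ fixes every center stable leaf in $\mt$ and $L$ is fixed by the nontrivial deck transformation $\gamma$, there is an $f$-periodic center leaf $e$ in $\pi(L)$. The goal is now to upgrade "periodic" to "fixed by $\ft$". Here is where I would use the hypothesis that $\fbu$ is also $f$-minimal and leafwise fixed by $\ft$. Indeed, applying Proposition~\ref{p.alternnonDC} (with $\fbs$) would force every $f$-periodic center leaf to be coarsely contracting, \emph{unless} $\ft$ fixes some center leaf in $\mt$ — but the symmetric statement with $\fbu$ in place of $\fbs$ forces every $f$-periodic center leaf to be coarsely expanding, again unless $\ft$ fixes some center leaf. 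A center leaf cannot be both coarsely contracting and coarsely expanding (a coarsely contracting leaf attracts a compact interval into its interior under $f$, while a coarsely expanding one does the opposite, and these are incompatible for the same homeomorphism of a line with the same orientation). Since we have produced at least one $f$-periodic center leaf $e$, this contradiction can only be avoided if the hypothesis "$\ft$ fixes no center leaf" fails for one of $\fbs$, $\fbu$ — in either case $\ft$ fixes some center leaf, which is exactly the conclusion.

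More carefully, I would phrase it as a clean dichotomy: either some good lift (namely our $\ft$) fixes a center leaf in $\mt$, in which case we are done, or it does not. In the latter case, the hypotheses of Proposition~\ref{p.alternnonDC} are satisfied for $\fbs$ (using that $\ft$ fixes every center stable leaf but no center leaf), so every $f$-periodic center leaf in $M$ is coarsely contracting; and the hypotheses of the $\fbu$-version of Proposition~\ref{p.alternnonDC} are satisfied as well (using that $\ft$ fixes every center unstable leaf but no center leaf), so every $f$-periodic center leaf is coarsely expanding. Applying both to the periodic center leaf $e$ furnished by Proposition~\ref{periodiccenter} gives the contradiction.

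The main obstacle, and the point that needs the most care, is the first step: producing a center stable leaf with nontrivial fundamental group so that Proposition~\ref{periodiccenter} can be applied. If one cannot guarantee this directly, an alternative is to argue that $\fbs$ being $f$-minimal on a manifold with non-virtually-solvable fundamental group forces non-simply-connected leaves (a leaf that is a plane and is fixed by $\ft$ cannot have its projection dense unless... — one uses tautness, the fact that $\fes$ has no compact leaves, and that the leaf space is a non-trivial $\R$-like object). I would want to cite the relevant fact from the theory of taut foliations (as the paper does elsewhere, e.g. via \cite{CandelConlonI}) rather than reprove it. The rest of the argument is a direct bookkeeping application of the already-established Propositions~\ref{p.alternnonDC} and~\ref{periodiccenter}, together with the elementary incompatibility of coarse contraction and coarse expansion.
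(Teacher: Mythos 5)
Your proposal is correct and follows essentially the same route as the paper: assuming $\ft$ fixes no center leaf, use the existence of a non-planar leaf of $\fbs$ to invoke Proposition~\ref{periodiccenter} for an $f$-periodic center leaf, then apply Proposition~\ref{p.alternnonDC} to $\fbs$ and to $\fbu$ to conclude that this leaf is simultaneously coarsely contracting and coarsely expanding, a contradiction. The only difference is that you spend more care on the (standard) fact that not every leaf can be a plane, which the paper simply asserts; this indeed follows from tautness of the approximating foliations and the assumption that $\pi_1(M)$ is not virtually solvable.
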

\begin{proof}
	Suppose that $\ft$ fixes no center leaf. Since there is at least one non-planar leaf, Proposition \ref{periodiccenter} provides an $f$-periodic center leaf $c$ in $M$. Applying Proposition \ref{p.alternnonDC} to $\widetilde \cW^{cs}_{bran}$ shows that $c$ is coarsely contracting, but the same result applied to $\widetilde \cW^{cu}_{bran}$ shows that $c$ is coarsely expanding. This is a contradiction, so $\ft$ must fix a center leaf as desired.
\end{proof}

\begin{proposition}\label{p.nondceverycfixed} 
	Let $f \colon M \to M$ be an orientable partially hyperbolic diffeomorphism homotopic to the identity with $f$-minimal branching foliations $\fbs,\fbu$. If a good lift $\ft$ of $f$ fixes every leaf of $\wfbs$ and $\wfbu$, then $\ft$ fixes every center leaf. 
\end{proposition}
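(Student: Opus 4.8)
The plan is to argue by contradiction, exploiting the dichotomy already in hand: by the preceding lemma, $\ft$ fixes at least one center leaf, and I want to show it fixes all of them. Suppose not. Then the set $\cG \subset \cL^c$ of center leaves fixed by $\ft$ is a nonempty, proper, $\ft$-invariant and $\pi_1(M)$-invariant subset of the center leaf space. The first step is to understand the structure of $\cG$ and of its complement inside individual center stable (and center unstable) leaves. Fix a leaf $L$ of $\wfbs$; since $\ft$ fixes $L$ and all of $\wfbu$, the map $\ft$ preserves the center foliation $\cC_L$ of $L$, and by the results of \S\ref{sec-centerdynamics} (in particular Lemma~\ref{l.iteratesdontfix} and the structure underlying Proposition~\ref{p.alternnonDC}) the action of $\ft$ on $\cL^c_L$ either fixes every center leaf in $L$ or acts freely — but the global hypothesis forces mixed behavior to be impossible on any single $L$ that carries a periodic center, since Proposition~\ref{p.alternnonDC} applied to $\wfbs$ would make a periodic center coarsely contracting while the same proposition applied to $\wfbu$ makes it coarsely expanding. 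So on each $L$ with $\pi_1(L) \neq 1$, $\ft$ fixes every center leaf.

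The second step is to promote this to \emph{all} leaves $L$, including the planar ones. Here I would use minimality: the union $B$ of center leaves fixed by $\ft$, together with the union $B'$ of leaves of $\wfbs$ on which $\ft$ fixes \emph{every} center, projects to closed $f$-invariant $\wfbs$-saturated subsets of $M$ (closedness coming from property \ref{item.convergence_of_leaves} of Definition~\ref{def.branching} plus the fact that limits of fixed center leaves are fixed, since $\ft$ moves points a bounded distance and is continuous). By $f$-minimality of $\fbs$, such a set is $\emptyset$ or $M$. It is nonempty because there is at least one nonplanar leaf by the standing assumption that $\pi_1(M)$ is not virtually solvable, hence a periodic center leaf by Proposition~\ref{periodiccenter}, hence (by Step 1 applied to an iterate and Lemma~\ref{l.iteratesdontfix}) a leaf on which all centers are fixed. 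Therefore it is all of $M$, i.e. every center leaf of $\mt$ is fixed by $\ft$.

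The main obstacle I anticipate is the passage from "fixed on nonplanar leaves" to "fixed everywhere" — the set of fixed center leaves is naturally a subset of the leaf space $\cL^c$, which is only a non-Hausdorff $2$-manifold, and one must be careful (exactly as in the proof of Corollary~\ref{c.minimalcase_branching_case}) that "closed in $\mt$" and "closed in the leaf space" are not conflated; the right statement is that the union of fixed center leaves is closed in $\mt$ iff its maximal saturation is closed in the relevant leaf space, and one invokes that together with minimality. A secondary subtlety is ensuring that a center leaf on which $\ft$ has mixed/free behavior genuinely produces a \emph{periodic} (not merely $\gamma\ft^m$-invariant) center leaf in $M$ to which Proposition~\ref{p.alternnonDC} applies — this is handled by the cyclic-stabilizer structure of fixed leaves (Proposition~\ref{p.planeannuliorfixedpoints_branching}) exactly as in the proof of Proposition~\ref{p.alternnonDC} itself, writing $c = \gamma^n\ft^m(c)$ with $\gamma$ generating the stabilizer. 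Once $\cG = \cL^c$, the proposition follows.
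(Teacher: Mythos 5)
There is a genuine gap at the first step. You assert that on each fixed center stable leaf $L$ the action of $\ft$ on $\cL^c_L$ ``either fixes every center leaf in $L$ or acts freely,'' but nothing in \S\ref{sec-centerdynamics} gives this dichotomy, and it is exactly the intermediate case --- $\mathrm{Fix}_L$ nonempty, open, but proper --- that carries all the difficulty. Because $\cL^c_L$ is a non-Hausdorff $1$-manifold, the set of fixed center leaves in $L$ need not be closed: a boundary leaf $c_1\in\partial\mathrm{Fix}_L$ is merely non-separated from $\ft(c_1)$, and ruling this out is the heart of the matter. The paper's proof handles precisely this case: from the non-separation it extracts a stable leaf making a perfect fit with $c_1$, concludes the stable axis is a line, runs the Graph Transform to get a curve invariant under $\ft$ and the generator $\gamma$ of the stabilizer of $L$, shows $c_1$ is fixed by some $h=\gamma^n\circ\ft^m$ with $m>0$, and then applies Lemma~\ref{l.condition_for_coarse_contraction} (using two $h$-fixed stable leaves a bounded Hausdorff distance apart bounding $c_1$) to make $c_1$ coarsely contracting; the symmetric argument on the center unstable side makes it coarsely expanding, a contradiction. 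None of this is in your proposal, and it cannot be replaced by citing Proposition~\ref{p.alternnonDC}: that proposition's hypothesis is that $\ft$ fixes \emph{no} center leaf in all of $\mt$, which fails here by your own setup (and by the lemma preceding the proposition, which produces a fixed center leaf). So the coarse contraction/expansion contradiction has to be re-derived in the partially-fixed situation, as the paper does; it is not available as a black box, leafwise or otherwise.

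A secondary issue is your globalization step: you claim the union $B'$ of center stable leaves on which every center is fixed is closed in $M$, which is not obvious (a limit of such leaves could a priori have unfixed centers, again by non-Hausdorff branching phenomena). The paper avoids this by going the other way: it proves the set of $\ft$-fixed center leaves is \emph{open} in the center leaf space (via a short argument using a stable transversal meeting $c$, $c'$ and $\ft(c')$ and the fact that $\ft$ fixes every center unstable leaf), establishes the statement on all leaves with nontrivial stabilizer, and then uses minimality together with openness, as in the dynamically coherent case, to conclude. Your proposal would need to either prove the closedness claim or switch to this openness-plus-minimality argument.
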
 
\begin{proof}
	Let 
	\[ \mathrm{Fix}^c_{\ft} := \{ c \ : \ \ft(c)=c \}, \]
	thought of as a subset of the center leaf space.
	
	The set $\mathrm{Fix}^c_{\ft}$ is obviously $\pi_1(M)$-invariant. It is also open, by an argument very similar to the one in \cite[Lemma 6.3]{BFFP-prequel}: If $c$ is a fixed center leaf in a center stable leaf
	$L$ in $\mt$, then for any center leaf $c'$ in $L$ close enough to $c$ (for the topology of the center leaf space in $L$), there is a strong stable leaf that intersects $c$, $c'$ and $\ft(c')$. Now, since $\ft$ fixes the center unstable leaves, $c'$ and $\ft(c')$ are on the same center unstable leaf. Since no transversal can intersect the same leaf twice, it implies that $c' = \ft(c')$. Thus the set of fixed center leaves within each center stable leaf is open (in the center leaf space within that center stable leaf). Similarly, the set of fixed center leaves within each center unstable leaf is open. Together, these facts imply that the set of fixed center leaves is open in the center leaf space.

	Note that since a good lift $\ft$ fixes every leaf of $\wfbs$, then $f$ fixes every leaf of $\fbs$. In particular $f$-minimality of $\fbs$ is equivalent to minimality of $\fbs$. Hence $\fbs$ 
	is minimal. Similarly for $\fbu$.\footnote{Note that $f$-minimality and minimality are in fact always equivalent as long as the branching foliation does not have a compact leaf and without assumptions on $f$, see Lemma \ref{fmin}.}

	To see that $\ft$ fixes every center leaf, we proceed as in \cite[Lemma 6.4]{BFFP-prequel}:
	We show first that every center leaf in a center stable leaf (resp.~center unstable leaf) which projects to an annulus has to be fixed (due to our orientability assumptions, leaves cannot project to a M\"{o}bius band). Then the same argument as in 
	\cite[Lemma 6.4]{BFFP-prequel} applies to show that every center leaf has to be fixed.

	Let $L$ be any center stable leaf that projects to an annulus, and choose a generator $\gamma$ of the isotropy group of $L$.
	
	Since the set of fixed center leaves is open in the center leaf spaces of any center unstable leaf, minimality of $\fbs$ implies that $L$ must have some fixed center leaves.

	We will first prove that if $c$ is a center leaf in $L$ which is in the boundary of the set of fixed center leaves in $L$, then $\pi(c)$ is periodic under $f$. We will then show, as in Proposition \ref{p.graphtransf2}, that any periodic leaf in 
	$\pi(L)$ must be coarsely contracting. The same argument applied to the center-unstable leaves yields that periodic center leaves must also be coarsely expanding, a contradiction.

	Since $\ft$ cannot have fixed points (as $\ft$ fixes all the leaves of $\wfbs$ and $\wfbu$), then $\ft$ acts freely on the space of stable leaves in $L$.
	
	We assume, for a contradiction, that not all center leaves in $L$ 
	are fixed. Let $\mathrm{Fix}_L$ be the set (in, $\cL^c_L$, the center leaf space on $L$) of center leaves fixed by $\ft$. 
	
	The set $\mathrm{Fix}_L$ is open, and assumed not to be the whole of $L$. So let $c_1$ be any leaf in $\partial \mathrm{Fix}_L$. 
	
	Let $(c_n)$ be  any sequence of center leaves in $\mathrm{Fix}_L$ that converge to $c_1$. Then $\ft(c_n) = c_n$ converges to $\ft(c_1)$. As the leaf $c_1$ is not fixed by $\ft$, we deduce that $\ft(c_1)$ is non-separated from $c_1$.
	
	Hence, there exists a (unique) stable leaf $s_1$, which separates $\ft(c_1)$ from $c_1$ and makes a perfect fit with $c_1$ (see section \ref{ss.perfectfits} for the definition of perfect fits in the non dynamically coherent setting). Then $\ft(s_1)$ makes a perfect fit with $\ft(c_1)$. 
	Because $c_1$ and $\ft(c_1)$ are non separated from each other,
	$s_1$ and $\ft(s_1)$ intersect a common transversal to the
	stable foliation.
	It follows that the stable axis of $\ft$ acting on $L$ is a line. Thus, since $\gamma$ commutes with $\ft$, the stable axis of $\gamma$ is that same line. Moreover, both the stable leaves $s_1$ and $\ft(s_1)$ are in the axis of $\ft$.
	
	Since the stable axis of $\ft$ acting on $L$ is a line, the Graph Transform argument \cite[Appendix H]{BFFP-prequel} applies and we obtain a curve $\hat\eta$, tangent to the center direction, that is fixed by both $\gamma$ and $\ft$.
	
	As $s_1$ makes a perfect fit with $c_1$ and $s_1$ intersects $\hat\eta$, we deduce that there exists a stable leaf $s$ that intersects both $c_1$ and $\hat\eta$. Let $x = s \cap \hat\eta$ and $y = s\cap c_1$. We denote by $J$ the segment of $s$ between $x$ and $y$.
	
	Since $\hat\eta$ projects down to a closed curve $\pi(\hat\eta)$, and $\ft$ decreases stable lengths, there exist $n_1,n_2 \in \mathbb{Z}$ and $m_1,m_2\in \mathbb{N}$ as large as we want such that the four points $\gamma^{n_1} \ft^{m_1} (x)$, $\gamma^{n_1} \ft^{m_1} (y)$, $\gamma^{n_2} \ft^{m_2} (x)$ and $\gamma^{n_2} \ft^{m_2} (y)$ are all in a disk of radius as small as we want.

	Suppose now that $\gamma^{n_1} \ft^{m_1}(c_1) \neq \gamma^{n_2} \ft^{m_2} (c_1)$. Then, up to switching $n_1, m_1$ and $n_2,m_2$, we obtain that $\gamma^{n_2} \ft^{m_2} (c_1)$ intersects $\gamma^{n_1} \ft^{m_1} (J)$. This is in contradiction with the fact that $c_1$ is in $\partial \mathrm{Fix}_L$ which is invariant by both $\ft$ and $\gamma$.
	
	Thus $\gamma^{n_1} \ft^{m_1}(c_1) = \gamma^{n_2} \ft^{m_2} (c_1)$. In other words, $c_1$ is fixed by the map $h= \gamma^n \ft^m$ for some $n,m$ integers, $m>0$.
	(Although not useful for the rest of the proof, one can further notice that $\hat\eta$ and $c_1$ intersect, as $h$ decreases the length of $J$ by forward iterations and both $c_1$ and $\hat\eta$ are fixed by $h$.)

	Now recall that we built above a stable leaf $s_1$ making a perfect fit with $c_1$. And, by our choice of $s_1$, the center leaf $c_1$ is in between $s_1$ and $s_2:=\ft^{-1}(s_1)$.
	
	Recall that $s_1$ is the unique leaf making a perfect fit with $c_1$ and separating $c_1$ from $\ft(c_1)$. Thus $h(s_1)$ is the unique leaf making a perfect fit with $h(c_1)= c_1$ and separating $h(c_1)=c_1$ from $h\circ \ft(c_1)= \ft\circ h(c_1) = \ft(c_1)$. That is, $s_1$ is fixed by $h$. Using again that $h$ and $\ft$ commutes, we deduce that $s_2$ is also fixed by $h$.
	
	Now, the leaves $s_1$ and $s_2$ are also a bounded distance apart, so Lemma \ref{l.condition_for_coarse_contraction} holds and we deduce that $c_1$, as well as any other center leaf $c$ that is in between $s_1$ and $s_2$ must be coarsely contracting.
	Note now that any center leaf $c$ in $L$ that is fixed by some $h'=\gamma^{n'}\ft^{m'}$ is separated from $\mathrm{Fix}_L$ by a center leaf $c_1' \subset \partial \mathrm{Fix}_L$ as above. Hence, we proved that every non-fixed periodic leaf in $\pi(L)$ is coarsely contracting.

	Therefore, the same argument applied to the center \emph{unstable} leaf containing $c_1$ shows that $c_1$ must also be coarsely expanding, a contradiction.

	So we obtained that every center stable or center unstable leaf $L$ which is fixed by some non trivial element of $\pi_1(M)$ has all of its center leaves fixed by $\ft$. Since $\mathrm{Fix}^c_{\ft}$ is open (in the center leaf space), minimality of the foliations implies that it contains every center leaf, as in the end of the proof of \cite[Lemma 6.4]{BFFP-prequel}. 
\end{proof}

\subsection{Dynamical coherence}\label{ss.dcifndc} 

We now want to prove dynamical coherence provided that a good lift fixes every center leaf. We do not assume that $f$ is orientable, only that it admits branching foliations.
We start with the following:
\begin{lemma}\label{l.boundedmovement}
	Let $f \colon M \to M$ be a partially hyperbolic diffeomorphism homotopic to the identity
preserving 
 branching foliations $\fbs,\fbu$. Let $\ft$ be a good lift that fixes every center leaf. Then there is a global bound on the length of every center segment between a point $x$ and $\ft(x)$. 
\end{lemma}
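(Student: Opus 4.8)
The plan is to argue by contradiction: suppose there is no global bound, so we can find points $x_n \in \mt$ together with center segments $J_n$ from $x_n$ to $\ft(x_n)$ whose lengths go to infinity. Since $\ft$ is a good lift, it moves every point a uniformly bounded distance in $\mt$, say $d(y,\ft(y)) \le R$ for all $y$; so the endpoints of $J_n$ stay within distance $R$ of each other in $\mt$, while the intrinsic length of $J_n$ blows up. After translating by deck transformations (using that $\ft$ commutes with them) we may assume all the $x_n$ lie in a fixed compact fundamental domain, and then pass to a subsequence so that $x_n \to x_\infty$ and the center stable and center unstable leaves through $x_n$ converge (property \ref{item.convergence_of_leaves} of Definition \ref{def.branching}) to leaves $L^{cs}_\infty \in \wfbs$ and $L^{cu}_\infty \in \wfbu$ through $x_\infty$; the center leaf through $x_\infty$ is a component $c_\infty$ of $L^{cs}_\infty \cap L^{cu}_\infty$, and is fixed by $\ft$ by hypothesis.

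\textbf{Key steps.} The heart of the argument is to show that a long center segment from $x_n$ to $\ft(x_n)$ forces the center leaf through $x_n$ to be ``almost fixed'' in a strong sense, and that in the limit this produces a contradiction with hyperbolicity. Concretely: because $J_n$ has length $\to \infty$ but its endpoints are a bounded distance apart, and because $\ft$ fixes the center leaf $c_n$ through $x_n$ (so $J_n \subset c_n$ and $\ft(c_n) = c_n$), the map $\ft$ restricted to $c_n$ moves the point $x_n$ a large intrinsic distance. Now I would invoke the dynamics transverse to the center direction inside the fixed center stable leaf: along $c_n$ the strong stable foliation is contracted by $\ft$ and along $c_n$ inside the center unstable leaf the strong unstable foliation is expanded by $\ft$ (more precisely, by a forward or backward iterate, using partial hyperbolicity with uniform constants after passing to the iterate $f^k$ from the Burago--Ivanov setup). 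A center segment of length $\to\infty$ that is mapped (setwise-invariantly, since $\ft(c_n)=c_n$) so that one endpoint goes to the other would, upon iterating, either create a periodic point of $\ft$ in $\mt$ or force center leaves to accumulate on themselves in a way incompatible with the local product structure of the branching foliations of uniform size $\epsilon_0$. In the limit, $c_\infty$ would be a fixed center leaf on which $\ft$ acts with unbounded displacement, hence (since $\ft$ moves points a bounded distance in $\mt$) $c_\infty$ must ``wrap up'' to have a compact piece accumulating on itself; but a center leaf is the component of an intersection of two properly embedded planes in $\mt \simeq \R^3$, and such accumulation contradicts the local product structure.

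\textbf{Main obstacle.} The delicate point, which I expect to be the main obstacle, is converting ``large intrinsic length of $J_n$ with bounded-in-$\mt$ endpoints'' into a genuine contradiction, because a single center leaf can be very long and twisty inside $\mt$ without immediately violating anything -- one really needs to use that $\ft$ \emph{fixes} it and acts on it. The cleanest route is probably to combine this with the contraction/expansion along strong stable/unstable directions: if the $J_n$ are long, pick points on $J_n$ far apart in $c_n$ but close in $\mt$; the strong stable leaves through these points are then both (nearly) fixed configurations that get exponentially contracted by $\ft$, which -- together with Lemma \ref{lema-boundedinfixcsleaf-nonDC} giving a uniform bound $d_L(x,\ft(x)) < K$ inside fixed center stable leaves -- forces these strong stable leaves to coincide or be asymptotic, hence the two far-apart points of $c_n$ lie on a common (or asymptotic) strong stable leaf inside $L^{cs}_n$. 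But inside a center stable leaf, a center leaf and a stable leaf meet in at most a point, so this is impossible once $J_n$ is long enough to contain two such points, giving the contradiction. I would present the argument in this order: set up the contradiction hypothesis and the uniform displacement bound $R$; invoke Lemma \ref{lema-boundedinfixcsleaf-nonDC} for the bound $K$ in fixed center stable leaves; then run the strong-stable-contraction argument inside a single fixed center stable leaf to bound the length of center segments there, and symmetrically (strong unstable expansion, using $\ft^{-1}$) inside center unstable leaves; finally combine to bound the length of center segments in $\mt$ and conclude.
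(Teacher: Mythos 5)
Your opening moves (arguing by contradiction, translating the points $x_n$ into a fixed compact region using that $\ft$ commutes with deck transformations, and extracting convergent subsequences of center stable and center unstable leaves) coincide with the paper's setup, but the heart of your argument has genuine gaps. First, the step ``pick points on $J_n$ far apart in $c_n$ but close in $\mt$'' is not justified: $J_n$ need not stay in any fixed compact subset of $\mt$, and pigeonhole only produces points that are close after projecting to the compact manifold $M$, i.e.\ close up to a deck transformation; a very long arc with nearby endpoints can have all pairs of parameter-distant points far apart in the universal cover. Second, even granting two such points $p,q$ on $c_n$, the claim that exponential stable contraction together with Lemma~\ref{lema-boundedinfixcsleaf-nonDC} forces the stable leaves through $p$ and $q$ to ``coincide or be asymptotic,'' and hence places $p$ and $q$ on a common stable leaf, is a non sequitur: those stable leaves are not fixed by $\ft$ (they are permuted along $c_n$), closeness at one point does not give asymptoticity, and in the ``asymptotic'' alternative there is no contradiction with the fact that a center curve meets each stable leaf of $L$ at most once. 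Third, the assertion that ``in the limit, $c_\infty$ would be a fixed center leaf on which $\ft$ acts with unbounded displacement'' begs the question: the limit leaf carries a perfectly finite center segment from $x_\infty$ to $\ft(x_\infty)$, and the entire content of the lemma is to rule out that the lengths of the approximating segments blow up while the leaves converge. That semicontinuity of segment lengths is exactly what your proposal never establishes.

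For comparison, the paper's proof supplies precisely this missing step and uses no contraction or expansion estimates at all: after the same compactness setup, it shows that the segments $e_n \subset c_n$ from $x_n$ to $\ft(x_n)$ converge to the segment $e \subset c_\infty$ from $x_\infty$ to $\ft(x_\infty)$, so their lengths converge and in particular stay bounded, the desired contradiction. When the center stable leaves $L_n$ containing $c_n$ eventually coincide with the limit leaf $L$, this follows because the $c_n$ lie in an interval of the center leaf space of $L$ and converge to $c_\infty$; when the $L_n$ are distinct from $L$, one first pushes $e_n$ into $L$ along unstable holonomy (using that an unstable leaf meets a center stable leaf in at most one point and that $L$ and every center leaf are fixed by $\ft$), obtaining segments $b_n \subset L$ between $y_n$ and $\ft(y_n)$ to which the previous case applies, and then transfers the bound back to $e_n$ since $d(x_n,y_n)\to 0$. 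If you want to salvage your approach, you would need to replace the ``far apart on $c_n$ but close in $\mt$'' mechanism with a convergence argument of this type.
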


In the dynamically coherent case this was very easy as
the center curves form an actual foliation and there is a
local product picture near any compact segment. We have to
be more careful in the non dynamically coherent setting.

\begin{proof}
We assume the conclusion of the lemma fails.
Then there exists a sequence $x_i$ of points in $\mt$ contained in center leaves $c_i$ such that the length in $c_i$ from $x_i$ to
$\ft(x_i)$ divverges to infinity. Notice that this length
depends not only on $x_i$ but also on $c_i$ since there may be many center leaves through $x_i$.
We denote by $e_i$ the segment in $c_i$ from $x_i$ to $\ft(x_i)$.

Up to acting by covering translations we can assume that the $x_i$ converge to a point $x\in \mt$.
Let $L_i$ and $U_i$ be respectively a center stable and center unstable leaves containing $c_i$. 
Up to considering a subsequence, we may assume
that $L_i$ converges to a center stable leaf $L$ containing $x$ (see condition \ref{item.convergence_of_leaves} of Definition \ref{def.branching}). Similarly, we can further assume that $U_i$ converges to some center unstable leaf $U$, with $x\in U$.

For $i$ large enough, all the leaves $L_i$ intersect a small unstable segment
in $u(x)$. The set of center stable leaves intersecting this
segment is a also a segment (even though many different leaves
may intersect a given point in $u(x)$). Hence we may assume
that $L_i$ is weakly monotone, and so is $U_i$.
Let $c$ be the center leaf through $x$ contained in $L \cap U$. Then $\ft(x) \in c$, and we call $e$ the segment in $c$ from $x$ to $\ft(x)$.

Suppose first that $L_i = L$ for all big $i$. So we may
assume $L_i = L$ for all $i$.
Then the center leaves $c_i$ are all in $L$ and, for $i$ big enough, intersect $s(x)$. Hence the leaves $c_i$ are, for $i$ big enough, contained in an interval of the center leaf space in $L$. In addition
they are converging to $c$ which is a center leaf through
$x$ and $\ft(x)$. This implies that the length of 
$e_i$ is converging to the length of $e$ and hence the 
length of $e_i$ is
bounded in $i$. Contradiction.

Suppose now that the $L_i$ are all distinct from $L$.
Notice that the points $x_i$, and $\ft(x_i)$ are all in a compact region
of $\mt$. Since $L_i$ converges to $L$, we have that
$u(x_i)$ intersects $L$ for big enough $i$. We call this nearby intersection $y_i$.
Likewise $u(\ft(x_i))$ intersects $L$ in $\ft(y_i)$.
We want to push the center segments $c_i$ contained
in $U_i \cap L_i$ along unstable segments to center
segments in $U_i \cap L$.

For $i$ big enough, both $x_i$ and $\ft(x_i)$ are very near $L$. Thus, their unstable leaves $u(x_i)$ and $u(\ft(x_i))$ both intersect $L$. Let $y_i$ be the intersection of $u(x_i)$ with $L$ (recall that this intersection is unique as the center stable branching foliation is approximated by a taut foliation). Then $\ft(y_i)$ is the intersection of $u(\ft(x_i))$ with $L$ (since $L$ is fixed by $\ft$). Then the intersection of the unstable saturation of $e_i$ with $L$ is a compact segment inside a center leaf between $y_i$ and $\ft(y_i)$ (since $\ft$ fixes every center leaf).
Let $b_i$ be this segment between $y_i$ and $\ft(y_i)$. The segments $b_i$ also converge to $e$, so the previous paragraph shows that the lengths of the $b_i$ are bounded. Since the distance between $x_i$ and $y_i$ converges to zero, this in turn implies that the lengths of the segments $e_i$ are themselves bounded. Which contradicts our assumption and finishes the proof.
\end{proof}

\begin{lemma}\label{l.nomergefix}
	Let $f \colon M \to M$ be a partially hyperbolic diffeomorphism homotopic to the identity preserving
branching foliations $\fbs,\fbu$. Let $\ft$ be a good lift that fixes every center leaf. If $c_1,c_2$ are different center leaves in a single center stable leaf $L \in \wfbs$, then $c_1 \cap c_2 = \emptyset$. 
\end{lemma}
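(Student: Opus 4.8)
The plan is to argue by contradiction, assuming that two distinct center leaves $c_1, c_2$ in a common center stable leaf $L$ share a point $z$. I would first record the setup: since $c_1 \ne c_2$ but they meet, by Remark~\ref{r.center} they cannot be two connected components of the same intersection-set coming from different center unstable leaves; so in the center leaf space $\cL^c_L$ (with topology $\mathcal A$ from Definition~\ref{def.center_leaf_space_in_L}) the leaves $c_1$ and $c_2$ are genuinely distinct points, and the set $\cL^c_L(z)$ of center leaves through $z$ is a nondegenerate closed interval (Lemma~\ref{l.center_leaf_space_in_L}). Because $\ft$ fixes every center leaf, it fixes $c_1$ and $c_2$; since $\ft$ fixes no point of $\mt$ when the foliations are $f$-minimal — but here I only assume $\ft$ fixes every center leaf, so I should instead invoke the Corollary after Lemma~\ref{l.nofixedpoints_branching_case}: $\ft$ fixing every center stable leaf forces it to have no fixed points in $\mt$, hence no fixed point on $L$.

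The key geometric step is to exploit the strong stable foliation $\cS_L$ inside $L$ together with Lemma~\ref{l.boundedmovement}. First I would note that $z$ and $\ft(z)$ both lie in $c_1$ and in $c_2$, so the center segment of $c_1$ from $z$ to $\ft(z)$ and the center segment of $c_2$ from $z$ to $\ft(z)$ are both bounded in length, by Lemma~\ref{l.boundedmovement}. Iterating $\ft$ forward, the stable length of the stable segment through $z$ contracts exponentially, while, because $\ft$ fixes $c_1$ and $c_2$ and moves points a bounded center-distance, the two center leaves $c_1, c_2$ stay a bounded Hausdorff distance apart along the stable direction near $z$. The aim is to show $c_1$ and $c_2$ bound, between the point $z$ and the adjacent boundary of $\cL^c_L(z)$ in $\cL^c_L$, a ``bigon''-type region whose area cannot be preserved: under forward iteration by $\ft$ the region between $c_1$ and $c_2$ (which is foliated by arcs of $\cS_L$, since the intersection of two center stable leaves is stable-saturated, hence so is the region in $L$ between two center leaves through a common point) has its stable cross-sections shrinking while its center extent stays bounded, forcing the enclosed area to zero — yet $\ft$ is a diffeomorphism of $L$ moving points a bounded distance, so it cannot collapse a fixed nondegenerate region. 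This contradiction shows $c_1 \cap c_2 = \emptyset$.

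More carefully, the main obstacle I anticipate is handling the possibility that $c_1$ and $c_2$ meet along a whole sub-segment rather than at an isolated point, and the fact that the ``region between $c_1$ and $c_2$'' may be pinched or non-open in $L$ (as with the $\mt$-regions in Figure~\ref{f.4M}). To deal with this I would pass to the boundary leaves of the closed interval $\cL^c_L(z) \subset \cL^c_L$: replacing $c_1, c_2$ by the two extreme center leaves through $z$, these are fixed by $\ft$, distinct, and by Lemma~\ref{l.center_leaf_space_in_L} every center leaf through $z$ lies between them; then I look instead at a stable leaf $s$ through a point of $c_1$ near where $c_1$ and $c_2$ first separate and use the $\cC\cS$-perfect-fit machinery of \S\ref{ss.perfectfits} (Definition~\ref{def.perfect_fits_branching}) to produce a stable leaf making a perfect fit with $c_1$ on the side of $c_2$. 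That perfect-fit leaf, being canonically determined, must be $\ft$-invariant, hence contains an $\ft$-fixed point — contradicting that $\ft$ has no fixed points in $\mt$. I expect the perfect-fit argument to be the cleanest route and the bounded-area argument the fallback; the delicate point in either case is controlling the topology of the region between two merging center leaves, which is exactly where the non-Hausdorff, branched nature of $\cL^c_L$ must be navigated using Lemma~\ref{l.center_leaf_space_in_L} and Lemma~\ref{l.boundedmovement}.
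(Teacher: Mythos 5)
Your preliminary observations are on the right track: you correctly isolate Lemma~\ref{l.boundedmovement} as the key input, and you correctly observe that the region between two merging center leaves is stable-saturated inside $L$. However, neither of your two proposed contradiction mechanisms closes the argument, and both miss the key move that the paper makes.

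The paper's proof first reduces (by a short argument: if $c_1$ and $c_2$ agreed on the whole segment from $x$ to $\ft(x)$, iterating would force $c_1=c_2$) to the existence of a compact bigon $B\subset L$ bounded by an arc $I\subset c_1$ and an arc of $c_2$, with vertices two points $x,y\in c_1\cap c_2$. It then iterates \emph{backwards}: stable segments crossing $B$ have length growing without bound under $\ft^{-n}$, and since a long stable segment in a leaf must have large diameter, $\operatorname{diam}\bigl(\ft^{-n}(B)\bigr)\to\infty$. On the other hand, any two points of $\ft^{-n}(B)$ are joined by a path of at most two center arcs each of length at most $K$ (the global constant from Lemma~\ref{l.boundedmovement}, since each such arc lies between a point and its $\ft$-image along a fixed center leaf), so $\operatorname{diam}\bigl(\ft^{-n}(B)\bigr)\le 2K$ for all $n$. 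Contradiction. Note the two crucial features: the iteration is backwards (so the stable part grows), and the bound used is on \emph{diameter} (via center arcs), not area.

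Your first route goes in the opposite direction and hits a real gap. You iterate \emph{forward} and argue that the stable cross-sections shrink, so ``the enclosed area goes to zero, yet $\ft$ cannot collapse a fixed nondegenerate region.'' But the bigon $B$ is \emph{not} fixed by $\ft$: its vertices $x,y$ are not fixed points ($\ft$ has none here), so $\ft(B)$ is a different bigon, and there is no fixed nondegenerate region for $\ft$ to be unable to collapse. Moreover, with forward iteration there is no contradiction at all: a sequence of ever-thinner bigons whose center extent also decreases is perfectly possible — nothing is bounded below. The correct way to turn contraction/expansion into a contradiction is to go backwards, where stable lengths become \emph{unbounded above} and then collide with the bounded-diameter estimate coming from two center arcs.

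Your second route (perfect fits) also has a gap: you assert that a stable leaf making a perfect fit with $c_1$ ``on the side of $c_2$'' is ``canonically determined, hence $\ft$-invariant.'' But canonically determined by \emph{what} $\ft$-invariant data? The bigon vertex $y$ where $c_1$ and $c_2$ separate is not $\ft$-fixed, so $s(y)$ is not $\ft$-fixed, and there is no canonical perfect-fit stable leaf attached to the pair $(c_1,c_2)$ alone: the perfect-fit machinery in the paper produces fixed stable leaves only when there is extra structure (e.g., a leaf sitting on the boundary of the set of $\ft$-fixed center leaves, or a non-separated pair, as in Proposition~\ref{p.nondceverycfixed}), which is absent here. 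So this route as stated does not produce a fixed stable leaf, and hence no fixed point of $\ft$.

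In short: the ingredients are right, but you need to reverse the direction of iteration and replace the area heuristic by the diameter bound coming from covering the bigon with two center arcs of uniformly bounded length.
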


\begin{proof}
Suppose that there are distinct center leave $c_1, c_2$ that intersect at a point $x \in c_1 \cap c_2$. Then $\ft(x)$ is also in $c_1 \cap c_2$. If $c_1$ coincides
with $c_2$ in their respective segments from $x$ to $\ft(x)$, then
applying iterates of $\ft$ implies that $c_1 = c_2$, contrary
to assumption.

So we may assume that $x$ is a boundary point of an open interval
$I$ in, say, $c_1$ which is disjoint from $c_2$, but such that both endpoints
are in $c_2$. Then $c_1 \cup c_2$ bounds a bigon $B$ with endpoints
$x, y$ and a ``side" in $I$. All center segments in $B$ pass
through $x$ and $y$ and they have bounded length (by Lemma \ref{l.boundedmovement}).
Each stable
segment intersecting $I$ also intersects the other ``boundary"
component of $B$.  See figure \ref{fig.nomergefix}.

\begin{figure}[ht]
\begin{center}
\includegraphics[scale=0.7]{./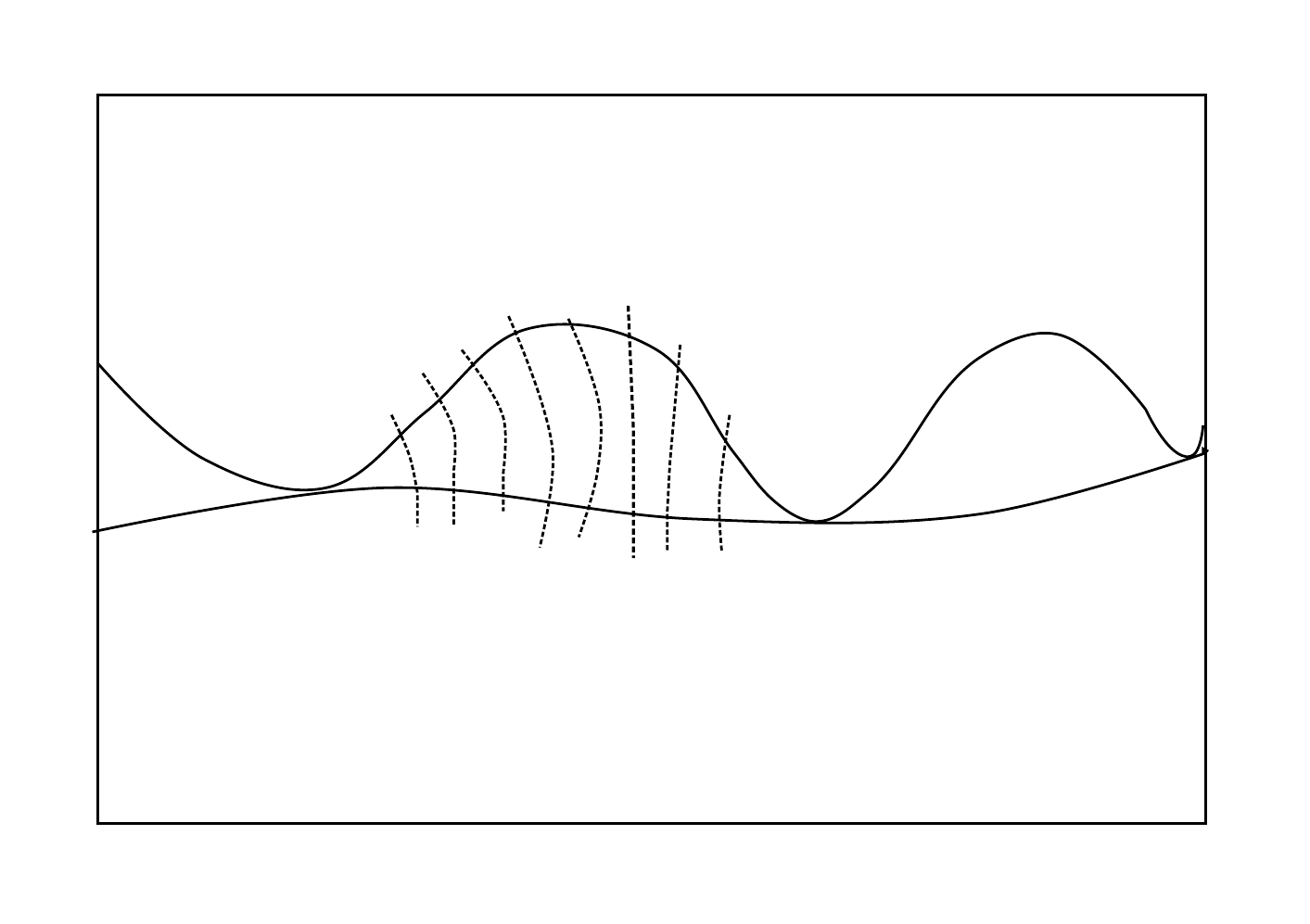}
\begin{picture}(0,0)
\put(-15,165){$L$}
\put(-280,82){$c_1$}
\put(-280,121){$c_2$}
\put(-119,74){$\ft^{-1}(x)$}
\put(-226,84){$x$}
\put(-38,83){$\ft^{-2}(x)$}
\put(-200,124){$B$}
\end{picture}
\end{center}
\vspace{-0.5cm}
\caption{{\small Two centers that merge. The bound on the distance between $x$ and $\ft(x)$ forces a behavior like the figure.}}\label{fig.nomergefix}
\end{figure}

The stable lengths grow without bound under negative iterates of $\ft$. Hence, since a stable segment can intersect a local
foliated disk of the stable foliation in $L$ only
in a bounded length,
it follows that the diameter in $\ft^n(L)$ of $\ft^n(B)$ grows without bound as $n$ goes to $-\infty$. 
But the length of the 
center segments in $\ft^n(B)$ are all bounded according to Lemma \ref{l.boundedmovement}. Moreover, between any two points in $\ft^n(B)$ there exists a path along (at most) two center leaves (one just follows the center leaf to one of the endpoint and then switch to the appropriate other center leaf). Thus the diameter is bounded, which is a contradiction.
\end{proof}

Thus we deduce what we wanted to obtain in this section.
\begin{corollary}\label{coro.fixcentercoh}
	Let $f \colon M \to M$ be a partially hyperbolic diffeomorphism homotopic to the identity preserving branching foliations $\fbs,\fbu$. If some good lift $\tilde f$ fixes every center leaf then $f$ is dynamically coherent. 
\end{corollary}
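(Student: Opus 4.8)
The plan is to assemble Corollary~\ref{coro.fixcentercoh} directly from the two preceding lemmas together with the basic structure of branching foliations. The goal is to show that $\fbs$ and $\fbu$ are genuine foliations, i.e.\ through each point of $\mt$ there passes exactly one center stable leaf and exactly one center unstable leaf, and then invoke unique integrability of the intersection to produce the center foliation.

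First I would reduce to a statement about leaves of $\wfbs$ (the argument for $\wfbu$ being symmetric). Suppose two distinct leaves $L_1 \neq L_2$ of $\wfbs$ share a point $x$. Since $\ft$ fixes every center leaf, it in particular fixes every leaf of $\wfbs$ (a center stable leaf is the union of the center leaves it contains, and each of those is $\ft$-invariant, so $\ft(L_i)$ meets $L_i$ in a full center leaf; since leaves of $\wfbs$ do not cross, $\ft(L_i)=L_i$). Now inside $L_1$ consider the center leaf $c_1$ through $x$, and inside $L_2$ the center leaf $c_2$ through $x$. The point is that $L_1$ and $L_2$ merge along a maximal region, and on the boundary of that region the center foliations of $L_1$ and $L_2$ must interact: I would argue, exactly along the lines of the bigon argument in Lemma~\ref{l.nomergefix}, that if $L_1 \ne L_2$ then one can find a center leaf of $L_1$ and a center leaf of $L_2$ that are distinct but share two points (they bound a bigon in $\mt$ whose two sides lie in $L_1$ and $L_2$ respectively). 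But Lemma~\ref{l.nomergefix} forbids two distinct center leaves in a \emph{single} center stable leaf from meeting; here the two center leaves lie in different center stable leaves, so I would instead apply the bounded-length control of Lemma~\ref{l.boundedmovement} to the center segments from $x$ to $\ft(x)$ inside the bigon region, and run the same "diameter grows under backward iteration while center length stays bounded'' contradiction as in Lemma~\ref{l.nomergefix}. This shows $\wfbs$ is a true foliation; symmetrically $\wfbu$ is a true foliation.

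Having established that $\wfbs$ and $\wfbu$ are genuine foliations in $\mt$, they descend to genuine foliations $\fbs,\fbu$ of $M$ tangent to $E^c\oplus E^s$ and $E^c\oplus E^u$; this is exactly the definition of dynamical coherence. The only remaining point is to make sure that the reduction "if $\ft$ fixes every center leaf then it fixes every center stable leaf'' is clean: I would note that every center stable leaf contains at least one center leaf (Definition~\ref{d.center_leaf}, using that the leaf meets some center unstable leaf transversally along a nonempty set) and invoke property~\ref{item.no_crossing_of_leaves} of Definition~\ref{def.branching} to upgrade "$\ft(L)$ meets $L$'' to "$\ft(L)=L$''. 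No orientability hypothesis is used anywhere, matching the statement of the corollary.

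The main obstacle is the bigon step: one must carefully extract, from the assumption that two distinct leaves $L_1,L_2$ of $\wfbs$ pass through $x$, an honest bigon bounded by a center segment in $L_1$ and a center segment in $L_2$ sharing both endpoints, so that Lemma~\ref{l.boundedmovement} applies on the nose. The subtlety is that the "merging locus'' of $L_1$ and $L_2$ need not be a nice subsurface, and one must choose $x$ to be a boundary point of a complementary interval, exactly as in the proof of Lemma~\ref{l.nomergefix}. Once that bigon is in hand, the contraction/expansion of stable length under $\ft^{\mp n}$ versus the uniform bound on center-segment length forces the diameter of $\ft^n$ of the bigon to be simultaneously unbounded and bounded, the desired contradiction. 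I expect the write-up of this step to be essentially a transcription of the proof of Lemma~\ref{l.nomergefix} with "center stable leaf $L$'' replaced by "pair of center stable leaves $L_1,L_2$,'' so it should go through with only cosmetic changes.
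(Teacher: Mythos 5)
Your global plan coincides with the paper's: by Proposition~\ref{p.BWcoh} it suffices to show that neither branching foliation has merging leaves, and one rules out a merge by producing two distinct intersecting center leaves in a single leaf and invoking Lemma~\ref{l.nomergefix} (the paper runs this for a merge in $\wfbu$, intersecting with a leaf $L\in\wfbs$ through a boundary point of the merge, then says the $\wfbs$ case is symmetric). So the route is the right one, but your execution of the bigon step has a genuine gap.

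You say the two offending center arcs ``bound a bigon in $\mt$ whose two sides lie in $L_1$ and $L_2$ respectively,'' and that the write-up should be ``a transcription of the proof of Lemma~\ref{l.nomergefix} with `center stable leaf $L$' replaced by `pair of center stable leaves $L_1,L_2$'.'' That substitution does not work: the bigon argument needs to live inside a single leaf, because the contradiction comes from a one-dimensional strong foliation of that leaf whose lengths blow up under iteration, against the bounded center lengths from Lemma~\ref{l.boundedmovement}. A bigon drawn ambiently in $\mt\simeq\R^3$ is neither canonical nor foliated, and the stable foliations of $L_1$ and $L_2$ do not help you across the merge locus. The correct localization is to pick one center unstable leaf $U\in\wfbu$ through the boundary point $x$ of the merge of $L_1,L_2$; then the components of $U\cap L_1$ and $U\cap L_2$ through $x$ are two distinct center leaves \emph{in the same leaf $U$} that intersect, which is exactly what the symmetric ($\wfbu$) version of Lemma~\ref{l.nomergefix} forbids — using unstable length growing under forward iterates, not stable length under backward iterates as you wrote. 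Once you place the bigon in $U$, you should simply cite Lemma~\ref{l.nomergefix} by symmetry rather than re-derive it; with that correction your proof becomes the paper's, up to choosing $\wfbs$-merging rather than $\wfbu$-merging as the starting point. (Two smaller notes: your preliminary reduction that $\ft$ fixes every center stable leaf is correct but unused — Lemma~\ref{l.nomergefix} only needs the center leaves fixed; and the merging locus of $L_1,L_2$ is saturated by stable leaves since $E^s$ is uniquely integrable, so its boundary consists of stable leaves and the boundary point $x$ is unproblematic.)
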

\begin{proof} By Proposition \ref{p.BWcoh} it is enough to show that the leaves of the branching foliations do not merge.  

Assume that two center unstable leaves $U_1$ and $U_2$
merge. Let $L$ be a center stable leaf intersecting $U_1$ and $U_2$ at the merging, i.e., $L$ is a leaf through a point $x$ such that the unstable leaf through $x$ is a boundary component of $U_1 \cap U_2$.
Then, connected components of 
$U_1 \cap L$ and $U_2 \cap L$ gives two center leaves that intersect but do not coincide.
This contradicts Lemma \ref{l.nomergefix}. A symmetric argument gives that two center stable leaf cannot merge either, proving dynamical coherence of $f$.
\end{proof}

\subsection{Dynamical coherence without taking lifts and iterates}
\label{dcwithout}

 We now want to prove that, if a finite lift and finite power of a partially hyperbolic diffeomorphism is dynamically coherent, then the original diffeomorphism is itself dynamically coherent. Although we do not know how to prove it in this generality, we show it when a good lift of the dynamically coherent lift fixes every center leaf, which is enough for our purposes.
 
Again, in this subsection we do not assume that $f$ is orientable.

We start by showing a uniqueness result for the pairs of the center stable and center unstable foliations under some conditions.
\begin{lemma}\label{lema-uniquefixfol}
Let $g\colon M\to M$ be a dynamically coherent partially hyperbolic diffeomorphism homotopic to the identity. Let $\cW^{cs}$ and $\cW^{cu}$ be $g$-invariant foliations tangent to $E^{cs}$ and $E^{cu}$ respectively. Let $\cW^c$ be the center foliation associated with $\cW^{cs}$ and $\cW^{cu}$ (defined as in Definition \ref{d.center_leaf}), and assume that there exists a good lift $\widetilde g$ which fixes all the leaves of $\wt \cW^c$.

Suppose that $\cW^{cs}_1$ and $\cW^{cu}_1$ are two $g$-invariant foliations tangent respectively to $E^{cs}$ and $E^{cu}$. Suppose that $\wt g$ also fixes all the leaves of the center foliation $\wt\cW_{1}^c$, associated with $\cW^{cs}_1$ and $\cW^{cu}_1$.

Then $\cW^{cs}=\cW^{cs}_1$ and $\cW^{cu}=\cW^{cu}_1$. 
\end{lemma}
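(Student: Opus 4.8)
The plan is to show that each leaf of $\cW^{cs}$ coincides with a leaf of $\cW^{cs}_1$, and symmetrically, by exploiting that both foliations are leafwise fixed by the \emph{same} good lift $\wt g$ and have the same tangent plane fields. First I would work in the universal cover $\mt$ and fix a center leaf $c$ of $\wt\cW^c$. By hypothesis $\wt g$ fixes $c$; since $c$ is also a center leaf of $\wt\cW^c_1$ (it is tangent to $E^c$, hence contained in some leaf $L_1^{cs}$ of $\wt\cW^{cs}_1$ and some $L_1^{cu}$ of $\wt\cW^{cu}_1$, and the component of $L_1^{cs}\cap L_1^{cu}$ through a point of $c$ is tangent to $E^c$ and contains $c$; one checks it equals $c$ using that both are complete curves tangent to the line field $E^c$ through that point — this requires a small argument since $E^c$ need not be uniquely integrable, but here $c$ being an honest intersection of a $cs$- and a $cu$-leaf of $\cW$ pins it down). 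The key point is then that $\wt g$ fixes each leaf of $\wt\cW^{cs}$ and of $\wt\cW^{cs}_1$: indeed a leaf $L$ of $\wt\cW^{cs}$ is the union of stable leaves through the center leaves it contains, and since the center leaves are fixed and stable leaves are fixed (being the strong stable manifolds through points of fixed center leaves), $\wt g(L)=L$; the same for $\wt\cW^{cs}_1$.

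Next I would run a standard argument showing two $\wt g$-invariant, $E^{cs}$-tangent foliations that are leafwise fixed must coincide. Fix a leaf $L$ of $\wt\cW^{cs}$ and a leaf $L_1$ of $\wt\cW^{cs}_1$ that share a point $x$; both are properly embedded planes in $\mt\simeq\R^3$ (Section \ref{ss.leafspace}, applied to the approximating taut foliations). Suppose $L\neq L_1$. Since they are tangent to the same plane field they cannot cross transversally, so $L_1$ lies (locally near the set where they meet) on one side of $L$; consider a point $y$ where $L_1$ leaves $L$ and a stable leaf $s$ through a nearby point. Because $\wt g$ contracts stable length and fixes every leaf of both foliations, and fixes the center leaves, iterating $\wt g^n$ with $n\to+\infty$ brings bounded-length center-and-stable data together. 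The mechanism is exactly the ``bounded movement / no bigon'' phenomenon already used in Lemma \ref{l.boundedmovement} and Lemma \ref{l.nomergefix}: if $L$ and $L_1$ separated, one produces a region bounded by arcs of $L$ and $L_1$ whose center segments have bounded length (by the analogue of Lemma \ref{l.boundedmovement} for $\cW^c$ and for $\cW^c_1$, both valid since $\wt g$ moves points a bounded distance and fixes all center leaves) while its stable diameter grows without bound under $\wt g^{-n}$, contradicting that a stable segment meets any local stable foliation chart in $L$ in bounded length. Hence $L=L_1$, so $\cW^{cs}=\cW^{cs}_1$; the argument for $\cW^{cu}=\cW^{cu}_1$ is identical with the roles of stable and unstable (and $\wt g$ replaced by $\wt g^{-1}$, which expands stable and contracts unstable).

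I expect the main obstacle to be the first step: carefully arguing that $c$, an intersection component of a $\cW^{cs}$-leaf with a $\cW^{cu}$-leaf, is literally equal to the corresponding intersection component for the foliations $\cW^{cs}_1,\cW^{cu}_1$, rather than merely tangent to it — i.e. controlling the possible non-unique integrability of $E^c$. This is where one must use that $\wt g$ fixes \emph{both} families of center leaves simultaneously: if $c$ (a $\cW^c$-leaf through $x$) and $c_1$ (the $\cW^c_1$-leaf through $x$) were distinct curves tangent to $E^c$ at $x$, they would have to separate along $E^c$, and then the same bounded-length-versus-unbounded-stable-diameter contradiction applies inside the center stable leaf containing both (which is forced to be common by the argument of the previous paragraph run first on the $cs$-foliations). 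So the clean order is: (1) show $\wt g$ fixes every $\wt\cW^{cs}$- and $\wt\cW^{cs}_1$-leaf and every $\wt\cW^{cu}$- and $\wt\cW^{cu}_1$-leaf; (2) show $\cW^{cs}=\cW^{cs}_1$ and $\cW^{cu}=\cW^{cu}_1$ by the no-bigon argument; (3) conclude (the center foliations then automatically agree, though that is not even asserted). Steps (2) for $cs$ and for $cu$ are logically independent and can be done in parallel, which avoids the circularity worry in the first paragraph.
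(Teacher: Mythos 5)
You have correctly identified the engine behind this statement---the bounded center displacement of Lemma \ref{l.boundedmovement} together with the no-bigon argument of Lemma \ref{l.nomergefix}---and your preliminary step is fine: each leaf of $\wt\cW^{cs}$, $\wt\cW^{cu}$, $\wt\cW^{cs}_1$, $\wt\cW^{cu}_1$ is a union of leaves of the corresponding center foliation, all of which are fixed, so all four lifted foliations are leafwise fixed by $\wt g$. (The parenthetical claim that the stable leaves through fixed center leaves are themselves fixed is false---a $\wt g$-invariant stable leaf would produce a fixed point of $\wt g$---but it is not needed for that conclusion.)

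The genuine gap is in your key step (2). You propose to run the bigon argument directly between the surfaces $L\in\wt\cW^{cs}$ and $L_1\in\wt\cW^{cs}_1$, contrasting bounded center segments with a ``stable diameter'' of the region between them that grows under $\wt g^{-n}$. This cannot work as stated: the stable direction is tangent to both foliations, so stable leaves through points of $L$ (or of $L_1$) stay inside that leaf; they neither connect $L$ to $L_1$ nor measure their separation, and inside a single invariant center stable leaf the coexistence of bounded center displacement with unbounded stable growth under backward iteration is not contradictory (it is exactly what happens in a weak stable leaf of an Anosov flow). The separation between $L$ and $L_1$ is in the unstable direction, and the $3$-dimensional region between the two planes need not be compact, so no diameter contradiction comes for free. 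What makes the mechanism bite---and this is what the paper does---is a dimensional reduction: choose $x$ at which the center foliations disagree and such that $L$ and $L_1$ do not coincide on any neighborhood of $x$, cut both by one common leaf $F\in\wt\cW^{cu}$ through $x$, and obtain two distinct center curves $c\subset L\cap F$ and $c_1\subset L_1\cap F$ through $x$, both fixed by $\wt g$; since $x$ and $\wt g(x)$ lie on both curves, they bound a compact bigon inside $F$, foliated by unstable segments whose lengths grow under iteration while the center sides stay of bounded length---precisely the contradiction of Lemma \ref{l.nomergefix}, with the roles of stable/unstable and of forward/backward iteration exchanged. Your proposal never performs this reduction; relatedly, the ``small argument'' you defer in your first paragraph (that the component of $L_1^{cs}\cap L_1^{cu}$ through a point of $c$ equals $c$) is essentially the content of the lemma along a leaf rather than a routine verification, and it is exactly what the reduction to a common $\wt\cW^{cu}$-leaf is designed to replace.
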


\begin{proof}
The argument is similar to the one made in Lemma \ref{l.nomergefix}.

Let $\widetilde \cW^{cs}_1, \widetilde \cW^{cu}_1$ be two $g$-equivariant foliations as in the lemma. We will consider the center foliation $\widetilde \cW^c_1$ is defined by taking the connected components of intersections of leaves of $\widetilde \cW^{cs}_1$
and $\widetilde \cW^{cu}$ to show that $\widetilde \cW^{cs} = \widetilde \cW^{cs}_1$. A symmetric argument shows that $\widetilde \cW^{cu}= \widetilde \cW^{cu}_1$. 

Since every leaf of both $\wt\cW^{c}$ and $\wt\cW^{c}_1$ are fixed by $\wt g$, Lemma \ref{l.boundedmovement} implies that $\widetilde g$ moves points a uniformly bounded amount in both center foliations.

Consider, for a contradiction, a point $x\in \mt$ such that $\widetilde \cW^c(x) \neq \widetilde \cW^{c}_1(x) $ (note that we are dealing here with actual foliations, not branching ones, so this notation make sense). Without loss of generality, we can choose $x$ so that the leaves $L:= \widetilde \cW^{cs}(x)$ and $L_1:=\widetilde \cW^{cs}_1(x)$ do not coincide in any neighborhood of $x$. 

Let $c$ and $c_1$ be the center leaves obtained as the connected components of $L \cap F$ and $L_1 \cap F$ containing $x$ for some $F \in \widetilde \cW^{cu}$.

By assumption, both $c$ and $c_1$ are fixed by $\wt g$, so we are in the exact same set up as in the proof of Lemma \ref{l.nomergefix}. Thus we deduce that $c=c_1$, a contradiction.
\end{proof}

We can now state and prove the aim of this section.

\begin{proposition}\label{coro-DCwithoutlift}
Let $f\colon M \to M$ be a partially hyperbolic diffeomorphism such that $f^k$ is homotopic to the identity for some $k > 0$. Let $\hat M$ be a finite cover of $M$ which makes all bundles orientable. Let $g$ be a lift to $\hat M$ of a homotopy of $f^k$ to the identity that preserves orientation of the bundles.
Suppose that $g$ is dynamically coherent 
and that there exists a good lift $\widetilde{g}$ of $g$ that fixes all the center leaves. Then, $f$ is dynamically coherent and $f^k$ is a discretized Anosov flow. 

\end{proposition}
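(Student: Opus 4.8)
The plan is to descend the dynamical coherence of $g$ on $\hat M$ to $f$ on $M$ by exploiting the uniqueness statement of Lemma~\ref{lema-uniquefixfol}. Write $N$ for the (finite) deck group of the cover $\hat M \to M$, so that $N$ is a finite group of diffeomorphisms of $\hat M$ commuting with nothing in particular a priori, but $g$ is a lift of $f^k$ in the sense that it projects to $f^k$ on $M$. First I would observe that $g$ being dynamically coherent means it has genuine invariant foliations $\cW^{cs}, \cW^{cu}$ tangent to $E^{cs}, E^{cu}$ on $\hat M$, and by hypothesis the good lift $\widetilde g$ on the universal cover $\widetilde{\hat M} = \widetilde M$ fixes every leaf of the associated center foliation $\widetilde\cW^c$.

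The key step is to produce, from these, a \emph{new} pair of foliations $\cW^{cs}_1, \cW^{cu}_1$ on $\hat M$ that are also $g$-invariant and tangent to $E^{cs}, E^{cu}$, but which are manifestly invariant under the deck group $N$, and then apply Lemma~\ref{lema-uniquefixfol} to conclude $\cW^{cs}=\cW^{cs}_1$, $\cW^{cu}=\cW^{cu}_1$. A foliation invariant under all of $N$ descends to a foliation of $M$ tangent to $E^{cs}$ (resp. $E^{cu}$), which is then automatically invariant under $f$ (since $f^k$-invariance together with the bundle being $f$-invariant forces $f$-invariance of the unique such foliation, or more directly: the pushforward $f_*\cW^{cs}$ is another foliation tangent to $E^{cs}$ whose lift to $\hat M$ is $g$-invariant with the good lift fixing all center leaves, so uniqueness again pins it down). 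To build $\cW^{cs}_1$ I would take $\alpha \in N$ and push forward: $\alpha_* \cW^{cs}$ is a foliation tangent to $\alpha_* E^{cs} = E^{cs}$ (the bundle is canonically defined on $M$, hence $N$-invariant upstairs), and it is invariant under $\alpha g \alpha^{-1}$. Now $\alpha g \alpha^{-1}$ is another lift of $f^k$ to $\hat M$, differing from $g$ by an element of $N$; since $g$ comes from lifting a homotopy to the identity and $N$ is finite, a bounded-iterate/averaging argument shows $\alpha g \alpha^{-1}$ and $g$ are sufficiently comparable that $\alpha_*\cW^{cs}$ is still $g$-invariant, and the good lift $\widetilde g$ (suitably chosen to commute with the lifted deck transformations, which it does by Definition~\ref{d.goodlift}) still fixes all the leaves of the center foliation of the pair $(\alpha_*\cW^{cs}, \alpha_*\cW^{cu})$ — here one uses that $\widetilde g$ commutes with every deck transformation of $\widetilde M \to M$, in particular with lifts of elements of $N$. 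Then Lemma~\ref{lema-uniquefixfol} gives $\alpha_*\cW^{cs} = \cW^{cs}$ for every $\alpha \in N$, i.e. $\cW^{cs}$ is $N$-invariant, and it descends.

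Once $f$ is shown to be dynamically coherent, the final clause follows from the already-cited machinery: $f^k$ is a dynamically coherent partially hyperbolic diffeomorphism homotopic to the identity whose good lift $\widetilde{g}$ fixes every center leaf, so the descended center foliation on $M$ is fixed leafwise by a good lift of $f^k$, and \cite[Theorem~6.1]{BFFP-prequel} (as invoked after Theorem~\ref{thm.DIimpliesDC}) shows that $f^k$ is a discretized Anosov flow. The main obstacle I anticipate is the bookkeeping in the previous paragraph: carefully checking that pushing the foliations forward by a deck transformation of the finite cover does not spoil either the $g$-invariance or the "good lift fixes center leaves" hypothesis, since $\alpha g \alpha^{-1} \neq g$ in general. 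This requires being precise about which good lift of $g$ one uses and verifying it is simultaneously a good lift for the conjugated dynamics — this works because good lifts are characterized by a bounded-displacement plus commuting-with-deck-transformations condition that is symmetric under conjugation by the (isometrically acting, after choosing an appropriate metric) finite group $N$, but it must be spelled out rather than asserted.
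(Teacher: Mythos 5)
Your overall strategy is the same as the paper's: push the foliations around by deck transformations (and by lifts of $f$) and use the uniqueness statement of Lemma~\ref{lema-uniquefixfol} to force invariance, hence descent to $M$ and $f$-invariance. But there is a genuine gap at the central step. You assert that the good lift $\widetilde g$ ``commutes with every deck transformation of $\widetilde M \to M$, in particular with lifts of elements of $N$,'' citing Definition~\ref{d.goodlift}. That is not what the definition gives you: $\widetilde g$ is a good lift of $g\colon \hat M \to \hat M$, so it commutes with $\pi_1(\hat M)$ only, not with the full group $\pi_1(M)$; indeed $g$ itself need not commute with the finite deck group $N$ (the conjugate $\alpha g\alpha^{-1}$ differs from $g$ by a possibly nontrivial element of $N$). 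Without this commutation you cannot conclude that $\widetilde g$ fixes the leaves of the center foliation of the pushed-forward pair $(\alpha_*\cW^{cs},\alpha_*\cW^{cu})$, which is exactly the hypothesis Lemma~\ref{lema-uniquefixfol} needs. The ``bounded-iterate/averaging argument'' you invoke to compare $\alpha g\alpha^{-1}$ with $g$ is precisely where the real work lies, and averaging has no meaning for foliations. The paper handles this by comparing $\widetilde g$ with the good lift $\widetilde{f^k}$ of $f^k$ obtained from the homotopy: their discrepancy $\beta=\widetilde g\,(\widetilde{f^k})^{-1}$ is a deck transformation moving points a bounded distance, and Lemma~\ref{boundcov} forces $\beta=\id$ \emph{only when $M$ is not Seifert fibered}; in that case $\widetilde g=\widetilde{f^k}$ does commute with all of $\pi_1(M)$ and your argument can be completed.

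When $M$ is Seifert fibered, $\beta$ may be a nontrivial power of the regular fiber, the commutation genuinely fails, and your proposal breaks down. The paper's proof in that case is substantially different: one first checks that $\gamma\widetilde\cW^{cs}$ is merely $\widetilde g$-invariant (using that $\beta$ lies in the normal $\ZZ$ subgroup), then upgrades invariance to ``every leaf fixed'' by ruling out mixed behavior on Seifert manifolds (\cite[Theorem 5.1]{BFFP-prequel}), invoking minimality (Proposition~\ref{p.hypSeifminimal_nonDC}) and Proposition~\ref{p.nondceverycfixed} to get the center leaves fixed, and only then applies Lemma~\ref{lema-uniquefixfol}; the $f$-invariance step moreover requires passing to a further finite cover (with $\pi_1$ equal to $\bigcap_i f_*^i(\pi_1(\hat M))$) so that $f$ itself lifts, and a separate commutator computation with Lemma~\ref{boundcov}. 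Your $f$-invariance argument has the same issue: uniqueness of foliations tangent to $E^{cs}$ is not unconditional, so to pin down $f_1(\widetilde\cW^{cs})$ via Lemma~\ref{lema-uniquefixfol} you must first show $\widetilde g$ fixes the center leaves of the image pair, which again requires controlling the commutator of $f_1$ with $\widetilde g$ and splits into the same non-Seifert/Seifert dichotomy. So the proposal is essentially the non-Seifert half of the proof with its key verification left as an assertion, and it misses the Seifert case entirely.
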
 

\begin{proof} 
First we notice that the assumptions of the proposition will be verified for any further finite cover $\bar M$ of $\hat M$ (because one can take a further lift $\bar g$ of $g$ to $\bar M$, it is dynamically coherent and $\wt g$ is a good lift of $\bar g$ too). Hence, without loss of generality, we may and do assume that $\hat M$ is a normal cover of $M$.

Let $\wt\cW^{cs}$ and $\wt\cW^{cu}$ be the lifts to $\wt M$ of the center stable and center unstable foliations of $g$. Our goal is to show that these foliations are $\pi_1(M)$-invariant, thus decending to foliations in $M$, and that these projected foliations are $f$-invariant. 

Notice that $\wt g$ fixes each leaf of $\wt\cW^{cs}$ and $\wt\cW^{cu}$.

The map $g$ is obtained from a lift of a homotopy of $f^k$ to the identity. Lifting that homotopy further to $\mt$, we get a good lift $\wt{f^k}$ of $f^k$ that is also a lift (and hence a good lift) of $g$ to $\mt$. 
As both $\wt g$ and $\wt{f^k}$ are good lifts of $g$, there exists $\beta\in \pi_1(\hat M) \subset \pi_1(M)$ such that $\widetilde g = \beta \ftk$. (Note however that $\wt g$ is not necessarily a good lift of $f^k$ as $\wt g$ only commutes with elements of $\pi_1(\hat M)$ and not $\pi_1(M)$.) 

Moreover, both $\wt g$ and $\ftk$ move points a bounded distance in $\wt M$, hence so does $\beta = \widetilde g (\ftk)^{-1}$. Lemma \ref{boundcov} then implies that either $\beta$ is the identity or $M$ is Seifert (and $\beta$ is either the identity or a power of a regular fiber).

We split the rest of the proof of dynamical coherence in two cases.

\vskip .1in
\noindent
{\bf {Case 1}} $-$ Suppose that $M$ is \emph{not} a Seifert fibered space.

Then $\beta$ is the identity, which means that $\wt g = \ftk$.

Let $\gamma$ be a deck transformation in $\pi_1(M)$. 
Define the foliations $\cF^{cs}_\gamma := \gamma \widetilde \cW^{cs}$, $\cF^{cu}_\gamma := \gamma \widetilde \cW^{cu}$, and $\cF^{c}_{\gamma} :=\gamma\wt \cW^{c}$. The leaves of these foliations are all fixed by $\widetilde g$ because $\gamma$ commutes with $\ftk = \widetilde g$.
In particular, Lemma \ref{lema-uniquefixfol} then implies that 
$\gamma \wcs = \wcs$ and $\gamma \wcu = \wcu$. Since this is true for any element of $\pi_1(M)$, these foliations descend to foliations $\cW^{cs}_M, \cW^{cu}_M$
in $M$.

Now we need too show that $\cW^{cs}_M, \cW^{cu}_M$ are also $f$-invariant. Equivalently, we need to show that $\wcu$ and $\wcs$ are invariant by any lift $f_1$ of $f$ to $\wt M$.

Let $f_1$ be a lift of $f$ to $\mt$. Notice that
$f$ may not be homotopic to the identity, so $f_1$ is not assumed to be a good lift.
Let $\cF^{cs}_1 := f_1(\wcs)$ and $ \cF^{cu}_1 := f_1(\wcu)$.

We will first show that $f_1$ and $\widetilde g$ commute.
Both $f_1 \widetilde g$ and $\widetilde g f_1$
are lifts of the map $f^{k+1}$ to $\mt$. So
$(\widetilde g)^{-1} (f_1)^{-1} \widetilde g f_1$ is
a deck transformation $\gamma\in \pi_1(M)$.
As $\wt g$ moves points a bounded distance, we have that $d(f_1(y), \widetilde g f_1(y))$ is bounded in $\mt$. In addition, $f_1$ has bounded derivatives
so $d(y, (f_1)^{-1} \widetilde g f_1(y))$ is also bounded
in $\mt$. So using again that $\wt g$ is a good lift, we deduce that $d(y, (\widetilde g)^{-1} (f_1)^{-1} \widetilde g f_1(y))$ is bounded in $\mt$.

Hence $\gamma$ is a deck transformation that moves
points a bounded distance. Applying Lemma \ref{boundcov} again gives that $\beta$ is the identity (since $M$ is not Seifert). Hence $f_1$ and $\widetilde g$
commute.

Since $\widetilde g$ fixes every leaf of $\widetilde \cW^{c}$ (the center foliation in $\mt$) and commutes with $f_1$, we deduce that $\widetilde g$ fixes
every leaf of $f_1(\wt\cW^c)$.
We can again apply Lemma \ref{lema-uniquefixfol} to get that $f_1(\wt\cW^{cs})= \wcs$ and $f_1(\wcu)= \wcu$. That is, the foliations $\widetilde \cW^{cs}$
and $\widetilde \cW^{cu}$ are $f_1$-invariant. Since this holds for any lift of $f$, it implies that $\cW^{cs}_M$ and $\cW^{cu}_M$ are $f$-invariant. 
Hence $f$ is dynamically coherent with foliations 
$\cW^{cs}_M, \cW^{cu}_M$.
This completes the proof that $f$ is dynamically
coherent when $M$ is not Seifert fibered.

\vskip .15in
\noindent
{\bf {Case 2}} $-$ Assume that $M$ is Seifert fibered.

In this case, Lemma \ref{boundcov} implies that $\beta= \widetilde g (\ftk)^{-1}$ is either the identity or represents a power of a regular fiber of the Seifert fibration. In any case, $\beta$ is in a normal subgroup of $\pi_1(M)$ isomorphic to $\ZZ$.
Moreover, as proved earlier, $\beta \in \pi_1(\hat M)$.

Let $\gamma \in \pi_1(M)$ be any deck transformation.  
As before, consider the foliations
$\cF^{cs}_{\gamma} := \gamma \widetilde \cW^{cs}$ and 
$\cF^{cu}_{\gamma}  := \gamma \widetilde \cW^{cu}$.

We first claim that these foliations are $\widetilde g$-invariant. We show this for $\cF^{cs}_{\gamma}$ the other being analogous. Let $L \in \widetilde \cW^{cs}$. We have

$$ \widetilde g (\gamma L) = \beta \ftk (\gamma L) = \beta \gamma \ftk (L) = \gamma \beta^{\pm 1} \ftk (L). $$

Notice that both $\ftk$ (because it is a lift of $g$) and $\beta$ (because it belongs to $\pi_1(\hat M)$ and the foliation $\cW^{cs}$ is defined in $\hat M$) preserve the foliation $\widetilde \cW^{cs}$. 
It follows that $\beta^{\pm 1} \ftk (L)\in \ \widetilde \cW^{cs}$, so

$$ \widetilde g (\gamma L) = \gamma \beta^{\pm 1} \ftk (L)\in \cF^{cs}_{\gamma}.$$ 
Thus $\cF^{cs}_{\gamma}$ is $\widetilde g$-invariant.

We now want to show that the foliations $\cF^{cs}_{\gamma}$, 
$\cF^{cu}_{\gamma}$ and $\cF^c_{\gamma} := \gamma \wt\cW^c$ are all leafwise fixed by $\widetilde g$. 

Since $\hat M$ was chosen to be a normal cover of $M$, any element $\gamma \in \pi_1(M)$ can be thought of as a difeomorphism of $\hat M$. Hence we can consider the foliation $\hat \cF_{\gamma}^{cs} := \gamma \cW^{cs}$ in $\hat M$. Note that $\hat \cF_{\gamma}^{cs}$ is tangent to the center stable distribution $E^{cs} \subset T\hat M$, since $\gamma$ preserves the tangent bundle decomposition, as it is defined by $f$ in $M$.
The argument above shows that $\hat\cF_{\gamma}^{cs}$ is $g$-invariant. 

Thus, we can consider $g$ to be a dynamically coherent diffeomorphism for the pair of transverse foliations $\hat\cF_{\gamma}^{cs}$ and $\cW^{cu}$. Moreover, $g$ is homotopic to the identity and the good lift $\wt g$ fixes every leaf of $\wt\cW^{cu}$. Since $\hat M$ is Seifert, mixed behaviour is excluded 
(cf. \cite[Theorem 5.1]{BFFP-prequel}) and this implies that $\wt g$ must also fix every leaf of $\cF_{\gamma}^{cs}$.

The symmetric argument shows that $\cF^{cu}_{\gamma}$ is also fixed by $\wt g$.
So we can apply Proposition \ref{p.hypSeifminimal_nonDC} to both $\hat\cF_{\gamma}^{cs}$ and $\hat\cF^{cu}_{\gamma}$, implying that they are $g$-minimal. 
To apply the proposition we need that $g$ is orientable.
Hence, the center foliation $\cF^c_{\gamma}$ is fixed by $\wt g$, thanks to Proposition \ref{p.nondceverycfixed} (this also uses that
$g$ is orientable).

Since all the leaves of $\cF^c_{\gamma}$ are fixed by $\wt g$, we can finally apply Lemma \ref{lema-uniquefixfol} to deduce that $\cF^{cs}_{\gamma}= \wt\cW^{cs}$ and $\cF^{cu}_{\gamma}= \wt\cW^{cu}$.
As this is true for any $\gamma$, the foliations $\widetilde \cW^{cs}$ and $\widetilde \cW^{cu}$ descends to foliations $\cW^{cs}_M$ and $ \cW^{cu}_M$ on $M$ in this case too.

We now again have to show that $\cW^{cs}_M$ and $\cW^{cu}_M$ are $f$-invariant. The argument is the same for both foliations, so we only deal with $\cW^{cs}_M$.

We start with a preliminary step. Let $f_*$ be the automorphism of $\pi_1(M)$ induced by $f$. Let

$$A :=\pi_1(\hat M) \cap f_*(\pi_1(\hat M)) \cap 
\dots \cap (f_*)^{k-1}(\pi_1(\hat M)).$$
The set $A$ is a finite index, normal subgroup of $\pi_1(M)$.
Moreover, as $f^k$ is homotopic to the identity, $f_*(A) = A$.

As we remarked at the beginning of the proof, we can without loss of generality prove the result for any further finite cover of $\hat M$. Thus we choose if necessary a further cover so that $\pi_1(\hat M) = A$.
Since $f_*(A) = A$, the map $f$ lifts to a 
homeomorphism $\hat f$ of $\hat M$.

As in the first case, we let $f_1$ be an arbitrary lift of $\hat f$ to $\mt$
and we define $\fol^{cs}_1 := f_1(\wcs)$ and $\fol^{cu}_1 := f_1(\wcu)$. (Note that $f_1$ is in particular also a lift of $f$.)

Note as before that both $\wt g f_1$ and $f_1 \wt g$ are lifts of $f^{k+1}$, and $\widetilde g f_1 (\widetilde g)^{-1} (f_1)^{-1}$ is a bounded distance
from the identity (because $\wt g$ is and $f_1$ has bounded derivatives). So $\delta := \widetilde g f_1 (\widetilde g)^{-1} (f_1)^{-1}$ is an element of $\pi_1(M)$ a bounded distance from identity. By Lemma \ref{boundcov}, $\delta$ represents a power of a regular fiber of the Seifert fibration, so is in the normal ${\ZZ}$ subgroup of $\pi_1(M)$ (note that since $\pi_1(M)$ is not virtually nilpotent, there exists a unique Seifert fibration on $M$, see Appendix \ref{app.3_manifold_topology}).

In addition $\widetilde g f_1$ and $f_1 \widetilde g$ are also
lifts of the homeomorphisms $g \hat f$ and $\hat f g$ in $\hat M$ to
$\widetilde M$. Hence $\delta$ is in $\pi_1(\hat M)$.

Using once more the arguments above, we get that
$(f_1)^{-1} \delta f_1 (\delta)^{-1}$ is a bounded distance from
the identity, and projects to the identity in $M$ (and in $\hat M$), hence it is
a deck transformation 
$\eta$ also contained in the ${\ZZ}$ normal subgroup of $\pi_1(M)$. Thus $\delta$ and $\eta$ commute.
 Moreover, $\eta$ is also in $\pi_1(\hat M)$.

Now we can show that $\widetilde g$ preserves $\fol^{cs}_1$:
Let $L$ in $\wcs$. Then
$$
\widetilde g(f_1(L)) = \delta f_1(\widetilde g(L)) =
\delta f_1(L) = f_1(\eta \delta(L)).
$$
Here $\eta \delta(L)$ is in $\wcs$, because $L$ is 
in $\wcs$ and $\eta \delta$ is in $\pi_1(\hat M)$.
Hence $\widetilde f_1(\eta \delta L)$
is in $f_1(\wcs)$ so $\widetilde g$ preserves $\fol^{cs}_1$.

What we proved implies that $g$ preserves
$\hat f(\cW^{cs})$ in $\hat M$. Now consider the pair of foliations $\hat f(\cW^{cs})$ and $\cW^{cu}$. They are both invariant by $g$, so $g$ is dynamically coherent for this particular pair of foliations, and $\wt g$ fixes the leaves of $\wt \cW^{cu}$. So once again, as $\hat M$ is Seifert,  we get that $\wt g$ must also fix every leaf of $f_1(\wt\cW^{cs})$ (cf. \cite[Theorem 5.1]{BFFP-prequel}).

The symmetric argument implies that $\wt g$ fixes every leaf of $f_1(\wt\cW^{cu})$. Once again, $\hat M$ being Seifert implies that all the foliations are $g$-minimal (Proposition \ref{p.hypSeifminimal_nonDC}). Hence $\wt g$ also fixes the center foliation $f_1(\wt\cW^c)$ (Proposition \ref{p.nondceverycfixed}). So Lemma \ref{lema-uniquefixfol} applies and we deduce that $f_1(\wt\cW^{cs})=\wt\cW^{cs}$ and $f_1(\wt\cW^{cu})=\wt\cW^{cu}$. 

In particular, $f$ preserves the foliations $\cW^{cs}_M$ and $\cW^{cu}_M$ as wanted. So $f$ is dynamically coherent.

This finishes the proof that $f$ is dynamically coherent.
Once that is known, then Proposition 6.5 and Proposition
G.2 of \cite{BFFP-prequel} implies that 
$f^k$ is a discretized Anosov flow.
This finishes the proof of the proposition.
\end{proof}

\section{Proof of Theorem~\ref{thmintro:Seifert}}\label{sec-ThmA}


Fix a partially hyperbolic diffeomorphism $f \colon M\to M$ that is homotopic to the identity on a closed Seifert-fibered $3$-manifold $M$. We make no orientability assumptions. We will show that some iterate of $f$ is a discretized Anosov flow, completing the proof of Theorem \ref{thmintro:Seifert}. 

Fix a finite cover $\hat M$ of $M$ so that the lifted center, stable, and unstable bundles are orientable. Then there is an integer $k > 0$, such a lift of $f^k$ to $\hat M$ will preserve the orientations of the bundles. In addition, we can find such a lift that is homotopic to the identity by lifting a homotopy from $f^k$ to the identity. Fix such a lift $g: \hat M \to \hat M$. 

Applying Theorem \ref{teo-BI}, we have $g$-invariant center stable and center unstable branching foliations $\fbs$ and $\fbu$ on $\hat M$.

\begin{lemma}\label{lem_no_mixed_nonDC}
 There exists a lift $\widetilde g$ of an iterate of $g$ that fixes every leaf of  $\wfbs$ and also fixes every leaf of $\wfbu$.
\end{lemma}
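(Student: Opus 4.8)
The plan is to invoke the trichotomy from Corollary~\ref{c.minimalcase_branching_case} (equivalently, the dichotomy $(\star)$) for both $\wfbs$ and $\wfbu$ and rule out every case except double invariance, possibly after passing to an iterate. First I would recall that $M$ (hence $\hat M$) is Seifert fibered, so Proposition~\ref{p.hypSeifminimal_nonDC} applies: each good lift either fixes every leaf of $\wfbs$ or acts as a translation on $\lcsb$, and in the former case $\fbs$ is automatically $f$-minimal (similarly for $\wfbu$). Let $\widetilde g_0$ be the good lift of $g$ coming from lifting a homotopy to the identity. Thus we are in one of the three cases: (1) double invariance, (2) mixed behavior, (3) double translation. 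In case (1) we are already done with $\widetilde g = \widetilde g_0$.

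Next I would handle double translation. When $\widetilde g_0$ acts as a translation on \emph{both} leaf spaces, I would use the known structure of partially hyperbolic diffeomorphisms homotopic to the identity on Seifert manifolds (cf. the discussion after Theorem~\ref{teo-hypseif} and \cite[Remark 7.3]{BFFP-prequel}): such double translations do occur, but one can always pass to a finite power of $g$ with a good lift that has doubly invariant behavior. Concretely, on a Seifert manifold one has the fiber subgroup $\ZZ \subset \pi_1(\hat M)$, and the obstruction to being a good lift with doubly invariant behavior is measured by a ``translation number'' along the leaf spaces relative to the action of the regular fiber; replacing $g$ by $g^N$ for suitable $N$ and correcting the lift by an appropriate power of the fiber element produces a good lift $\widetilde g$ of $g^N$ fixing every leaf of both $\wfbs$ and $\wfbu$. (This is exactly the mechanism alluded to in \S\ref{sec-ThmA} just before the statement of the lemma, and the one used to deduce Theorem~\ref{thmintro:Seifert} from Theorem~\ref{teo-hypseif}.) Note that an iterate of $f$ has the same branching foliations, and a lift of an iterate of $g$ that fixes all leaves of $\wfbs$ and $\wfbu$ is what the lemma asks for, so working with powers is harmless.

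Then I would rule out mixed behavior. Suppose $\widetilde g_0$ fixes every leaf of $\wfbs$ but acts as a translation on $\lcub$ (the other mixed case is symmetric). By Proposition~\ref{p.hypSeifminimal_nonDC}, $\fbs$ is $f$-minimal (indeed $g$-minimal), and every leaf of $\fbs$ is a plane or an annulus; moreover there is a center stable leaf with nontrivial fundamental group (otherwise $\lcsb \simeq \R$ with trivial $\pi_1$-action, contradicting that $\pi_1(M)$ is not virtually solvable and acts on the Seifert base — here one uses the Seifert structure). On such an annular leaf $L$ fixed by some $\gamma \neq \id$, Proposition~\ref{periodiccenter} gives an $f$-periodic center leaf $e$ in $\pi(L)$, and Proposition~\ref{p.alternnonDC} applied with $\wfbs$ forces $e$ to be coarsely contracting. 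But $\widetilde g_0$ acts as a translation on $\lcub$, so by Theorem~\ref{t.good_lifts_have_no_fixed_points} (applied to the $f$-minimal $\fbu$, which is $\R$-covered and uniform) $\widetilde g_0$ has no periodic points, while a coarsely contracting periodic center leaf contains a periodic point — a contradiction. Hence mixed behavior cannot occur for $\widetilde g_0$, and in fact for no good lift of $g$.

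The main obstacle is the double-translation step: making precise the ``translation number'' argument on a Seifert manifold and verifying that passing to a finite power of $g$ and correcting by a fiber element really yields a \emph{good} lift (bounded displacement \emph{and} commuting with all deck transformations) that fixes every leaf of both foliations. This requires knowing that the translation lengths of $\widetilde g_0$ on $\lcsb$ and $\lcub$ are commensurable with the action of the regular fiber, which is where the Seifert structure and the results of \S\ref{sss.minimal}--\S\ref{sec-doubleinvndc} (and \cite[Remark 7.3]{BFFP-prequel}) are essential. Everything else — the trichotomy, ruling out mixed behavior, and the harmlessness of iterates — is a direct application of results already established in the excerpt.
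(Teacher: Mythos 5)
There are two genuine gaps. First, your contradiction in the mixed case does not work: Theorem~\ref{t.good_lifts_have_no_fixed_points} (or Lemma~\ref{l.nofixedpoints_branching_case}) only rules out fixed or periodic points of the good lift in $\mt$, i.e.\ \emph{contractible} periodic points of $g$. A coarsely contracting $g$-periodic center leaf produces a periodic point of $g$ in $M$, namely a fixed point of $h=\gamma^n\circ\ft^m$ with $\gamma^n$ in general nontrivial, and this is perfectly compatible with $\ft$ having no periodic points upstairs (a discretized Anosov flow already illustrates this). So coarse contraction alone yields no contradiction. The paper gets the contradiction differently: after Proposition~\ref{p.liftfixleaf} produces an iterate $g^i$ with a good lift fixing every leaf of $\wfbs$, if that lift fixes no leaf of $\wfbu$ it fixes no center leaf, so Proposition~\ref{p.alternnonDC} makes every periodic center leaf of $g$ coarsely contracting; then one \emph{exchanges the roles of the two foliations}, applying Proposition~\ref{p.liftfixleaf} to $\wfbu$ (with a possibly different iterate and lift) to conclude that the same periodic center leaves are coarsely expanding, and Proposition~\ref{periodiccenter} guarantees such a leaf exists. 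Your argument is missing this second application, and the ``no periodic points'' shortcut you substitute for it is false.

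Second, the double-translation case is not actually handled. The ``translation number along the fiber'' mechanism you sketch is precisely the nontrivial content here, and the sources you cite for it are either circular (the discussion in \S\ref{sec-ThmA} and the deduction of Theorem~\ref{thmintro:Seifert} are consequences of Lemma~\ref{lem_no_mixed_nonDC}, not inputs to it) or do not assert what you need. The ingredient the paper uses is Proposition~\ref{p.liftfixleaf} (= \cite[Proposition~7.1 and Remark~7.2]{BFFP-prequel}), applicable because $\hat M$ and the bundles are orientable and $\fbs$ is horizontal, so the Seifert fibration is orientable; it gives an iterate with a good lift fixing every leaf of $\wfbs$ and thereby removes the double-translation case from consideration altogether --- the only remaining issue is whether that particular lift also fixes a leaf of $\wfbu$ (in which case Proposition~\ref{p.hypSeifminimal_nonDC} finishes), or translates $\wfbu$, which is the mixed situation dealt with as above. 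Note also that you would in any case have to rule out mixed behavior for the \emph{new} lift of the iterate, not merely for the original good lift $\widetilde g_0$, so structuring the proof as a trichotomy for $\widetilde g_0$ does not by itself suffice. A minor further slip: in your mixed case you invoke $f$-minimality of $\fbu$, which Proposition~\ref{p.hypSeifminimal_nonDC} does not give when the lift fixes no leaf of $\wfbu$; there the minimality you may use is that of $\fbs$.
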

\begin{proof}
	We will use the following result, found in \cite[Proposition~7.1 \& Remark~7.2]{BFFP-prequel}.
	\begin{proposition}\label{p.liftfixleaf}
		Let $g: M \to M$ be a partially hyperbolic diffeomorphism that is homotopic to the identity on a Seifert fibered $3$-manifold $M$ with orientable Seifert fibration. Then some iterate of $g$ has a good lift which fixes every leaf of $\wfbs$.
	\end{proposition}
	
	Since $\hat M$ is orientable, the bundles are orientable, and $\fbs$ is a horizontal foliation (see \cite[Theorem F.3]{BFFP-prequel}), it follows that the Seifert fibration is orientable.  Thus there is an integer $i > 0$ so that the iterate $g^i$ has a good lift $\widetilde{g^i}$ which fixes every leaf of $\wfbs$.

	Suppose that $\widetilde{g^i}$ fixes one leaf of $\wfbu$. Then Proposition \ref{p.hypSeifminimal_nonDC} says that $\fbu$ is $g^i$-minimal and 
$\wt{g^i}$ fixes \emph{every} leaf of $\wfbu$ as desired. 
	
	Suppose, then, that $\wt{g^i}$ fixes no leaf of $\wfbu$. Then $\widetilde g$ fixes no center leaf, and we can apply Proposition \ref{p.alternnonDC} to see that every periodic center leaf of $g$ has to be coarsely contracting. Exchanging roles, and applying Proposition \ref{p.liftfixleaf} to the center unstable branching foliation we deduce that every periodic center leaf for $g$ must be coarsely expanding. Notice that although the lifts may be different, the coarsely expanding and coarsely contracting behavior is for periodic center leaves of the original map $g$.
	
	As there must be at least one such periodic center leaf (cf.~Proposition \ref{periodiccenter}) this gives a contradiction.
\end{proof}

Let $\widetilde{g^i}$ be a good lift of an iterate $g^i$, $i > 0$, that fixes every leaf of both $\wfbs$ and $\wfbu$. Then Proposition \ref{p.nondceverycfixed} implies that $\widetilde{g^i}$ fixes every center leaf, and Corollary \ref{coro.fixcentercoh} says that $g^i$ is dynamically coherent. Then Proposition \ref{coro-DCwithoutlift} tells us that $f$ is dynamically coherent.

Now that we have reduced to the dynamically coherent case, \cite[Theorem A]{BFFP-prequel} says that $f$ has an iterate that is a discretized Anosov flow. This completes the proof of Theorem~\ref{thmintro:Seifert}. 

Note that the arguments in the proof of Lemma~\ref{lem_no_mixed_nonDC} also eliminate mixed behavior for good lifts in Seifert fibered manifolds.

\section{Absolutely partially hyperbolic diffeomorphisms}\label{ss.AbsolutePH} 


In this section, we explain how one can improve the trichotomy
in subsection  \ref{sss-introLeafSpaces} eliminating the mixed case,
if one uses a strong version of partial hyperbolicity. 

\begin{definition}
A partially hyperbolic diffeomorphism $f \colon M \to M$ on a 3-manifold is called \emph{absolutely partially hyperbolic} if there exists constants $\lambda_1<1<\lambda_2$ such that for some $\ell>0$ and every $x\in M$, we have 

$$ \|Df^\ell|_{E^s(x)}\| < \lambda_1 < \|Df^\ell|_{E^c(x)}\| < \lambda_2 < \|Df^\ell|_{E^u(x)}\|. $$ 
\end{definition}

Notice that, although subtle, the difference between being absolutely partially hyperbolic versus just partially hyperbolic is far from trivial. Here, we just show that with this stronger property one can significantly simplify the arguments. However, some previous results have shown significant differences between the two notions, specifically with regard to the integrability of the bundles (see \cite{BBI,HHU-noncoherent,PotT3}).

We will show the following
\begin{theorem}\label{thm_absolute_case}
 Let $f\colon M \to M$ be an absolutely partially hyperbolic diffeomorphism on a $3$-manifold. Suppose that $f$ is homotopic to the identity and preserves two branching foliations $\fbs$ and $\fbu$ that are both $f$-minimal.
 Then either
 \begin{enumerate}[label=(\roman*)]
  \item \label{item.absolute_discretized} $f$ is a discretized Anosov flow, or,
  \item \label{item.absolute_translation} $\fbs$ and $\fbu$ are $\R$-covered and uniform and a good lift $\ft$ of $f$ act as a translation on their leaf spaces.
 \end{enumerate}
\end{theorem}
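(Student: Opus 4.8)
The plan is to leverage the trichotomy from \S\ref{sss-introLeafSpaces} (which applies since $\fbs$ and $\fbu$ are assumed $f$-minimal) and to rule out the mixed case using the absolute partial hyperbolicity hypothesis, thereby reducing to the double invariance case (which gives a discretized Anosov flow by Theorem~\ref{thm.DIimpliesDC}) or the double translation case (which is option \ref{item.absolute_translation}). So the only real work is eliminating mixed behavior. Suppose, for a contradiction, that some good lift $\ft$ fixes every leaf of $\wfbs$ but acts as a translation on $\lcub$ (the other mixed case being symmetric). By Corollary~\ref{c.minimalcase_branching_case}, $\fbu$ is then $\R$-covered and uniform.

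The first step is to produce a periodic center leaf to work with. Since $\fbs$ is $f$-minimal and $\ft$ fixes every leaf of $\wfbs$, Proposition~\ref{p.hypSeifminimal_nonDC}-type reasoning is not available in general, but Proposition~\ref{periodiccenter} still applies once we know there is a center stable leaf with nontrivial fundamental group: in the $\R$-covered uniform situation for $\fbu$ one finds such a leaf (a center stable leaf that is not a plane), and then Proposition~\ref{periodiccenter} gives an $f$-periodic center leaf $e = \pi(c)$, $c \subset L \in \wfbs$, fixed by $h = \gamma^n \circ \ft^m$ for some $\gamma$ generating the stabilizer of $L$ and $m \neq 0$. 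The second step is to apply \ref{starDI_contractingcenters}, i.e., Proposition~\ref{p.alternnonDC}: here I must first check that $\ft$ fixes no center leaf in $\mt$ (if it fixed one, $f$ would have a fixed center leaf and combining with the translation action on $\wfbu$ would contradict Lemma~\ref{l.iteratesdontfix}/the axis analysis — this needs care). Granting that, Proposition~\ref{p.alternnonDC} applied to $\fbs$ shows every $f$-periodic center leaf is \emph{coarsely contracting}.

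The heart of the matter is the third step: deriving the opposite conclusion from the center unstable side using absolute partial hyperbolicity. Since $\ft$ acts as a translation on $\lcub$, it has no fixed center unstable leaf; but we want to show the periodic center leaf $e$ is coarsely \emph{expanding}, contradicting coarse contraction. Without dynamical coherence, Proposition~\ref{p.alternnonDC} for $\fbu$ requires a good lift fixing every center unstable leaf, which we do not have. This is where absoluteness enters: on a coarsely contracting center leaf $c$ fixed by $h = \gamma^n \circ \ft^m$, the dynamics of $h$ along $c$ contracts outside a compact interval, so the central Lyapunov-type behavior along $c$ at the periodic point inside is non-expanding, $\|Dh|_{E^c}\| \le 1$ along the periodic orbit in an averaged sense. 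But absolute partial hyperbolicity gives a uniform gap: along the center stable leaf $L$, which is fixed by $\ft$ and on which $\ft$ moves points a bounded distance (Lemma~\ref{lema-boundedinfixcsleaf-nonDC}), the strong stable leaves through the periodic point are uniformly contracted by a definite factor $\lambda_1 < 1$ under $f^\ell$, while $\|Df^\ell|_{E^c}\| > \lambda_1$ \emph{uniformly}; combined with boundedness of the good lift this forces the center direction along the periodic orbit to be, on average, not contracted strictly — in fact one shows it must be coarsely expanding by running the symmetric argument with $f^{-1}$, where absoluteness gives $\|Df^{-\ell}|_{E^c}\| > \lambda_2^{-1} \cdot (\text{something}) $, i.e. the center is expanded relative to the unstable. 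More precisely, I expect to argue directly: the coarsely contracting structure on $c$ produces a fixed point $p$ with $\|Dh|_{E^c(p)}\| \le 1$; but absolute partial hyperbolicity says $\|Df^{j}|_{E^c(q)}\|$ grows or decays at a definite exponential rate bounded away from the stable and unstable rates, and a bounded-distance good lift argument along the \emph{center unstable} leaf through $p$ (which is uniform, being $\R$-covered) forces $\|Dh|_{E^c(p)}\| \ge 1$, with equality impossible because $p$ is not on a fixed center leaf. This contradiction eliminates the mixed case.

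Thus the structure is: (1) invoke the trichotomy; (2) in the mixed case, extract a periodic center leaf and verify $\ft$ fixes no center leaf; (3) get coarse contraction from $\fbs$ via Proposition~\ref{p.alternnonDC}; (4) use absoluteness plus uniformity of $\fbu$ and the bounded good lift to get coarse expansion (equivalently a contradiction with the center-stable-side rate estimate); (5) conclude we are in the double invariance case, whence $f$ is a discretized Anosov flow by Theorem~\ref{thm.DIimpliesDC}, or in the double translation case \ref{item.absolute_translation}. The main obstacle is step (4): making the comparison of growth rates along the center direction rigorous without dynamical coherence, using only the uniform gap from absoluteness and the uniform Hausdorff-distance bounds coming from $\R$-coveredness and uniformity of the branching foliations — this is exactly where the paper notes "one can significantly simplify the arguments," so I expect the absolute hypothesis to make the rate bookkeeping clean enough to close the gap that, in the general partially hyperbolic case, requires the much harder analysis of \S\ref{sec-transl}--\ref{sec-mix.hyp}.
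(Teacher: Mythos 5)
Your steps (1), (2), (3), and (5) are sound and match the paper's overall strategy: invoke the trichotomy of Corollary~\ref{c.minimalcase_branching_case}, reduce to ruling out mixed behavior, and then cite Proposition~\ref{p.nondceverycfixed}, Corollary~\ref{coro.fixcentercoh}, and Theorem~\ref{thm.DIimpliesDC} in the doubly invariant case. Your parenthetical in step (2) is also correct and simpler than you feared: if $\ft$ fixed a center leaf $c$ in the mixed case, then $\ft$ would preserve the closed interval in $\lcub$ of center unstable leaves through $c$, hence fix its endpoints, contradicting translation on $\lcub$.

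The gap is step (4), which is where the absolute hypothesis has to do its work, and what you write there is not a proof. There are two concrete problems. First, coarse contraction of a periodic center leaf $c$ is a purely topological condition (a compact interval $I \subset c$ absorbs the dynamics from outside); it does not give any pointwise bound of the form $\|Dh|_{E^c(p)}\| \le 1$ at a fixed point $p \in I$ — inside the interval $I$ the differential can be anything, and indeed there can be several fixed points with both attracting and repelling local behavior. So the premise of your derivative comparison is not available. Second, your proposed source of the reverse inequality $\|Dh|_{E^c(p)}\| \ge 1$ is ``a bounded-distance good lift argument along the center unstable leaf through $p$,'' but in the mixed case $\ft$ acts as a translation on $\lcub$: no center unstable leaf is fixed, so Lemma~\ref{lema-boundedinfixcsleaf-nonDC} gives you nothing on the center unstable side, and there is no bounded-displacement statement intrinsic to a center unstable leaf. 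The expression ``$\|Df^{-\ell}|_{E^c}\| > \lambda_2^{-1}\cdot(\text{something})$'' is a placeholder, not an argument.

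The paper's actual route is different and stays entirely inside a single fixed center stable leaf $L \in \wfbs$. It proves Proposition~\ref{prop.absolute}: if $\ft$ fixes every leaf of $\wfbs$ and $L$ has nontrivial stabilizer $\gamma$, then \emph{some center leaf in $L$ is fixed by} $\ft$. This is proved by contradiction using Proposition~\ref{p.alternnonDC2} to produce a periodic center leaf $c$ separating two $h$-fixed stable leaves $s_1,s_2$ at bounded Hausdorff distance (a band $B$), and then running a quasi-isometry bookkeeping argument internal to $L$: stable arcs in $B$ are contracted at rate $\lambda_1$, center arcs crossing the band are controlled by the absolute gap $\lambda_1 < \lambda_2$, and the bounded width of $B$ plus the bounded distortion $\|Dh^{-1}\| \le R$ leads to a contradiction in the inequality $D \le K^2 (\lambda_1/\lambda_2)^n D + 2R^n + 2K/\lambda_2^n$ for suitable choices of $n$ and $D$. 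Once a fixed center leaf exists in $L$, mixed behavior is immediately impossible (a fixed center leaf sits in a fixed center unstable leaf, so $\ft$ would fix every leaf of $\wfbu$ by Corollary~\ref{c.minimalcase_branching_case}); one never needs to discuss coarse expansion on the center unstable side. You should replace your step (4) with this band-crossing length estimate; the coarse contraction language from Proposition~\ref{p.alternnonDC} is not the right tool here, and the comparison must be done in $L$, not across $\wfbu$.
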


In order to prove this theorem, the main step will be to show that, using absolute partial hyperbolicity, we have an improvement of Proposition \ref{p.alternnonDC}. 

\begin{proposition}\label{prop.absolute}
Let $f \colon M \to M$ be an absolutely partially hyperbolic diffeomorphism homotopic to the identity and $\ft$ a good lift of $f$ to $\mt$. Assume that every leaf of $\wfbs$ is fixed by $\ft$. Let $L$ be a leaf whose stabilizer is generated  by $\gamma \in \pi_1(M) \setminus \{\mathrm{id}\}$. Then, there is a center leaf in $L$ fixed by $\ft$. 
\end{proposition}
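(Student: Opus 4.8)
The strategy is to argue by contradiction, running the scheme of the proofs of Propositions~\ref{p.alternnonDC} and \ref{p.alternnonDC2}, and to invoke absolute partial hyperbolicity at exactly the one place where the general argument produces only a \emph{coarsely} contracting center leaf fixed by a composition $\gamma^n\circ\ft^m$, rather than an honest center leaf fixed by $\ft$ itself. First the reductions. Since $\ft$ fixes every leaf of $\wfbs$, Lemma~\ref{l.nofixedpoints_branching_case} (applied to $L$ and to every $\ft$-fixed leaf) shows $\ft$ has no fixed or periodic points in $\mt$; in particular it fixes no stable leaf, hence acts freely on $\cL^s_L$. By Proposition~\ref{p.planeannuliorfixedpoints_branching} and the orientability assumption, since $\mathrm{Stab}(L)=\langle\gamma\rangle$ with $\gamma\neq\mathrm{id}$, the surface $\pi(L)$ is an annulus; as it contains no closed stable leaf, $\gamma$ also acts freely on $\cL^s_L$, and since $\gamma$ commutes with $\ft$ the two actions share an axis $A^s\subseteq\cL^s_L$. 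Finally, by Lemma~\ref{lema-boundedinfixcsleaf-nonDC} there is $K>0$ with $d_L(x,\ft(x))\le K$ for all $x\in L$, a bound preserved by the isometry $\gamma$.

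Now assume, for a contradiction, that no center leaf of $L$ is fixed by $\ft$, so $\ft$ acts freely on $\cL^c_L$. Proposition~\ref{periodiccenter} gives an $f$-periodic center leaf in $\pi(L)$, and then Proposition~\ref{p.alternnonDC2} provides a center leaf $c\subset L$ and stable leaves $s_1,s_2\subset L$, all fixed by $h:=\gamma^n\circ\ft^m$ for some $n$ and some $m\neq 0$ (which we may take positive), such that $c$ separates $s_1$ from $s_2$ and $s_1,s_2$ are a bounded Hausdorff distance apart in $L$. Since the leaves of $\wfbs$ are Gromov hyperbolic (as in Lemma~\ref{GromovhypnonDC}, using absolute partial hyperbolicity together with the standing assumption on $\pi_1(M)$), Lemma~\ref{l.condition_for_coarse_contraction} applies and shows that $c$ is coarsely contracting for $h$: there is a compact interval $I\subset c$ with $h(J)\subset\mathring{J}$ for every compact $J\subset c$ whose interior contains $I$. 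In particular the fixed point set of $h|_c$ is a nonempty compact subinterval of $c$, and the same argument applies verbatim to each $\ft^{\,i}(c)$, which is again $h$-fixed (because $\ft$ commutes with $h$) and coarsely contracting for $h$ (being trapped between $\ft^{\,i}(s_1)$ and $\ft^{\,i}(s_2)$, which remain a bounded Hausdorff distance apart by the $K$-bound above).

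The crux — and the main obstacle — is to upgrade this to an \emph{$\ft$-fixed} center leaf. Fix $\ell>0$ and $\lambda_1<1<\lambda_2$ with $\|Df^{\ell}|_{E^s}\|<\lambda_1<\|Df^{\ell}|_{E^c}\|<\lambda_2<\|Df^{\ell}|_{E^u}\|$ everywhere. Inside the fixed leaf $L$ this gives a \emph{uniform} domination of the stable foliation $\cS_L$ over the center direction $E^c|_L$: stable length is contracted by a definite factor more than center length under $f^{\ell}$, with constants independent of the point and the leaf. This is precisely the input that lets one run the graph transform of \cite[Appendix H]{BFFP-prequel} with a preserved center cone field of definite contraction, so that the resulting $\ft$- and $\gamma$-invariant curve is not merely transverse to $\cS_L$ but tangent to $E^c$, i.e.\ is an actual center leaf of $L$; and since it can be produced $\ft$-equivariantly (reducing, as in the two cases of the proof of Proposition~\ref{p.alternnonDC2}, to $A^s$ being a line, the $\mathbb{Z}$-union-of-intervals case being absorbed by the bounded bunching of $s_1,s_2$ already obtained), it is fixed by $\ft$. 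This contradicts the assumption that $\ft$ fixes no center leaf of $L$, proving Proposition~\ref{prop.absolute}. The delicate point to verify carefully is exactly this last implication: that absolute partial hyperbolicity makes the within-leaf domination strong enough for the graph-transform limit to be genuinely a center leaf (equivalently, that the $h$-fixed coarsely contracting center leaf $c$, together with the bounded band between $s_1$ and $s_2$, forces an $\ft$-invariant, and not merely $h$-invariant, center leaf); the rest of the argument is, mutatis mutandis, that of Section~\ref{sec-centerdynamics}.
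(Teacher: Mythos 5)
Your setup (reductions, then the contradiction hypothesis that $\ft$ acts freely on $\cL^c_L$, then Proposition~\ref{periodiccenter} and Proposition~\ref{p.alternnonDC2} producing a center leaf $c$ and stable leaves $s_1,s_2$ fixed by $h=\gamma^n\circ\ft^m$, with $c$ separating $s_1$ from $s_2$ and $s_1,s_2$ a bounded Hausdorff distance apart) coincides with the paper. But the step you yourself flag as ``the delicate point to verify'' is the entire content of the proposition, and it is asserted rather than proved. Concretely: (i) even if a graph-transform limit curve could be shown tangent to $E^c$, a complete curve tangent to $E^c$ need not be a center leaf in the sense of Definition~\ref{d.center_leaf} (see Remark~\ref{r.center_leaf}); producing an honest $\ft$-fixed center leaf from such a curve is precisely what breaks down without dynamical coherence, which is why Proposition~4.4 of the prequel does not carry over. (ii) The ``uniform within-leaf domination'' you invoke to justify the cone-field/graph-transform upgrade is already available for pointwise partial hyperbolicity (by compactness the pointwise ratio gap between $E^s$ and $E^c$ is uniform), so your sketch never isolates where absoluteness is actually used -- a warning sign, since absoluteness is the whole point of the statement. (iii) Your appeal to Lemma~\ref{l.condition_for_coarse_contraction} requires Gromov hyperbolicity of $L$, which in the paper is obtained via Lemma~\ref{GromovhypnonDC} under $f$-minimality; that is not a hypothesis of this proposition, so this detour is also unjustified as written (and, as it turns out, unnecessary).

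For contrast, the paper's proof never constructs the fixed center leaf and uses neither coarse contraction, Gromov hyperbolicity, nor a second graph transform: it derives a contradiction directly from the configuration given by Proposition~\ref{p.alternnonDC2} by a quantitative length-versus-distance estimate in the thin $h$-invariant band $B$ bounded by $s_1$ and $s_2$. Absoluteness is used to get, after passing to an iterate, uniform constants $\lambda_1<\lambda_2$ with $\|Dh|_{E^s}\|<\lambda_1<\lambda_2<\|Dh|_{E^c}\|$ at \emph{every} point (pointwise partial hyperbolicity gives no such two-sided uniform gap). Then one picks $z,w\in s_1$ at intrinsic distance $D$; the stable arc between them has $h^n$-image of length at most $KD\lambda_1^n$; since $c$ crosses the thin band, a center arc of $c$ joins the unit balls around $h^n(z)$ and $h^n(w)$ and has length at most $K^2D\lambda_1^n+2K$; pulling back by $h^{-n}$ shrinks it by $\lambda_2^{-n}$ while moving its endpoints at most $R^n$ (where $\|Dh^{-1}\|\le R$), giving $D\le K^2(\lambda_1/\lambda_2)^nD+2R^n+2K/\lambda_2^n$, which contradicts the choice of $n$ and then $D$. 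The essential role of absoluteness here is to compare the stable contraction rate along one orbit with the center rate along a \emph{different}, nearby orbit; your cone-field route supplies no substitute for that comparison, so the gap is genuine.
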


 The proof is essentially the same as the one in \cite[Section 5.4]{HaPS} but we repeat it since the contexts are different. 
\begin{proof}
The proof is by contradiction. Assume that $\ft$ does not fix any center leaf in $L$.

Proposition \ref{periodiccenter} gives that there exists a center leaf periodic by $f$. Call $c$ a lift of this center leaf. Using Proposition \ref{p.alternnonDC2} we get two stable leaves $s_1$ and $s_2$ in $L$ fixed by $h:= \gamma^n \circ \ft^m$, a bounded distance apart in $L$ and such that $c$ separates $s_1$ from $s_2$ in $L$. We denote by $B$ the band bounded by $s_1$ and $s_2$.

Since $\gamma$ is an isometry, the diffeomorphism $h$ is absolutely partially hyperbolic, and we can (modulo taking iterates) assume that there are constants $\lambda_1 < \lambda_2$ such that
$$ 
\|Dh|_{E^s}\| < \lambda_1 <  \lambda_2 < \|Dh|_{E^c}\|.
$$ 
Moreover, there is a constant $R>1$ such that $\|Dh^{-1}\| \leq R$ in all of $L$. 

For simplicity, we will assume that the distance between $s_1$ and $s_2$ is smaller than $1/2$ so that the band $B$ is contained in the neighborhood $\hat B=\bigcup_{x\in S_1} B_1(x)$ of radius $1$ around $s_1$.

For every positive $d$ there is a constant $r(d) > 0$ such that for any set of diameter less than $d$, the length of a stable
leaf contained in this set is at most $r(d)$. This is because in a foliated box only one segment of a stable segment can intersect it.
This implies that stable leaves (and center leaves as well) are quasi-isometrically
embedded in their neighborhoods of a fixed diameter. So there is 
$K > 0$ so that for any stable segment $J$ contained in $\hat B$ with
endpoints $z$ and $w$ we have
$$
\mathrm{length}(J) \leq K d_{\hat B}(z,w). 
$$

Now, choose $n>0$ such that $K^2 \frac{\lambda_1^n}{\lambda_2^n} \ll \frac{1}{2}$ and once $n$ is fixed, choose $D>0$ so that $\frac{D}{2} \gg 2 R^n + \frac{2K}{\lambda_2^n}$. 

We now pick points $z,w \in s_1$ such that $d_{\hat B}(z,w) = D$ and take $J^s$ an arc of $s_1$ joining these points. From the choice of $K$ and $D$ we know that $\mathrm{length}(J^s) \leq KD$. So, it follows that $\mathrm{length}(h^n(J^s)) \leq KD \lambda_1^n$. 

Choose a center curve $J^c$ joining $B_1(h^n(z))$ with $B_1(h^n(w))$ (this can be done because $c$ separates $s_1$ from $s_2$) and call $z_n$ and $w_n$ the endpoints in each ball. It follows that $\mathrm{length}(J^c) \leq K^2 D\lambda_1^n +2K$. 

Since the distance between the endpoints of $J^c$ and $h^n(z)$, $h^n(w)$ is less than $1$, by iterating backwards by $h^{-n}$ we get that $d(h^{-n}(z_n),z)$ and $d(h^{-n}(w_n),w)$ are less than $R^n$. 

This implies that

$$ D \leq d_{\hat B}(z,w) \leq K^2 \frac{\lambda_1^n}{\lambda_2^n} D + 2 R^n +\frac{2K}{\lambda_2^n}, $$
a contradiction with the choices of $n$ and $D$. This completes the proof of the proposition.
\end{proof}

Using this proposition, we can prove Theorem \ref{thm_absolute_case} in the same way as \cite[Theorem 5.1]{BFFP-prequel}.

\begin{proof}[Proof of Theorem \ref{thm_absolute_case}]
 Let $\ft$ be a good lift of $f$. Since $\fbs$ and $\fbu$ are $f$-minimal, by Corollary \ref{c.minimalcase_branching_case}, $\ft$ either fixes each leaf of $\wfbs$ and $\wfbu$, or act as a translation on both leaf space (in which case the foliations are $\R$-covered and uniform and we are in case \ref{item.absolute_translation} of the theorem), or $\ft$ translates one and fixes the other.
 
 If $\ft$ fixes the leaves of both $\wfbs$ and $\wfbu$ then Proposition \ref{p.nondceverycfixed} and Corollary \ref{coro.fixcentercoh} imply that we are in case \ref{item.absolute_discretized} of the theorem.
 
 So we have to show that we cannot be in the mixed case. Suppose that $\ft$ fixes every leaf of $\wfbs$. 
 
 Since $M$ is not $\mathbb{T}^3$, there are leaves of $\fbs$ with non-trivial fundamental group. 
Consider the lift $L$ in
$\wfbs$ of such a leaf, with
$L$ invariant by $\gamma$ in $\pi_1(M) \smallsetminus\{\id\}$.
We can apply Proposition \ref{prop.absolute} to conclude
that there is a center leaf $c$ in $L$ that is fixed by
$\ft$. So, in particular, $\ft$ needs to fix a center unstable leaf containing $c$ (note that there may be an interval of center unstable leaves intersecting $L$ in $c$, but the endpoints of such interval will then be fixed by $\ft$). Thus $\ft$ has to also fix every leaf of $\wfbu$ by Corollary \ref{c.minimalcase_branching_case}.
\end{proof}




\section{Regulating pseudo-Anosov flows and translations} \label{s.pA_and_translations_nonDC}
The rest of the paper is concerned with hyperbolic 3-manifolds. We will get positive results dealing with the non-dynamically coherent case. That is, we want to understand the dynamics of a homeomorphism acting by translation on a branching foliation. In order to be able to do that, we first need to build a regulating pseudo-Anosov flow transverse to the branching foliation. The existence of such a flow is a relatively immediate consequence of the construction of the regulating flow and the fact that the branching foliation is well-approximated by foliations.

\begin{proposition}\label{prop-transversepA} 
Let $M$ be a hyperbolic 3-manifold and $\fol$ a branching foliation well-approximated by foliations $\fole$ such that $\fol$ (and thus also $\fole$ for small $\epsilon$) are $\R$-covered and uniform. Then, there exists a transverse and regulating pseudo-Anosov flow $\Phi$ for $\fol$.
\end{proposition}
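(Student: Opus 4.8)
The plan is to reduce everything to the case of true (non-branching) foliations, where the existence of a transverse regulating pseudo-Anosov flow for an $\R$-covered uniform foliation of a hyperbolic $3$-manifold is known by the work of Thurston, Calegari, and Fenley (see \cite{Thurston2,Calegari00,Fen2002}, as already cited in the excerpt). First I would invoke the hypothesis that $\fol$ is well-approximated by foliations: for $\epsilon$ small enough we have a true foliation $\fole$ with $C^1$ leaves, and by the remark following Proposition~\ref{prop.leafspace} (and the discussion of $\R$-covered/uniform branching foliations just after it), $\fole$ is itself $\R$-covered and uniform, since these properties pass between a branching foliation and its approximating foliations for small $\epsilon$. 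Moreover $\fole$ has no compact leaves and is taut (this was established in the Tautness subsection). So $\fole$ is a taut, $\R$-covered, uniform foliation on a closed hyperbolic $3$-manifold.

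**The core input.** For such a foliation, the cited results produce a pseudo-Anosov flow $\Phi$ which is transverse to $\fole$ and regulating — meaning that in the universal cover, every orbit of $\widetilde\Phi$ intersects every leaf of $\wt\fole$ exactly once, and the flow is "uniform" in the sense that this intersection pattern is coarsely controlled. Concretely, one first uses the $\R$-covered uniform structure to build a flow transverse to $\fole$ whose orbits are regulating (Thurston's universal circle / Calegari's construction), and then Fenley's theorem upgrades this to a genuine (topological) pseudo-Anosov flow. The key quantitative feature one extracts is that a regulating flow for $\fole$ moves points along the leaf space at a definite rate: there is a proper function controlling how far one travels in the $\R$-direction of the leaf space after flowing for time $t$.

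**Transferring back to $\fol$.** It remains to check that $\Phi$ is also transverse to and regulating for the branching foliation $\fol$ itself, not merely for $\fole$. Transversality is immediate: the leaves of $\fol$ make angle less than $\epsilon$ with the leaves of $\fole$ (property (v) of Definition~\ref{def.wellapproximated}), and $\Phi$ is uniformly transverse to $\fole$, so for $\epsilon$ small enough $\Phi$ is transverse to every leaf of $\fol$ as well. For the regulating property, I would use Proposition~\ref{prop.leafspace}: the leaf space $\lcsb$ (resp.\ $\lb$ for $\fol$) is identified up to the monotone map $g_{\epsilon}$ with the leaf space of $\fole$, and in the $\R$-covered case this map becomes a homeomorphism of $\R$'s after the harmless modification mentioned there. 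Thus an orbit of $\widetilde\Phi$ that crosses every leaf of $\wt\fole$ exactly once also crosses every leaf of $\wt\fol$ — here one must be slightly careful at points where several leaves of $\fol$ merge, i.e.\ where $g_\epsilon$ collapses an interval of $\wt\fole$-leaves to a single $\wt\fol$-leaf; but since the approximating map $h_\epsilon$ is $C^0$-close to the identity and the flow is uniform, the orbit segment between consecutive $\wt\fole$-leaves collapsing to the same $\wt\fol$-leaf has bounded length, so the orbit still meets each $\wt\fol$-leaf in a single point (or a bounded segment that projects to a point in $\lb$), giving the regulating property. The main obstacle is precisely this last compatibility check between the flow's intersection behaviour and the leaf-collapsing inherent in branching foliations; everything else is a direct citation of the true-foliation theory combined with the well-approximation package already set up in Section~\ref{sec-branching}.
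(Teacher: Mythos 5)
Your overall strategy coincides with the paper's: pass to the approximating foliation $\fole$, invoke the Thurston--Calegari--Fenley theory to get a transverse regulating pseudo-Anosov flow $\pe$ for $\fole$, and transfer both properties back to $\fol$. The genuine gap is in the step you call immediate, namely transversality. You argue that since leaves of $\fol$ and $\fole$ make angle less than $\eps$ and ``$\Phi$ is uniformly transverse to $\fole$'', transversality to $\fol$ follows ``for $\eps$ small enough''. This is circular: the flow you have is $\pe$ and it changes with $\eps$, so for a fixed $\eps$ the minimal transversality angle $\delta_\eps$ between $\pe$ and $\fole$ (even granting enough regularity for such an angle to make sense --- the flow produced by this theory is only a topological pseudo-Anosov flow) has no a priori relation to $\eps$; nothing rules out $\delta_\eps\le\eps$, in which case the $\eps$-closeness of $\fol$ to $\fole$ gives no transversality to $\fol$ whatsoever. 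The paper closes exactly this gap by going back into the construction of $\pe$: the flow is obtained by blowing down laminations transverse to $\fole$, and those laminations are transverse to \emph{every} foliation within a uniform angle of $\fole$; this robustness is what is quoted from \cite[Corollary 5.3.22]{Calegari00}. So transversality to $\fol$ is a feature of the construction, not a soft consequence of angle closeness.

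Your argument for the regulating property is also weaker than what is needed, though closer to repairable. Identifying the leaf spaces via $g_\eps$ and remarking that collapsed intervals have bounded length does not show that an orbit of $\widetilde{\pe}$ actually meets a given leaf $L$ of $\widetilde{\fol}$ as a subset of $\mt$: crossing a leaf $L_\eps$ of $\widetilde{\fole}$ with $h_{\eps}(L_\eps)=L$ only places the orbit within $\eps$ of $L$, not on it. The paper's argument is a sandwich/separation argument: using uniformity of $\fole$, one chooses leaves $L_1,L_2\in\widetilde{\fole}$ disjoint from $L$ and lying on opposite sides of it; every orbit of $\widetilde{\pe}$ crosses $L_1$ and $L_2$ because $\pe$ is regulating for $\fole$, and since $L$ is a properly embedded plane separating $L_1$ from $L_2$, the orbit must cross $L$. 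With that separation argument in place of your leaf-space bookkeeping, and with the construction-based justification of transversality, your outline becomes the paper's proof.
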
 

\begin{proof}
By \cite{Thurston2, Calegari00,Fen2002} (see \cite[Theorem D.3]{BFFP-prequel}) for any $\eps$, there exists a pseudo-Anosov flow $\pe$ transverse to and regulating for $\fole$.

Now, as $\eps$ get small, the angle between leaves of $\fole$ and leaves of $\fol$ becomes arbitrarily small.

Then, since both $\fol$ and $\fole$ are $\mathbb{R}$-covered and uniform, for any leaf $L\in \widetilde{\fol}$, there exists two leaves $L_1,L_2 \in \widetilde{\fole}$ such that $L$ is in between $L_1$ and $L_2$ (note that by construction, each leaf of $\fol$ is the image of a leaf of $\fole$ by a continuous map homotopic to identity of $M$, so, given a leaf $L \in \widetilde{\fol}$ there is a leaf $L' \in \widetilde{\fole}$ at a bounded distance $< a_1$ from $L$. Now using the fact that $\fol_\eps$ is uniform, choose $L_1, L_2$ in 
$\widetilde{\fole}$ on different components of $\mt - L'$,
and so that for any $p \in L', \ q \in L_1, \ z \in L_2$, then
$d(p,q) > a_1, \ d(p,z) > a_1$. The leaves $L_1, L_2$ satisfy the required
property).
As $\pe$ is regulating for $\fole$, every orbit of $\widetilde{\pe}$ intersects both $L_1$ and $L_2$, thus it also intersects $L$. So every orbit of $\widetilde{\pe}$ intersect every leaf of $\widetilde{\fol}$, that is, $\pe$ is regulating for $\fol$.

The fact that the flow $\pe$ can be chosen transverse to $\widetilde{\fol}$ follows from the construction of $\pe$ (see \cite{Thurston2, Calegari00,Fen2002}).
The flow $\pe$ is build by blowing down certain laminations transverse to $\fole$. Moreover these laminations are transverse to any foliation that are close enough to $\fole$ for a uniform angle. Since the angle between $\fol$ and $\fole$ gets arbitrarily small, $\pe$ will also be transverse.
For a continuous family of $\mathbb{R}$-covered foliations, this
property is explicitely stated in \cite[Corollary 5.3.22]{Calegari00}.
\end{proof}

Using the regulating pseudo-Anosov flow given by Proposition \ref{prop-transversepA}, all of \cite[Section 8]{BFFP-prequel}
works for a branching foliation without change. Thus we obtain

\begin{proposition}\label{p.homeotranslation_nonDC} 
Let $M$ be a hyperbolic $3$-manifold. Let $f\colon M \to M$ be a homeomorphism homotopic to the identity that preserves a (branching) foliation $\fol$. Suppose that $\fol$ is uniform and $\R$-covered, and that a good lift $\ft$ of $f$ acts as a translation on the leaf space of $\fol$.
Let $\Phi$ be a transverse regulating pseudo-Anosov flow to $\fol$.

Then, for every $\gamma \in \pi_1(M)$ associated with a periodic orbit of $\Phi$, there is a compact $\hat f_{\gamma}$-invariant set $T_\gamma$ in $M_\gamma$ which intersects every leaf of $\hat \fol_{\gamma}$, where $M_{\gamma} = \rquotient{\mt}{\langle\gamma\rangle}$ and $\hat f_{\gamma}\colon M_{\gamma} \to M_{\gamma}$ is the corresponding lift of $f$.

 Moreover, if an iterate $\hat f_{\gamma}^k$ of $\hat f_{\gamma}$ fixes a leaf $L$ of $\hat \fol_{\gamma}$, 
and $\gamma$ fixes all the prongs of this orbit, 
then the fixed set of $\hat f_{\gamma}^k$ in $L$ is contained in $T_\gamma \cap L$ and has negative Lefschetz index. 
\end{proposition}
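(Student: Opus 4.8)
The plan is to reduce everything to the dynamically coherent statement, \cite[Section~8]{BFFP-prequel}, by passing to the approximating foliations. First I would invoke Proposition~\ref{prop-transversepA} to obtain a pseudo-Anosov flow $\Phi$ that is transverse to and regulating for the branching foliation $\fol$; as noted in that proposition, for $\eps$ small it is simultaneously transverse and regulating for $\fole$. Since $\ft$ acts as a translation on the leaf space of $\fol$, and by Proposition~\ref{prop.leafspace} the leaf spaces of $\fol$ and $\fole$ are identified (for $\eps$ small) in a way compatible with the $\ft$-action, the good lift $\ft$ also acts as a translation on the leaf space of $\fole$. Thus the hypotheses of \cite[Section~8]{BFFP-prequel} hold verbatim for the \emph{true} foliation $\fole$, and one gets, for each $\gamma\in\pi_1(M)$ corresponding to a periodic orbit of $\Phi$, a compact $\hat f_\gamma$-invariant set $T_\gamma^\eps$ in $M_\gamma$ meeting every leaf of $\widehat{\fole}_\gamma$, with the Lefschetz-index statement for leaves fixed by an iterate.

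The next step is to transport $T_\gamma^\eps$ from $\fole$ to $\fol$ using the map $h_\eps$ (equivariantly lifted, $\widetilde h_\eps$, which descends to $M_\gamma$ since $h_\eps$ is homotopic to the identity and hence commutes with the relevant deck group up to the translation, and in particular commutes with $\gamma$). I would set $T_\gamma := \widehat{h_\eps}(T_\gamma^\eps)$. Because $h_\eps$ is $C^0$-close to the identity and takes leaves of $\fole$ onto leaves of $\fol$ (Definition~\ref{def.wellapproximated}\ref{item.leaves_to_leaves}--\ref{item.map_surjective}), the image $T_\gamma$ is compact, $\hat f_\gamma$-invariant (because $h_\eps$ semi-conjugates the lifted dynamics, as $f$ preserves both $\fbs$ and its approximants and $\ft$ is the common good lift), and meets every leaf of $\widehat\fol_\gamma$ by surjectivity of $h_\eps$ onto leaves. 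For the Lefschetz-index claim: if $\hat f_\gamma^k$ fixes a leaf $L$ of $\widehat\fol_\gamma$ and $\gamma$ fixes all prongs, then $L$ is the $h_\eps$-image of a leaf $L_\eps$ of $\widehat{\fole}_\gamma$ fixed by $\hat f_\gamma^k$; on $L_\eps$ the fixed set lies in $T_\gamma^\eps\cap L_\eps$ and has negative index by the dynamically coherent result, and one checks that $h_\eps$ restricted to $L_\eps$ is a homeomorphism onto $L$ in a neighborhood of the fixed set (it is a local diffeomorphism on leaves, and the fixed set is isolated/compact), so the index is preserved and the fixed set of $\hat f_\gamma^k$ in $L$ sits inside $T_\gamma\cap L$.

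The main obstacle I anticipate is the bookkeeping around $h_\eps$ on $M_\gamma$ and the leafwise identifications: one must be careful that $h_\eps$ does indeed descend to $M_\gamma$ and intertwines $\hat f_\gamma$ with its $\fole$-counterpart, and that the branching of $\fol$ (distinct leaves of $\fole$ mapping to a single self-merging leaf of $\fol$, cf.~the remark after Definition~\ref{def.wellapproximated}) does not corrupt the Lefschetz-index computation on the fixed leaf $L$. The key point to verify here is that near the fixed point set — which is compact and, by the index statement, behaves like an isolated invariant set — the map $h_\eps$ is a genuine homeomorphism onto its image in $L$, so that index, the property of lying in $T_\gamma$, and the equality of fixed sets all pull through; the branching only occurs away from this set, or can be absorbed since we only claim the fixed set is \emph{contained in} $T_\gamma\cap L$. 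Everything else is a routine transfer of the conclusions of \cite[Section~8]{BFFP-prequel}, which, as the excerpt states, ``works for a branching foliation without change'' once the transverse regulating flow is in hand.
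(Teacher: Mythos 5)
There is a genuine gap, and it occurs at the very first step of your reduction: you apply \cite[Section~8]{BFFP-prequel} to the approximating foliation $\fole$, but the hypotheses of that result fail for the pair $(f,\fole)$ because $\fole$ is \emph{not} $f$-invariant. Only the branching foliation $\fol$ is preserved by $f$; the Burago--Ivanov approximating foliations are merely angle- and $C^0$-close to it, and $f(\fole)$ is in general a different foliation (this is exactly the difficulty the paper has to work around elsewhere, e.g.\ in the proof of Lemma~\ref{GromovhypnonDC}: ``Since $\fes$ is not $f$-invariant, we have to adjust the proof\dots''). Relatedly, $\ft$ does not act on the leaf space $\lcse$ at all, so the phrase ``the leaf spaces of $\fol$ and $\fole$ are identified in a way compatible with the $\ft$-action'' has no content: Proposition~\ref{prop.leafspace} gives a monotone quotient map $\lcse\to\lcsb$, not an $\ft$-equivariant identification. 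The same problem sinks the transport step: $h_\eps$ is not dynamically equivariant (there is no semi-conjugacy intertwining $\hat f_\gamma$ with any ``$\fole$-counterpart''), so even if one had a compact set $T_\gamma^\eps$, its image $\widehat{h}_\eps(T_\gamma^\eps)$ has no reason to be $\hat f_\gamma$-invariant; a leaf $L$ of $\widehat{\fol}_\gamma$ fixed by $\hat f_\gamma^k$ need not be the image of any leaf of the approximating foliation fixed by $\hat f_\gamma^k$ (the whole interval of approximating leaves over $L$ can be moved off itself, since $\hat f_\gamma$ does not even preserve that foliation); and the Lefschetz index of $\hat f_\gamma^k|_L$ cannot be read off from a different map on a nearby leaf $L_\eps$.

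The paper's route is different: the approximating foliations are used only once, in Proposition~\ref{prop-transversepA}, to produce a pseudo-Anosov flow transverse to and regulating for the branching foliation $\fol$ itself. After that, the arguments of \cite[Section~8]{BFFP-prequel} are re-run \emph{directly on the branching foliation}: they use only that leaves of $\widetilde{\fol}$ are properly embedded planes, that the leaf space is $\R$ and $\ft$ translates it, that $\ft$ moves points a uniformly bounded distance, and the transverse regulating flow (which gives the sets $T_\gamma$ and the control of $\hat f_\gamma$ on a fixed leaf and its ideal circle, hence the index $1-p$). All of these inputs hold verbatim for a branching foliation, which is what the sentence ``all of \cite[Section~8]{BFFP-prequel} works for a branching foliation without change'' asserts. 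So the correct task is to verify that the prequel's construction never uses holonomy or genuine local product structure of a true foliation, rather than to push the dynamically coherent statement through $h_\eps$, which cannot work as proposed.
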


Almost without any change, we also obtain the corresponding version of  \cite[Proposition 9.1]{BFFP-prequel}

\begin{proposition}\label{fixedleaf_nonDC}
Let $f$ be partially hyperbolic diffeomorphism in a hyperbolic $3$-manifold
which preserves a branching foliation $\fbs$ tangent to $E^{cs}$.
Assume that a good lift $\ft$ of $f$ acts as a translation on the foliation $\fbs$ and let $\Phi^{cs}$ be a transverse regulating pseudo-Anosov flow. Then, for every $\gamma \in \pi_1(M)$ associated to the inverse periodic orbit of $\Phi^{cs}$ there are $n>0, m>0$ such that $h= \gamma^n \circ \ft^m$ fixes a leaf $L$ of $\fbs$. 
\end{proposition}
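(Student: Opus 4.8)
The strategy is to mimic the proof of \cite[Proposition 9.1]{BFFP-prequel} but account for the branching. Since $\fbs$ is $\R$-covered and uniform, Proposition~\ref{prop-transversepA} provides a transverse regulating pseudo-Anosov flow $\Phi^{cs}$. Fix $\gamma \in \pi_1(M)$ corresponding to the \emph{inverse} of a periodic orbit of $\Phi^{cs}$ (the ``inverse'' is chosen so that the translation direction of $\ft$ on $\lcsb$ and the direction in which $\gamma$ moves leaves are compatible; this is exactly the set-up in \cite[\S 9]{BFFP-prequel}). The first step is to pass to the intermediate cover $M_\gamma = \rquotient{\mt}{\langle \gamma \rangle}$ and consider the induced flow $\hat\Phi^{cs}_\gamma$ and the induced lift $\hat f_\gamma$ of $f$. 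By Proposition~\ref{p.homeotranslation_nonDC} applied to $\fol = \fbs$, there is a compact $\hat f_\gamma$-invariant set $T_\gamma \subset M_\gamma$ meeting every leaf of $\hat\fbs_\gamma$; this compactness is what forces some leaf to come back to itself.

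The second step is the leaf-recurrence argument. Because $\ft$ acts as a translation on $\lcsb \cong \R$ and $\gamma$ also acts (freely) on $\lcsb$ moving leaves monotonically, the composition $\gamma^n \circ \ft^m$ for suitable $n,m>0$ can be made to act on $\lcsb$ with a fixed point: one chooses $m$ so that $\ft^m$ moves leaves ``far enough'' in its translation direction and then $n$ so that $\gamma^n$ moves them back, using that $\gamma$ and $\ft$ commute and that the two actions on $\R$ have opposite-enough behaviour near the orbit of $\Phi^{cs}$. Concretely, in $M_\gamma$ the leaf space of $\hat\fbs_\gamma$ is still $\R$; the set $T_\gamma$ being compact means it meets only a bounded interval of leaves, and $\hat f_\gamma$ permutes the (finitely many, up to the $\ZZ$ that has been quotiented) leaves through $T_\gamma$ — actually one argues that $\hat f_\gamma$ has a ``fixed'' leaf in the sense of the leaf space of $\hat \fbs_\gamma$ because it preserves $T_\gamma$ and acts monotonically on the interval of leaves meeting $T_\gamma$, hence has a fixed point there by the intermediate value theorem. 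Pulling this fixed leaf back to $\mt$, one gets a leaf $L$ of $\wfbs$ with $\gamma^{n} \ft^{m}(L) = L$, i.e. $h := \gamma^n \circ \ft^m$ fixes $L$. (One must be slightly careful: a priori $\hat f_\gamma$ might only fix the class of $L$ modulo $\gamma$, which is precisely why the conclusion involves $\gamma^n$ rather than just $\ft^m$.)

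The main obstacle, and the place where branching genuinely intervenes, is the passage between ``$\ft$ translates leaves'' and ``$\hat f_\gamma$ permutes a bounded set of leaves of $\hat\fbs_\gamma$.'' In the true-foliation case of \cite[\S 9]{BFFP-prequel} one works directly with the leaf space $\R$; here one must use Proposition~\ref{prop.leafspace} to know that $\lcsb$ really is $\R$ (equivalently that the approximating $\fes$ is $\R$-covered and uniform, which is part of the hypothesis), and one must use the monotonicity of the action on $\lcsb$ rather than any holonomy statement, since holonomy is not well defined for a branching foliation. Once $\lcsb \cong \R$ is in hand, the rest of the argument is formally identical: the compactness of $T_\gamma$ from Proposition~\ref{p.homeotranslation_nonDC}, combined with the fact that $h = \gamma^n \circ \ft^m$ acts on $\lcsb$ as an orientation-preserving homeomorphism of $\R$ preserving the compact (hence bounded) set of leaves through $T_\gamma$, forces a fixed point of $h$ in $\lcsb$, i.e. a leaf $L \in \fbs$ with $h(L) = L$. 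Thus the proof goes through with the (by now routine) replacement of ``leaf space of a foliation'' by ``center stable leaf space of a branching foliation,'' exactly as advertised before the statement.
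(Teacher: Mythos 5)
Your overall framing (pass to $M_\gamma$, invoke Proposition~\ref{p.homeotranslation_nonDC}, note that the conclusion must involve $\gamma^n$ because a leaf fixed downstairs is only fixed modulo $\gamma$ upstairs) matches the intended route, but the central step --- actually producing the fixed leaf --- is not established, and the mechanism you propose cannot work as stated. First, two factual slips: in $M_\gamma$ the leaf space of $\hat\fol_\gamma$ is the quotient of $\lcsb\cong\R$ by the free action of $\gamma$, hence a circle, not $\R$; and $T_\gamma$ intersects \emph{every} leaf of $\hat\fol_\gamma$ (that is precisely what Proposition~\ref{p.homeotranslation_nonDC} says), so compactness of $T_\gamma$ does not single out a ``bounded interval of leaves.'' Consequently your intermediate value theorem step has nothing to bite on: an orientation-preserving homeomorphism of a circle which preserves a compact set meeting every leaf can perfectly well have irrational rotation number and no periodic leaf, so ``$\hat f_\gamma$ preserves $T_\gamma$ and acts monotonically, hence has a fixed point'' is not a valid inference. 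The same problem reappears upstairs in the soft version of your argument: $\gamma$ and $\ft$ are commuting, fixed-point-free, orientation-preserving homeomorphisms of $\lcsb\cong\R$ translating in opposite directions, and such a pair can generate a group in which \emph{every} element $\gamma^n\circ\ft^m$ with $n,m>0$ is again fixed-point free (think of a group of translations by $mc-nT$ with $T/c$ irrational). So ``choose $m$ to go far enough and $n$ to come back'' cannot, by itself, force a sign change of the displacement on the leaf space; some additional input is indispensable.

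That additional input is exactly what the proof of \cite[Proposition 9.1]{BFFP-prequel} supplies and what your sketch omits: using the regulating pseudo-Anosov dynamics near the $\gamma$-invariant orbit (and the choice of $\gamma$ as the \emph{inverse} periodic orbit, which fixes the signs of $n$ and $m$), one produces an explicit interval in the leaf space that is mapped into itself by $h^{-1}$, whence a fixed leaf. The paper's own proof consists of observing that this argument goes through verbatim for the branching foliation, with the single caveat that one can no longer assert that $h$ acts as an expansion on the leaf space at the fixed leaf, since collapsing of leaves may destroy that; the nested-interval conclusion is all that survives and all that is needed. To repair your write-up you would need to reproduce (or carefully quote) that interval construction rather than appeal to compactness of $T_\gamma$ plus monotonicity.
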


\begin{proof}
The only difference is that we cannot say that the action of $h$ in the leaf space is expanding since collapsing of leaves may change the behavior. However, the same proof gives the existence of an interval in the leaf space which is mapped inside itself by $h^{-1}$ giving a fixed leaf as desired. 
\end{proof}

\begin{remark}
Note that in the non dynamically coherent situation, the proof of \cite[Theorem B]{BFFP-prequel} does not give a contradiction: it could happen  (and indeed happens in a situation
with similar properties, see e.g.,~\cite{BGHP}) that having a fixed point in a leaf of the foliation, does not force the dynamics on the leaf space to be repelling around the leaf in terms of the action on the
leaf space. This issue has previously appeared, in particular in Proposition \ref{l.analogL53}. 
\end{remark}

Notice that if one assumes the existence of a periodic center leaf, then we can easily prove a version of \cite[Theorem B]{BFFP-prequel} in the non dynamically coherent setting.

\begin{proposition}\label{prop-periodiccenterleaf}
 Let $f\colon M \to M$ be a partially hyperbolic diffeomorphism on a hyperbolic $3$-manifold. Suppose that there exists a closed center leaf $c$ that is periodic under $f$. Then $f$ is a discretized Anosov flow.
\end{proposition}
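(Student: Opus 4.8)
The plan is to reduce to the dynamically coherent setting, where Theorem~\ref{theorem:prequelB} applies. First I would take a finite cover $\hat M$ of $M$ and an iterate $f^k$ so that all three bundles become orientable and $f^k$ lifts to a diffeomorphism $g$ of $\hat M$ homotopic to the identity preserving the orientations; by Theorem~\ref{teo-BI} we then have $g$-invariant branching foliations $\fbs$ and $\fbu$. Since $M$ is hyperbolic, $\pi_1(M)$ is not virtually solvable, so all the machinery of \S\ref{sec-branching_goodlift}--\ref{sec-doubleinvndc} is available. The closed center leaf $c$ periodic under $f$ lifts to a closed center leaf $\hat c$ periodic under $g$ in $\hat M$: indeed the center stable leaf through $\hat c$ has nontrivial fundamental group (it contains the loop $\hat c$), so $\hat c$ is a periodic center leaf lying in a center stable (and center unstable) leaf that is fixed by some nontrivial $\gamma \in \pi_1(\hat M)$.

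The heart of the argument is to produce a good lift $\widetilde g$ of an iterate of $g$ with doubly invariant behavior. Fix a good lift $\widetilde g$ of $g$ (or of an iterate). By Proposition~\ref{p.dichotomy_branching} and Corollary~\ref{c.minimalcase_branching_case}, $\widetilde g$ either fixes every leaf of $\wfbs$ or acts as a translation on $\lcsb$, and similarly for $\wfbu$ — provided we know $f$-minimality, which in a hyperbolic manifold is automatic once a single leaf is fixed by Proposition~\ref{p.hypSeifminimal_nonDC}. The key point is to rule out the case where $\widetilde g$ acts as a nontrivial translation on $\lcsb$ or $\lcub$. Here I would use the existence of the periodic center leaf $c$ together with \ref{starDI_contractingcenters} (Proposition~\ref{p.alternnonDC}): if a good lift of an iterate fixes every center stable leaf but no center leaf, then every $f$-periodic center leaf in $M$ is coarsely contracting — but $c$ is a \emph{closed} center leaf, i.e.\ it is a circle, so it cannot be coarsely contracting (coarse contraction requires the leaf to be homeomorphic to the line). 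This forces any good lift that fixes every center stable leaf to fix some center leaf. Then one shows the translation case is also incompatible with the existence of a closed periodic center leaf: if $\widetilde g$ acted as a translation on $\lcsb$, then $\widetilde g^i(L) \cap L = \emptyset$ for $|i|$ large for every leaf $L$ of $\wfbs$, in particular for the leaf containing a lift of $c$; but the closed center leaf $c$ has a lift fixed (up to a deck transformation) by an iterate of $\widetilde g$, and that deck transformation must stabilize the center stable leaf, contradicting translation behavior. The surviving possibility is that some good lift of an iterate of $g$ fixes every leaf of $\wfbs$, and symmetrically every leaf of $\wfbu$.

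Once we have a good lift with doubly invariant behavior, Theorem~\ref{thm.DIimpliesDC} (equivalently Proposition~\ref{p.nondceverycfixed} plus Corollary~\ref{coro.fixcentercoh}) gives that $g$ is dynamically coherent; moreover $\widetilde g$ fixes every center leaf. Then Proposition~\ref{coro-DCwithoutlift} (the ``without lifts and iterates'' statement) applies with $\hat M$, $g$, $\widetilde g$ in place of its hypotheses, and concludes that $f$ itself is dynamically coherent. Finally, Theorem~\ref{theorem:prequelB} says some iterate of $f$ is a discretized Anosov flow; one then needs the extra observation — which is already contained in the proof of Proposition~\ref{prop-periodiccenterleaf}'s surrounding discussion and in \cite[\S6--G]{BFFP-prequel} — that the presence of a \emph{fixed} (not merely periodic) center leaf, obtained from the doubly invariant good lift, upgrades ``some iterate is a discretized Anosov flow'' to ``$f$ is a discretized Anosov flow'' directly, since the natural homotopy can be chosen to move points along center leaves. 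The main obstacle I anticipate is the second step: carefully excluding the translation and mixed cases using only the hypothesis of a single closed periodic center leaf, i.e.\ making precise why a circle center leaf is incompatible both with coarse contraction/expansion (\ref{starDI_contractingcenters}) and with a good lift translating a leaf space — this is exactly the subtlety flagged in the Remark preceding the proposition, that a fixed point in a leaf need not make the leaf-space action repelling, so one cannot argue by a naive index computation and must instead exploit the geometry of the closed center leaf.
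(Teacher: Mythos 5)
Your overall skeleton (reduce to a good lift with doubly invariant behavior, then invoke \S\ref{sec-doubleinvndc} and the dynamically coherent result) matches the paper, and your handling of the \emph{mixed} case is fine: if a good lift fixes every leaf of one branching foliation but translates the other leaf space, it fixes no center leaf, so Proposition~\ref{p.alternnonDC} makes every periodic center leaf coarsely contracting (or expanding), which a circle leaf cannot be. But the crucial step — ruling out the case where $\ft$ acts as a translation on $\lcsb$ (in particular the double translation case, where neither foliation is leafwise fixed and your coarse-contraction argument has nothing to bite on) — has a genuine gap. Your stated contradiction, namely that the deck transformation $\gamma$ with $\gamma\,\ft^{k}(\wt c)=\wt c$ ``must stabilize the center stable leaf, contradicting translation behavior,'' is not a contradiction: translation of the leaf space is perfectly compatible both with deck transformations carrying $L$ to $\ft^{k}(L)$ (this is exactly the situation produced in Proposition~\ref{fixedleaf_nonDC}, where $\gamma^{n}\circ\ft^{m}$ fixes a leaf) and with leaves having nontrivial stabilizer (cylindrical leaves). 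Worse, no argument of this purely leaf-space/combinatorial kind can work: for the time-one map of a geodesic flow on a Seifert manifold, composing the leafwise-fixing good lift with the central fiber element gives a good lift acting as a double translation, while closed periodic center leaves (periodic orbits) abound. So hyperbolicity of $M$ must enter essentially at this point, and your sketch never uses it there.

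The paper's proof supplies exactly this missing ingredient. Since $c$ is \emph{closed}, there is a nontrivial $g_{0}\in\pi_1(M)$ with $g_{0}(\wt c)=\wt c$; since $c$ is periodic and $\ft$ translates $\lcsb$, there is $\gamma$ with $\gamma(\wt c)=\ft^{k}(\wt c)$, and $g_{0}$ is not a power of $\gamma$ because $g_{0}$ preserves the center stable leaf through $\wt c$ while $\gamma^{k}$ does not. Then $\gamma^{-1}g_{0}\gamma$ also fixes $\wt c$, so $g_{0}$ and $\gamma^{-1}g_{0}\gamma$ have coarsely the same invariant curve, hence the same geodesic axis in $\HH^{3}$; in a closed hyperbolic manifold two elements sharing an axis (or an ideal point) must lie in a common cyclic group, contradicting that $g_{0}$ and $\gamma$ do not. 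You would need to add this (or an equivalent use of hyperbolicity) to close the translation case; the remaining issues in your write-up — the finite cover/iterate bookkeeping and the upgrade from ``some iterate is a discretized Anosov flow'' to ``$f$ is'' via Proposition~\ref{coro-DCwithoutlift} — are minor and essentially as in the paper.
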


\begin{proof}
 We start by replacing $f$ by a power, so that $f$ becomes homotopic to the identity.
 
 Let $\ft$ be a good lift of $f$. We will show that $\ft$ fixes every leaf of $\wfbs$ and $\wfbu$. Then, section \ref{sec-doubleinvndc} above shows that the original $f$ (before taking a power) is dynamically coherent, hence the result  follows from  \cite[Theorem B]{BFFP-prequel}. 
 
 Suppose that $\ft$ does not fix every leaf of, say, $\wfbs$. Then Corollary \ref{c.minimalcase_branching_case} implies that the leaf space of $\wfbs$ is $\R$ and that $\ft$ acts as a translation on it.
 
 Let $\wt c$ be a lift of the periodic closed center leaf $c$. Since $c$ is periodic and $\ft$ acts as a translation, there exists $\gamma \in \pi_1(M)$, non-trivial such that $\gamma (\wt c) = \ft^k (\wt c)$ for some $k$. Now $c$ is also closed, so there exists $g \in \pi_1(M) - id$  such that $g(\wt c) = \wt c$.  We have that $g$ is distinct from any power of $\gamma$, since if $L\in \wfbs$ is such that $\wt c \in L$ we have that $g(L)=L \neq \gamma^k(L)$ for every $k \neq 0$. 
 
 On the other hand, $g \circ \gamma(\wt c) = g \circ \ft^k(\wt c) = \ft^k \circ g(\wt c)= \gamma(\wt c)$ which implies that $\gamma^{-1} \circ g \circ \gamma$ and $g$ fix $\wt c$. This is impossible since $M$ is hyperbolic:
if they both fix $\wt c$ then they have they have the same 
axis.
But the geodesic axes of the hyperbolic transformations $g$
and $\gamma^{-1} g \gamma$ cannot share an ideal point since
$g, \gamma$ are not contained in a cyclic group.
\end{proof}

\begin{remark}
The arguments here show that the dynamics of the transverse pseudo-Anosov flow coarsely affects the dynamics of $f$. In particular, if $\ft$ is a translation with respect to a certain $\R$-covered branching foliation, there must be a lower bound on the topological entropy of $f$ depending only on the $\R$-covered branching foliation and the amount of translation of $\ft$. It is possible that in certain hyperbolic 3-manifolds one could control the possible geometries of $\R$-covered foliations, in which case one could find a uniform lower bound on the entropy of partially hyperbolic diffeomorphisms that act as translations on their branching foliations. If such a bound could be obtained, one could deduce that if the entropy of a partially hyperbolic diffeomorphism is sufficiently low, then the system must be a \emph{discretised Anosov flow}. \end{remark}

\section{Translations in hyperbolic $3$-manifolds} \label{sec-transl}
In this section we obtain further consequences of having a partially hyperbolic diffeomorphism act as a translation in a hyperbolic 3-manifold. 

We start by recalling the setting. Let $f\colon M \to M$ be a (not necessarily dynamically coherent) partially hyperbolic diffeomorphism on a hyperbolic $3$-manifold. Up to replacing $f$ by a power, we assume that it is homotopic to the identity. Up to taking a further iterate of $f$ and a lift to a finite cover of $M$, we can assume that $f$ admits branching foliations,
 and that the good lift
$\ft$ acts as a translation on the leaf space of $\wfbs$.

Let $\Phi^{cs}$ be a transverse regulating pseudo-Anosov flow to $\fbs$ given by Proposition \ref{prop-transversepA}.
This flow is fixed throughout the discussion.

Then Proposition \ref{fixedleaf_nonDC} shows that, for any periodic orbit of $\Phi^{cs}$, there exists a center stable leaf periodic by $f$.

\subsection{Periodic center rays}

We will now produce rays in periodic center leaves which are expanding. A \emph{ray} in $L$ is a proper embedding of $[0,\infty)$ into $L$. We say that a ray is a \emph{center ray} if it is contained in a center leaf. So a center ray $c_x$ is the closure in $L$ of a connected component of $c \smallsetminus \{x\}$ where $c$ is a center curve and $x \in c$.

Let $\gamma$ in $\pi_1(M)$ be associated with a
periodic orbit $\delta_0$ of the pseudo-Anosov flow $\Phi^{cs}$. Let $L$ be a leaf (given by Proposition \ref{fixedleaf_nonDC}) of $\wfbs$ fixed by $h:=\gamma^n \circ \ft^m$, with $m > 0$. 

A center ray $c_x$ is \emph{expanding} if $h(c_x) = c_x$ and
$x$ is the unique fixed point of $h$ in $c_x$ and every $y \in c_x \smallsetminus \{x\}$ verifies that $h^{-n}(y) \to x$ as $n\to +\infty$. It is \emph{contracting} if it is expanding for $h^{-1}$.

\begin{proposition}\label{coarseexpand}
Assume that a good lift $\ft$ of $f$ acts as a translation on the (branching) foliation $\wfbs$. Let $\Phi^{cs}$ be a regulating transverse pseudo-Anosov flow. 
 Let $\gamma$ in $\pi_1(M)$ associated with a
periodic orbit $\delta_0$ of $\Phi^{cs}$. Let $L$ be a leaf of $\wfbs$ fixed by $h=\gamma^n \circ \ft^m$,
where $m > 0$. 
Assume that $\gamma$ fixes all prongs of a lift of $\delta_0$ to $\mt$.
Then there are at least two center rays in $L$, fixed by $h$,
which are expanding. 
\end{proposition}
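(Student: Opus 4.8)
The strategy is to combine the Lefschetz-index information coming from the regulating pseudo-Anosov flow (Proposition~\ref{p.homeotranslation_nonDC}) with the local dynamics of $h$ inside the fixed leaf $L$, exactly as in the dynamically coherent argument of \cite[Section~9]{BFFP-prequel}, but being careful that center leaves may merge. First I would pass to $M_\gamma = \rquotient{\mt}{\langle \gamma\rangle}$, a manifold fibering over a leaf $\hat L$ of $\hat\fol^{cs}_\gamma$, with the lift $\hat f_\gamma$; since $\gamma$ fixes all prongs of the lift of $\delta_0$, Proposition~\ref{p.homeotranslation_nonDC} applies, giving a compact invariant set $T_\gamma$ meeting every leaf, and telling us that the fixed set of $\hat f_\gamma^k = \hat h$ in the fixed leaf $\hat L = \pi(L)$ is contained in $T_\gamma \cap \hat L$ and has \emph{negative} Lefschetz index. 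In particular $\hat h$ has at least one fixed point $x$ in $\hat L$, and the fixed point set is a nonempty compact subset of $\hat L$.

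\textbf{Local analysis at fixed points.} Next I would analyze the local picture of $h$ near a fixed point $x \in L$. The leaf $L$ is Gromov hyperbolic (it corresponds under $h^{cs}_\epsilon$ to a leaf of the taut approximating foliation $\fes$, which is $\pi_1$-injective and hyperbolic by Candel's theorem, as in Lemma~\ref{GromovhypnonDC}), so we may use the standard structure of a homeomorphism of a Gromov hyperbolic plane near an isolated fixed point, as in \cite[Appendix C/I]{BFFP-prequel}. Through $x$ there is a center leaf $c$ in $L$ (possibly several, forming a closed interval in $\cL^c_L$, but I would pick a specific one through $x$) and a stable leaf $s(x)$; the stable direction is uniformly contracted by $h$ (for $m>0$), so $s(x)$ is locally attracting. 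This forces the center direction to be, at least coarsely, repelling: the point $x$ cannot be a sink for $h$ in $L$, because then $h$ would have a fixed point of index $+1$ contributing positively, and one checks that if \emph{all} fixed points of $\hat h$ in $\hat L$ had nonnegative local index the total Lefschetz index would be nonnegative, contradicting Proposition~\ref{p.homeotranslation_nonDC}. So there is a fixed point $x$ at which the index is negative, and the negative index together with the attracting stable direction forces the two center rays $c_x^\pm$ emanating from $x$ (the closures of the two components of $c\smallsetminus\{x\}$) to be locally repelling, i.e. points near $x$ on $c_x^\pm$ move away from $x$ under $h$.

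\textbf{From local to global expansion.} Finally I would upgrade ``locally repelling'' to ``expanding'' in the sense of the definition preceding the proposition, namely that $x$ is the \emph{unique} fixed point of $h$ on each ray $c_x^\pm$ and $h^{-n}(y)\to x$ for every $y$ on the ray. Suppose a ray $c_x^+$ contained another fixed point; take the first one $x'$ past $x$. Then the compact center segment $[x,x']\subset c$ is $h$-invariant, and I would run a contraction/expansion argument on it: the stable leaves through $x$ and $x'$ are both $h$-fixed, a bounded Hausdorff distance apart (by Lemma~\ref{lema-boundedinfixcsleaf-nonDC} applied as in the proof of Proposition~\ref{p.graphtransf2}, after translating by an appropriate power of $\gamma$), and then Lemma~\ref{l.condition_for_coarse_contraction} would give coarse contraction of $c$ between them — but $h$ contracts the stable direction and the center ray was locally repelling at $x$, which is incompatible unless no such $x'$ exists, so also there is no merging accident producing a second fixed point. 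Hence $x$ is the unique fixed point on the ray and, again using the absence of other fixed points together with the local repelling behavior, $h^{-n}$ drives every point of the ray to $x$; the ray is expanding. The negative index being strictly less than $+1$ (indeed $\le -1$) and the local structure of a fixed point of a surface homeomorphism guarantee that there are \emph{at least two} such expanding rays issuing from $x$ (a fixed point with $2k$ ``repelling prongs'' has index $1-k \le -1$, so $k\ge 2$; each such prong contains an expanding center ray). The main obstacle I anticipate is precisely controlling the possibility that distinct center leaves through $x$ merge along the ray, thwarting the naive ``unique fixed point'' claim — this is the point where the branching-foliation case genuinely departs from \cite{BFFP-prequel}, and it is handled by the Lemma~\ref{l.nomergefix}-style bigon argument combined with the bounded-movement Lemma~\ref{l.boundedmovement} and the Gromov hyperbolicity of $L$.
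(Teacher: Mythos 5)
Your reduction to the Lefschetz index of $h$ on $L$ via Proposition~\ref{p.homeotranslation_nonDC} matches the paper, but the central step of your argument does not work. You localize the negative total index at a single fixed point $x$ and deduce that the two rays $c_x^\pm$ of one center leaf $c$ through $x$ are expanding, via the dictionary ``index $1-k$ corresponds to $2k$ repelling prongs at $x$, so $k\ge 2$''. That dictionary is exactly what branching destroys: because distinct center leaves fixed by $h$ may merge, index contributions cancel in ways invisible to any local model at a point. The remark immediately after the proposition (and Figure~\ref{figureCancelation}, realized by the examples of \cite{BGHP}) describes a configuration with total index $-1$, three fixed center leaves, and only two expanding rays lying in \emph{distinct} center leaves --- so there need be no point $x$ and no single leaf $c$ with both rays of $c$ expanding, and no fixed point with a $2k$-prong local structure. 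The statement your route would establish is strictly stronger than the proposition and fails in general. The paper's proof is instead a global counting argument: it first shows that fixed center leaves cannot bound bigons (Claim~\ref{claim_fixed_centers_match_on_interval}, proved by stable-length contraction), organizes all fixed points of $h$ into finitely many pairwise disjoint compact center segments with fixed endpoints (Claims~\ref{claim_fixed_points_in_finite_union} and~\ref{claim_properties_for_J}), attributes index $+1$, $0$ or $-1$ to each fixed center leaf according to whether its two outermost rays contract or expand (using only that $h$ contracts stables), and then checks, keeping track of leaves that share rays, that if $L$ carried at most one expanding ray the total index would be $\ge 0$, contradicting $1-p\le -1$.

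A second, related problem is that the auxiliary lemmas you invoke are out of scope here. Lemmas~\ref{l.boundedmovement} and~\ref{l.nomergefix} assume a good lift fixing every center leaf, and Lemmas~\ref{lema-boundedinfixcsleaf-nonDC} and~\ref{l.condition_for_coarse_contraction} assume $\ft$ fixes the center stable leaves; all of these hypotheses are incompatible with the standing assumption of this proposition that $\ft$ translates $\wfbs$ (only $h=\gamma^n\circ\ft^m$, which is not a good lift, fixes $L$). So the ``bigon plus bounded movement'' repair you propose for the merging issue, and the use of Lemma~\ref{l.condition_for_coarse_contraction} to exclude a second fixed point on a ray, are not available as stated; the paper's substitutes are precisely Claim~\ref{claim_fixed_centers_match_on_interval} and the index bookkeeping described above.
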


\begin{remark}
We should stress that we cannot guarantee to get 
a single center leaf with both rays expanding.
For example it is very easy to construct an example
such that $h$ has Lefschetz index $-1$ in $L$, it has
exactly 
$3$ fixed center leaves in $L$, and only two fixed 
expanding rays, which are contained in distinct 
center leaves (see Figure \ref{figureCancelation}).
This situation occurs in the examples constructed
in \cite{BGHP} in the unit tangent bundle of a surface.
\end{remark}

We will use Proposition \ref{coarseexpand} and its proof to eliminate the mixed behavior in hyperbolic 3-manifolds. It should be noted that this proposition also gives some relevant information about the structure of the enigmatic double translations examples which are not ruled out by our study.

The key point is to understand how each fixed center leaf contributes to the total Lefschetz index of the map in a center-stable leaf which we can control. Since the dynamics preserves foliations and one of them has a well understood dynamical behavior (i.e., in the center stable foliation, the stable foliation is contracting) we can compute the index just by looking at the dynamics in the center foliation (see Figure~\ref{figureIndex}).

As remarked above, one do have to be careful when computing the index as cancellations might happen with branching foliation (see Figure \ref{figureCancelation}).

\begin{figure}[ht]
\begin{center}
\includegraphics[scale=0.8]{./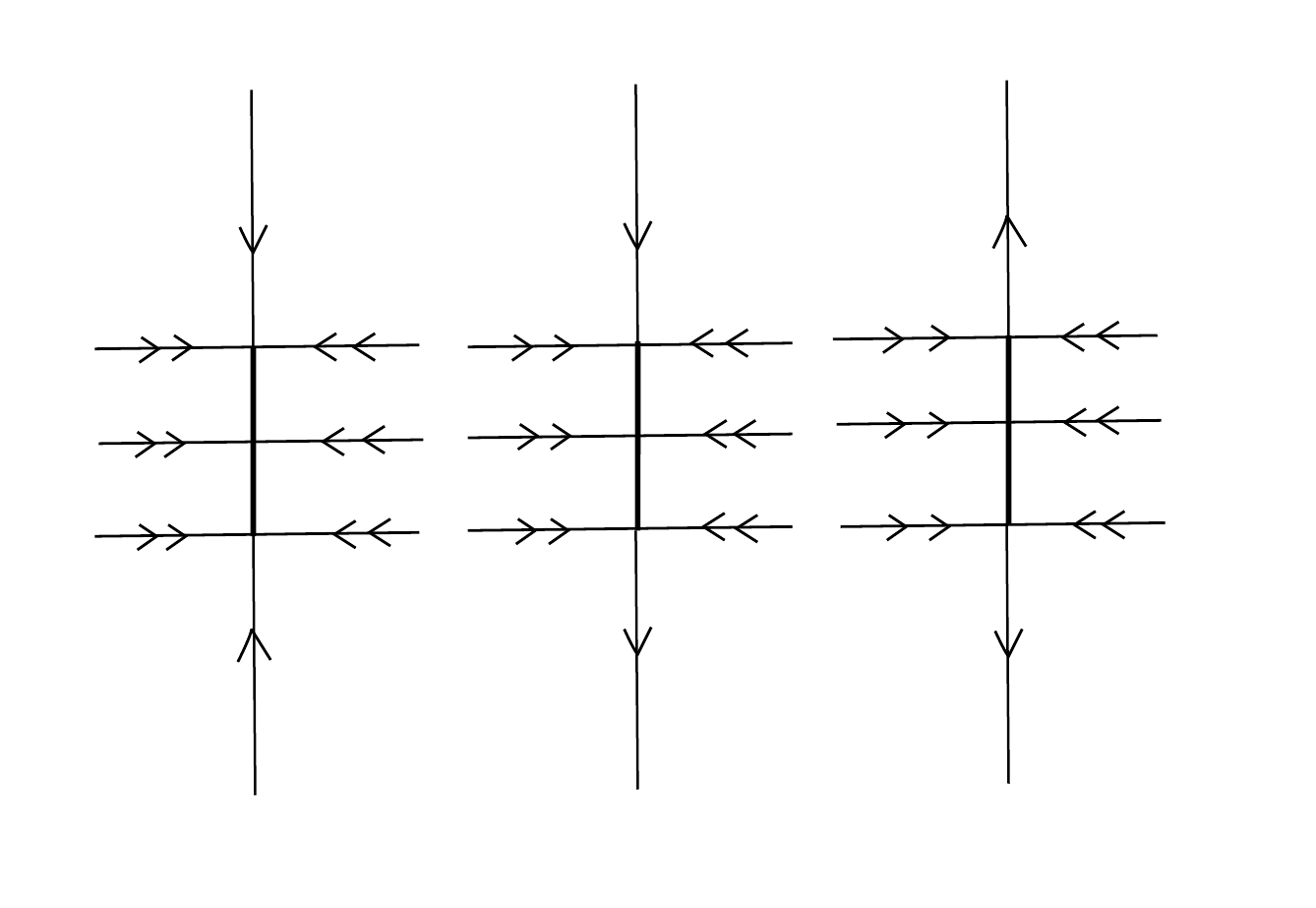}
\begin{picture}(0,0)
\put(-95,18){\text{Index }$-1$}
\put(-180,18){\text{Index }$0$}
\put(-270,18){\text{Index }$1$}
\end{picture}
\end{center}
\vspace{-0.5cm}
\caption{Contribution of index of a center arc depending on the center dynamics}\label{figureIndex}
\end{figure}

\begin{figure}[ht]
\begin{center}
\includegraphics[scale=0.8]{./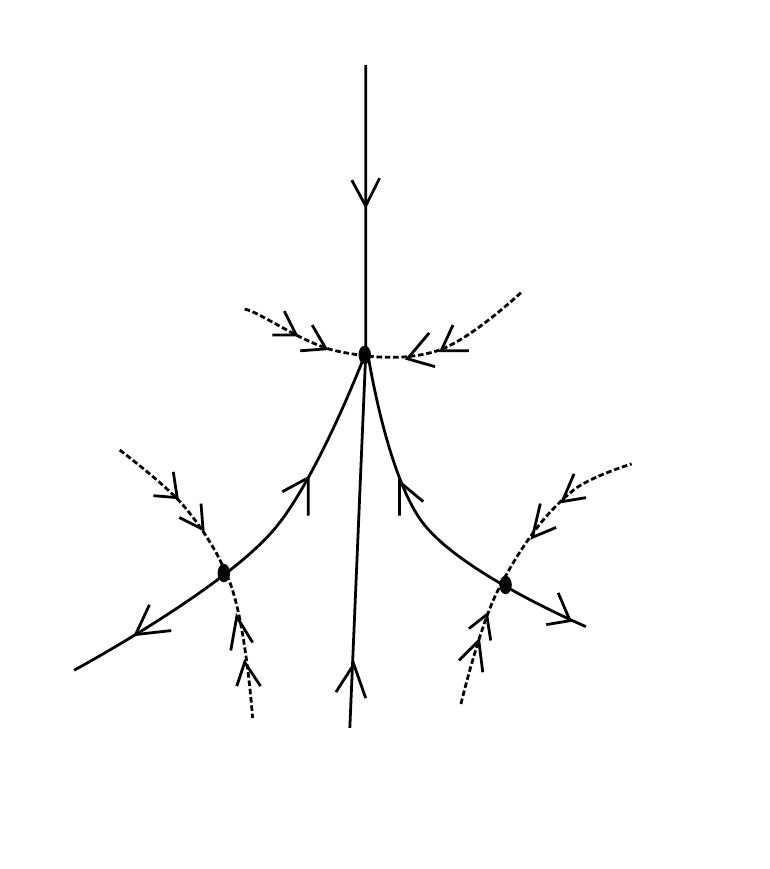}
\end{center}
\vspace{-0.5cm}
\caption{Two segments of zero index merge with a point with index 1 to produce a global -1 index.}\label{figureCancelation}
\end{figure}

We are now ready to give a proof of Proposition \ref{coarseexpand}.

\begin{proof}[Proof of Proposition \ref{coarseexpand}]
By Proposition \ref{p.homeotranslation_nonDC}, we know that the fixed point set of $h$ in $L$ is contained in the lift of $T_\gamma$ to $\mt$ (which intersects $L$ in  a compact set) and has Lefschetz index $1-p$
where $p$ is the number of stable prongs at the fixed 
point.  
In particular $h$ has some fixed points in $L$.

Let $L_2 = \ft^m(L)$. We denote by $\tau_{12}\colon L \to L_2$ the flow along $\wt\Phi^{cs}$ map.

\begin{claim} \label{claim_fixed_centers_match_on_interval}
 Let $c_1$, $c_2$ be two distinct center
leaves in $L$ that have a non-trivial intersection. Suppose that both 
$c_1, c_2$ are fixed by $h$, and there exist two distinct points $z, y \in c_1\cap c_2$ which are fixed by $h$.
Then the center leaves $c_1$ and $c_2$ coincide on the segment between $z$ and $y$.
\end{claim}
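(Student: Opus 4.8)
The plan is to exploit the bounded-displacement property of the good lift together with exponential contraction along stable leaves, in the spirit of Lemma~\ref{l.nomergefix}. Suppose, for contradiction, that $c_1$ and $c_2$ do \emph{not} coincide on the segment between the two fixed points $z$ and $y$. Since $z,y\in c_1\cap c_2$ and the two leaves are distinct, after possibly replacing $z$ and $y$ by a closer pair of consecutive intersection points (using that $c_1\cap c_2$ is closed in each leaf), we may assume that the segment $I_1\subset c_1$ from $z$ to $y$ and the segment $I_2\subset c_2$ from $z$ to $y$ intersect only at $z$ and $y$. Then $I_1\cup I_2$ bounds a bigon $B$ in $L$ with vertices $z,y$, exactly as in the proof of Lemma~\ref{l.nomergefix}. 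Every center segment contained in $B$ must pass through both vertices $z$ and $y$, and every stable leaf meeting the interior of $I_1$ also meets the interior of $I_2$, since $\cS_L$ is a true foliation transverse to the center direction in $L$ and $I_1, I_2$ separate $B$.

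First I would record that $h$ moves points a bounded distance in $L$: since $\ft$ does so (Lemma~\ref{lema-boundedinfixcsleaf-nonDC}) and $\gamma$ is an isometry of $\mt$ for a fixed metric, $h=\gamma^n\circ\ft^m$ moves points a uniformly bounded path-distance in $L$; in particular, by the same argument as in Lemma~\ref{l.boundedmovement}, the center segments joining $w$ to $h(w)$ for $w$ in a center leaf fixed by $h$ have uniformly bounded length. Hence all the center segments inside $h^{-k}(B)$ — which again form bigons with vertices $h^{-k}(z)=z$ and $h^{-k}(y)=y$, as both are $h$-fixed — have length bounded by a constant $C$ independent of $k$. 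Now I would invoke the expansion of stable length under $h^{-1}$ (here $m>0$ is essential, so that $h^{-1}$ expands stable leaves): the stable segments crossing $h^{-k}(I_1)$ get longer and longer as $k\to+\infty$, and since a stable leaf meets any local product chart of $\cS_L$ in a segment of bounded length, the Hausdorff diameter of $h^{-k}(B)$ in $L$ tends to $\infty$.

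Finally I would combine the two estimates to get the contradiction. Between any two points of $h^{-k}(B)$ one can travel along at most two center segments — follow the center leaf through the first point to a vertex ($z$ or $y$), then switch to the center leaf through the second point — so the diameter of $h^{-k}(B)$ is at most $2C$. This contradicts the fact that the diameter grows without bound, and therefore $c_1$ and $c_2$ must coincide on the entire segment between $z$ and $y$. The main obstacle is the careful bookkeeping needed to guarantee that $I_1$ and $I_2$ genuinely bound a bigon (i.e.\ extracting a pair of consecutive intersection points and checking that no center leaf re-enters the region), but this is the same topological setup already handled in Lemma~\ref{l.nomergefix}, so I expect it to go through with only cosmetic changes; the quantitative input (bounded center displacement from Lemma~\ref{l.boundedmovement}, stable expansion under $h^{-1}$) is exactly as there.
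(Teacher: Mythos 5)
There is a genuine gap: the quantitative inputs you invoke are not available in this setting, and one of them is false. Your key step asserts that $h=\gamma^n\circ\ft^m$ moves points a uniformly bounded distance in $L$ "since $\ft$ does and $\gamma$ is an isometry". Postcomposing with an isometry does not preserve bounded displacement (for a hyperbolic deck transformation $\gamma$ of $\mt$, $d(x,\gamma^n x)$ is unbounded), and indeed in this context $h$ acts on $L$ like the pseudo-Anosov-type map $g=\gamma\circ\tau_{12}$, whose displacement is certainly unbounded. Moreover Lemma~\ref{lema-boundedinfixcsleaf-nonDC} and Lemma~\ref{l.boundedmovement} both require $\ft$ to fix every center stable (resp.\ center) leaf, whereas here $\ft$ acts as a \emph{translation} on $\lcsb$ and fixes no center leaf at all; in fact the $h$-fixed center leaves in $L$ are coarsely expanding, so the length of the center segment from $w$ to $h(w)$ is unbounded along them. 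Thus your bound "$\mathrm{diam}(h^{-k}(B))\le 2C$ via two center segments" has no justification. There is also an internal inconsistency: since $z,y$ are fixed by $h$ and $c_1,c_2$ are $h$-invariant, the bigon satisfies $h(B)=B$, so $h^{-k}(B)=B$ has constant diameter and cannot "tend to $\infty$" as you claim.

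The correct argument is simpler and uses only what you already set up: $B$ is compact, so (by covering $B$ with finitely many product charts of the stable foliation) every stable segment contained in $B$ has length bounded by some constant; $B$ is $h$-invariant because its vertices and sides are; now take one stable segment $s\subset B$ joining the two sides of the bigon, and note that $h^{-k}(s)$ is again a stable segment contained in $B$ for all $k$, while its length goes to infinity because $h^{-1}$ expands stable lengths (here $m>0$ is used). This contradicts the compactness bound and proves the claim; no bounded-displacement or bounded-center-length statement is needed, which is essential since neither holds in the translation case.
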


\begin{proof}[Proof of Claim \ref{claim_fixed_centers_match_on_interval}]
Let $[y,z]_{c_1}$ and $[y,z]_{c_2}$ be the center segments between $y$ and $z$ in $c_1$ and $c_2$ respectively.

Assume for a contradiction that $[y,z]_{c_1}$ and $[y,z]_{c_2}$ are distinct. Then, up to changing $y$ and $z$, we can assume that the intersection between the open intervals $(y,z)_{c_1}$ and $(y,z)_{c_2}$ is empty.

Thus, by construction, $[y,z]_{c_1}$ and $[y,z]_{c_2}$ intersect only at $z$ and $y$. We let $B$ be the bigon in $L$ bounded by $[y,z]_{c_1}$ and $[y,z]_{c_2}$.

Note that any stable leaf that enters the bigon $B$ must exit it (otherwise it would
limit in a stable leaf entirely contained in $B$, which is impossible). Hence, $B$ is ``product foliated" by stable
leaves. Since $B$ is compact the length of the stable segments
contained in $B$ is bounded.

Since $z,y$ are fixed by $h$ it follows that $B$ is also 
fixed by $h$.
Let $s$ be one such stable segment connecting $(z,y)_{c_1}$ to
$(z,y)_{c_2}$. Then, the images of $s$ under powers of $h^{-1}$ stay in $B$ but must also have unbounded length, contradiction.
\end{proof}

Let $x$ be a fixed point of $h$. Recall from Lemma \ref{l.center_leaf_space_in_L} that the set of center leaves through $x$ in $L$ is a closed interval. In particular $h$ fixes the endpoints of this interval. Hence, $x$ is contained
in a center leaf $c$ such that $h(c) = c$.

\begin{claim}\label{claim_fixed_points_in_finite_union}
 All the fixed points of $h$ in $L$ are contained in the union of finitely many compact segments of center leaves in $L$.
\end{claim}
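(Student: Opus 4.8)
The plan is to derive the claim from the compactness of $C := \mathrm{Fix}(h)\cap L$ together with the observation that fixed points of $h$ cannot accumulate transversally to the center foliation of $L$, because stable length is contracted by $h$. First note that $C$ is compact: by Proposition~\ref{p.homeotranslation_nonDC} it is contained in the intersection of the lift of $T_\gamma$ with $L$, which is compact, and $C$ is closed in $\mt$. Recall also, as observed just above, that each $x\in C$ lies on a center leaf $c_x\subset L$ with $h(c_x)=c_x$ (take $c_x$ to be an endpoint of the closed interval $\mathcal{L}^c_L(x)$ of center leaves through $x$, cf.~Lemma~\ref{l.center_leaf_space_in_L}).

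The heart of the argument will be a local statement: every $x_0\in C$ has a neighborhood $N_{x_0}$ in $L$ and a compact center segment $I_{x_0}\subset c_{x_0}$ with $C\cap N_{x_0}\subseteq I_{x_0}$. Granting this, $\{N_{x_0}\}_{x_0\in C}$ is an open cover of the compact set $C$, so finitely many of them, $N_{x_1},\dots,N_{x_r}$, already cover $C$, and then $C\subseteq I_{x_1}\cup\dots\cup I_{x_r}$, which is the claim.

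To prove the local statement I would fix $x_0\in C$, set $c:=c_{x_0}$, and use that $c$, being a connected component of the intersection of a leaf of $\wfbs$ with a leaf of $\wfbu$ (both properly embedded planes in $\mt$, cf.~\S\ref{ss.leafspace}), is properly embedded in $L$ and transverse to the stable foliation $\cS_L$. Choose a compact center segment $I\subset c$ with $x_0$ in its relative interior, and then a box $N\ni x_0$ in $L$ of diameter less than the local product structure size $\epsilon_0$, whose product structure is adapted to $\cS_L$ and to the branching center foliation $\cC_L$ simultaneously (so that in $N$ each stable plaque meets each center plaque in exactly one point), chosen small enough that $c\cap N$ is the single center plaque $P$ through $x_0$ and $P\subseteq I$. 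Now let $x\in C\cap N$. Its stable leaf $s(x)$ is $h$-invariant, and since $h$ is partially hyperbolic and contracts stable length, $x$ is the unique fixed point of $h$ on $s(x)$ and $h^n(z)\to x$ for every $z\in s(x)$. Letting $y$ be the unique point where the stable plaque of $N$ through $x$ meets $P$, we get $y\in c$ and $y\in s(x)$, so $h^n(y)\in c$ for all $n\ge 0$ while $h^n(y)\to x$; as $c$ is closed in $L$ this forces $x\in c$. Hence $C\cap N\subseteq c\cap N=P\subseteq I$, and one takes $N_{x_0}:=N$, $I_{x_0}:=I$.

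The main obstacle is exactly this non-accumulation statement. A priori $C$ is just a compact subset of $L$ which need not be finite nor contained in finitely many center leaves; the crucial input is that $h$-invariance of stable leaves together with contraction of stable length pins any fixed point near $x_0$ back onto the (closed) fixed center leaf through $x_0$, which is what makes the covering argument go through. The remaining ingredients — properness of center leaves in $L$, the existence of a joint product box for $\cS_L$ and $\cC_L$, and the elementary compactness argument — are routine.
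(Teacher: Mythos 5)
Your proof is correct, and it is organized differently from the paper's. The paper argues globally and by contradiction: for each $h$-fixed center leaf it takes the \emph{minimal} compact interval containing the fixed points on that leaf, supposes there are infinitely many distinct such intervals, uses compactness of $\mathrm{Fix}(h)\cap L$ to find two of them Hausdorff-close, applies the stable-contraction trick to the \emph{endpoints} of one interval to force them onto the other leaf, and then needs Claim~\ref{claim_fixed_centers_match_on_interval} plus minimality of the intervals to conclude the two intervals coincide, a contradiction. You instead prove a local statement at each fixed point $x_0$ — every fixed point in a small box adapted to $\cS_L$ and $\cC_L$ lies on the fixed center leaf $c_{x_0}$ through $x_0$ — by applying the same mechanism ($h$-invariance of stable leaves plus contraction of stable length, together with the fact that $c_{x_0}$ is closed in $L$, being a component of the intersection of two closed, properly embedded planes) to an \emph{arbitrary} nearby fixed point rather than to interval endpoints, and then you finish with a finite subcover of the compact fixed set. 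What your route buys is that it bypasses Claim~\ref{claim_fixed_centers_match_on_interval} and the minimal-interval bookkeeping entirely; what it uses instead (and should be stated, though it is at the same level of rigor as the paper's step ``the stable leaf through $z$ intersects $c_j$'') is the local product structure in $L$ between the stable foliation and the branching center foliation, and the closedness/properness of center leaves in $L$ — the latter needed to pass from ``the nearby fixed points lie on $c_{x_0}$'' to ``they lie in a single compact segment of $c_{x_0}$''; both facts are already used implicitly elsewhere in the paper (e.g.\ via Lemma~\ref{l.center_leaf_space_in_L} and in the landing arguments for fixed rays), and the compactness input is exactly the one the paper cites (Proposition~\ref{p.homeotranslation_nonDC}).
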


\begin{proof}[Proof of Claim \ref{claim_fixed_points_in_finite_union}]
Let $c$ be a center leaf fixed by $h$. Since the fixed
points are contained in a compact set $C$ (see \cite[Lemma 8.11]{BFFP-prequel}), there is a 
minimal compact interval $J$ in $c$ which contains
all the fixed points of $h$ in $c$.

Suppose that there exists infinitely many distinct such minimal intervals $J_i$ in center leaves $c_i$.
Since the fixed points of $h$ in $L$ are in a compact set, we can choose $i,j$ large enough, so that $J_i$ is very close in the Hausdorff distance of $L$ to $J_j$. Let $z$ be an endpoint of $J_i$.
Then the stable leaf $s(z)$ through $z$ intersects the center leaf $c_j$. As $z$ is fixed by $h$ and so is $c_j$, contraction of the stable length implies that $z \in c_j$, thus $z\in J_j$.

Hence, both endpoints of $J_i$ are on $J_j$. By Claim \ref{claim_fixed_centers_match_on_interval}, it implies that $J_i \subset J_j$, and minimality of the interval $J_j$ implies $J_j=J_i$ which is a contradiction.
\end{proof}

Let $\{ J_i, 1 \leq i \leq i_0 \}$ be a finite family of compact intervals containing all the fixed point of $h$, as given by Claim \ref{claim_fixed_points_in_finite_union}. Note that we do not necessarily take the minimal intervals as constructed in the proof of Claim \ref{claim_fixed_points_in_finite_union}, as we want the following properties for that family.

\begin{claim}\label{claim_properties_for_J}
 We can choose the collection of intervals
$\{ J_i, 1 \leq i \leq i_0 \}$, each in a center leaf fixed
by $h$, satisfying
the following properties: 
\begin{enumerate}[label=(\arabic*)]
\item \label{item.all_fixed_points} The union $ \bigcup_{1\leq i\leq i_0}J_i $ contains all the fixed points
of $h$.
\item \label{item.fixed_endpoints} The endpoints of each interval $J_i$
are fixed by $h$.
\item\label{item.disjoint} The intervals are pairwise disjoint.
\end{enumerate} 
\end{claim}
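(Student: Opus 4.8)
The plan is to prove Claim~\ref{claim_properties_for_J} by starting from the finite collection of \emph{minimal} intervals constructed in the proof of Claim~\ref{claim_fixed_points_in_finite_union} and then merging overlapping ones. First I would invoke Claim~\ref{claim_fixed_points_in_finite_union} to get finitely many center leaves $c_1,\dots,c_N$ (fixed by $h$) and in each a minimal compact interval $J_i'\subset c_i$ containing all fixed points of $h$ on $c_i$; together these contain all fixed points of $h$ in $L$, which gives property~\ref{item.all_fixed_points}. The endpoints of each minimal interval are fixed points of $h$ (they are limits of fixed points, or if $c_i$ has a single fixed point then $J_i'$ is that point), giving property~\ref{item.fixed_endpoints} for the minimal intervals.

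The only remaining issue is disjointness~\ref{item.disjoint}, since two distinct center leaves $c_i$ and $c_j$ fixed by $h$ may intersect, and then $J_i'$ and $J_j'$ could overlap. Here I would use Claim~\ref{claim_fixed_centers_match_on_interval}: if $c_i$ and $c_j$ intersect in two distinct $h$-fixed points $z,y$, then $c_i$ and $c_j$ coincide on the segment $[z,y]$. The key observation is that whenever $J_i'\cap J_j'\neq\emptyset$, the intersection is a subinterval whose endpoints are $h$-fixed (each endpoint is either an endpoint of one of the $J$'s, hence fixed, or — I would argue via the stable-contraction trick already used in Claim~\ref{claim_fixed_points_in_finite_union}, namely that an $h$-fixed point on one center leaf whose stable leaf meets the other center leaf must lie on that leaf — an $h$-fixed point common to both). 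So the two leaves agree on an interval that covers the overlap, and one can replace $J_i'$ and $J_j'$ by a single compact interval inside their common center leaf (the convex hull of $J_i'\cup J_j'$ traversed along the shared portion), whose endpoints are among the original $h$-fixed endpoints. Iterating this merging finitely many times produces a pairwise disjoint collection $\{J_i : 1\le i\le i_0\}$, each in an $h$-fixed center leaf, with $h$-fixed endpoints, whose union still contains all fixed points of $h$.

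I expect the main obstacle to be the bookkeeping in the merging step: one must check that after merging two overlapping intervals the new interval is still contained in a single well-defined center leaf (this is exactly where Claim~\ref{claim_fixed_centers_match_on_interval} is essential, since the two center leaves may only coincide on part of their length), that its endpoints remain $h$-fixed, and that the process terminates — which it does because the collection is finite and each merge strictly decreases the number of intervals. A minor subtlety worth spelling out is the case where the overlap of $J_i'$ and $J_j'$ is a single point; then that point is an endpoint of one of the two intervals and hence $h$-fixed, and one can still merge into one interval of the ambient center leaf (using Claim~\ref{claim_fixed_centers_match_on_interval} applied after slightly enlarging, or simply observing the two leaves share that fixed point and pushing along stables as before). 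Once these routine verifications are in place, properties \ref{item.all_fixed_points}--\ref{item.disjoint} hold simultaneously, completing the proof of the claim.
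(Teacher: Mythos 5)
Your first two steps (taking the minimal compact intervals from Claim~\ref{claim_fixed_points_in_finite_union}, and noting their endpoints are fixed since the fixed set in each leaf is closed) coincide with the paper's proof and give properties \ref{item.all_fixed_points} and \ref{item.fixed_endpoints}. The gap is in the disjointness step: merging cannot work in general. Two distinct $h$-fixed center leaves $c_i$, $c_j$ may coincide only on a sub-segment and branch apart at a point $p$ interior to both $J_i'$ and $J_j'$, with further fixed points of $h$ on \emph{both} branches beyond $p$. The stable-contraction trick (and Claim~\ref{claim_fixed_centers_match_on_interval}) only forbids fixed points of $e_1$ \emph{arbitrarily close} to $p$ on one of the two branches; it does not forbid fixed points farther out whose stable leaves no longer meet the other branch. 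In that configuration $J_i'\cup J_j'$ is a tripod, and since a center leaf is a curve, no single center leaf contains it; hence ``the convex hull of $J_i'\cup J_j'$ traversed along the shared portion'' inside ``their common center leaf'' is simply not defined, and any single interval you substitute must drop some fixed points, destroying property~\ref{item.all_fixed_points}. Your fallback for single-point overlaps has the same defect: the touching point need not be fixed, and enlarging does not produce a common leaf.

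The paper resolves this by doing the opposite of merging: it keeps each $J_i$ inside its own leaf, views $\bigcup J_i$ as a tree (no bigons, by Claim~\ref{claim_fixed_centers_match_on_interval}), and at every vertex of degree at least $3$ uses the stable-contraction argument to find, on one of the two edges meeting the vertex from the same side, an open subinterval $(p,z)$ free of fixed points with $z$ fixed; removing such subintervals splits the collection into pairwise disjoint intervals while retaining all fixed points and fixed endpoints. If you want to salvage your write-up, replace the merging step by this splitting step; as written, the proposal does not prove property~\ref{item.disjoint} compatibly with \ref{item.all_fixed_points}.
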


\begin{proof}[Proof of Claim \ref{claim_properties_for_J}]
Let $c_1,\dots,c_n$ be a minimal collection of center leaves
that contains all fixed points of $h$ in $L$, as given by Claim \ref{claim_fixed_points_in_finite_union}. 
Let $J_i$ be the minimal compact interval containing all
fixed points of $h$ in $c_i$. 

The family $J_i$ then satisfies conditions \ref{item.all_fixed_points} and \ref{item.fixed_endpoints}. So we only have to show that one can split the intervals $J_i$ further so that conditions \ref{item.disjoint} is also satisfied (while still satisfying the first two conditions).

Notice that $c_i, c_j$ intersect if and only if 
$J_i, J_j$ intersect. Thus, we can restrict our attention to each connected component of the union of the $c_i$'s separately.

Up to renaming, assume that $\cup_{1\leq i\leq k}c_k$ is a connected component of $\cup_{1\leq i\leq n}c_k$.

Now we can consider the union of the $J_1,\dots, J_k$ as a graph, where the vertices are are the endpoints of the segments $J_i$ together with the points where two segments merge, and the edge are the subsegments joining the vertices.
With this convention, the union of the $J_1,\dots, J_k$ is then a tree. Otherwise
there would be a bigon in $L$ enclosed by the union, which is ruled out by Claim \ref{claim_fixed_centers_match_on_interval}.

Let $\mathcal B$ be this tree. Our goal is to remove enough open segments from the $J_i$'s so that no vertex of this associated tree has degree $3$ or more.
Consider a vertex $p$ in $\mathcal B$ with degree $3$ or more. Then there are two edges $e_1$ and $e_2$ abutting at $p$ on the same side of $p$.
We claim that $e_1$ cannot have points fixed by $h$ arbitrarily close
to $p$ (except for $p$ itself). Otherwise one would have a fixed point $y \in e_1$ such that $s(y)$ intersects $e_2$. Since $e_2$ is contained in a fixed leaf, $e_2 \cap s(y)$ is fixed by $h$. This implies (since $h$ decreases stable length) that $y$ is in $e_2$. Thus, by Claim \ref{claim_fixed_centers_match_on_interval}, the intersection of $e_1$ and $e_2$ would contain the segment $[y,p]$, contradicting the fact that they are distinct edges.

Thus, we can remove an open interval $(p,z)$ from, say, $e_1$, where $z$ is fixed by $h$ but $(p,z)$ has no fixed points. In the new tree, $p$ has index one less than before and $z$ has index one.

Doing this recursively on each vertex of index strictly greater than $2$, we will obtain, as sought, a disjoint collection of intervals that also satisfy conditions \ref{item.all_fixed_points} and \ref{item.fixed_endpoints}.
\end{proof}

Now we will look at the index of $h$ on the fixed intervals
$J_i$, $1 \leq i \leq i_0$ produced by Claim \ref{claim_properties_for_J}.
Note that for each such interval $J_i$ there are no other fixed points of 
$h$ nearby in $L$.
Let $c$ be a leaf fixed by $h$ containing $J_i$. 

If $h$ is contracting on $c$ near both endpoints
of $J_i$ on the outside then the index of $J_i$ is $+1$.
This is because the stable foliation is contracting
under $h = \gamma^n \circ \ft^m$ (since $m>0$). 
Hence $h$ is contracting near $J_i$.
If $h$ is expanding on both sides, the index is $-1$. If 
one side is contracting and the other is expanding then
the index is zero.

The global index for $h$ can then be computed by adding the indexes of $h$ on each of the intervals $J_i$, taking care of cancellations.

Let $c_k$, $1 \leq k \leq k_0$, be finitely many
center leaves, fixed by $h$ and containing all the
$J_i$. We choose this collection to have the
minimum possible number of leaves.

Each leaf $c_k$ contains finitely many segments $J_i$, so there are exactly two infinite rays that do not contain any $J_i$. The contribution of $c_k$ to the global index of $h$ (before possible cancellations) will then be $-1$ if both rays are expanding, $0$ if one is expanding while the other contracts and $1$ if both are contracting.

Suppose for a contradiction, that there is at most one expanding ray in $L$. So each $c_k$, considered separately, has index either $0$ or $1$.

If there is an expanding ray, let $c_k$ be a leaf with an expanding ray.
Otherwise let $c_k$ be any leaf. 
Now we need to consider how the other leaves and the possible cancellations impact the global index of $h$.
Let $c_l$ be a leaf that intersect $c_k$. If $c_l$ shares an expanding ray
with $c_k$, then the other ray of $c_l$ is contracting, and eventually
disjoint from the corresponding ray of $c_k$. The fixed set (if
any) of this ray in $c_l$ has index zero. If $c_l$ does not
share an expanding ray with $c_k$, then both rays of $c_l$
are contracting. The ray that is added to the same end
as the expanding ray of $c_k$ contributes index $1$. The
other ray contributes index $0$. In any case the index, starting
at $0$ or $1$, does not decrease.

Now, if $c_m$ is another leaf that is disjoint from the set above, then both rays are contracting and it contributes an index $1$. So again the index does not decrease.

Thus, if there is at most one expanding ray, then the index of $h$ is at least $0$. This contradicts the fact that the index of $h$ is $1-p$ where $p\geq 2$, and thus finishes the proof of Proposition \ref{coarseexpand}.
\end{proof}

\subsection{Periodic rays and boundary dynamics}\label{ss.boundary}

Proposition \ref{coarseexpand} gave the existence of periodic rays that are coarsely expanding. Here we will show that such a ray has a well-defined ideal point on the circle at infinity of the leaf, and that it corresponds to the endpoint of a prong of the transverse regulating pseudo-Anosov flow, $\Phi^{cs}$.

As previously, we assume that we have a center stable leaf $L \in \wfbs$ such that there is a deck transformation $\gamma$ for which $\gamma \circ \ft^m (L)=L$ for some $m>0$.  We let $L_2 = \ft^m(L)$ and define $\tau_{12}\colon L \to L_2$ the flow along $\wt\Phi^{cs}$ map. We also take as before

$$ h := \gamma \circ \ft^m \  \text{ and } \  g := \gamma \circ \tau_{12}. $$

Recall that $h$ and $g$ are maps of $L$ that are a bounded distance from each other. Also $g$ preserves the (singular) foliations $\cG^s$ and $\cG^u$. We again assume that if $g$ has a fixed point $x_0$ in $L$ then $\gamma$ is such that $g$ preserves each of the prongs of $\cG^s(x_0)$ (resp.~$\cG^u(x_0)$).

The action of $g$ on the circle at infinity $S^1(L_1)$ has an even number of fixed
points, which are alternately attracting and repelling. We denote by $P$ the set of attracting fixed points and by $N$ the set of repelling ones. With these notations, we get the following.

\begin{proposition}\label{contract}
Let $\eta\colon [0,\infty) \to L$ be a contracting fixed ray for $h$. Then $\lim_{t \to \infty} \eta(t)$ exists in $S^1(L)$ and it is a (unique) point in $N$. (Symmetrically, if $\eta$ is an expanding fixed ray, its limit point belongs to $P$.)
\end{proposition}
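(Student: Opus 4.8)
The plan is to show that the contracting ray $\eta$ "converges" to a single ideal point of $S^1(L)$ and to identify that point with a repelling fixed point of the boundary action of $g$. First I would use the fact that $h$ and $g$ are a bounded distance apart (in the metric $d_L$), so that $\eta$ is a quasi-geodesic-like object controlled by the dynamics of $g$; since $L$ is Gromov hyperbolic (Lemma \ref{GromovhypnonDC}, which applies here because the relevant leaf is approximated by taut foliations and the setting of \S\ref{sec-transl} has $\wfbs$ $\R$-covered and uniform so the leaves are Gromov hyperbolic), a ray that is contracted into a compact set under $h^{-1}$ and whose $h$-orbit behaves coarsely like the $g$-orbit should accumulate on the circle at infinity in a controlled way. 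The key point is that a contracting fixed ray escapes every compact set (it is a proper embedding of $[0,\infty)$), so it has at least one accumulation point $\xi \in S^1(L)$; one then shows the accumulation point is unique and is fixed by $g$ on $S^1(L)$.

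For uniqueness and identification, here are the steps in order. (1) Since $h(\eta)=\eta$ and $h$ fixes the endpoint $x=\eta(0)$ with $h^{-n}(y)\to x$ for all $y\in\eta$, iterating $h$ (not $h^{-1}$) pushes points of $\eta$ out to infinity: $h^n(y)\to \xi$ as $n\to\infty$ for some $\xi\in S^1(L)$, and this limit is the same for all $y$ on the ray because all such $y$ lie on a single proper arc and $h$ is a homeomorphism preserving the arc. (2) The limit $\xi$ is fixed by $h$ acting on $S^1(L)=\partial_\infty L$: indeed $h(\xi)=h(\lim h^n(y))=\lim h^{n+1}(y)=\xi$. (3) Because $g$ is a bounded distance from $h$ in $L$, the two maps induce the same homeomorphism of $S^1(L)$ (a standard fact: maps a bounded $d_L$-distance apart extend to the same boundary map on a Gromov hyperbolic space), so $\xi$ is fixed by $g$ on $S^1(L)$ as well. (4) It remains to decide whether $\xi\in P$ or $\xi\in N$. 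Since along $\eta$ one has $h^{-n}(y)\to x$ (the ray contracts toward its finite endpoint under backward iteration), the forward iterates $h^n$ move points of $\eta$ monotonically toward $\xi$, so $\xi$ is an attracting fixed point for the $h$-dynamics restricted to the arc; but the stable foliation $\cG^s$ is contracted under $g=\gamma\circ\tau_{12}$ (because $h=\gamma\circ\ft^m$ with $m>0$, as used in the proof of Proposition \ref{coarseexpand}), and a contracting center ray is transverse to the contracting stable direction, which forces the germ of $g$ at $\xi$ to be repelling on $S^1(L)$ transversally — hence $\xi\in N$. The symmetric statement for expanding rays follows by replacing $h$ with $h^{-1}$ (equivalently $m<0$), which swaps the roles of $\cG^s$ and $\cG^u$ and of $P$ and $N$.

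The main obstacle I expect is step (4): carefully reconciling the two notions of "attracting/repelling" — the one internal to the ray $\eta$ (under iteration of $h$ along the one-dimensional center leaf) versus the one on the circle at infinity $S^1(L)$ (where $g$ has alternating attractors $P$ and repellers $N$). One must argue that a ray which is \emph{coarsely contracted} toward its finite endpoint corresponds, at infinity, to a \emph{repelling} direction of the boundary homeomorphism. The cleanest route is probably to use the product structure of the stable/unstable singular foliations $\cG^s, \cG^u$ near the boundary: a contracting center ray $\eta$ must, after the point where its fixed set ends, run "parallel" to the unstable prongs $\cG^u$ on one side (since under $h^{-1}$, which expands stable length and contracts $\eta$, the ray cannot cross too many stable leaves), and the ideal endpoint of an unstable prong is a repelling fixed point of $g$ on $S^1(L)$ by the standard dynamics of pseudo-Anosov boundary actions. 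Making the word "parallel" precise — i.e., that $\eta$ is eventually contained in a sector bounded by two leaves of $\cG^u$ that both limit on $\xi$, so that $\eta$ is forced to the same ideal point $\xi\in N$ — is where the real work lies, and this is presumably where the hyperbolic geometry of $L$ and a Gromov-hyperbolicity "fellow-traveling" argument (comparing $\eta$ with an unstable leaf of $g$ through a nearby fixed point) get used in earnest. I would also need to rule out $\xi$ being a point where $g$ is parabolic or where the ray spirals; this is handled by the fact that $g$ acts on $S^1(L)$ with only finitely many fixed points, all hyperbolic (alternately in $P$ and $N$), so $\eta$ cannot accumulate on an interval and must converge to a single fixed point.
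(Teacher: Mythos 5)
Your proposal contains two genuine errors that undermine the argument, and the core step is left unfinished. First, you have the definition of a contracting ray backwards: by Definition (preceding Proposition~\ref{coarseexpand}), a contracting ray is one that is \emph{expanding for $h^{-1}$}, so for a contracting ray one has $h^n(y) \to x$ as $n \to +\infty$ (forward iteration pulls the ray into its finite endpoint), not $h^{-n}(y)\to x$ as you repeatedly assert. Consequently your step~(1) claim that ``iterating $h$ pushes points of $\eta$ out to infinity, $h^n(y)\to\xi\in S^1(L)$'' is false for a contracting ray; the forward $h$-orbit converges to $x\in L$, not to an ideal point. Second, you have the identification of prong endpoints with $N$ and $P$ reversed: the paper sets up the boundary dynamics (just before this proposition) so that ideal points of $\cG^s(x)$ are repelling (they form $N$) and ideal points of $\cG^u(x)$ are attracting (they form $P$), whereas you write that unstable prong endpoints are repelling. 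Finally, you explicitly flag step~(4) -- the actual identification $\xi\in N$ -- as the ``real work'' and do not carry it out; the fellow-traveling/Gromov-hyperbolicity sketch there is not made precise and, as written, compares $\eta$ to the wrong singular foliation.

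The paper's argument is considerably more elementary and avoids the quasi-geodesic machinery altogether. It uses directly the local dynamics of $h$ on the compactification $L\cup S^1(L)$: if $\eta$ had a point $q$ in a small neighborhood $U$ of some $y\in P$, then $h^n(q)\to y\in S^1(L)$, contradicting $h^n(q)\to x\in L$ for a contracting ray; similarly, any $z\in S^1(L)\smallsetminus(N\cup P)$ has a neighborhood whose forward iterates eventually land in such a $U$, giving the same contradiction. Hence $\eta$ can accumulate only on $N$, which is finite, and since $\eta$ is a proper embedding its accumulation set on $S^1(L)$ is connected -- so it is a single point of $N$. If you want to salvage your approach, the essential fix is to use the \emph{correct} direction of iteration ($h^n(y)\to x$ for contracting) and then to argue by exclusion using the attracting basins of $P$ and of the non-fixed boundary points, rather than trying to show positively that $\eta$ fellow-travels a prong of $\cG^s$.
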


\begin{proof}
Let $y$ in $P$ and $U$ a small neighborhood of $y$ in $L \cup S^1(L)$ as in \cite[\S 8]{BFFP-prequel}. 
If $\eta$ has a point $q$
in $U \cap L$, then $h^n(q)$ converges to $y$ as $n \rightarrow +\infty$,
so $\eta$ could not be a contracting ray, a contradiction.
So $\eta$ cannot limit on any point in $P$. If $z$ is in 
$S^1(L) \smallsetminus \{ N \cup P \}$, then $h^n(z)$ converges to a point
in $P$ under forward iteration. Hence again a small neighborhood
$Z$ of $z$ in $L \cup S^1(L)$ is sent under some iterate
inside a neighborhood $U$ as in the first part of the proof.
So any point in $Z \cap L$ converges to a point in $P$ under
forward iteration. Hence $\eta$ cannot limit
to a point in $S^1(L) \smallsetminus \{ N \cup P \}$ either.
So $\eta$ can only limit on points in $N$. Since $\eta$
is properly embedded in $L$, the set
of accumulations points of $\eta$ is connected, so it has
to be a single point.
\end{proof}

\section{Mixed case in hyperbolic manifolds} \label{sec-mix.hyp}


In this section we show that even in the non-dynamically coherent case, the mixed behavior is impossible for hyperbolic $3$-manifolds. This will be done by using the study of translations in hyperbolic $3$-manifolds developed in sections \ref{s.pA_and_translations_nonDC} and \ref{sec-transl} to provide more information on the dynamics of general partially hyperbolic diffeomorphisms.

The main result of this section is the following.

\begin{theorem}\label{mixednonDC}
Let $f\colon M \to M$ be a partially hyperbolic diffeomorphism homotopic
to the identity on a hyperbolic 3-manifold $M$. Suppose that there exists a finite lift and finite power $\hat f$ of $f$ that preserves two branching foliations $\fbs,\fbu$ and is such that a good lift $\ft$ fixes a leaf of $\wfbu$. Then, $f$ is a discretized Anosov flow. 
\end{theorem}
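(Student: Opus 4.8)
The plan is to combine the dichotomy of Corollary~\ref{c.minimalcase_branching_case} with the results of Sections~\ref{s.pA_and_translations_nonDC}--\ref{sec-transl} to rule out the mixed case, thereby reducing to double invariance, which by Theorem~\ref{thm.DIimpliesDC} (via the orientability-removal argument of \S\ref{dcwithout}) gives that $f$ is a discretized Anosov flow. First I would pass to the finite lift and iterate $\hat f$ so that there are $\hat f$-invariant branching foliations $\fbs,\fbu$ and a good lift $\ft$ fixing a leaf of $\wfbu$. Since $M$ is hyperbolic, Proposition~\ref{p.hypSeifminimal_nonDC} applies: $\ft$ then fixes \emph{every} leaf of $\wfbu$, and $\fbu$ is $\hat f$-minimal (and every leaf of $\fbu$ is a plane or an annulus). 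The goal is now to show that $\ft$ also fixes every leaf of $\wfbs$; then Proposition~\ref{p.nondceverycfixed} gives that $\ft$ fixes every center leaf, Corollary~\ref{coro.fixcentercoh} gives dynamical coherence of $\hat f$, and Proposition~\ref{coro-DCwithoutlift} upgrades this to dynamical coherence of $f$ with $f^k$ a discretized Anosov flow; finally \cite[Theorem~B]{BFFP-prequel} (or Theorem~\ref{teo-hypseif}) finishes it.

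So assume for contradiction that $\ft$ does \emph{not} fix every leaf of $\wfbs$. Here I would first want to know that $\fbs$ is also $\hat f$-minimal; the cleanest route is to apply Proposition~\ref{p.hypSeifminimal_nonDC} once more — if some good lift fixed a leaf of $\wfbs$ we would be in a better situation, and otherwise the dichotomy of Corollary~\ref{c.minimalcase_branching_case} combined with \ref{star_dichotomyHypSeif} forces $\fbs$ to be $\R$-covered, uniform, with $\ft$ acting as a translation on $\lcsb$. (More precisely: since $M$ is hyperbolic, \ref{star_dichotomyHypSeif} says each good lift either fixes every leaf of $\wfbs$ or acts as a translation; we are in the translation case.) Now invoke Proposition~\ref{prop-transversepA} to obtain a transverse regulating pseudo-Anosov flow $\Phi^{cs}$ for $\fbs$. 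Choose a periodic orbit $\delta_0$ of $\Phi^{cs}$ whose associated $\gamma\in\pi_1(M)$ fixes all prongs of a lift (pass to a power of $\gamma$ if necessary), and apply Proposition~\ref{fixedleaf_nonDC} to get a leaf $L\in\wfbs$ fixed by $h=\gamma^n\circ\ft^m$ with $m>0$. By Proposition~\ref{coarseexpand}, there are at least two $h$-expanding center rays in $L$.

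The main obstacle, and the crux of the argument, is to derive a contradiction from the coexistence of these $h$-expanding center rays in $L$ with the fact that $\ft$ fixes every leaf of $\wfbu$. The idea is this: an $h$-expanding center ray $\eta$ lies in a center leaf, hence in the intersection of $L$ with some center unstable leaf $U$; since $\ft$ (hence $h$, up to the $\gamma$-twist and the fact that $\gamma$ fixes leaves of $\wfbu$ as deck transformations permuting the fixed family) behaves well on $\wfbu$, the center leaf through a point of $\eta$ is essentially $h$-invariant as a subset of a fixed center unstable leaf. But Proposition~\ref{p.alternnonDC} applied with the roles of $\fbs$ and $\fbu$ reversed — using that $\fbu$ is $\hat f$-minimal and $\ft$ fixes every center unstable leaf but (since $\ft$ translates $\lcsb$) no center leaf — tells us that every $\hat f$-periodic center leaf in $M$ is coarsely \emph{expanding}. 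Wait: the tension is exactly the one exploited in the Lemma preceding Proposition~\ref{p.nondceverycfixed}. Concretely, the periodic center leaf $\pi(\eta)$ inside $\pi(L)$ projects to an $f$-periodic center leaf, which by Proposition~\ref{p.alternnonDC} (applied to $\wfbu$, which is leafwise fixed) must be coarsely expanding, while the detailed index/ray analysis of Proposition~\ref{coarseexpand} on the center stable side shows the \emph{same} leaf carries expanding rays for $h=\gamma^n\circ\ft^m$ on $L$ — and one must check these two expansion statements are incompatible, precisely because on a center \emph{unstable} leaf a fixed good lift forces the analogue of $(\star\star)$ to give coarse \emph{contraction} of periodic center leaves after switching $f\leftrightarrow f^{-1}$. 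I expect the delicate point to be bookkeeping the twists $\gamma^n$ and verifying that ``$\ft$ fixes every leaf of $\wfbu$'' genuinely forces the relevant center leaf to sit inside a fixed center unstable leaf, so that Proposition~\ref{p.alternnonDC} (and hence the coarse-contraction conclusion for $f^{-1}$, i.e. coarse expansion for $f$ in the center unstable picture) truly applies; this contradicts the coarsely expanding rays produced on the center stable side together with the negative Lefschetz index forced by Proposition~\ref{p.homeotranslation_nonDC}. Once the contradiction is in hand, $\ft$ fixes every leaf of $\wfbs$, and the chain of implications above completes the proof.
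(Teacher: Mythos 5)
The first half of your proposal — passing to the lift/iterate, applying Proposition~\ref{p.hypSeifminimal_nonDC} to conclude $\ft$ fixes all of $\wfbu$ and $\fbu$ is minimal, reducing to the case where $\ft$ translates $\lcsb$, invoking Proposition~\ref{prop-transversepA} for the regulating pseudo-Anosov flow, and obtaining a leaf $L\in\wfbs$ fixed by $h=\gamma^n\circ\ft^m$ with expanding center rays via Propositions~\ref{fixedleaf_nonDC} and~\ref{coarseexpand} — matches the paper's setup exactly. But your proposed ``crux'' contains a genuine gap: you assert that the coarsely \emph{expanding} conclusion for periodic center leaves coming from Proposition~\ref{p.alternnonDC} applied to $\wfbu$ (leafwise fixed) is \emph{incompatible} with the $h$-expanding center rays produced on the center stable side. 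These two statements are in fact the same conclusion from two different angles and are perfectly compatible; the paper \emph{uses} both of them in tandem (they are combined in Proposition~\ref{p.mixedhyp}), rather than contrasting them. You seem to be importing the contradiction from the doubly invariant case (Proposition~\ref{p.nondceverycfixed}), where one foliation gives coarse contraction and the other coarse expansion — but in the mixed case $\ft$ translates $\wfbs$, so Proposition~\ref{p.alternnonDC} cannot be applied on the center stable side and there is no ``coarsely contracting'' conclusion to contradict.

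The actual contradiction in the paper is a counting argument at infinity and is the real content of the proof. Using that every $h$-fixed center leaf in $L$ is coarsely expanding, one shows (Lemma~\ref{claim8}) that each such center leaf and each fixed stable leaf lands at two \emph{distinct} ideal points in $S^1(L)$, that distinct $h$-fixed center leaves cannot intersect (Lemma~\ref{claim9}), hence that there are exactly $p-1$ of them by the Lefschetz index $1-p$ (Lemma~\ref{conclusion3}), and that the stable leaves through the resulting $p-1$ fixed points have pairwise disjoint ideal endpoints (Lemma~\ref{conclusion4}). This yields at least $2p-2$ distinct repelling ideal points in $S^1(L)$, while the $p$-prong pseudo-Anosov model provides exactly $p$; hence $2p-2\le p$, forcing $p\le 2$, contradicting $p\ge 3$. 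You would need to supply this ideal-point bookkeeping — vaguely gesturing at ``the negative Lefschetz index'' and an alleged incompatibility of two expansions does not produce a contradiction.
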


This, together with Proposition~\ref{p.hypSeifminimal_nonDC}, completes Theorem~\ref{teo-hypseif}.

\subsection{The set up} 
Consider a partially hyperbolic diffeomorphism $f$ as in Theorem \ref{mixednonDC}. 

Our goal is to show that the good lift $\ft$ of $f$ fixes every leaf of $\wfbs, \wfbu$. Indeed, Proposition \ref{p.nondceverycfixed} (and Corollary \ref{coro.fixcentercoh}) then implies that $\hat f$ is dynamically coherent, so we can then use \cite[Theorem B]{BFFP-prequel} to obtain that $\hat f$ is a discretized Anosov flow.
In turns, thanks to Proposition \ref{coro-DCwithoutlift}, we obtain that $f$ itself is dynamically coherent and a discretized Anosov flow.

Since Proposition \ref{coro-DCwithoutlift} allows us to use finite lifts and powers, we assume directly that $f = \hat f$, that $\fbs$ and $\fbu$ are orientable and transversely orientable and that $f$ preserves their orientations. 

Since $\ft$ is assumed to fix one leaf of $\wfbu$, Proposition \ref{p.hypSeifminimal_nonDC} implies that every leaf of $\wfbu$ is fixed. We will prove that every leaf of $\wfbs$ is fixed by $\ft$ by contradiction. So, by Proposition \ref{p.hypSeifminimal_nonDC}, we can assume that $\fbs$ is $\R$-covered and uniform and that $\ft$ acts as a translation on the leaf space of $\wfbs$. In particular, there are no center curves fixed by $\ft$. 

Then, we can apply Proposition \ref{p.alternnonDC} 
to $\fbu$ to deduce that every periodic center leaf is coarsely expanding.

On the other hand, since $\ft$ acts as a translation on $\wfbs$, we can use the results from sections \ref{s.pA_and_translations_nonDC} and \ref{sec-transl}.
Let $\Phi^{cs}$ be a regulating pseudo-Anosov flow transverse to $\fbs$ given by Proposition \ref{prop-transversepA}. 

The flow $\Phi^{cs}$ is a genuine pseudo-Anosov, that is it admits at least one periodic orbit which is a $p$-prong with $p \geq 3$ (see \cite[Proposition D.4]{BFFP-prequel}).  

Now, we choose $\gamma$ in $\pi_1(M)$, associated to this prong, and apply Proposition \ref{fixedleaf_nonDC}: Up to taking powers, we can assume that $h := \gamma \circ \ft^k$ for some $k>0$ fixes a leaf $L$ of $\wfbs$. Moreover, the dynamics in $L$ resembles that of the dynamics of a $p$-prong,
and in particular fixes every prong.

Notice that Proposition \ref{coarseexpand} also provides some center rays which are expanding in $L$ for $h$. We will need to use some of the ideas involved in the proof of that proposition (even though the statement itself will not be used). 

We summarize the discussion above in the following proposition.

\begin{proposition}\label{p.mixedhyp} 
Let $f\colon M \to M$ be a partially hyperbolic diffeomorphism homotopic
to the identity of a hyperbolic 3-manifold $M$ preserving branching foliations $\fbs,\fbu$. Suppose that a good lift $\ft$ fixes a leaf of $\wfbu$ and acts as a translation on $\wfbs$. Then, up to taking finite iterates 
and covers, there exists $\gamma \in \pi_1(M)$ and $k>0$ such that a center stable leaf $L \in \wfbs$ is fixed by $h:= \gamma \circ \ft^k$ and its Lefschetz index is $I_{\mathrm{Fix}(h)}(h) = 1-p$ with $p\geq 3$. Moreover, every center curve fixed by $h$ in $L$ is coarsely expanding. 
\end{proposition}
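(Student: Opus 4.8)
The plan is to assemble this proposition directly from the machinery developed in Sections \ref{s.pA_and_translations_nonDC} and \ref{sec-transl}, essentially transcribing the discussion that precedes the statement into a self-contained argument. First I would fix the setup: since $f$ is homotopic to the identity and a good lift $\ft$ fixes a leaf of $\wfbu$, Proposition \ref{p.hypSeifminimal_nonDC} applies to $\fbu$, so $\ft$ fixes \emph{every} leaf of $\wfbu$ and $\fbu$ is $f$-minimal; in particular every leaf of $\fbu$ is a plane or an annulus. Since we are assuming (for the eventual contradiction this proposition feeds into) that $\ft$ does not fix every leaf of $\wfbs$, Corollary \ref{c.minimalcase_branching_case} (via Proposition \ref{p.hypSeifminimal_nonDC} for the minimality input) forces $\fbs$ to be $\R$-covered and uniform with $\ft$ acting as a translation on $\lcsb$. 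The hypothesis of the proposition already grants this translation behavior, so I would simply record these consequences. Because $\ft$ acts as a translation on $\lcsb$, it fixes no leaf of $\wfbs$, hence $\ft$ fixes no center leaf at all, so in particular no center curve is fixed by $\ft$; combined with $f$-minimality of $\fbu$ and the ``fixes every center unstable leaf'' property, Proposition \ref{p.alternnonDC} (applied with $\fbu$ in place of $\fbs$) yields that every $f$-periodic center leaf in $M$ is coarsely expanding.

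Next I would invoke the translation-in-hyperbolic-manifolds package. Since $\fbs$ is $\R$-covered and uniform, Proposition \ref{prop-transversepA} produces a transverse regulating pseudo-Anosov flow $\Phi^{cs}$ for $\fbs$. By \cite[Proposition D.4]{BFFP-prequel} this flow is genuinely pseudo-Anosov, so it has a periodic orbit that is a $p$-prong with $p \geq 3$. After passing to a finite cover and a finite power (which is harmless by Proposition \ref{coro-DCwithoutlift}, and which also lets us arrange orientability so that $\gamma$ fixes all prongs of a lift of this orbit), I would let $\gamma \in \pi_1(M)$ be the element associated to this periodic orbit. Proposition \ref{fixedleaf_nonDC} then gives exponents $n,m>0$ — which after renaming I take to be $\gamma$ itself and $k = m$ — such that $h := \gamma \circ \ft^k$ fixes a leaf $L \in \wfbs$. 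Proposition \ref{p.homeotranslation_nonDC} identifies the fixed set of $h$ in $L$ with (a subset of) the intersection of the lift of the $\Phi^{cs}$-invariant compact set $T_\gamma$ with $L$, and computes its Lefschetz index as $1 - p$ where $p$ is the number of stable prongs at the corresponding orbit; since we chose $p \geq 3$, this gives $I_{\mathrm{Fix}(h)}(h) = 1 - p \leq -2$.

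Finally I would handle the statement that every center curve fixed by $h$ in $L$ is coarsely expanding. Here I would note that a center curve $c$ fixed by $h = \gamma \circ \ft^k$ projects to a center curve in $M$ that is periodic under $f$ (of period dividing $k$, up to the $\gamma$-twist, exactly as in the proof of Proposition \ref{p.alternnonDC}), so the conclusion of the previous paragraph — every $f$-periodic center leaf is coarsely expanding — applies; one then transfers ``coarsely expanding for $f$'' to ``coarsely expanding for $h$ in $L$'' using that $\gamma$ acts by isometries and $\ft^k$ is the relevant power, exactly as coarse contraction/expansion is shown to pass between $f^m$ and $h = \gamma^n \circ \ft^m$ in Lemma \ref{l.condition_for_coarse_contraction} and Proposition \ref{p.alternnonDC}. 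The main obstacle, and the place requiring genuine care rather than bookkeeping, is making the choice of $\gamma$ and the finite cover/power compatible across all the cited results simultaneously: we need the \emph{same} $\gamma$ (the prong orbit) to (i) have $\gamma$ fix all prongs of its lift so that Propositions \ref{coarseexpand} and \ref{p.homeotranslation_nonDC} apply with index exactly $1-p$, (ii) satisfy the hypothesis of Proposition \ref{fixedleaf_nonDC} so that $h$ fixes a leaf $L$ of $\wfbs$, and (iii) be such that the periodic center leaf whose existence Proposition \ref{periodiccenter} guarantees actually lies in (the projection of) $L$ — this last point is where one has to re-run the fixed-center-ray analysis from the proof of Proposition \ref{coarseexpand} rather than merely quoting its statement, which is presumably why the remark ``we will need to use some of the ideas involved in the proof of that proposition'' is made. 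Once these compatibility choices are pinned down, the proposition is just the concatenation of the quoted results.
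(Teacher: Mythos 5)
Your argument is correct and is essentially the paper's own proof: the proposition is stated there as a summary of the preceding discussion, which likewise combines Proposition \ref{p.hypSeifminimal_nonDC} (every leaf of $\wfbu$ is fixed and $\fbu$ is $f$-minimal), Proposition \ref{p.alternnonDC} applied to $\fbu$ (periodic center leaves are coarsely expanding, hence so is any center curve fixed by $h$), Proposition \ref{prop-transversepA} plus the existence of a $p$-prong orbit with $p\geq 3$, and Propositions \ref{fixedleaf_nonDC} and \ref{p.homeotranslation_nonDC} for the fixed leaf $L$ and the Lefschetz index $1-p$. The only small inaccuracy is that making $\gamma$ fix all the prongs is achieved by passing to a power of $\gamma$ (and of $h$), not by the orientability arranged in the finite cover; since you do allow finite iterates, this does not affect the argument.
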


Let $\gamma$ be as in the proposition.
Let $L$ be a center stable leaf fixed by $h= \gamma \circ \ft^k$ and $L_2=\ft^k(L)$. As previously, we write $\tau_{12} \colon L \to L_2$ for the map obtained by flowing from $L$ to $L_2$ along $\wt\Phi^{cs}$. We set $g := \gamma \circ \tau_{12}$. 

The map $g$ acts on the compactification of $L$ with its ideal circle $L \cup S^1(L)$ the same way as $h$ does (see sections \ref{s.pA_and_translations_nonDC} and \ref{sec-transl}). 

Let $\delta$ be the unique orbit of $\wt\Phi^{cs}$ fixed by $\gamma$ and let $x$ be the (unique) intersection of $\delta$ with $L$. Note that $x$ is the unique fixed point of $g$.
Since we assume that $\gamma$ fixes the prongs of $\delta$, then $h$ has 
exactly $2p$ fixed points in $S^1(L)$. These fixed points are contracting if they correspond to an ideal point of $\cG^u(x)$ and expanding if they are ideal points of $\cG^s(x)$.

\subsection{Proof of Theorem \ref{mixednonDC}} 

To prove Theorem \ref{mixednonDC} we will first show some properties. Recall from Proposition \ref{contract} that every proper ray in $L \in \wfbs$, 
fixed by $h$ has a unique limit point in $S^1(L)$ (notice that the ray must be either expanding or contracting). We will show that the fixed rays associated to the center and stable (branching) foliations have different limit points at infinity. 

\begin{lemma}\label{claim8}
Let $s$ be a stable leaf in $L$ which
is fixed by $h$. Then the two rays of $s$
limit to distinct ideal points of $L$.
The same holds if $c$ is a center leaf in $L$ fixed
by $h$. 
\end{lemma}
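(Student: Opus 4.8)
\textbf{Proof plan for Lemma~\ref{claim8}.}

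The strategy is to argue by contradiction: suppose the two rays of a fixed stable (or center) leaf $\ell$ in $L$ limit to the same ideal point $\xi \in S^1(L)$. First I would recall from Proposition~\ref{contract} (and the discussion preceding it) that a fixed ray of $\ell$ is either coarsely contracting or coarsely expanding for $h$, and its limit point lies in $N$ (the repelling fixed points of $g$ on $S^1(L)$) or in $P$ (the attracting ones) accordingly. Since $\ell$ is a \emph{single} leaf fixed by $h$, and stable length is contracted by $h = \gamma \circ \ft^k$ with $k > 0$ (recall $\ft$ acts as a translation on $\wfbs$, so $g = \gamma\circ\tau_{12}$ has the expected hyperbolic boundary dynamics), both rays of a fixed \emph{stable} leaf are contracting; hence both limit points of $s$ would lie in $N$, and our assumption forces them to be the single point $\xi \in N$. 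The analogous statement for a fixed center leaf $c$: by Proposition~\ref{p.mixedhyp} (or, in the general set-up, by the coarse expansion coming from Proposition~\ref{p.alternnonDC} applied to $\fbu$) every fixed center curve in $L$ is coarsely expanding, so both rays of $c$ are expanding and both limit points lie in $P$; the assumption collapses them to one point $\xi \in P$.

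The heart of the argument is then a separation/embeddedness obstruction. If both ends of the properly embedded line $\ell \subset L$ converge to the same ideal point $\xi$, then $\ell$ together with $\xi$ bounds an embedded disk region $D$ in $L \cup S^1(L)$ whose ideal boundary is a nondegenerate arc $A \subset S^1(L)$ with endpoint $\xi$ (here I use that $L$ is a Gromov-hyperbolic plane — Lemma~\ref{GromovhypnonDC} in the $\fbs$-minimal reduction, or the fact that center stable leaves in a hyperbolic manifold compactify to closed disks — so the ideal circle and the notion of ``region cut off'' behave as in the hyperbolic plane). Now $h$ fixes $\ell$ and fixes $\xi$, so $h$ preserves this region $D$ up to the $g$-action on $S^1(L)$; but $g$ has only finitely many fixed points on $S^1(L)$, alternately attracting and repelling, so inside the arc $A$ there must be at least one fixed point of $g$ of the \emph{opposite} type from $\xi$. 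That contradicts the classification of the limit behavior: iterating $h$ on a point of $\ell$ near one end, its forward (or backward) orbit must converge to $\xi$, yet any neighborhood of an opposite-type fixed point inside $A$ is eventually mapped away from $\xi$ — more precisely, a point of $\ell \cap D$ lying near the ``wrong'' end cannot have its $h$-orbit stay bounded while $\ell$ is $h$-invariant and properly embedded, since properness forces the accumulation set of each ray to be a single point, and the region $D$ is forward-invariant, giving two distinct ideal accumulation points. This is the desired contradiction.

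Concretely I would organize it as: (1) fix the sign conventions, recording that a fixed stable ray is contracting (limit in $N$) and a fixed center ray is expanding (limit in $P$), citing Proposition~\ref{contract} and Proposition~\ref{p.mixedhyp}; (2) assume both rays of $\ell$ share the ideal point $\xi$, and use proper embeddedness of $\ell$ in $L$ plus Gromov hyperbolicity of $L$ to produce the region $D$ cut off by $\ell\cup\{\xi\}$ with nondegenerate ideal arc $A$; (3) observe $h(D) = D$ (as $h$ fixes $\ell$ and $\xi$ and preserves coorientation since $f$ is orientable), hence $g$ fixes $\xi$ and preserves $A$, so $A$ contains a $g$-fixed point $\zeta \ne \xi$ on $S^1(L)$; (4) take a point $q$ in the interior of $D \cap L$ whose forward (if $\zeta \in P$) or backward (if $\zeta \in N$) $h$-orbit converges to $\zeta$ — such $q$ exists since $D$ is a nonempty open set and $g$-dynamics near $\zeta$ is a source/sink — and note that, since $D$ is $h$-invariant, $\zeta \in \overline{D}\cap S^1(L) = A \cup\{\xi\}$, so $\zeta$ is an accumulation ideal point of $D$; but simultaneously the two rays of $\ell$ accumulate only at $\xi$, so $\ell$ cannot separate $q$'s orbit from $\xi$ while the orbit runs off to $\zeta$ — formalize this by saying that the orbit of $q$ eventually leaves every neighborhood of $\xi$, contradicting that $D$ is ``thin near $\xi$'' in the sense that $\partial D \cap L = \ell$ forces long pieces of $\ell$ near both ends, whose only ideal accumulation is $\xi$.

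\textbf{Main obstacle.} The delicate point is step (2)–(4): making rigorous the claim that a properly embedded line in $L$ with both ends at the same ideal point genuinely cuts off a region whose ideal boundary is a \emph{nondegenerate} arc, and that $g$-dynamics then forces an opposite-type fixed point strictly inside it. For true foliations on hyperbolic leaves this is standard (a properly embedded ray in $\HH^2$ has a well-defined endpoint, and a bi-infinite properly embedded line separates $\HH^2 \cup S^1$ into two disks), and the branching foliation causes no extra trouble here because we work inside a single leaf $L$, which is an honest plane; the only care needed is to invoke the compactification $L \cup S^1(L)$ and the continuity of the $g$-action on it exactly as set up in \cite[\S8]{BFFP-prequel}. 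I would also double-check that ``both rays contracting'' (resp.\ expanding) is legitimate: for $s$ it is immediate from stable contraction; for $c$ it is precisely the content of Proposition~\ref{p.mixedhyp}, which already did the work of ruling out mixed (one expanding, one contracting) behavior on a fixed center leaf via the index computation, so this lemma is really a corollary of that analysis plus the boundary dynamics of Proposition~\ref{contract}.
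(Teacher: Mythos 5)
There is a genuine gap, and it sits exactly where you flagged your ``main obstacle,'' in steps (2)--(4). If both rays of the properly embedded line $\ell$ limit to the same ideal point $\xi$, then one complementary region of $\ell$ (the ``thin'' side $S$) has ideal closure equal to the \emph{single point} $\{\xi\}$, while the other side accumulates on all of $S^1(L)$. So there is no region cut off by $\ell\cup\{\xi\}$ whose ideal boundary is a nondegenerate arc $A$ with endpoint $\xi$, and consequently no forced $g$-fixed point $\zeta\neq\xi$ of the opposite type ``inside $A$.'' With that, the dynamical contradiction you sketch in step (4) never materializes: $S$ is indeed $h$-invariant, but nothing prevents orbits of points of $S$ from simply converging to $\xi$ or accumulating on $\ell$; the configuration is not ruled out by boundary dynamics of $g$ plus proper embeddedness alone, and the closing sentence of your step (4) is an assertion rather than an argument.

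The missing idea is to bring in the \emph{transverse} foliation inside $L$ through a fixed point of $h$ on the leaf. For the center case: since $c$ is coarsely expanding, $h$ has a fixed point $z\in c$; the stable leaf $s(z)$ has a ray entering the thin region $S$, and this ray is properly embedded and cannot cross $c$ again, so it is trapped in $S$ and must also limit at $q_1=\xi$. But that ray is $h$-invariant and contracting, so by Proposition~\ref{contract} its ideal point lies in $N$, while $q_1\in P$ — contradiction. The stable case is symmetric (and easier): take the fixed point of $h$ on $s$, pick an $h$-invariant center leaf through it (the interval of center leaves through a point has $h$-fixed endpoints), and its ray trapped in the thin region would be an expanding ray limiting at a point of $N$. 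Your step (1) — fixed stable rays limit in $N$, fixed center rays limit in $P$, via Proposition~\ref{contract} and Proposition~\ref{p.mixedhyp} — agrees with the paper and is the right input; what is needed on top of it is this trapped transverse ray, not a separation argument at infinity.
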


\begin{proof}
We do the proof for the center leaf $c$, the one for stable
leaves is analogous, and a little bit easier (since there is no branching).

By hypothesis, $c$ is fixed by $h$, hence it is coarsely expanding 
under $h$. It follows that there are fixed points of
$h$ in $c$. By Proposition \ref{contract} each ray of $c$
can only limit in a point in $P \subset S^1(L)$,
where, as previously, $P$ is the set of attracting fixed points of $h$
in $S^1(L)$.
Let $q_1$, $q_2$ be the ideal points of the rays. What we
have to prove is that $q_1$ and $q_2$ are distinct.

\begin{figure}[ht]
\begin{center}
\includegraphics[scale=1.4]{./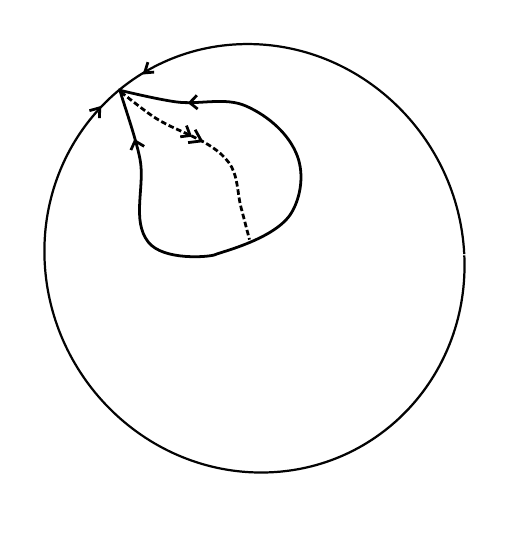}
\begin{picture}(0,0)
\put(-35,185){$L$}
\put(-175,185){$q_1$}
\put(-110,110){$z$}
\put(-86,144){$c$}
\put(-116,145){$s(z)$}
\end{picture}
\end{center}
\vspace{-0.5cm}
\caption{{\small Rays have to land in different points of $S^1(L)$.}}\label{fig-lema153}
\end{figure}

Suppose that $q_1 = q_2$. Then $c$ bounds a unique region
$S$ in $L$ which limits only in $q_1\in S^1(L)$. The other
complementary region of $c$ in $L$ limits to every point in
$S^1(L)$.
Let $z$ be a fixed point
of $h$ in $c$. Then the stable leaf $s(z)$ of $z$ has
a ray $s_1$ entering $S$.  It cannot intersect $c$ again,
and it is properly embedded in $L$. Hence it has to limit
in $q_1$ as well. See Figure~\ref{fig-lema153}.

 But now this ray is contracting for $h$.
This contradicts Proposition \ref{contract} because this ray
should limit in a point of $N$. 
\end{proof}

\begin{remark}
The proof used strongly that periodic center leafs are coarsely expanding, in order to induce a behavior at infinity. In the examples of \cite{BGHP} it does happen that different stable curves land in the same ideal point at infinity in their center stable leaf.  
\end{remark}

Now we show a sort of dynamical coherence for fixed center rays. 

\begin{lemma}\label{claim9} 
Suppose that $c_1, c_2$ are distinct
center leaves in $L$ which are fixed by $h$. Then
$c_1, c_2$ cannot intersect.
\end{lemma}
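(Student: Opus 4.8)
The plan is to argue by contradiction, following the same "bigon" strategy used in the proof of Lemma~\ref{l.nomergefix} and in Claim~\ref{claim_fixed_centers_match_on_interval}, but now extracting a contradiction from the asymptotic behavior at infinity established in Proposition~\ref{contract} and Lemma~\ref{claim8}. Suppose $c_1$ and $c_2$ are distinct center leaves in $L$, both fixed by $h$, that intersect at some point. Since both are fixed by $h$ and $h$ is coarsely expanding on each of them (Proposition~\ref{p.mixedhyp}), each $c_i$ contains fixed points of $h$. The first step is to understand the intersection pattern: if $c_1$ and $c_2$ share two distinct fixed points of $h$, then by Claim~\ref{claim_fixed_centers_match_on_interval} (whose proof applies verbatim here, using that stable segments trapped in a compact bigon would have to grow unboundedly under $h^{-1}$) they coincide on the segment between those points. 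So the obstruction reduces to the case where $c_1$ and $c_2$ form a genuine bigon $B$ bounded by a subarc of $c_1$ and a subarc of $c_2$ meeting only at two fixed points $y,z$ of $h$, or an "open bigon" in which the two leaves diverge.

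Next I would analyze the rays. Each ray of $c_1$ (and of $c_2$) is a proper ray fixed by $h$, hence by Proposition~\ref{contract} and the coarse expansion it has a unique ideal limit point in $P\subset S^1(L)$, and by Lemma~\ref{claim8} the two rays of each $c_i$ land at \emph{distinct} points of $P$. Now if $c_1$ and $c_2$ intersect but are distinct, then after passing to the relevant subarcs, the union $c_1\cup c_2$ separates $L$ into regions, one of which is enclosed by one ray of $c_1$ and one ray of $c_2$ together with a compact piece, while another is the bigon-type region. The key point is to track the four ideal endpoints of the two rays of $c_1$ together with the two rays of $c_2$: since intersecting proper rays landing at points of $P$ would force, via the cyclic order on $S^1(L)$ and the fact that $\cG^s(x)$-prongs and $\cG^u(x)$-prongs alternate, some ray to be trapped in a region limiting to a single ideal point. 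Then, exactly as in the proof of Lemma~\ref{claim8}, a stable leaf through a fixed point on that trapped center ray would have a ray forced into the same single ideal point of $P$, making it a contracting ray landing in $P$ rather than $N$ — contradicting Proposition~\ref{contract}.

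A cleaner way to organize the final contradiction, which I would prefer to write out, is the following. Assume $c_1\neq c_2$ intersect. Using Claim~\ref{claim_fixed_centers_match_on_interval}, I may choose the configuration so that there is a point $p\in c_1\cap c_2$ and rays $c_1^+\subset c_1$, $c_2^+\subset c_2$ emanating from $p$ (on appropriate sides) that meet only at $p$; together with a connecting arc they bound a region $S$. If the ideal endpoints of $c_1^+$ and $c_2^+$ coincide, then $S$ limits only to that single ideal point $q\in P$, and a stable leaf through a fixed point of $h$ on $c_1^+$ has a ray entering $S$ hence landing at $q$ as well — but that stable ray is contracting for $h$, so it must land in $N$, contradiction. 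If the ideal endpoints differ, then I use that $c_1$ and $c_2$ are both \emph{globally} fixed and coarsely expanding: iterating $h^{-n}$ contracts everything toward the fixed-point sets, and the two leaves, being distinct and fixed, must then have their fixed-point sets matching on the overlap by Claim~\ref{claim_fixed_centers_match_on_interval}, forcing $c_1=c_2$ after all.

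The main obstacle I anticipate is making the "region $S$ limits to a single ideal point" step fully rigorous in the branching setting — one has to be careful that $c_1^+$ and $c_2^+$ really do enclose a region whose ideal boundary is a single point or a controlled arc of $S^1(L)$, using that the rays are properly embedded and land at well-defined points (Proposition~\ref{contract}), and that a center ray cannot re-cross a center leaf it is disjoint from; the non-crossing property~\ref{item.no_crossing_of_leaves} of branching foliations handles the latter, but the precise separation argument in $L\cup S^1(L)$ needs the proper embedding and the Jordan-curve-type reasoning already invoked in Lemma~\ref{claim8}. Everything else is a direct transcription of the tools already developed: Claim~\ref{claim_fixed_centers_match_on_interval} for the bigon, Lemma~\ref{claim8} for distinctness of ideal endpoints, and Proposition~\ref{contract} for the $P$/$N$ dichotomy.
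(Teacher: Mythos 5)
Your reduction to a single branch point and your ``common ideal point'' case are essentially sound: when the two rays $e_1\subset c_1$, $e_2\subset c_2$ emanating from the branch point land at the same point $q\in P$, an argument parallel to Lemma~\ref{claim8} works, although even there you elide two steps the paper has to supply --- the existence of an $h$-fixed point in the \emph{interior} of the ray (which comes from showing the rays are locally contracting toward the branch point, via stable segments joining $e_1$ to $c_2$ near it, and then invoking coarse expansion), and the fact that the stable ray entering the wedge cannot exit through $e_2$ (it cannot re-cross $c_1$, and crossing $c_2$ would force, by stable contraction and invariance, a point of $e_1\smallsetminus\{x\}$ to lie on $c_2$). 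Note that the stable leaf through the branch point itself does not enter the wedge, since $e_1$ and $e_2$ leave that point tangent to the center direction, so you genuinely need an interior fixed point.

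The serious gap is your second case, where the ideal endpoints of $e_1$ and $e_2$ differ --- and this is exactly where the paper's proof does its real work. Your argument there (``iterating $h^{-n}$ contracts everything toward the fixed-point sets, and the fixed-point sets must match on the overlap by Claim~\ref{claim_fixed_centers_match_on_interval}, forcing $c_1=c_2$'') is not valid: Claim~\ref{claim_fixed_centers_match_on_interval} only applies when the two leaves share \emph{two distinct} fixed points, and beyond the branch point they share none by construction; nothing in your argument produces a second common fixed point, and there is no a priori contradiction in two fixed, coarsely expanding center leaves branching at a fixed point and running to different points of $P$ --- ideal-point bookkeeping alone cannot rule this out. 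The paper's proof handles this configuration by a different mechanism: it shows both rays are locally contracting toward the branch point $x$, deduces from coarse expansion that there are interior fixed points $y\in e_1$, $z\in e_2$ closest to $x$ whose stable leaves $s(y),s(z)$ are non-separated in the stable leaf space of $L$, and then considers the \emph{first} center leaf $c$ through $x$ (between $c_1$ and $c_2$) not meeting $s(y)$; this $c$ is fixed by $h$, and its ray toward $y$ meets only stable leaves between $s(x)$ and $s(y)$, hence is contracted by $h$ --- contradicting Proposition~\ref{p.mixedhyp}, which says every $h$-fixed center leaf in $L$ is coarsely expanding. This construction of a new fixed center leaf with a contracting ray is the key idea, and it is absent from your proposal, so as written the proof does not go through.
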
 

Notice that since $f$ is not necessarily dynamically coherent,
the distinct center leaves
$c_1, c_2$ can a priori intersect each other.
The proof will depend very strongly on the fact that 
center rays fixed by $h$ are coarsely expanding.

\begin{proof}
Suppose that $c_1, c_2$ intersect.
Since $c_1, c_2$ are both fixed by $h$, so is their intersection.
Since $h$ is coarsely expanding in each, then $c_1, c_2$
share a fixed point of $h$. 
In the the proof of Claim \ref{claim_fixed_centers_match_on_interval}, 
we showed that $c_1$ and $c_2$ cannot form a bigon $B$. 

It follows that there is a point $x$, fixed by $h$, which is an
endpoint of all intersections of $c_1$ and $c_2$: On
one side $x$ bounds a ray  $e_1$ of $c_1$ and a ray $e_2$
of $c_2$ such that $e_1$ and $e_2$ are disjoint.
For a point $y$ in $e_1$ near enough to $x$, we have that $s(y)$ must intersects $c_2$. Since
stable lengths are contracting under powers of $h$, it implies that $e_1$ is contracting towards $x$ near
$x$ and similarly for $e_2$ (see figure~\ref{fig-lema154}). But $e_1$ is coarsely expanding. Hence there must exist fixed points of $h$ in $e_1$. Let $y\in e_1$ be the closest point to $x$ which is fixed by $h$. Similarly, let $z$ in
$e_2$ closest to $x$ fixed by $h$.

\begin{figure}[ht]
\begin{center}
\includegraphics[scale=0.9]{./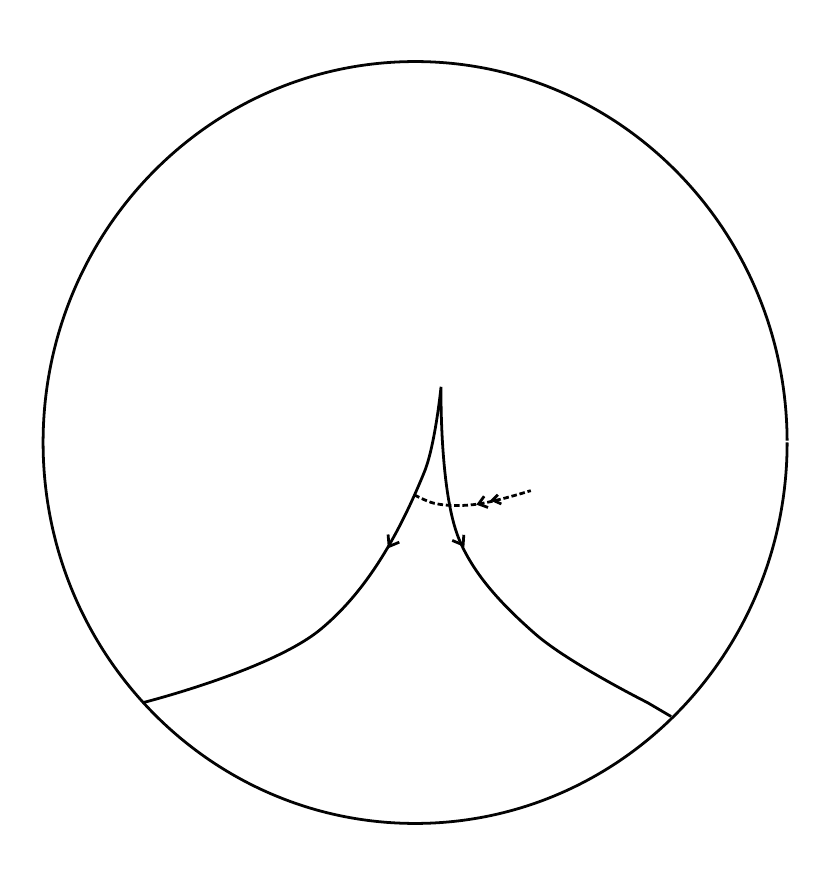}
\begin{picture}(0,0)
\put(-35,185){$L$}
\put(-145,75){$e_1$}
\put(-125,100){$y$}
\put(-86,74){$e_2$}
\put(-116,135){$x$}
\end{picture}
\end{center}
\vspace{-0.5cm}
\caption{{\small Showing the existence of fixed points below $x$ in Lemma \ref{claim9}.}}\label{fig-lema154}
\end{figure}

The leaves $s(y)$, $s(z)$ are not separated from each other
in the stable leaf space in $L$. 

Let now $c$ be a center leaf
through $x$, which is between $c_1$ and $c_2$ and which
is the first center leaf not intersecting $s(y)$. 

Then
$h(c) = c$ since $s(y)$ is fixed and $c$ is
the first leaf through $x$ not intersecting $s(y)$. Consider the ray of
$c$ starting at $x$ and moving in the direction of $y$. This ray is the limit
of compact center segments from $x$ to points in $s(y)$. As such this ray
of $c$ 
can only intersect stable leaves which are between $s(x)$ and
$s(y)$. Because the map $h$ contracts stable lengths it follows that
the map $h$ is contracting in this ray of $c$. This contradicts
Proposition \ref{p.mixedhyp} because this ray is in a center leaf which is
fixed by $h$.
\end{proof}

Thus far, we showed that distinct center leaves
in $L$, which are fixed by $h$ do not intersect.
Then, the proof of Claim~\ref{claim_fixed_points_in_finite_union} also implies that fixed center leaves cannot 
accumulate (as accumulation would imply that some fixed leaves intersect).

We conclude that there are finitely many center leaves in $L$
that are fixed under $h$. Each such center leaf is
coarsely expanding. For each such center leaf $c$, we consider a small enough open topological disk containing 
all the fixed points of $h$ in $c$,  and no other fixed
point of $h$ in $L$.
Then, on such disks, the Lefschetz
index of $h$ is $-1$. 
Since the total Lefschetz number of $h$ in $L$ is
$1 - p$ it follows that:

\begin{lemma}\label{conclusion3}
There are exactly $p  - 1$ center leaves which are
fixed by $h$ in $L$.
\end{lemma}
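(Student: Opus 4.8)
The plan is to compute the Lefschetz index of $h$ acting on $L$ by summing the local contributions from the fixed center leaves, using the structure established in the previous lemmas. First I would recall from Proposition~\ref{p.homeotranslation_nonDC} that the fixed point set of $h$ in $L$ is compact and has total Lefschetz index $1-p$, and from Proposition~\ref{p.mixedhyp} that every center leaf fixed by $h$ is coarsely expanding. Lemmas~\ref{claim8} and \ref{claim9} give that distinct fixed center leaves do not intersect, and the argument of Claim~\ref{claim_fixed_points_in_finite_union} then shows fixed center leaves cannot accumulate in $L$; hence there are finitely many, say $k$ of them, $c_1,\dots,c_k$. Every fixed point of $h$ lies on one of these (using Lemma~\ref{l.center_leaf_space_in_L}: through any fixed point $x$ of $h$, the set of center leaves through $x$ in $L$ is a closed interval whose endpoints are fixed, so $x$ lies on a fixed center leaf).

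Next I would localize: for each $c_i$, choose a small open topological disk $D_i \subset L$ that contains all the fixed points of $h$ lying on $c_i$ and no other fixed points of $h$ in $L$ — this is possible precisely because the $c_i$ are finite in number and pairwise disjoint (Lemma~\ref{claim9}), so their fixed-point sets are separated. The key computation is that the Lefschetz index of $h$ on each $D_i$ is $-1$. Indeed, $h$ contracts stable length (since $h = \gamma\circ \ft^m$ with $m>0$) so the dynamics transverse to the stable foliation is contracting, while $c_i$ is coarsely expanding, meaning the two rays of $c_i$ emanating outward from the fixed-point interval are both expanding. This is the ``index $-1$'' picture of Figure~\ref{figureIndex}: a saddle-like configuration. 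I must be careful here about the cancellation phenomenon of Figure~\ref{figureCancelation}, but since the $D_i$ are chosen disjoint and each contains a full fixed center leaf's worth of fixed points with both outward rays expanding, no cancellation across different $c_i$ occurs — the disjointness is exactly what rules it out.

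Finally, by additivity of the Lefschetz index over the disjoint disks $D_1,\dots,D_k$ covering all of $\mathrm{Fix}(h)$ in $L$, I get
\[
1 - p \;=\; I_{\mathrm{Fix}(h)}(h) \;=\; \sum_{i=1}^{k} I_{D_i}(h) \;=\; \sum_{i=1}^{k}(-1) \;=\; -k,
\]
so $k = p-1$, which is the claim.

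The main obstacle I anticipate is the careful justification that each local index is exactly $-1$ rather than merely $\le 0$, since a center leaf fixed by $h$ could in principle have complicated internal fixed-point structure (several fixed points along it, with intervals between them), and one must argue that the contribution of the whole interval of fixed points on $c_i$, together with the contracting stable direction and the two expanding outer rays, sums to $-1$. This requires invoking the precise index computation for an orientation-preserving homeomorphism of a surface preserving a pair of transverse (singular) one-dimensional foliations — the stable foliation $\cS_L$ and the center leaf $c_i$ — with the outer behavior expanding in the center direction and contracting in the stable direction; the relevant bookkeeping is exactly the one used in the proof of Proposition~\ref{coarseexpand} (the discussion of how each fixed interval $J_i$ contributes index $+1$, $0$, or $-1$ according to whether its two outer rays are contracting, mixed, or expanding). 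Since here both outer rays are expanding, the contribution is $-1$, and the disjointness of the $D_i$ ensures no further cancellations, completing the count.
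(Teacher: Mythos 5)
Your proposal is correct and follows essentially the same route as the paper: finiteness and pairwise disjointness of the fixed center leaves (Lemma~\ref{claim9} plus the non-accumulation argument from Claim~\ref{claim_fixed_points_in_finite_union}), localization of $\mathrm{Fix}(h)$ in disjoint disks around each fixed leaf, a local index of $-1$ per leaf because each is coarsely expanding while $h$ contracts stable length, and additivity against the total index $1-p$ from Proposition~\ref{p.homeotranslation_nonDC}. Your extra remarks (that every fixed point lies on a fixed center leaf via Lemma~\ref{l.center_leaf_space_in_L}, and that disjointness precludes the cancellations of Figure~\ref{figureCancelation}) are exactly the points the paper relies on, stated there more tersely.
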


This together with the following lemma will allow us to make a counting argument to reach a contradiction.  

\begin{lemma}\label{conclusion4}
Let $c_1$, $c_2$ be two distinct center leaves
in $L$ fixed by $h$. Let $y_1\in c_1$ and $y_2\in c_2$ be fixed points of $h$. Then $s(y_1)$ and $s(y_2)$ do not have common ideal points.
\end{lemma}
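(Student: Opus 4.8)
The plan is to first reduce to the case in which $s(y_1)$ and $s(y_2)$ are \emph{distinct} stable leaves of $L$. Recall from Proposition~\ref{p.mixedhyp} that $h=\gamma\circ\ft^k$ with $\gamma$ acting by an isometry, so after passing to a suitable power of $h$ we may assume $h$ shortens every nondegenerate stable segment in $L$. A stable leaf fixed by $h$ then carries a unique $h$-fixed point: two such points would bound an $h$-invariant stable segment which $h$ would nonetheless strictly shorten, an absurdity. Since $h(y_1)=y_1$ and $h$ preserves the stable foliation of $L$, the leaf $s(y_1)$ is fixed by $h$, and likewise $s(y_2)$; hence $s(y_1)=s(y_2)$ would force $y_1=y_2$, a common point of $c_1$ and $c_2$, contradicting Lemma~\ref{claim9}. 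So $s(y_1)$ and $s(y_2)$ are distinct leaves of the genuine stable foliation on $L$, hence disjoint. Moreover each of the two rays of $s(y_i)$ based at $y_i$ is $h$-invariant and contracting (every point converges to $y_i$ under forward iteration of $h$), so by Proposition~\ref{contract} all four ideal points of $s(y_1)$ and $s(y_2)$ lie in $N$, and by Lemma~\ref{claim8} the two ideal points of each $s(y_i)$ are distinct.

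Next, suppose toward a contradiction that $s(y_1)$ and $s(y_2)$ share an ideal point $\xi\in N$, and let $r_i\subset s(y_i)$ be the ray based at $y_i$ landing at $\xi$. As $s(y_1)\cap s(y_2)=\emptyset$, the rays $r_1,r_2$ are disjoint; together with a compact arc $\beta$ from $y_1$ to $y_2$ meeting $s(y_1)\cup s(y_2)$ only at its endpoints, they bound a closed region $R\subset L$ contained in the region of $L$ lying between the leaves $s(y_1)$ and $s(y_2)$. Since $h$ fixes both $s(y_1)$ and $s(y_2)$ and preserves transverse orientations, the open region between these leaves is $h$-invariant, $h$ shortens the stable leaves it contains, and near $\xi$ this region is a thin funnel along which $s(y_1)$ and $s(y_2)$ converge.

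For the contradiction I would follow the center leaf $c_1$ through $y_1$: it crosses $s(y_1)$ transversally at $y_1$, so one of its two rays $e$ enters the side of $s(y_1)$ containing $R$, and since a center leaf and a stable leaf in $L$ cannot bound a bigon (as in the proof of Lemma~\ref{claim8}) the ray $e$ never meets $r_1$ again. By Proposition~\ref{p.mixedhyp}, $c_1$ is coarsely expanding and fixed by $h$, so beyond its finitely many $h$-fixed points $e$ is an expanding ray and, by Proposition~\ref{contract}, lands at a point of $P$; since this point is not $\xi$, the ray $e$ cannot stay in the funnel near $\xi$, hence must leave the region between $s(y_1)$ and $s(y_2)$, necessarily by crossing $s(y_2)$. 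Running the symmetric argument with $c_2$ (which then crosses $s(y_1)$) and invoking the disjointness $c_1\cap c_2=\emptyset$ from Lemma~\ref{claim9} pins down the configuration of $c_1$ and $c_2$ inside $R$; iterating $h^{-1}$ traps a center arc in a region whose stable width grows without bound while its center length stays bounded, exactly the kind of bunching ruled out in Lemma~\ref{l.nomergefix}. Extracting this final length-versus-diameter contradiction — locating precisely where $e$ exits and controlling the backward iterates of the resulting region — is the step I expect to require the most care; the reduction and the placement of the ideal points in $N$ are routine given Lemmas~\ref{claim8}, \ref{claim9} and Proposition~\ref{contract}. This lemma, combined with Lemma~\ref{conclusion3}, then gives $|N|\ge 2(p-1)>p=|N|$ since $p\ge 3$, which is the contradiction driving Theorem~\ref{mixednonDC}.
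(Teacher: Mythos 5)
Your preliminary reduction is fine (distinctness of $s(y_1)$ and $s(y_2)$, uniqueness of the $h$-fixed point on each, and the placement of all four ideal points in $N$), but the heart of the lemma --- extracting a contradiction from the shared ideal point $\xi$ --- is not actually proved, and the two steps you lean on are problematic. First, the assertion that the expanding ray $e$ of $c_1$ ``must leave the region between $s(y_1)$ and $s(y_2)$, necessarily by crossing $s(y_2)$'' does not follow: the ideal boundary of that region consists not only of $\xi$ but also of the entire arc of $S^1(L)$ between the \emph{other} ideal points of $s(y_1)$ and $s(y_2)$, and since points of $P$ and $N$ alternate, that arc contains points of $P$; so $e$ can land there, staying in the region forever and never meeting $s(y_2)$, and your configuration never gets pinned down. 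Second, the final ``length-versus-diameter'' contradiction is only announced (``the step I expect to require the most care''), and the tool you invoke for it, Lemma~\ref{l.nomergefix}, is not available in this setting: its proof rests on Lemma~\ref{l.boundedmovement}, which requires a good lift fixing \emph{every} center leaf, i.e.\ exactly the doubly invariant case, whereas here $\ft$ translates $\wfbs$, no center leaf is $\ft$-fixed, and there is no a priori bound on center displacement under $h$. So what you have is a plausible plan whose decisive steps are missing or rest on an inapplicable lemma.

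For comparison, the paper's proof avoids any region-trapping or length estimate. It splits into two cases according to whether some center leaf crossing the ray $e_1\subset s(y_1)$ with ideal point $q$ also crosses the corresponding ray $e_2\subset s(y_2)$. If no such center leaf exists, the backward iterates $h^{-i}(c_0)$ of a center leaf $c_0$ meeting $e_1$ converge to an $h$-fixed center leaf with ideal point $q$; since $q$ is repelling on $S^1(L)$ this leaf would have a contracting ray, contradicting coarse expansion of fixed centers (Propositions~\ref{contract} and~\ref{p.mixedhyp}). If such a $c_0$ exists, its forward iterates converge to $h$-fixed center leaves through $y_1$ and $y_2$, which by Lemma~\ref{claim9} are $c_1$ and $c_2$ themselves, hence non-separated; the first stable leaf separating $s_1$ from $s_2$ and not meeting $c_1$ is then $h$-fixed, and the fixed center leaf through its fixed point must cross $e_1$ in a second $h$-fixed point of $s_1$, contradicting uniqueness of the fixed point on $s_1$. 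Reworking your argument along these lines (a dichotomy plus limiting leaves, rather than a trapped region plus a length argument) is what is needed to close the gap.
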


\begin{proof}
Suppose, for a contradiction, that there are distinct fixed center
leaves $c_1$, $c_2$ satisfying the following: There 
are points $y_1\in c_1$ and $y_2\in c_2$, fixed by $h$,
such that $s_1 = s(y_1)$ and $s_2 = s(y_2)$ share an ideal
point in $S^1(L)$.

Let $q$ be the common ideal point of the corresponding
rays of $s_1$ and $s_2$. Note that  by Proposition \ref{p.mixedhyp} the point $q$ cannot be an endpoint of $c_1$ or $c_2$, because ideal
points of fixed centers are contracting in $S^1(L)$ and
ideal points of fixed stables are repelling in $S^1(L)$. 

Let $e_j$ be the ray in $s_j$ with
endpoint $y_j$ and ideal point $q$.
Suppose first that no center leaf intersecting $e_1$
intersects $e_2$.
Let $c_0$ be a center leaf intersecting $e_1$. Iterate
$c_0$ by powers of $h^{-1}$. It pushes points in $s_1$
away from $y_1$. Since the leaves $h^{-i}(c_0)$ all intersect $s_1$ and none of them intersects $s_2$ or $c_2$, the sequence $(h^{-i}(c_0))$
converges to a collection of center leaves as $i \rightarrow +\infty$. Then there is only one center leaf in this limit,
call it $c$, which separates
all of $h^{-i}(c_0)$ from $s_2$. This $c$ is invariant
under $h$ and it has an ideal point in $q$ because it separates $h^{-i}(c_0)$ (recall that $h^{-i}(c_0) \cap s_1 \to q$ as $i\to \infty$) from $s_2$. Now $q$ is a repelling fixed point in $S^1(L)$,
so $c$ must have an attracting ray, a contradiction with Proposition \ref{p.mixedhyp}.

\begin{figure}[ht]
\begin{center}
\includegraphics[scale=0.6]{./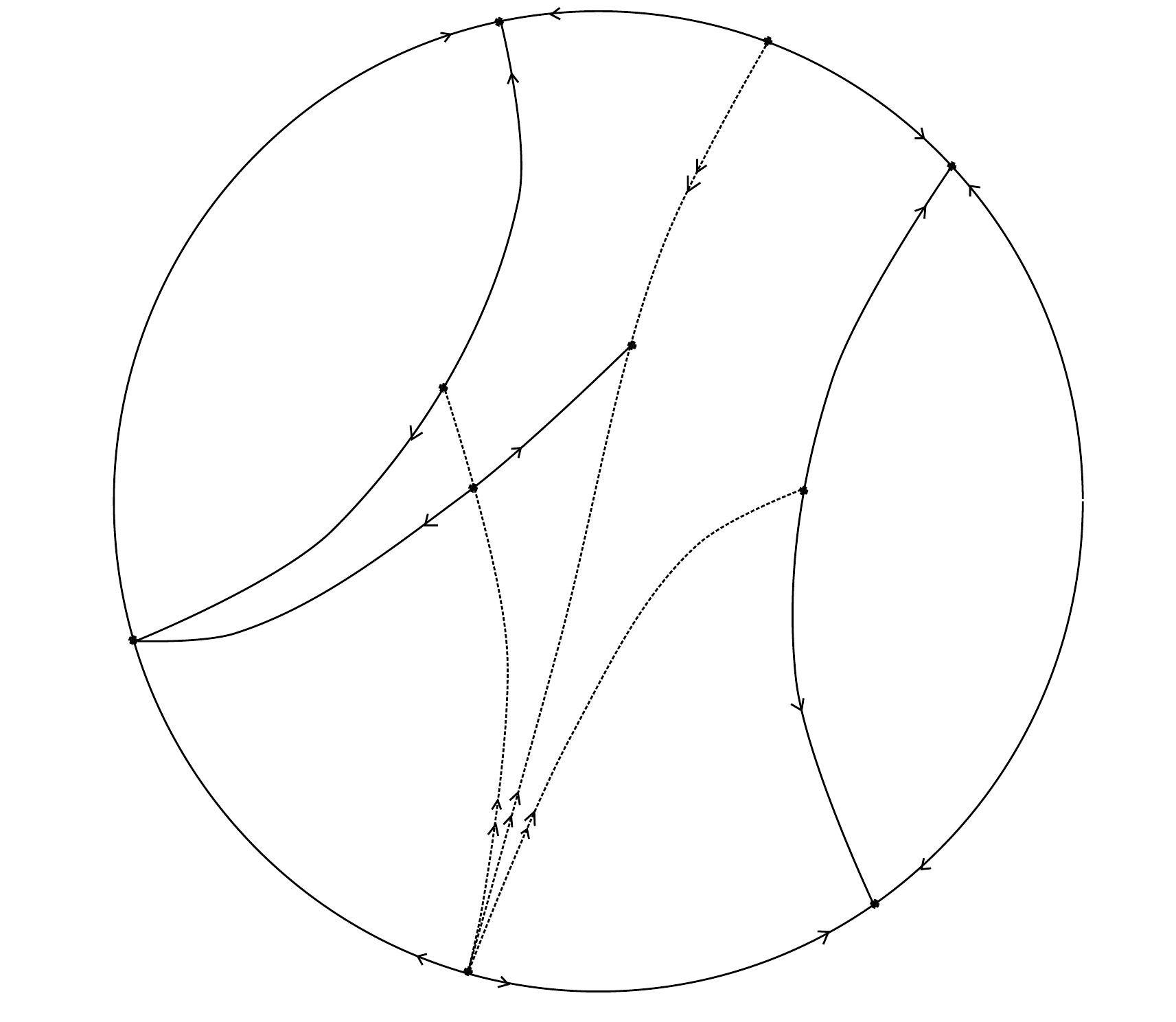}
\begin{picture}(0,0)
\put(-125,199){$s$}
\put(-195,199){$c_1$}
\put(-92,130){$y_2$}
\put(-76,174){$c_2$}
\put(-136,169){$y$}
\put(-199,160){$y_1$}
\put(-179,130){$z \neq y_1,y$}
\put(-146,79){$e_2$}
\put(-186,89){$e_1$}
\put(-186,0){$q$}
\end{picture}
\end{center}
\vspace{-0.5cm}
\caption{{\small A depiction of the main objects in the proof of Lemma~\ref{conclusion4}.}}\label{fig-lema156}
\end{figure}

It follows that some center leaf intersecting $e_1$
also intersects $e_2$. Let $c_0$ be one such center leaf.
Now iterate by positive powers of $h$. Then $(h^i(c_0))$ converges
to a fixed center leaf $v_1$ through $y_1$ and a fixed
center leaf $v_2$  through $y_2$. But then $v_1$ and $c_1$ 
are both fixed by $h$ and both contain $y_1$. Lemma \ref{claim9}
implies that $c_1 = v_1$ and $c_2 = v_2$. In particular
$v_1 \not = v_2$, and they are non separated from each 
other. In this case, consider $s$ the unique stable
leaf defined as the first leaf not intersecting $c_1$ that separates $s_1$
from $s_2$.
Then, as above, $h$ fixes $s$ and has a fixed point
$y$ in $s$. But a center leaf $c$ through $y$ fixed by $h$ has
to intersect the interior of the ray $e_1$. This intersection point is 
the intersection of $c$ fixed by $h$, and $s_1$ fixed
by $h$. So this intersection point is fixed by $h$.
But this is a contradiction, because $y_1$ is the
only fixed point of $h$ in $s_1$. So Lemma \ref{conclusion4} is proven.
\end{proof}

We now can complete the proof of Theorem \ref{mixednonDC}.

\begin{proof}[Proof of Theorem \ref{mixednonDC}] 

By Lemma \ref{conclusion3}, there are $p - 1$ center leaves fixed
by $h$ in $L$. We denote them by $c_1, \dots,c_{p-1}$.

Each center leaf has at least 
one fixed point. Let  $y_i$, $1 \leq i \leq p-1$ be a fixed
point in $c_i$.
Then, for each $i$, Lemma \ref{claim8} states that $s(y_i)$ has two distinct ideal points $z^1_i$ and $z^2_i$.

Moreover, for every $i \not = j$, the ideal points of the stable leaves are distinct by Lemma \ref{conclusion4}. It follows that there are at least
$2p -2$ distinct points in $S^1(L)$ which are repelling.

But we also know that there are exactly $p$ points 
in $S^1(L)$ that are repelling under $h$. 
It follows that $2p -2 \leq p$, which implies $p =2$. However, we had that $p\geq 3$, thus obtaining a contradiction.

This finishes the proof of Theorem \ref{mixednonDC}.
\end{proof}

\begin{appendix}


\section{Some $3$-manifold topology} \label{app.3_manifold_topology}

Besides the 3-manifold topology presented in \cite[Appendix A]{BFFP-prequel} we will need an additional result 
important to understand certain particular deck transformations when one lifts to finite covers. 

\begin{lemma}{}\label{boundcov}
Let $M$ be a closed, irreducible $3$-manifold with
fundamental group that is not virtually nilpotent.
Suppose that $\beta$ is a non trivial deck transformation so that
$d(x,\beta(x))$ is bounded above in $\mt$.
Then $M$ is a Seifert fibered space 
and $\beta$ represents a power
of a regular fiber.
\end{lemma}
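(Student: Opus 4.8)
The plan is to reduce the statement to the classification of groups of bounded-displacement deck transformations via quasi-isometry and large-scale geometry of $\pi_1(M)$. First I would observe that since $M$ is closed and irreducible with $\pi_1(M)$ not virtually nilpotent, $\mt$ is one of the standard geometries, and in particular $\mt$ with a $\pi_1(M)$-invariant metric is quasi-isometric to $\pi_1(M)$. A deck transformation $\beta$ with $d(x,\beta(x))$ uniformly bounded on $\mt$ is then, under this quasi-isometry, an element of $\pi_1(M)$ whose left-translation action on the Cayley graph moves every vertex a bounded amount; equivalently, $\beta$ lies in the kernel of the action of $\pi_1(M)$ on its own ``boundary at infinity'' (or: $\beta$ commutes up to bounded error with everything, so it is in the ``coarse center''). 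The key algebraic fact is that for a finitely generated group $G$ acting on a proper geodesic space $X$ geometrically, the set of elements acting with bounded displacement forms a normal subgroup $Z$, and $G/Z$ still acts properly cocompactly on $X$ up to the obvious collapse; moreover $Z$ is virtually $\ZZ$ or trivial unless $G$ is virtually nilpotent. I would extract this from the theory of the ``translation length'' spectrum together with the fact that a normal subgroup on which the induced metric is bounded must be finite-or-$\ZZ$ for a group with at least two ends or which is hyperbolic/$\mathrm{CAT}(0)$-like.

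More concretely, I would split into cases according to the geometry of $M$ (here $\pi_1(M)$ is infinite, not virtually nilpotent, and $M$ irreducible, so $M$ is geometrizable with pieces that are hyperbolic, $\widetilde{\mathrm{SL}_2}$, $\mathbb H^2\times\R$, or has a nontrivial JSJ/torus decomposition). If $M$ is hyperbolic, then $\pi_1(M)$ is word-hyperbolic with no $\ZZ^2$ and its only elements of bounded displacement on $\mathbb H^3$ are trivial (a loxodromic isometry has unbounded displacement, a parabolic too, and there are no nontrivial elliptics in a torsion-free Kleinian group), so $\beta=\id$ and there is nothing to prove. If $M$ is atoroidal but carries a Seifert structure — impossible unless it is one of the small Seifert manifolds; these still have $\pi_1$ with a normal $\ZZ$ (the fiber) and bounded elements are exactly powers of that fiber. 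If $M$ has a nontrivial torus decomposition or is Seifert fibered with hyperbolic base, the bounded-displacement condition forces $\beta$ to fix every end / every vertex of the Bass–Serre tree of the JSJ decomposition, hence to lie inside a single vertex group; if that vertex group is hyperbolic we conclude $\beta=\id$ by the previous case, and if it is Seifert we conclude $\beta$ is a power of the fiber — but then for $\beta$ to also be bounded across the gluing tori it must be a power of a fiber that is consistent globally, which (as $\pi_1(M)$ is not virtually solvable) forces $M$ itself to be Seifert fibered and $\beta$ a power of the regular fiber. I would invoke the uniqueness of the Seifert fibration for such $M$ (stated in Appendix~\ref{app.3_manifold_topology} context) to make ``the'' regular fiber well defined.

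The cleanest packaging, which I would actually write, is: let $Z\le\pi_1(M)$ be the subgroup of all deck transformations moving points a bounded distance; $Z$ is normal (it is the set of $g$ with $\sup_x d(x,gx)<\infty$, which is conjugation-invariant since covering translations of the closed $M$ are ``isometries up to bounded error'') and every element of $Z$ has bounded displacement, so the subspace metric on $Z$ is bounded, meaning $Z$ is a finitely generated group quasi-isometric to a point with respect to the restricted word metric from $\pi_1(M)$ — hence $Z$ is finite or, being torsion-free (as $\pi_1(M)$ of an irreducible aspherical $3$-manifold is torsion-free), $Z=\{\id\}$ or $Z\cong\ZZ$. If $Z=\{\id\}$ the statement is vacuous; if $Z\cong\ZZ$, then $\pi_1(M)$ has an infinite cyclic normal subgroup, and by the standard $3$-manifold result this forces $M$ to be Seifert fibered with $Z$ generated by a regular fiber, whence $\beta\in Z$ is a power of that fiber. \textbf{The main obstacle} I anticipate is making rigorous the step ``$Z$ is torsion-free and bounded, hence $\{\id\}$ or $\ZZ$'': one must be careful that ``bounded displacement'' of each element does not automatically give a uniform bound over all of $Z$, so I would instead argue directly that $Z$, being normal with each element of bounded displacement, acts on the (one-ended or tree-like) boundary of $\pi_1(M)$ trivially, and a normal subgroup acting trivially on the boundary of a non-elementary hyperbolic-like group, or fixing the JSJ tree pointwise, is forced into the fiber subgroup — citing the uniqueness of Seifert fibrations and the structure of centralizers in $3$-manifold groups from the references already used in \cite{BFFP-prequel}.
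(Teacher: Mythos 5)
Your atoroidal/hyperbolic case is fine, and the endgame you propose (an infinite cyclic normal subgroup forces $M$ to be Seifert by Casson--Jungreis/Gabai, and then $\beta$ must die in the Fuchsian base group, hence is a power of the regular fiber) is essentially the same as the paper's conclusion. But the heart of the lemma is precisely the step you leave open: getting from ``there exists one nontrivial $\beta$ of bounded displacement'' to ``$\pi_1(M)$ contains an infinite cyclic normal subgroup / the JSJ decomposition is trivial''. Your ``cleanest packaging'' does not close this: the subgroup $Z$ of bounded-displacement elements is indeed normal, but each element has its own bound, so $Z$ is not a bounded subset of $\pi_1(M)$ and no argument is given that $Z$ is virtually cyclic; you acknowledge this, but the proposed repairs are not available in the needed generality. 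A closed graph manifold (the case that actually has to be ruled out) is not word-hyperbolic and need not be CAT(0), so ``acts trivially on the boundary at infinity'' has no meaning there; and in the Bass--Serre/JSJ picture, the decisive claim --- that $\beta$, lying in a Seifert vertex group as a power of that piece's fiber times a boundary class, cannot be globally bounded unless the decomposition is trivial --- is exactly what must be proved and is only asserted (``consistent globally''). The paper proves it by a concrete computation with three consecutive pieces $P_1,P_2,P_3$: writing $\beta=\delta_1^k\alpha_1=\delta_3^i\alpha_3$ with $\delta_j$ the fibers and $\alpha_j$ loops in the adjacent walls, equality forces $k=i=0$ and $\alpha_1,\alpha_3$ freely homotopic, so $P_2\cong \T^2\times[0,1]$, which is impossible in a JSJ decomposition (and the torus-bundle case is excluded because the monodromy would be Anosov, using that $\pi_1(M)$ is not virtually nilpotent). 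Something of this nature is needed in your write-up; without it the proof does not go through.

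Two further points. First, your opening reduction ``$M$ closed irreducible with $\pi_1$ not virtually nilpotent, hence $\mt$ is one of the standard geometries'' is false as stated (mixed and graph manifolds are not geometric), though your later JSJ discussion partially supersedes it. Second, if you want to salvage your group-theoretic packaging, note that bounded displacement is equivalent (via the quasi-isometry $\mt\sim\pi_1(M)$) to $\beta$ having a \emph{finite conjugacy class}; then the centralizer of $\beta$ has finite index, the corresponding finite cover has nontrivial center, the Seifert fiber space theorem applies to that cover, and one descends using the fact that a closed irreducible $3$-manifold finitely covered by a Seifert manifold is Seifert --- this is a genuinely different and viable route, but it is not the argument you wrote, and it still requires handling the non-orientable case (the paper passes to the orientation double cover and quotes Tollefson) and the final identification of $\beta$ with a power of the regular fiber.
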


\begin{proof}{}
First we assume that $M$ is orientable. 
Then, the JSJ decomposition states that $M$ has a canonical 
decomposition into Seifert fibered and geometrically
 atoroidal pieces.
 We lift this to a decomposition of $\mt$ and construct
a tree $\cT$ in the following way: The vertices are the lifts of components of the
torus decomposition of $M$, and we associate an edge if
two components intersect along the lift of a torus.
Such a lift of a torus is called a wall.
There is a minimum separation distance between any two walls.

The deck transformation $\beta$ acts on this tree.
Let $W$ be a wall. Suppose that $\beta(W)$ is distinct 
from $W$. But, as subsets of $\mt$, the walls
$W, \beta(W)$ are a finite Hausdorff distance from
each other. Then $\pi(W)$, $\pi(\beta(W))$ are tori
in $M$, and the region $V$ in $\mt$ between
$W, \beta(W)$ projects to $\pi(V)$ which is $\T^2 \times [0,1]$
in $M$. 
If this happens then $M$ is a torus bundle over a circle.
In that case, use that $\pi_1(M)$ is not virtually nilpotent, so
the monodromy of the fibration is an Anosov map of $\T^2$.
But then no $\beta$ as above could satisfy the bounded
distance property. 
It follows that $\beta(W) = W$ for any wall, and in
particular $\beta(P) = P$ for any vertex of $\cT$.

Now consider a vertex $P$. Suppose first that $\pi(P)$ is 
homotopically atoroidal.
By the Geometrization Theorem, $\pi(P)$ is hyperbolic.
If $\beta$ restricted to $P$ were to satisfy the bounded distance
property, then it would have to be the identity on $P$. Hence $\beta$
itself is the identity, contradiction.

Hence all the pieces of the torus decomposition of $M$
are homotopically toroidal. 
Suppose now that there is one such piece $\pi(P)$ that is geometrically atoroidal (but
not homotopically atoroidal). The proof of the Seifert
fibered conjecture (\cite{Ca-Ju,Gab92}) shows that $\pi(P)$ has no boundary
and $\pi(P)$ is Seifert. In other words, $M=\pi(P)$ is Seifert.
So we can assume that all the pieces of the torus
decomposition are geometrically toroidal. Then they are all
Seifert fibered. Thus $M$ is a graph manifold.

We will show that the torus decomposition of $M$
is in fact trivial, proving that $M$ is Seifert fibered.
Suppose it is not true. Then the tree ${\cT}$ is
infinite. 
Let $P_1, P_2, P_3$ be three consecutive vertices in ${\cT}$.
Let $W_1$ be the wall between $P_1$ and $P_2$. Then 
$\beta(W_1)$ (as a set in $\mt$) is a bounded distance from $W_1$ and
sends the Seifert fibration of $P$ in $W_1$ to lifts
of Seifert fibers. It follows that $\beta = \delta_1^k \alpha_1$
where $\delta_1$ represents a regular fiber in $\pi(P_1)$,
and $\alpha_1$ is a loop in $\pi(W_1)$. Similarly
if $W_2$ is the wall between $P_2$ and $P_3$ then 
$\beta = \delta_3^i \alpha_3$ where $\alpha_3$ is a loop
in $\pi(W_3)$. Then $\alpha_1, \alpha_3$ are both in the
boundary of $\pi(P_2)$. The loops representing 
$\delta_1^k \alpha_1$, $\delta_3^i \alpha_3$ are both
in the boundary of $\pi(P_2)$. They represent the same element
of $\pi_1(M)$ only when $k = i = 0$ and $\alpha_1, \alpha_3$
are freely homotopic. That means that $P_2$ is a torus times
an interval, which is impossible in the torus decomposition
in our situation as explained above.

It follows now that the torus decomposition of $M$ is
trivial, which implies that $M$ is Seifert fibered.
Moreover, if the base is not hyperbolic, then $\pi_1(M)$ is
virtually nilpotent (\cite[Theorem 5.3]{Sco83}).
But this contradicts the hypothesis of the lemma.

It follows that the base is hyperbolic. Also
$\beta$ induces
a transformation in the universal cover of the base
that is a bounded distance from the identity.
This can only happen if this transformation is the
identity. Therefore $\beta$ represents a power
of a regular Seifert fiber in $M$ (notice that non-regular fibers induce a finite symmetry on the base, thus not the identity, and not a bounded
distance from the identity).

So the Lemma is proven when $M$ is orientable. If $M$ is not orientable, then it has a double
cover $M_2$ which is orientable. Now $\beta^2$ lifts to an element of $\pi_1(M_2)$ that satisfies the assumption of the lemma. So we can apply the result to $M_2$ and obtain that $M_2$ is Seifert.
Thus $M$ is doubly covered by a Seifert space, which, by a 
result of Tollefson \cite{Tol}, implies that $M$ itself is Seifert fibered.
It follows that
$\beta$ corresponds to a power
of a regular fiber.
This finishes the proof of the lemma.
\end{proof}

\section{Minimality and $f$-minimality} \label{app.partial_hyperbolicity}

We prove that in certain situations minimality is equivalent to
$f$-minimality. We need the following result which is of interest
in itself.

\begin{lemma}\label{closedsets}
Let $\lcsb$ be the leaf space of $\wfbs$. Let $\cB \subset \lcsb$ be a closed set of leaves.
Suppose that, for all $x\in \mt$, there exists a leaf $L\in \cB$ containing $x$.
Then $\cB = \lcsb$.
\end{lemma}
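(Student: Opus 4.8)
The statement is a rigidity statement about the leaf space $\lcsb$: a closed set of leaves $\cB$ that ``covers'' every point of $\mt$ must be everything. The natural strategy is to argue by contradiction, so suppose $\cB \neq \lcsb$. Let $\cB' = \pi^{-1}(\text{the union of leaves in }\cB)$, which equals $\mt$ by the covering hypothesis; the point is to derive a contradiction from the assumption that some leaf $L_0$ of $\wfbs$ is not in $\cB$. First I would use that $\cB$ is closed in $\lcsb$: by Remark~\ref{r.closedsetb} (the easy direction), the union of the leaves in $\cB$ being all of $\mt$ forces the \emph{maximal} saturation of $\mt$ to be closed — but wait, that is the thing we want. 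Instead, the cleaner route is to work directly in the leaf space. Since $\lcsb$ is a simply-connected (possibly non-Hausdorff) $1$-manifold and $\cB$ is a closed proper subset, its complement $\cU = \lcsb \smallsetminus \cB$ is open and non-empty; pick a connected component $I$ of $\cU$, which is an open interval in $\lcsb$.

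\textbf{Key steps.} The second step is to examine a boundary leaf of $I$. Because $\cB$ is closed, the endpoints of $I$ (if they exist in $\lcsb$) lie in $\cB$. The crucial topological input is that in a $1$-manifold an open interval $I$ has at most two ``ends'', and the leaves of $\wfbs$ corresponding to points of $I$ sweep out an open subset $J \subset \mt$ (open because $I$ is open and the leaf space topology is generated by transversals — cf.\ the proof of Corollary~\ref{c.minimalcase_branching_case}, where exactly this ``interior in $\mt$ is non-empty'' argument appears). So $J$ is a non-empty open subset of $\mt$ whose points are covered only by leaves in $I$, hence by leaves \emph{not} in $\cB$ — \emph{except} that this is where branching subtlety enters: a point $x \in J$ might also lie on a leaf that is an endpoint of $\lcsb(x)$, and that endpoint could conceivably be in $\cB$. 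Here I would invoke Claim~\ref{claim.closed_intervals_merge}: $\lcsb(x)$ is a closed interval, and if $x$ lies in the open region $J$ swept by $I$, then the entire interval $\lcsb(x)$ consists of leaves that are ``between'' leaves of $I$, hence in $I$ (since $I$ is a connected component of the complement of the closed set $\cB$, and the leaves separating two leaves of $I$ cannot jump into $\cB$). Therefore \emph{no} leaf through any interior point of $J$ lies in $\cB$, contradicting the hypothesis that every point of $\mt$ lies on some leaf of $\cB$.

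\textbf{Main obstacle.} The technical heart is making precise that ``$J$ has non-empty interior in $\mt$'' and that this interior is disjoint from every leaf of $\cB$. For the first part I would argue exactly as in the proof of Corollary~\ref{c.minimalcase_branching_case}: take an open transverse arc $\tau$ realizing an open subinterval of $I$ as $\lcsb(\tau)$; then $\bigcup_{L \in \lcsb(\tau)} L$ is an open set, because through any point of such a leaf one finds a product chart in which nearby plaques all belong to leaves meeting $\tau$. For the second part, one must rule out that some $x$ in this open set also sits on a leaf $E \in \cB$; but $E \in \lcsb(x)$ and the order-interval $\lcsb(x)$ lies inside the order-interval $I$ (any leaf through $x$ is ``between'' the two leaves of $I$ that bound $x$ from above and below in the transversal order — and those bounding leaves are themselves in $I$ unless $x$ is carried to a boundary leaf of $I$, which would put $x$ on the boundary of $J$, not its interior). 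Hence $E \in I$, so $E \notin \cB$, the desired contradiction. I expect the bookkeeping around the endpoints of $I$ — whether $I$ is bounded or unbounded in $\lcsb$, and whether its closure adjoins one or two leaves of $\cB$ — to be the only place where care is genuinely needed; in every case, though, the open swept region $J$ strictly contains the leaves of $I$ and excludes $\cB$, which finishes the argument. It also follows that the only \emph{saturation} of $\mt$ closed in $\lcsb$ is $\lcsb$ itself, which is the assertion flagged in Remark~\ref{r.closedsetb}.
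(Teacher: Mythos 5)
Your route is genuinely different from the paper's. You pick a connected component $I$ of $\lcsb \smallsetminus \cB$, look at the swept region $J = \bigcup_{L\in I} L$ in $\mt$, and try to show that $J$ has nonempty interior disjoint from every $\cB$-leaf. The paper instead fixes an arbitrary leaf $L$ with $x\in L$ and a transversal $\tau$ through $x$, takes the supremum $L'$ of the $\cB$-leaves in $\lcsb(\tau)$ that are $\leq L$ (closedness of $\cB$ gives $L'\in\cB$ and $x\in L'$), and, if $L'\neq L$, produces a point $w$ near a branch point of $L\cap L'$ through which every leaf lies strictly between $L'$ and $L$, hence outside $\cB$ by maximality of $L'$. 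Both proofs exhibit a witness point off all $\cB$-leaves; yours argues globally in the leaf space, the paper's locally near one pair of merging leaves, which is a bit more elementary since it only needs the order on a single $\lcsb(\tau)$. Your ``second part'' — that an interior point $x$ of $J$ has $\lcsb(x)\subset I$ — is correct as you set it up: if a boundary leaf $L_1\in\partial I\subset\cB$ were in $\lcsb(x)$, then on a transversal through $x$ the points just past $x$ on the far side of $L_1$ would lie only on leaves beyond $L_1$, hence outside $I$, forcing $x\in\partial J$.

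The ``first part'' has a real gap. You assert that $\bigcup_{L\in\lcsb(\tau)}L$ is open ``because through any point of such a leaf one finds a product chart in which nearby plaques all belong to leaves meeting $\tau$.'' For a branching foliation this is false: write $\lcsb(\tau)=(A,B)$; if some $L_0\in\lcsb(\tau)$ merges with the boundary leaf $B$ at a point $y$, then in a product chart around $y$ the plaques on the $B^\oplus$ side lie on leaves $>B$, which miss $\tau$, so $y$ belongs to the union but is not an interior point. You are implicitly using the true-foliation picture in which distinct leaves never meet. (The appeal to the proof of Corollary~\ref{c.minimalcase_branching_case} does not rescue this, since that proof obtains the nonempty interior from the translation action of $\ft$, which is unavailable here.) The needed conclusion is nonetheless true, and is obtained most cheaply by noticing that $\tau$ itself lies in the interior: every $q\in\tau$ satisfies $\lcsb(q)\subset\lcsb(\tau)\subset I$, since any leaf through $q$ meets $\tau$; and the set $\{y:\lcsb(y)\subset\lcsb(\tau)\}$ is open by property~\ref{item.convergence_of_leaves} of Definition~\ref{def.branching} — if $y_n\to y$ with all leaves through $y_n$ outside $\lcsb(\tau)$, some subsequence of these leaves would converge to a leaf through $y$ outside the open set $\lcsb(\tau)$, a contradiction. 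With that substitution the rest of your argument closes.
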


\begin{proof}
The lemma is obvious when $\fbs$ is a true foliation (and one does not need to require $\cB$ to be closed). However, when $\fbs$ has some branching, one could possibly have a union of leaves that cover all of $\mt$ without using all the leaves of $\wfbs$. For closed sets of leaves we show this is not possible.

Let $L$ be a leaf of $\wfbs$, $x$ a point in $L$ and
$\tau$ an open unstable segment through $x$. 
The set of leaves of $\wfbs$ intersecting $\tau$ is isomorphic
to an open interval. Using the transversal orientation to $\wfbs$, we can put an order on this interval.

By our assumption, every point in $\tau$ intersects a leaf in $\mathcal B$. Let $L'$ be the supremum of leaves in $\cB$, intersecting $\tau$ and smaller than 
or equal to $L$. Since $\cB$ is closed, we have $L'\in \cB$. Notice that $x$ is in both $L$ and  $L'$.

We claim that $L' = L$. 
If $L$ is not equal to $L'$ then they branch out. Let $y$ be 
a boundary point of $L \cap L'$. 
Let $z \in L'$, with $z \notin L$ be close enough to $y$ so that its unstable leaf $u(z)$ intersects $L$. Now take any point $w\in u(z)$ in between $z$ and $L \cap u(z)$. Any leaf $L_1 \in \wfbs$ that contains $w$ must contain $y$. Hence (because leaves do not cross), $L_1$ also contains $x$. By definition, it is above $L'$, thus $L_1$ is not in $\cB$. Since this is true for any leaf through $w$, it contradicts our assumption.
\end{proof}

\begin{lemma}\label{fmin}
When $\fbs$ does not have compact leaves, then $f$-minimality
of $\fbs$ is equivalent to minimality of $\fbs$.
\end{lemma}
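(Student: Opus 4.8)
The statement to prove is Lemma \ref{fmin}: when $\fbs$ has no compact leaves, $f$-minimality of $\fbs$ is equivalent to minimality. One direction is trivial: minimality (every closed $\fbs$-saturated set is $\emptyset$ or $M$) immediately implies $f$-minimality (every closed $\fbs$-saturated \emph{and $f$-invariant} set is $\emptyset$ or $M$), since the latter quantifies over a smaller collection of sets. So the content is the converse: assuming $f$-minimality, I want to show $\fbs$ is minimal.

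The plan is to argue by contradiction. Suppose there is a closed, $\fbs$-saturated, nonempty proper subset $C \subsetneq M$ that is \emph{not} $f$-invariant (otherwise we contradict $f$-minimality directly). The idea is to build from $C$ a closed, $\fbs$-saturated, $f$-invariant proper nonempty subset, which will then contradict $f$-minimality. The natural candidate is something like $C' = \bigcap_{n \in \ZZ} f^n(C)$ or, dually, a minimal closed saturated invariant sub-object. First I would note that $f$ maps leaves of $\fbs$ to leaves of $\fbs$ ($f$-invariance of the branching foliation, Theorem \ref{teo-BI}), so each $f^n(C)$ is again closed and $\fbs$-saturated; hence $C'$ is closed, $\fbs$-saturated (one must check saturation survives the intersection --- each point of $C'$ lies in a leaf inside every $f^n(C)$, and since these leaves all contain the point and leaves through a point form a closed interval in the leaf space by Claim \ref{claim.closed_intervals_merge}, a compactness/nestedness argument produces a single leaf through the point contained in all the $f^n(C)$, hence in $C'$), and visibly $f$-invariant. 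The difficulty is that $C'$ could a priori be empty. To rule this out, I would use Zorn's lemma on the collection of nonempty closed $\fbs$-saturated $f$-invariant subsets contained in... no --- rather, I would first pass to the universal cover and the leaf space.

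Here is where the hypothesis ``no compact leaves'' and the auxiliary Lemma \ref{closedsets} enter, and I expect this to be the main obstacle. The subtlety with branching foliations (flagged in Remark \ref{r.closedsetb} and Definition \ref{def.fsaturated}) is that the complement of an $\fbs$-saturated set need not be $\fbs$-saturated, so the usual foliation argument ``pick a minimal sublamination'' needs care. I would lift $C$ to $\widetilde C \subset \mt$, which is closed and $\wfbs$-saturated, take its maximal saturation $\mathcal{C} \subset \lcsb$ (closed in $\lcsb$ by Remark \ref{r.closedsetb}), and then work there. If $C \neq M$ then $\widetilde C \neq \mt$, so by Lemma \ref{closedsets} the maximal saturation $\mathcal{C}$ is a proper closed subset of $\lcsb$ --- crucially it does not cover every point, i.e. there is $x \in \mt$ lying on no leaf of $\mathcal{C}$. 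This ``room'' is what prevents $\bigcap_n f^n(\widetilde C)$ from degenerating: one shows that the intersection $\bigcap_{n\geq 0} \widetilde{f^n(C)}$ (equivalently a nested intersection of nonempty compact-ish pieces) is nonempty by a diagonal argument using that each $\widetilde{f^n(C)}$ misses a definite $x_n$ but, projecting to $M$, the $f^n(C)$ are all nonempty closed subsets of the compact $M$, so $\bigcap_n f^n(C) \supseteq \bigcap_n \overline{f^n(C)}$ is nonempty by the finite intersection property for compact sets --- here I need the $f^n(C)$ to be \emph{decreasing} or at least to have the finite intersection property; replacing $C$ by $\bigcup_{n \le 0} f^n(C)$ first, whose closure $\overline{C}$ is still $\fbs$-saturated (saturation passes to closures for branching foliations by property \ref{item.convergence_of_leaves} of Definition \ref{def.branching}) and satisfies $f(\overline{C}) \subseteq \overline{C}$, arranges a decreasing sequence $f^n(\overline{C})$.

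To finish, set $C_\infty = \bigcap_{n \geq 0} f^n(\overline{C})$. It is closed, nonempty (compactness), $f$-invariant ($f(C_\infty) = \bigcap_{n\geq 1} f^n(\overline C) = C_\infty$ since the sequence is decreasing), and $\fbs$-saturated (each point lies in a leaf of $f^n(\overline C)$ for each $n$; the leaves through a fixed point form a closed interval in $\lcsb$, and the nested closed sub-intervals coming from the $f^n(\overline C)$ have nonempty intersection, giving a single leaf through the point lying in $C_\infty$). Finally $C_\infty \neq M$: if it were, then $\overline{C} = M$, but $\overline{C}$ has maximal saturation a proper closed subset of $\lcsb$ (as $C\neq M$ and using Lemma \ref{closedsets} together with the no-compact-leaves hypothesis to preclude the degenerate case where a proper saturated set still covers $\mt$), contradiction. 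Thus $C_\infty$ is a closed, $\fbs$-saturated, $f$-invariant, nonempty, proper subset of $M$, contradicting $f$-minimality. Hence no such $C$ exists and $\fbs$ is minimal. The one place I'd be most careful is verifying $\fbs$-saturation is preserved under the operations (closure, $f^n$-image, nested intersection) --- each uses a specific property of branching foliations (Definition \ref{def.branching}(iv), $f$-invariance of $\fbs$, Claim \ref{claim.closed_intervals_merge}) and these are exactly the points where a true-foliation intuition would silently fail.
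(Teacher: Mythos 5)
The easy direction is fine, and several of your bookkeeping points are correct or repairable: closures and $f$-images of $\fbs$-saturated sets are saturated, and saturation of the nested intersection can indeed be salvaged, most easily by applying Definition \ref{def.branching}\ref{item.convergence_of_leaves} to a constant sequence $x_n=x$ rather than via your ``nested closed sub-intervals'' (the set of leaves through $x$ lying in a closed saturated set need not be an interval). But the core of your argument has a fatal gap: properness of $C_\infty$. Your set $C_\infty=\bigcap_{n\ge 0}f^n(D)$, where $D$ is the closure of the orbit of $C$ (note, incidentally, that you need the \emph{forward} orbit $\bigcup_{n\ge 0}f^n(C)$ to get $f(D)\subseteq D$; with $n\le 0$ the inclusion goes the other way), is closed, saturated, nonempty and $f$-invariant, but there is no reason it is proper: nothing prevents the forward orbit of a proper closed saturated set from being dense, in which case $D=M$, $C_\infty=M$, and no contradiction with $f$-minimality results. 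Indeed, under $f$-minimality the orbit closure of \emph{any} nonempty closed saturated set is $M$, so a soft orbit-closure construction can never manufacture the proper invariant set you need. Your justification of $C_\infty\neq M$ conflates $C$ with $D$: from $C\neq M$ one cannot conclude that the maximal saturation of $D$ is a proper closed subset of $\lcsb$, and Lemma \ref{closedsets} points in the opposite direction (it upgrades ``the leaves of a closed set of leaves cover $\mt$'' to ``the set is all of $\lcsb$''; it provides no properness). It is also telling that your argument never genuinely uses the hypothesis that $\fbs$ has no compact leaves.

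That hypothesis is exactly where the real work lies, which is why the paper's proof is not formal. The paper pulls $C$ back by $\hs$ to the approximating \emph{true} foliation $\fes$; since $\fbs$ (hence $\fes$) has no compact leaves, the proper closed $\fes$-saturated set $(\hs)^{-1}(C)$ contains an \emph{exceptional} minimal set, and by Hector--Hirsch there are only finitely many of these, $B_1,\dots,B_k$. The set $A=\hs(B_1\cup\dots\cup B_k)$ is canonically attached to $\fbs$ (each leaf of $\hs(B_i)$ is dense in $\hs(B_i)$), hence $f$-invariant even though $f$ does not preserve $\fes$; it is closed, saturated and nonempty, so $f$-minimality forces $A=M$. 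Lemma \ref{closedsets} then shows every leaf of $\wfbs$ comes from a leaf of some $B_i$, and a collapsing argument converts this into minimality of $\fes$, contradicting the existence of an exceptional minimal set. To repair your proof you would need some analogue of this mechanism for producing an invariant proper set (for instance via minimal sets of the approximating foliation); the nested-intersection construction by itself cannot do it.
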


\begin{proof}
Note that minimality obviously implies $f$-minimality, so we only need to show the other implication.

Suppose that $\fbs$ is not minimal and let
$C$ be the union of a set of $\fbs$  leaves which is closed
and not $M$. 
Let $\fes$ be an approximating foliation, with
approximating map $\hs$ sending leaves of $\fes$ to
those of $\cW^{cs}$. 
Then $(\hs)^{-1}(C)$ is a set which is a union of $\fes$ 
leaves, which is closed and not $M$.
In particular it contains an exceptional minimal set $D$. 
By \cite[Theorem 4.1.3]{He-Hi}
the actual foliation $\fes$ has finitely many
exceptional
minimal sets $B_1,\dots,B_k$. The union $B$ of these is not
$M$ because $D \not = M$. The set of leaves in $B$ is a closed
set of leaves denoted by $\mathcal B$.  
Then $A = \hs(B)$ is a closed subset of $M$, and
$\mathcal A = \hs(\mathcal B)$ is a closed set of leaves,
being the image by $\hs$ of the leaves in $\mathcal B$. 
Let $\wt{\mathcal A} = \pi^{-1}(\mathcal A)$, we stress that this
is on the leaf space level, not in terms of sets.
This is a closed subset of $\lcsb$.

Let $A_i := \hs(B_i)$. Every leaf of $\fbs$ which is the image
of a leaf in $B_i$ is dense in $A_i$. 
Using this, it is easy to see that $f(A) = A$.
By $f$-minimality it follows that $A = M$.

Since $A = M$ then  $\wt{\mathcal A}$ is a closed subset of $\lcsb$,
whose union of points in all leaves of $\wt{\mathcal A}$ is 
$\mt$ as $A = M$.
Lemma \ref{closedsets} implies that $\wt{\mathcal A} = \lcsb$.
Hence for each leaf $E$ of $\fbs$, it is the image of
a leaf $F$ in some $B_i$.
Conversely every leaf of $\fes$ maps by $\hs$ to a leaf
of $\fbs$. 

For each leaf $E$ of $\fbs$, its preimage $(\hs)^{-1}(E)$ is
a closed interval of leaves of $\fes$. No leaf in
the interior of the interval can be in a $B_i$ as it
is a minimal set. 
It follows that 
 the complementary regions of $B$ in $M$
are $I$-bundles. These can be collapsed to generate another
foliation $\mathcal C$. Since the $B_i$ were minimal sets
of $\fes$, then the collapsing of each of these is a minimal
set of $\mathcal C$. Since the union is all of $M$,
there can be only one such minimal set, so $\fes$ is
minimal. 

But this contradicts the fact that $D$ is an exceptional
minimal set of $\fes$.
\end{proof}

We state the following criteria for dynamical coherence (which in this setting is quite obvious).

\begin{proposition}[Proposition 1.6 and Remark 1.10 in \cite{BW}]\label{p.BWcoh} 
Assume that $f$ is a partially hyperbolic diffeomorphism admitting branching foliations $\fbs$ and $\fbu$. If no two distinct leaves of $\fbs$ or $\fbu$ intersect, then $f$ is dynamically coherent.
\end{proposition}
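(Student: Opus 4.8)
The plan is to observe that the hypothesis forces $\fbs$ and $\fbu$ to be honest (non-branching) foliations, at which point dynamical coherence is immediate from the definition; as the authors remark, the content here is ``quite obvious'' and purely local. This is essentially Proposition~1.6 and Remark~1.10 of \cite{BW}, so the proof is really just an unwinding of definitions.

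First I would note that, for any $x \in \mt$, the set of leaves of $\wfbs$ through $x$ is order-isomorphic to a closed interval by Claim~\ref{claim.closed_intervals_merge}. If no two distinct leaves of $\fbs$ meet, then no two distinct leaves of $\wfbs$ meet either, so this interval is a single point: exactly one leaf of $\wfbs$, and hence of $\fbs$, passes through each point, and the leaves of $\fbs$ partition $M$. Next I would upgrade this partition to a genuine foliation atlas: every branching foliation is locally product branched foliated, meaning each point has a neighbourhood $U \simeq \mathbb{D}^2 \times [0,1]$ in which each leaf meeting $U$ does so in a union of discs transverse to the $[0,1]$-fibration and crossing every fibre. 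When distinct leaves are disjoint, these discs are pairwise disjoint and the unique leaf through any point of $U$ meets $U$ in a single such disc, so $U$ is an ordinary foliation chart. Hence $\fbs$ is a genuine topological foliation with $C^1$ leaves, tangent to $E^c \oplus E^s$ (Definition~\ref{def.branching}, Theorem~\ref{teo-BI}); the identical argument applies to $\fbu$, producing a foliation tangent to $E^c \oplus E^u$.

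Finally, since $\fbs$ and $\fbu$ are $f$-invariant branching foliations (Theorem~\ref{teo-BI}), $f$ maps leaves to leaves, so the two foliations just constructed are $f$-invariant. Thus $f$ admits invariant foliations tangent to $E^c \oplus E^s$ and $E^c \oplus E^u$, which is precisely the definition of dynamical coherence. One may further remark that the center leaves of Definition~\ref{d.center_leaf} then fit together into the $1$-dimensional intersection foliation $\fbs \cap \fbu$, although this is not needed for the statement. There is no real obstacle in the argument; the only mildly delicate point is the passage from the branched local product structure to genuine foliation charts, and this uses nothing more than the fact that the leaves are everywhere tangent to a fixed continuous ($C^1$) distribution. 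In particular, no orientability assumption beyond the one already guaranteeing the existence of $\fbs$ and $\fbu$ is required.
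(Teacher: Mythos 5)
Your argument is correct and matches the intended one: the paper does not give its own proof but cites \cite{BW} and labels the statement as ``quite obvious'' in this setting, and your unwinding of definitions -- no branching means the branching foliations are genuine foliations, which are $f$-invariant and tangent to the correct bundles, hence $f$ is dynamically coherent -- is exactly that obvious argument. One minor imprecision worth noting: in the passage to a foliation atlas you assert that ``the unique leaf through any point of $U$ meets $U$ in a single such disc,'' but this need not hold, since a leaf can exit $U$ and re-enter it in a different plaque; what you actually need, and what does follow from the disjointness of distinct leaves together with the fact that two plaques of a single leaf through a common point would already coincide, is that the plaques partition $U$, which is enough to make $U$ a foliation chart. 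This does not affect the validity of the proof.
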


\section{The Lefschetz index}\label{app.Lefschetz}
 
Here we define the Lefschetz index and give the main property that we used.
We refer to the monograph by Franks \cite[Section 5]{Franks}
for details and other references.

For any space $X$ and subset $A\subset X$, we denote by $H_k(X,A)$ the $k$-th relative homology group with coefficients in $\ZZ$.

\begin{definition}
Let $V \subset \R^k$ be an open set and $F\colon V \subset \R^k
\to \R^k$ be a continuous map such that the
set of fixed point of $F$ is $\Gamma \subset V$, a compact
set. Then the \emph{Lefschetz index} of $F$, denoted by $I_{\Gamma}(F)$ is 
an element in $\ZZ \cong H_k(\R^k, \R^k - \{ 0 \})$, 
defined as follows. It is the image
by $(\mathrm{id} -F)_*\colon H_k(V,V- \Gamma) \to 
H_k(\R^k, \R^k - \{ 0 \})$ of the class $u_{\Gamma}$,
 where 
$u_{\Gamma}$ itself is the image
of the generator $1$ under the composite $H_k(\R^k, \R^k - D) \to
H_k(\R^k, \R^k - \Gamma) \cong 
H_k(V, V - \Gamma)$.
Here $D$ is a ball containing
$\Gamma$.
\end{definition}

It is easy to see that  if $\Gamma = \mathrm{Fix}(F) =
\Gamma_1 \cup \dots \cup \Gamma_j$, where $\Gamma_i$ are compact
and disjoint then 
$I_{\Gamma}(F) = \sum_1^j I_{\Gamma}(F)$. 
Here $I_{\Gamma}(F)$ is the index restricted to an open set
$V_i$ of $V$ which does not intersect the other $\Gamma_m$,
see \cite[Theorem 5.8 (b)]{Franks}.

This technical definition works well with the standard examples. For a single hyperbolic fixed point $q$, the index at $q$ is exactly 
$\mathrm{sgn}(\mathrm{det}(\mathrm{id} - D_qF))$ (see \cite[Proposition 5.7]{Franks}),
where $\mathrm{det}$ is the determinant, and $\mathrm{sgn}$ is the sign of the
determinant. Hence in dimension $2$ the index of a hyperbolic
fixed point when the orientation of the bundles is preserved is $-1$. This can be generalized to a $p$-prong
hyperbolic fixed point to obtain that the index is $1-p$.
This is because the index is invariant by homotopic changes.
A $p$-prong can be easily split into $p-1$ distinct 
hyperbolic points which are differentiable. 
In addition for any fixed set which behaves locally as a hyperbolic
fixed point, the index is the same as the hyperbolic
fixed point.

The main property we use is the following.

\begin{proposition}[Theorem 5.8(c) of \cite{Franks}]\label{p.lefschetz}
Let $P$ be a topological plane equipped with a metric $d$. Let $g,h\colon P \to P$ be two homeomorphisms. Suppose that there exists $R>0$ such that:
\begin{itemize}
\item For every $x\in P$, one has that $d(g(x),h(x))< R$;
\item There is a disk $D$ such that, for every $x \notin D$, one has that $d(x,g(x)) > 2R$. 
\end{itemize}
Then, the total index $I_{\mathrm{Fix}(g)}(g) = I_{\mathrm{Fix}(h)}(h)$. 
\end{proposition}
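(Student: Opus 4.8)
The plan is to derive this from the homotopy invariance of the fixed-point index, as in \cite[Theorem 5.8]{Franks}; I only indicate the argument adapted to the present setting, where the index $I_{\Gamma}(F)$ as defined in this appendix is nothing but the Brouwer degree of $\mathrm{id}-F$ on any disk whose interior contains $\Gamma = \mathrm{Fix}(F)$ and no other fixed points.

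First I would check that both fixed-point sets are compact: if $g(x)=x$ then $d(x,g(x))=0$, while if $h(x)=x$ then $d(x,g(x))=d(h(x),g(x))<R$, so in either case $d(x,g(x))<2R$ and the second hypothesis forces $x\in D$. Hence $\mathrm{Fix}(g)\cup\mathrm{Fix}(h)\subset D$, and both indices in the statement make sense.

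Next I would build a homotopy $F\colon P\times[0,1]\to P$ from $g$ to $h$ whose fixed points stay confined. For each $x\in P$ one chooses a path $\sigma_x\colon[0,1]\to P$ from $g(x)$ to $h(x)$ of $d$-length less than $2R$ (possible since $d(g(x),h(x))<R$ whenever $d$ is a length metric, which is the case in all our applications, where $P$ is a leaf equipped with a Riemannian path metric), depending continuously on $x$, and sets $F_t(x)=\sigma_x(t)$. If $F_t(x)=x$ for some $t$, then $x$ lies on the image of $\sigma_x$, so $d(x,g(x))\le\mathrm{length}(\sigma_x)<2R$, and again $x\in D$. Thus $\bigcup_{t\in[0,1]}\mathrm{Fix}(F_t)\subset D$. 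Now fix a homeomorphism $P\cong\R^2$ and a closed disk $D'$ with $D\subset\mathrm{int}(D')$: the map $\mathrm{id}-F_t$ is then nonzero on $\partial D'$ for every $t$, so $\deg(\mathrm{id}-F_t,D',0)$ is defined, equals $I_{\mathrm{Fix}(F_t)}(F_t)$ by additivity and excision, and is independent of $t$ by homotopy invariance of the Brouwer degree. Evaluating at $t=0$ and $t=1$ gives $I_{\mathrm{Fix}(g)}(g)=I_{\mathrm{Fix}(h)}(h)$.

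The step I expect to require the most care is the construction of the confined homotopy: the naive straight-line homotopy in a chart $P\cong\R^2$ need not confine fixed points, because the chart can distort $d$ arbitrarily, so one must interpolate along short $d$-paths instead and check these can be chosen to depend continuously on $x$ (which is routine on a smooth surface, cutting a length-$<R$ path into pieces inside small metric balls). For a completely general metric $d$ on a topological plane one would instead argue directly with the degree on a large disk, which is exactly \cite[Theorem 5.8(c)]{Franks}; but since in this paper $P$ is always a leaf with the Riemannian path metric, the argument above suffices.
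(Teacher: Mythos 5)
The paper offers no argument for this proposition at all: it is quoted from Franks, the understanding being that it follows from the homotopy-invariance property of the index (that is what \cite{Franks}, Theorem 5.8(c), actually is) applied to a homotopy from $g$ to $h$ whose fixed points stay in a fixed compact set. Your proposal supplies exactly that derivation, with the Brouwer degree of $\mathrm{id}-F_t$ on a large disk $D'$ playing the role of the index, and your confinement estimate (if $F_t(x)=x$ then $d(x,g(x))<2R$, hence $x\in D$) is correct. So the approach is the intended one, and for the way the proposition is used in this paper (leaves with their path metrics) it is essentially complete.

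Two caveats, one of which is a genuine soft spot. First, the proposition as stated allows an arbitrary metric on the topological plane, and your construction really uses that $d$ is a length metric; that is harmless here, but your closing remark that the general case ``is exactly Theorem 5.8(c)'' is not accurate -- 5.8(c) is only the homotopy-invariance property, and one still has to manufacture a homotopy with uniformly confined fixed points, which is precisely the issue. Second, the continuous dependence of the short paths $\sigma_x$ on $x$ is the only substantive step, and ``routine on a smooth surface'' is too quick: on an abstract complete Riemannian plane short geodesics need not be unique and the injectivity radius need not be bounded below, so neither geodesic interpolation nor a naive subdivision works verbatim, and patching local choices of paths runs into a selection problem. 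What saves the argument where the proposition is applied is (i) that only the behaviour on the compact circle $\partial D'$ matters -- since $P\cong\R^2$ is an absolute retract one may construct the controlled homotopy over $\partial D'$ only and extend it arbitrarily over the disk, interior fixed points at intermediate times being irrelevant to the boundary degree -- and (ii) that leaves of (branching) foliations of a closed manifold have uniformly bounded local geometry, so over the compact set $\partial D'$ one has a uniform convexity radius and a finite subdivision/patching argument keeps all interpolating paths of length less than $2R$. With that step spelled out (or with the general case simply left to Franks, as the paper does), your proof is correct.
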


See also \cite[Section 8.6]{KH} for an alternate
presentation of the Lefschetz index.

\end{appendix}


\bibliographystyle{amsalpha_for_Rafael}
\bibliography{biblio}

\end{document}